\DeclareMathOperator{\Sp}{Sp}
\title[Resonances for open quantum maps]%
{Resonances for open quantum maps\\
and a fractal uncertainty principle}
\author{Semyon Dyatlov}
\email{dyatlov@math.mit.edu}
\address{Department of Mathematics, Massachusetts Institute of Technology,
77 Massachusetts Ave, Cambridge, MA 02139}
\author{Long Jin}
\email{long249@purdue.edu}
\address{Department of Mathematics,
Purdue University,
150 N University Street, West Lafayette, IN 47907}
\begin{document}

\begin{abstract}
We study eigenvalues of quantum open baker's maps with trapped sets
given by linear arithmetic Cantor sets of dimensions $\delta\in (0,1)$.
We show that the size of the spectral gap is strictly greater than the standard
bound $\max(0,{1\over 2}-\delta)$ for all values of $\delta$, which is the first result of this kind.
The size of the improvement is determined from
a fractal uncertainty principle and can be computed for any given Cantor set.
We next show a fractal Weyl upper bound for the number of eigenvalues in annuli, with exponent
which depends on the inner radius of the annulus.
\end{abstract}

\maketitle

%%%%%%%%%%%%%%%%%%%%%%%%%%%%%%%%%%%%%%%%%%%%%%%%%%%%%%%%%%%%%%%%%%%%%%%%%%%%%%%%
%                                 INTRODUCTION                                 %
%%%%%%%%%%%%%%%%%%%%%%%%%%%%%%%%%%%%%%%%%%%%%%%%%%%%%%%%%%%%%%%%%%%%%%%%%%%%%%%%
\addtocounter{section}{1}
\addcontentsline{toc}{section}{1. Introduction}

Open quantum maps are useful models in the study of scattering
phenomena and in particular
scattering resonances. They quantize canonical relations on compact
symplectic manifolds, giving families of operators defined on
finite dimensional Hilbert spaces. This makes them attractive
models for numerical experimentation.
See~\S\ref{s:history} for an overview of some of the
previous results in physics and mathematics literature.

The present paper investigates eigenvalues (related to resonances
by~\eqref{e:eig-res} below) for a family of open quantum maps known as
\emph{quantum open baker's maps}. The corresponding trapped
orbits form Cantor sets. The combinatorial
and number theoretic properties of these sets
make it possible to prove
results on spectral gaps
(Theorems~\ref{t:gap-improves}, \ref{t:gap-detailed})
which lie well beyond what is known for
other models. We also obtain a fractal Weyl upper bound
(Theorem~\ref{t:weyl}) and provide numerical results (see~\S\ref{s:numerics}).

The quantum open baker's maps we study are determined by triples
\begin{equation}
  \label{e:oqm-triple}
(M,\mathcal A,\chi),\quad
M\in\mathbb N,\quad
\mathcal A\subset\{0,\dots,M-1\},\quad
\chi\in C_0^\infty\big((0,1);[0,1]\big).
\end{equation}
We call $M$ the \emph{base}, $\mathcal A$ the \emph{alphabet}, and
$\chi$ the \emph{cutoff function}. For each $k\in\mathbb N$, the corresponding
open quantum baker's map is the
operator on
$$
\ell^2_N=\ell^2(\mathbb Z_N),\quad
\mathbb Z_N=\mathbb Z/(N\mathbb Z),\quad
N:=M^k
$$
defined as follows (see~\S\ref{s:setup} for details):
\begin{equation}
  \label{e:B-N-def}
B_N=B_{N,\chi}=\mathcal F_N^*\begin{pmatrix}
\chi_{N/M}\mathcal F_{N/M}\chi_{N/M} & \\
&\ddots&\\
 & &\chi_{N/M}\mathcal F_{N/M}\chi_{N/M}
\end{pmatrix}I_{\mathcal A, M}
\end{equation}
where $\mathcal F_N$ is the unitary Fourier transform,
$\chi_{N/M}$ is the multiplication operator
on~$\ell^2_{N/M}$ discretizing $\chi$, and
$I_{\mathcal A,M}$ is the diagonal matrix
with $\ell$-th diagonal entry equal to~1 if $\lfloor {\ell\over N/M}\rfloor\in\mathcal A$
and 0 otherwise. A basic example is $M=3$, $\mathcal A=\{0,2\}$, giving
$$
N=3^k,\quad
B_N=\mathcal F_N^*\begin{pmatrix}
\chi_{N/3}\mathcal F_{N/3}\chi_{N/3}&0&0\\
0&0&0\\
0&0&\chi_{N/3}\mathcal F_{N/3}\chi_{N/3}\end{pmatrix}.
$$
The operator $B_{N}$ is the discrete analog of a Fourier integral
operator corresponding to the classical open baker's map, which
is the following symplectic relation on the torus~$\mathbb T^2_{x,\xi}$
(see \S\ref{s:setup} and Figure~\ref{f:basic-baker})
\begin{equation}
  \label{e:kappa-def}
\begin{gathered}
\varkappa_{M,\mathcal A}:(y,\eta)\mapsto (x,\xi)=\Big(My-a,{\eta+a\over M}\Big),\\
(y,\eta)\in \Big({a\over M},{a+1\over M}\Big)\times (0,1),\quad
a\in\mathcal A.
\end{gathered}
\end{equation}
%%%%%%%%%%%%%%%%%%%%%%%%%%%%%%%%%%%%%%%%%%%%%%%%%%%%%%%%%%%%%%%%%%%%%%%%%%%%%%%%
\begin{figure}
\includegraphics{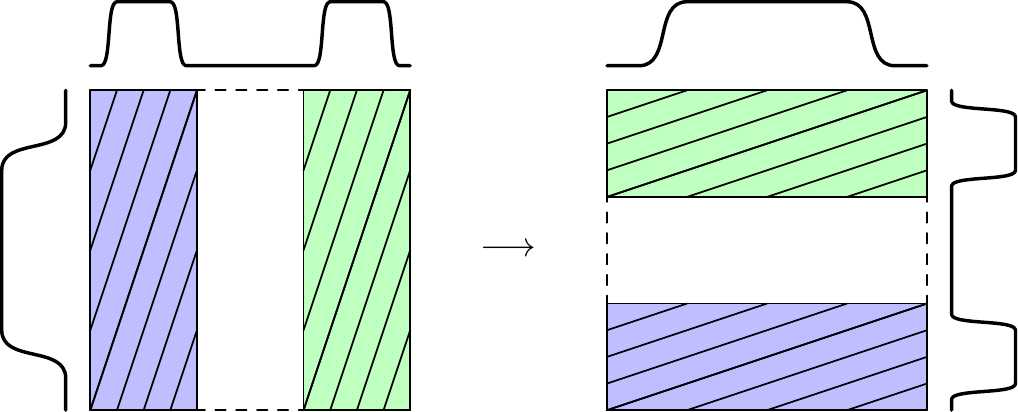}
\caption{The classical open baker's map $\varkappa_{M,\mathcal A}$ for
$M=3,\mathcal A=\{0,2\}$, picturing the cutoffs forming the symbol
of the Fourier integral operator.}
\label{f:basic-baker}
\end{figure}%
%%%%%%%%%%%%%%%%%%%%%%%%%%%%%%%%%%%%%%%%%%%%%%%%%%%%%%%%%%%%%%%%%%%%%%%%%%%%%%%%
The symbol of the Fourier integral operator has the form $\chi(x)\chi(\eta)$
and cuts off from the boundaries of the rectangles where the transformation
$\varkappa_{M,\mathcal A}$ is defined. Without the cutoff $\chi$,
the operator $B_N$ would have additional singularities
which would change the spectrum~-- see~\S\ref{s:cutoff-dep} below.

The linearization of $\varkappa_{M,\mathcal A}$ has eigenvalues $M,1/M$.
The operator $B_{N}$ is then a toy model for the time $t=\log M$
propagator of a quantum system which has classical expansion rate $1$
(such as a convex co-compact hyperbolic surface, see for instance~\cite{hgap}),
at frequencies $\sim N$. In particular, if $\omega$, $\Im\omega\leq 0$, is a scattering resonance
for the quantum system, then the corresponding eigenvalue
of $B_{N}$ is
\begin{equation}
  \label{e:eig-res}
\lambda=e^{-it\omega}=M^{-i\omega},\quad
|\lambda|=M^{\Im\omega}\leq 1.
\end{equation}
This formula provides an analogy between gap/counting results for
scattering resonances and those for eigenvalues of open quantum maps.

We assume that
$1<|\mathcal A|<M$ and define the parameter
\begin{equation}
  \label{e:delta}
\delta:={\log |\mathcal A|\over\log M}\in (0,1)
\end{equation}
which is the dimension of the corresponding Cantor set~-- see~\eqref{e:C-k}, \eqref{e:C-infty} below. We remark that the topological pressure of the time $t=\log M$ suspension of the
map $\varkappa_{M,\mathcal A}$ on the trapped set~\eqref{e:trapped-set} is given by~\cite[\S8.2.2]{Nonnenmacher}
\begin{equation}
  \label{e:under-pressure}
P(s)=\delta-s,\quad s\in\mathbb R.
\end{equation}
Here we choose time $\log M$ suspension so that it has expansion rate 1,
see the discussion preceding~\eqref{e:eig-res}.

%%%%%%%%%%%%%%%%%%%%%%%%%%%%%%%%%%%%%%%%%%%%%%%%%%%%%%%%%%%%%%%%%%%%%%%%%%%%%%%%
\subsection{Spectral gaps}
  \label{s:intro-gaps}

The matrix $B_N$ has norm bounded by 1, therefore its spectrum $\Sp(B_N)$ is contained
in the unit disk. Our first result shows in particular that
the spectral radius of $B_N$ is less than 1,
uniformly as $N\to\infty$:
%%%%%%%%%%%%%%%%%%%%%%%%%%%%%%%%%%%%%%%%%%%%%%%%%%%%%%%%%%%%%%%%%%%%%%%%%%%%%%%%
\begin{theo}[Improved spectral gap]
  \label{t:gap-improves}
There exists
\begin{equation}
  \label{e:improved-beta}
\beta=\beta(M,\mathcal A)>\max\Big(0,{1\over 2}-\delta\Big)
\end{equation}
such that
\begin{equation}
  \label{e:gap-improves}
\limsup_{N\to\infty}\max\{|\lambda|\colon \lambda\in\Sp(B_N)\}\ \leq\ M^{-\beta}.
\end{equation}
\end{theo}
%%%%%%%%%%%%%%%%%%%%%%%%%%%%%%%%%%%%%%%%%%%%%%%%%%%%%%%%%%%%%%%%%%%%%%%%%%%%%%%%
The size of the gap~\eqref{e:improved-beta} improves over both the trivial bound
and the pressure bound $-P({1\over 2})={1\over 2}-\delta$
(see~\cite[\S8]{Nonnenmacher})
 for the entire range $\delta\in (0,1)$.
See~\S\ref{s:history} below for an overview of previously known spectral gap results.
We also provide a polynomial resolvent bound for
$|\lambda|>M^{-\beta}$, see Proposition~\ref{l:resolvent-bound}.

%%%%%%%%%%%%%%%%%%%%%%%%%%%%%%%%%%%%%%%%%%%%%%%%%%%%%%%%%%%%%%%%%%%%%%%%%%%%%%%%
\begin{figure}
\includegraphics[scale=0.5]{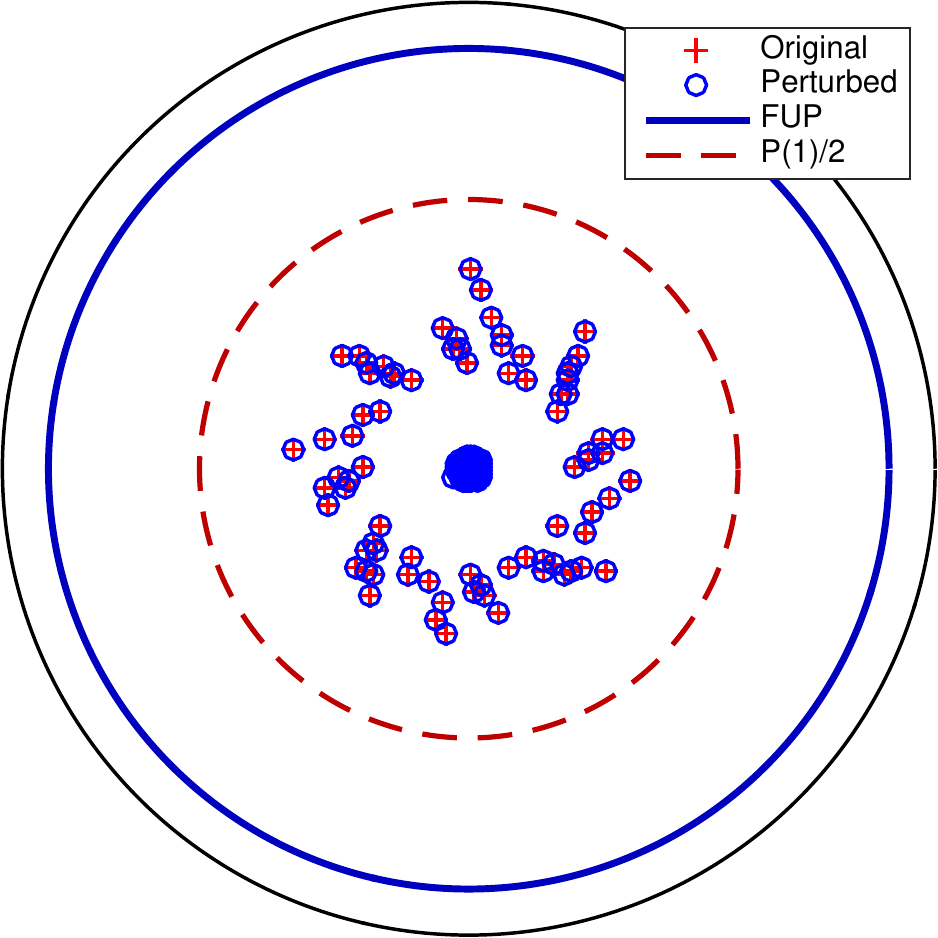}\quad
\includegraphics[scale=0.5]{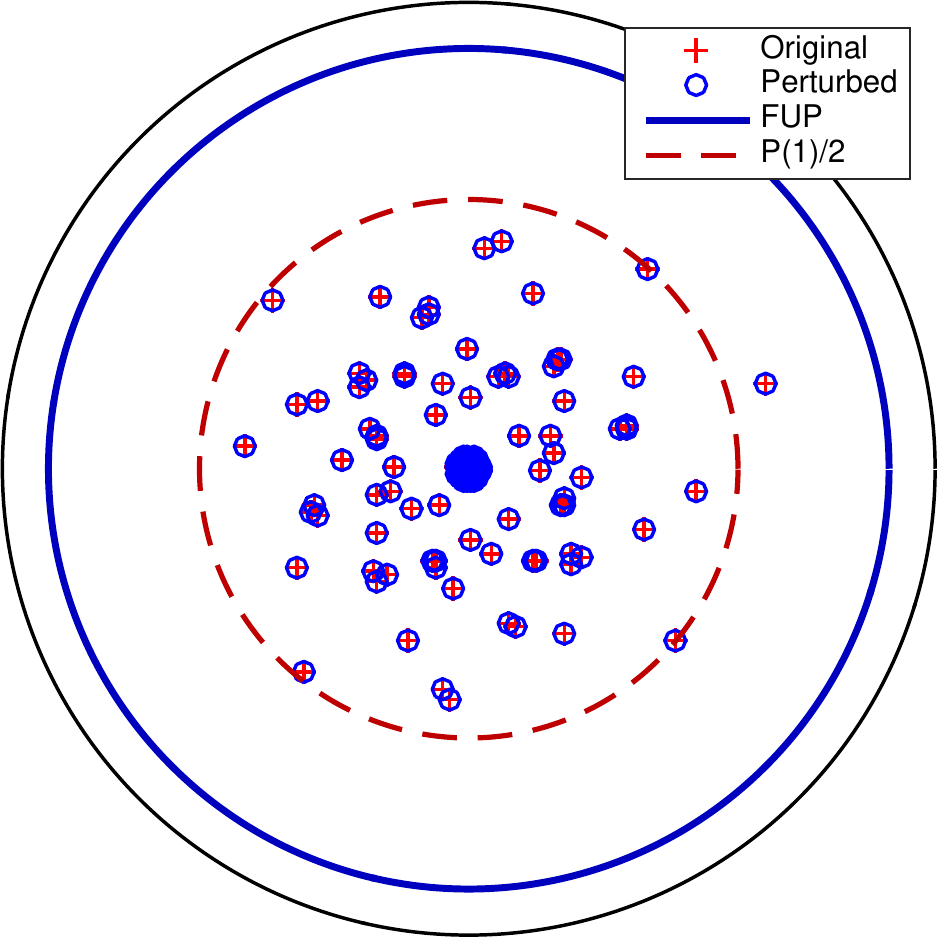}\quad
\includegraphics[scale=0.5]{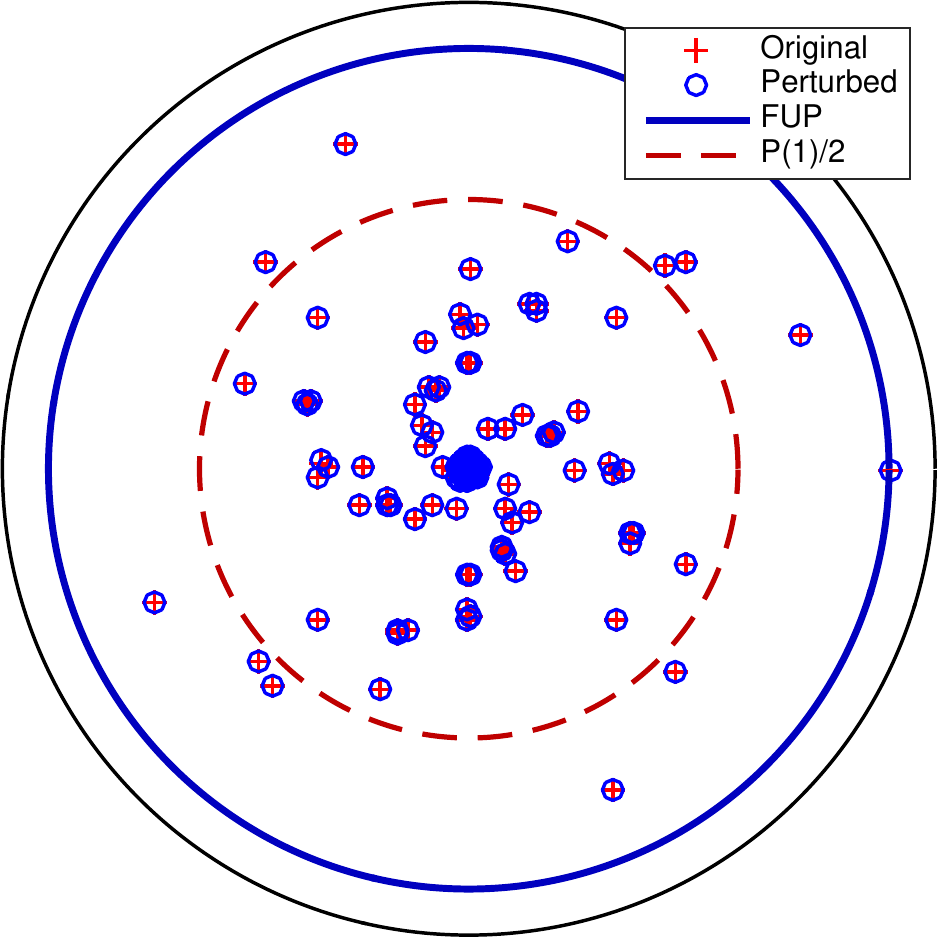}
\hbox to\hsize{\hss $\mathcal A=\{1,2,3\}$
\hss\hss $\mathcal A=\{2,3,4\}$
\hss\hss $\mathcal A=\{3,4,5\}$\hss}
\caption{The spectrum of $B_N$ for $M=9$, $k=4$, and three
different alphabets with $\delta=1/2$. The spectral radius
bound~\eqref{e:gap-improves} (labeled `FUP')
is close to being sharp for $\mathcal A=\{3,4,5\}$
but not for the other two alphabets.
We use a small random perturbation
to test the stability of the numerics.
See~\S\ref{s:numerics}
for the notation used in the eigenvalue plots.}
\label{f:fup-howgood}
\end{figure}
%%%%%%%%%%%%%%%%%%%%%%%%%%%%%%%%%%%%%%%%%%%%%%%%%%%%%%%%%%%%%%%%%%%%%%%%%%%%%%%%

The constant in~\eqref{e:improved-beta} can be computed
as follows. Define the $k$-th order Cantor set
\begin{equation}
  \label{e:C-k}
\mathcal C_k=\mathcal C_k(M,\mathcal A)=\Big\{\sum_{j=0}^{k-1} a_j M^j\,\Big|\, a_0,\dots,a_{k-1}\in\mathcal A\Big\}\subset \mathbb Z_N.
\end{equation}
Denoting by $\indic_{\mathcal C_k}$ the multiplication operator by the indicator function of $\mathcal C_k$,
define
\begin{equation}
  \label{e:r-k}
r_k=\|\indic_{\mathcal C_k}\mathcal F_N\indic_{\mathcal C_k}\|_{\ell^2_N\to\ell^2_N}.
\end{equation}
Then Theorem~\ref{t:gap-improves} is a corollary of the more precise
%%%%%%%%%%%%%%%%%%%%%%%%%%%%%%%%%%%%%%%%%%%%%%%%%%%%%%%%%%%%%%%%%%%%%%%%%%%%%%%%
\begin{theo}
  \label{t:gap-detailed}
There exists a limit, called the \textbf{fractal uncertainty exponent},
\begin{equation}
  \label{e:beta-fup}
\beta=\beta(M,\mathcal A)=-\lim_{k\to\infty}{\log r_k\over k\log M} >\max\Big(0,{1\over 2}-\delta\Big)
\end{equation}
and~\eqref{e:gap-improves} holds for this choice of $\beta$.
\end{theo}
%%%%%%%%%%%%%%%%%%%%%%%%%%%%%%%%%%%%%%%%%%%%%%%%%%%%%%%%%%%%%%%%%%%%%%%%%%%%%%%%
The definition~\eqref{e:beta-fup} implies the following bound,
which we call \emph{fractal uncertainty principle} since it says that
no function can be supported on $\mathcal C_k$ in both position and frequency:
\begin{equation}
  \label{e:fup-boundd}
\|\indic_{\mathcal C_k}\mathcal F_N\indic_{\mathcal C_k}\|_{\ell^2_N\to \ell^2_N}\leq C_\varepsilon N^{-\beta+\varepsilon}\quad\text{for all }\varepsilon>0.
\end{equation}
The fractal uncertainty principle implies a bound on the spectral radius of $B_N$ by
the following argument which previously appeared in the setting
of hyperbolic manifolds in~\cite{hgap}:
an eigenfunction with eigenvalue~$\lambda$, $|\lambda|\geq M^{-\beta+\varepsilon}$,
would give a counterexample to~\eqref{e:fup-boundd} since (a) it is essentially supported
near $\mathcal C_k$ in frequency and (b) its mass when restricted to near $\mathcal C_k$
in position has a lower bound. 
We present the proof in~\S\ref{s:oqm}; due to the explicit nature
of open quantum maps it is greatly
simplified on the technical level compared to~\cite{hgap}.

Theorems~\ref{t:gap-improves} and~\ref{t:gap-detailed} only give
an upper bound on the spectral radius of $B_N$. Lower bounds are
difficult to prove mathematically because this would involve showing
existence of eigenvalues of nonselfadjoint operators. However,
numerical evidence suggests that there are cases for which~\eqref{e:gap-improves}
is close to being sharp~-- see Figures~\ref{f:fup-howgood}, \ref{f:improved-pressure}, \ref{f:improved-0}, and~\ref{f:Weyl-great}.
We also remark that the constant $\beta(M,\mathcal A)$ does
not depend on the cutoff~$\chi$. The spectral
radius of $B_N$ may be much smaller than $\beta(M,\mathcal A)$ for
some choices of $\chi$ (for instance $\chi\equiv 0$)
however the size of a spectral gap with polynomial resolvent bound
is independent of $\chi$ as long as $\chi=1$ near $\mathcal C_\infty$,
see Theorem~\ref{t:cutoff-dependence} below.

It is easy to show~\eqref{e:fup-boundd} with $\beta=\max(0,{1\over 2}-\delta)$
using only the size of $\mathcal C_k$, see~\eqref{e:this-is-trivial}.
The proof that~\eqref{e:fup-boundd} holds for some $\beta>\max(0,{1\over 2}-\delta)$,
presented in~\S\S\ref{s:submul}--\ref{s:improve-2},
is more complicated and uses the algebraic structure of the Cantor sets
$\mathcal C_k$.
In particular it relies on a submultiplicative
inequality~\eqref{e:submultiplicative}, which uses that $N$ is a power of~$M$
and does not seem to extend to more general situations.
More recent results 
of Bourgain--Dyatlov~\cite{fullgap}
and Dyatlov--Jin~\cite{regfup} give a fractal uncertainty principle
for the much more general class of Ahlfors--David regular sets.
They in particular imply that Theorem~\ref{t:gap-improves}
holds without the assumption $N=M^k$ (though with less information
on the size of $\beta$)~-- see~\cite[\S5]{regfup}.

The value of the exponent $\beta$ in~\eqref{e:beta-fup} varies
with the choice of the alphabet, even for fixed $M,\delta$~-- see Figure~\ref{f:fupcloud}.
We summarize several quantitative results regarding this dependence,
valid for large $M$ and proved in~\S\ref{s:fup}:
%%%%%%%%%%%%%%%%%%%%%%%%%%%%%%%%%%%%%%%%%%%%%%%%%%%%%%%%%%%%%%%%%%%%%%%%%%%%%%%%
\begin{figure}
\includegraphics[scale=0.5125]{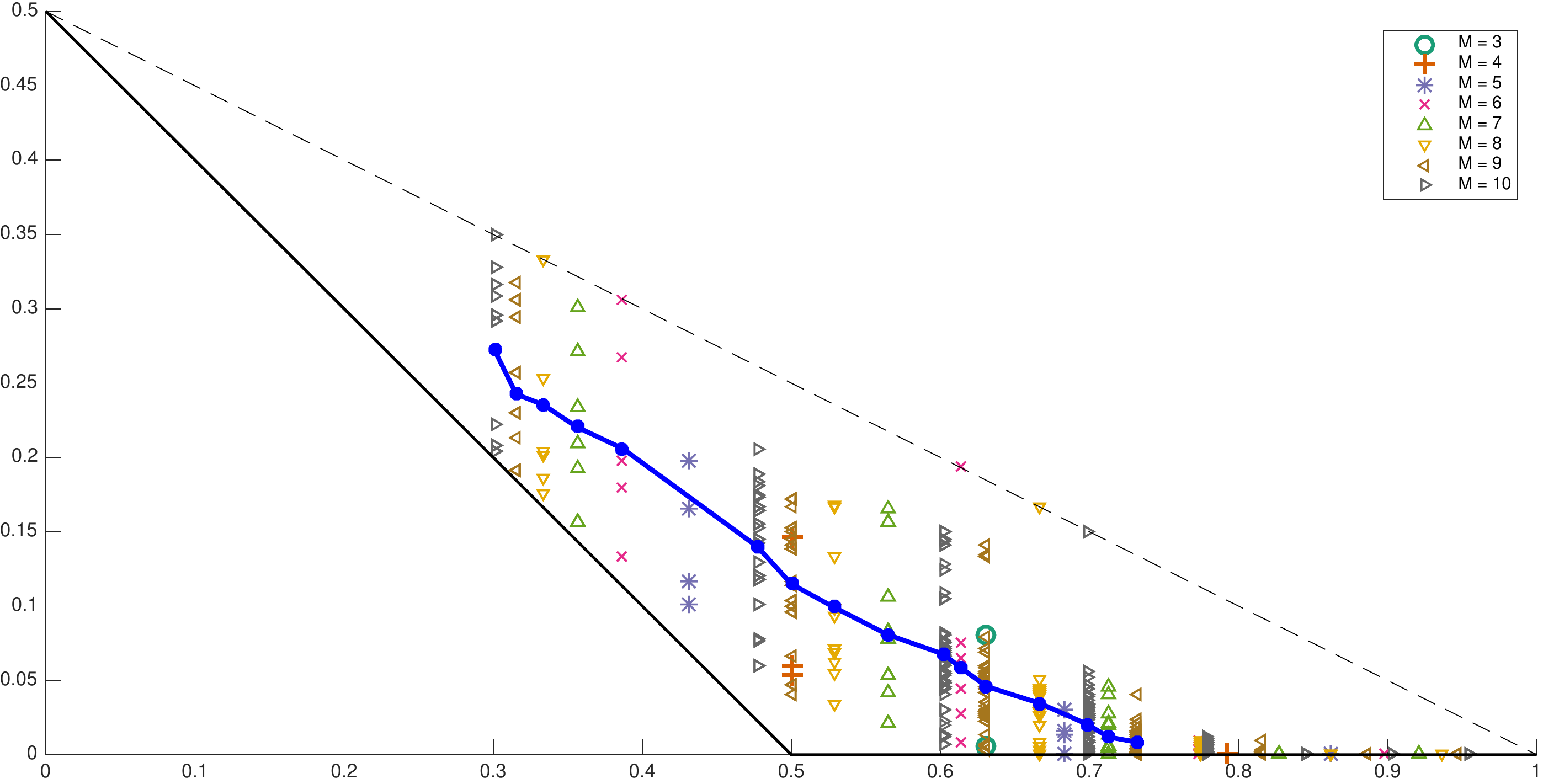}
\caption{Numerically approximated fractal uncertainty exponents for all possible
alphabets with $M\leq 10$. Here the $x$ axis represents $\delta$
and the $y$ axis represents $\beta$.
The solid black line is $\beta=\max(0,{1\over 2}-\delta)$
and the dashed line is $\beta={1-\delta\over 2}$.
The solid blue line is the average value of $\beta$ over all alphabets
with given $M\in [6,10]$ and $\delta\leq 0.75$.
See~\S\ref{s:numerics} for details.}
\label{f:fupcloud}
\end{figure}
%%%%%%%%%%%%%%%%%%%%%%%%%%%%%%%%%%%%%%%%%%%%%%%%%%%%%%%%%%%%%%%%%%%%%%%%%%%%%%%%

%%%%%%%%%%%%%%%%%%%%%%%%%%%%%%%%%%%%%%%%%%%%%%%%%%%%%%%%%%%%%%%%%%%%%%%%%%%%%%%%
\begin{enumerate}
\item For $\delta\leq {1\over 2}$, the value $\beta-(1/2-\delta)$ is bounded below
by a negative power of $M$~-- see Corollary~\ref{l:improve-pressure-cantor}.
For $\delta<{1\over 2}$, there exist alphabets for which $\beta-(1/2-\delta)$ is also
bounded above by a (different) negative power of $M$~-- see Proposition~\ref{l:lower-1/2}.
%%%%%%%%%%%%%%%%%
\item For $\delta={1\over 2}+o({1\over\log M})$, $\beta$ is bounded below
by ${1\over \mathbf K\log M}$ (here $\mathbf K$ denotes a global constant)~--
see Proposition~\ref{l:improved-ae-ultimate}. In fact,
$\beta$ can be estimated in terms of the additive energy of $\mathcal C_k$. 
There exist alphabets with $\delta={1\over 2}$ for which $\beta$ is bounded
above by $\mathbf K\over \log M$~-- see Proposition~\ref{l:lower-1/2}.
%%%%%%%%%%%%%%%%%
\item
For $\delta>{1\over 2}$, $\beta$ is bounded below by
$$
\beta\geq \exp\big(-M^{{\delta\over 1-\delta}+o(1)}\big),
$$
see Corollary~\ref{l:improve-0-cantor}.
We do not prove matching upper bounds but numerical evidence
in Table~\ref{b:worst-FUP} suggests that there exists
alphabets with $\beta$ exponentially small in $M$.
%%%%%%%%%%%%%%%%%
\item We always have
\begin{equation}
  \label{e:jnn}
\beta\leq {1-\delta\over 2}=-{P(1)\over 2},
\end{equation}
corresponding to the classical escape rate (see~\S\ref{s:history}
and~\eqref{e:JN-2}),
and for a generic alphabet the inequality in~\eqref{e:jnn}
is strict~-- see Proposition~\ref{l:not-the-best}.
However, there exist infinitely many pairs $(M,\mathcal A)$ for which
$\beta={1-\delta\over 2}$, see~\S\ref{s:special-alphabets}. 
Numerical evidence suggests
that for these special alphabets the spectrum of $B_N$ has a \emph{band structure},
making it the most feasible case for proving a fractal Weyl asymptotic
for the number of eigenvalues~-- see Conjecture~\ref{l:Weyl-great}.
%%%%%%%%%%%%%%%%%
\item Finally, the expected value of $\beta$ for large $M$ and a randomly
chosen $\mathcal A$ of fixed size appears to be much larger than $\max(0,{1\over 2}-\delta)$,
see the solid blue line on Figure~\ref{f:fupcloud}. A related
question of $L^p$ Fourier restriction bounds for random sets
was investigated by Bourgain~\cite{Bourgain}.
Examples of random multiscale Cantor sets satisfying $L^p$ restriction bounds and Fourier decay estimates
were constructed by Chen--Seeger~\cite{chen-seeger}, Shmerkin--Suomala~\cite{shmerkin},
and \L aba--Wang~\cite{laba-wang}.
\end{enumerate}
%%%%%%%%%%%%%%%%%%%%%%%%%%%%%%%%%%%%%%%%%%%%%%%%%%%%%%%%%%%%%%%%%%%%%%%%%%%%%%%%

%%%%%%%%%%%%%%%%%%%%%%%%%%%%%%%%%%%%%%%%%%%%%%%%%%%%%%%%%%%%%%%%%%%%%%%%%%%%%%%%
\subsection{Weyl bounds}

Our next result concerns the counting function
\begin{equation}
  \label{e:N-k}
\mathcal N_k(\nu)=\big|\Sp(B_N)\cap \{|\lambda|\geq M^{-\nu}\}\big|,\quad
\nu\geq 0,
\end{equation}
where eigenvalues of $B_N$ are counted with multiplicities.
We obtain a Weyl upper bound on $\mathcal N_k(\nu)$ (see~\cite[\S6.1]{Nonnenmacher}):
%%%%%%%%%%%%%%%%%%%%%%%%%%%%%%%%%%%%%%%%%%%%%%%%%%%%%%%%%%%%%%%%%%%%%%%%%%%%%%%%
\begin{figure}
\includegraphics[scale=0.55]{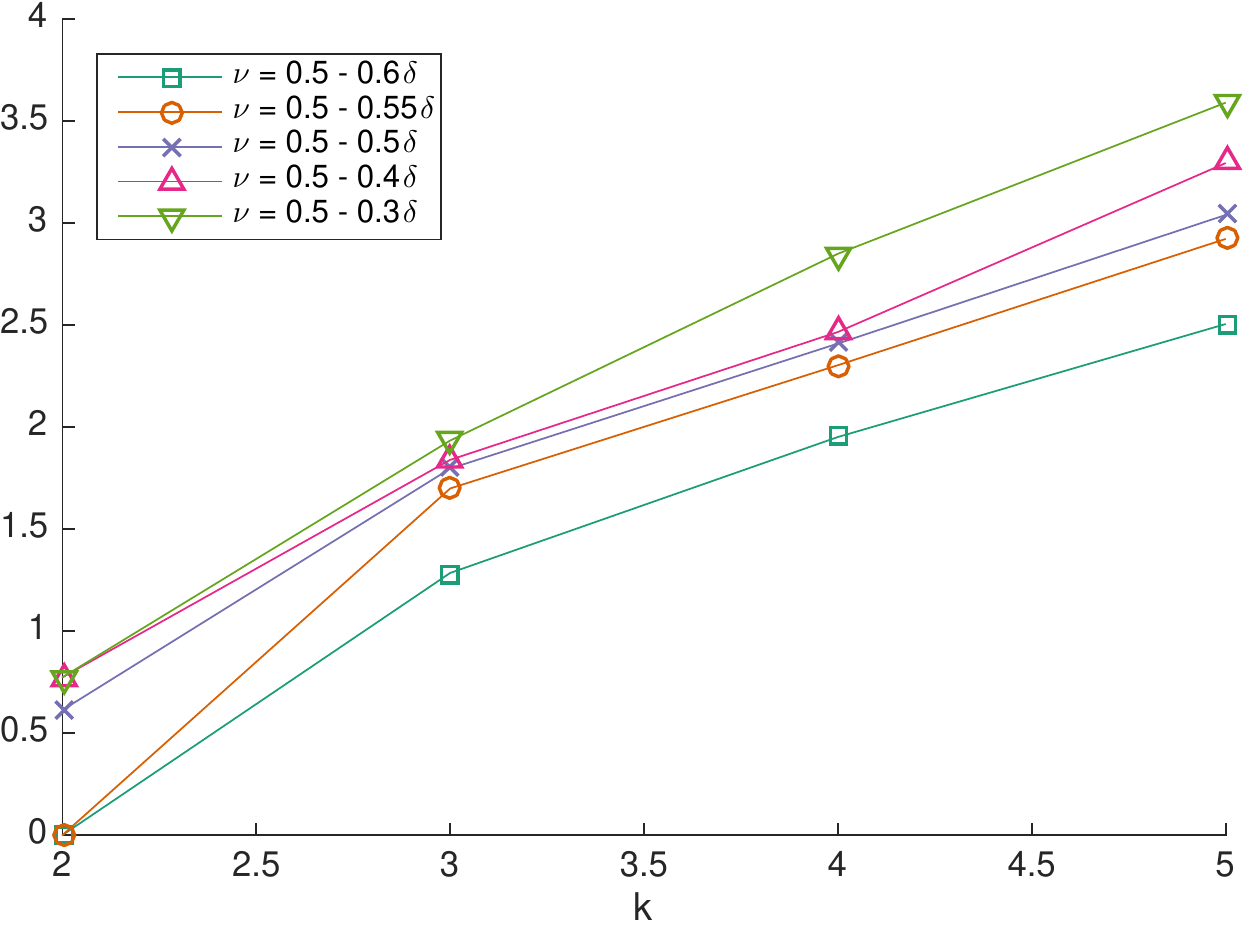}\quad
\includegraphics[scale=0.55]{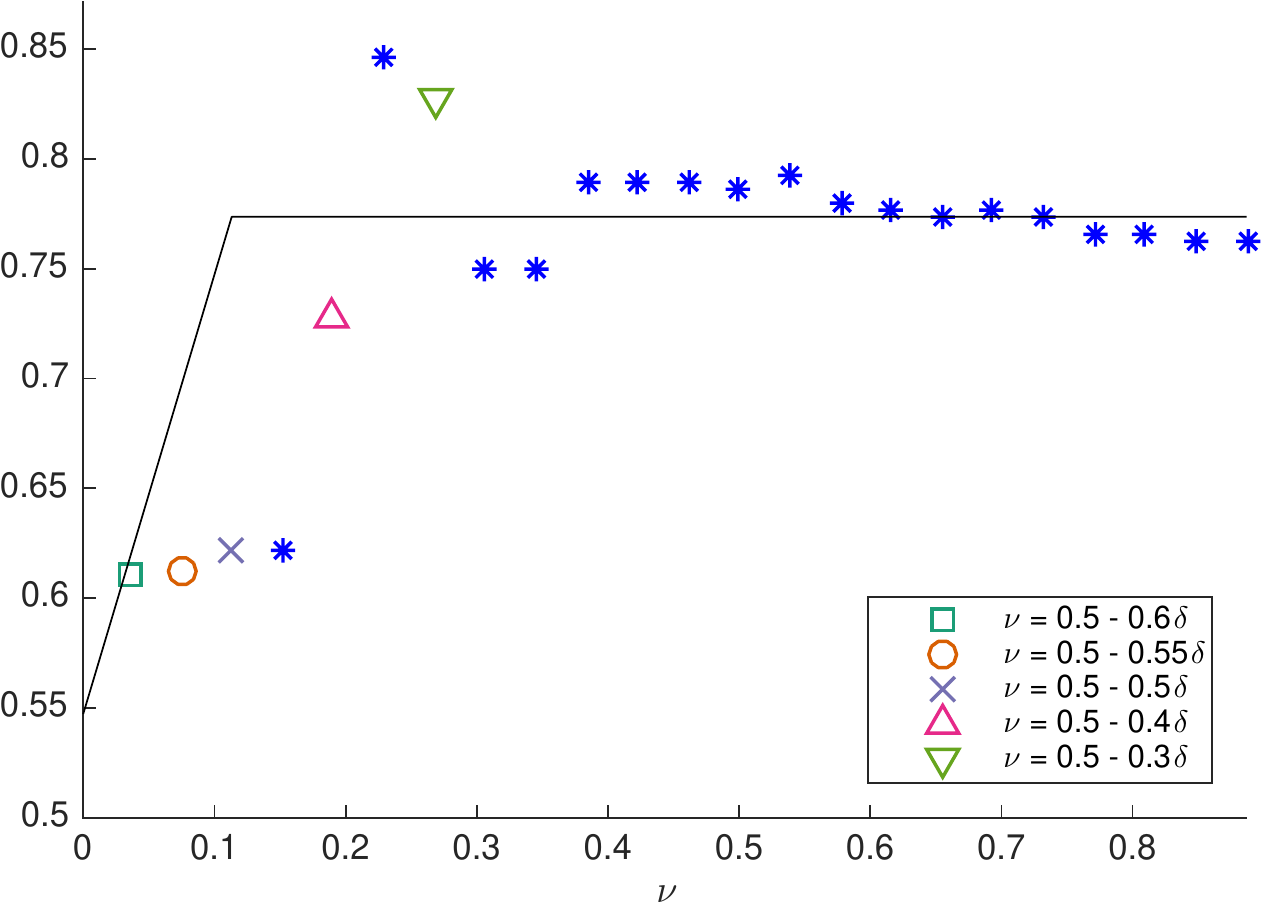}
\caption{Left: plot of $\log\mathcal N_k(\nu)/\log M$
for $M=6$, $\mathcal A=\{1, 2, 3, 4\}$, as a function of $k=2,\dots,5$
and for various values of $\nu$.
We have $\delta=\log 4 / \log 6 \approx 0.7737$.
Right: the slope of best linear approximation to the graph on the
left restricted to $k=3,4,5$, as a function
of $\nu$, together with the bound $m(\delta,\nu)$ of Theorem~\ref{t:weyl}.}
\label{f:Weyl}
\end{figure}
%%%%%%%%%%%%%%%%%%%%%%%%%%%%%%%%%%%%%%%%%%%%%%%%%%%%%%%%%%%%%%%%%%%%%%%%%%%%%%%%
%
%%%%%%%%%%%%%%%%%%%%%%%%%%%%%%%%%%%%%%%%%%%%%%%%%%%%%%%%%%%%%%%%%%%%%%%%%%%%%%%%
\begin{theo}[Weyl bounds]
  \label{t:weyl}
For each $\nu>0$ and $\varepsilon>0$, we have as $k\to\infty$
\begin{equation}
  \label{e:weyl}
\mathcal N_k(\nu)=\mathcal O(N^{m(\delta,\nu)+\varepsilon}),\quad
m(\delta,\nu)=\min(2\nu+2\delta-1,\delta).
\end{equation}
\end{theo}
%%%%%%%%%%%%%%%%%%%%%%%%%%%%%%%%%%%%%%%%%%%%%%%%%%%%%%%%%%%%%%%%%%%%%%%%%%%%%%%%
The proof, presented in~\S\ref{s:Weyl}, uses the argument introduced
for hyperbolic surfaces in~\cite{ifwl}.
Note that $m(\delta,\nu)=\delta$ for $\nu\geq {1-\delta\over 2}$,
corresponding to the standard Weyl law (see~\S\ref{s:history}).
For $\nu\leq{1-\delta\over 2}$, the exponent
$m(\delta,\nu)$ interpolates linearly between
$m(\delta,{1-\delta\over 2})=\delta$
and $m(\delta,{1\over 2}-\delta)=0$, the latter
corresponding to the pressure gap. 

While no matching lower bounds on $\mathcal N_k(\nu)$ are known rigorously, numerical
evidence on Figure~\ref{f:Weyl} suggests that $\mathcal N_k(\nu)\sim N^{\delta}$ for $\nu$ large enough.
However, because of the small number of data points available
(and the resulting artefacts such as rough behavior of
the exponents in the right half of Figure~\ref{f:Weyl}) we could not determine
how close~\eqref{e:weyl} is to the optimal bound.

%%%%%%%%%%%%%%%%%%%%%%%%%%%%%%%%%%%%%%%%%%%%%%%%%%%%%%%%%%%%%%%%%%%%%%%%%%%%%%%%
\subsection{Dependence on cutoff}
  \label{s:cutoff-dep}

Our final result, proved in~\S\ref{s:cutoff-dependence},
concerns the dependence of the spectrum of
$B_{N,\chi}$ on the cutoff $\chi$.
Let $\mathcal C_\infty\subset[0,1]$ be the limiting Cantor set:
\begin{equation}
  \label{e:C-infty}
\mathcal C_\infty=\bigcap_k \bigcup_{j\in\mathcal C_k}\Big[{j\over M^k},{j+1\over M^k}\Big].
\end{equation}

%%%%%%%%%%%%%%%%%%%%%%%%%%%%%%%%%%%%%%%%%%%%%%%%%%%%%%%%%%%%%%%%%%%%%%%%%%%%%%%%
\begin{figure}
\includegraphics[scale=0.6]{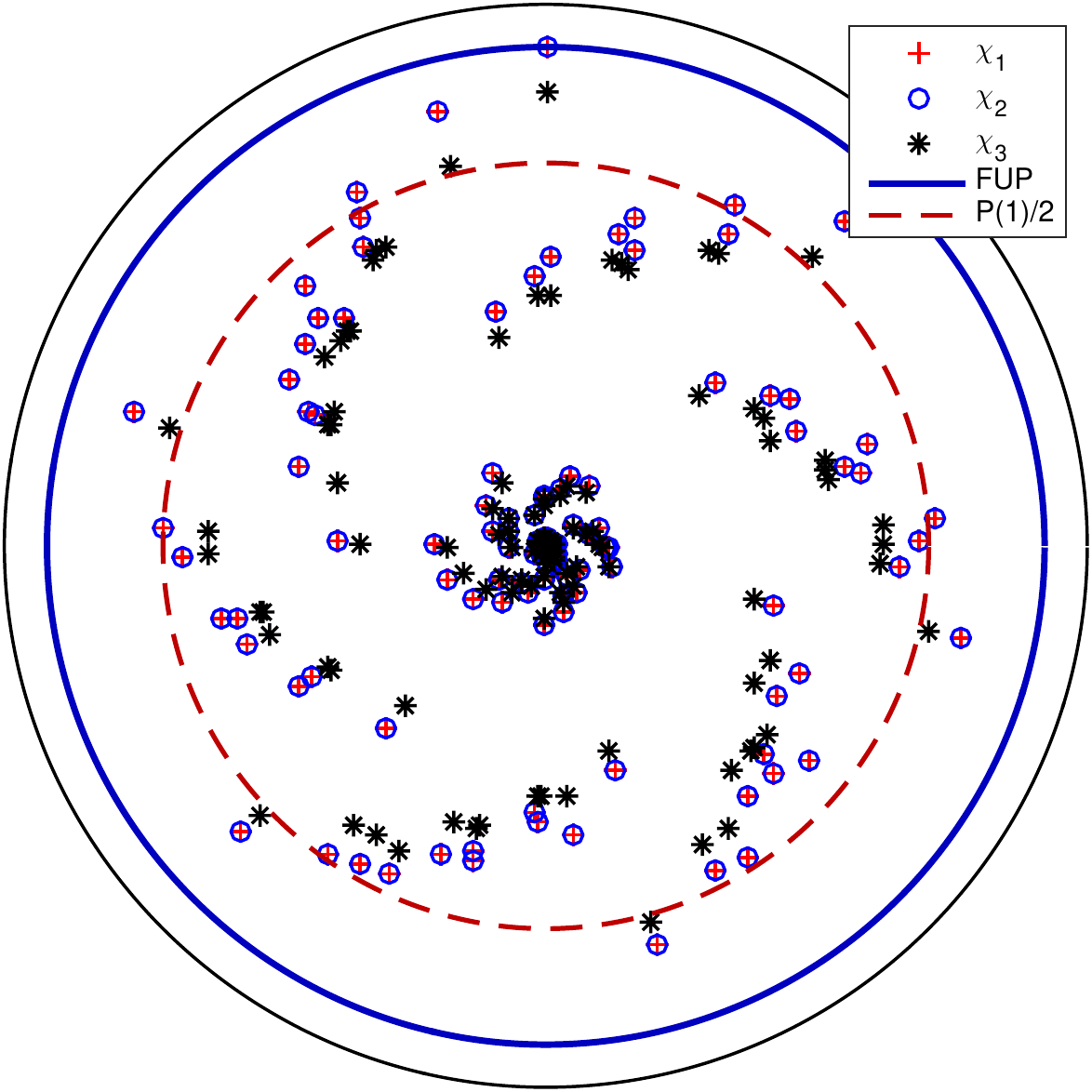}
\quad
\includegraphics[scale=0.6]{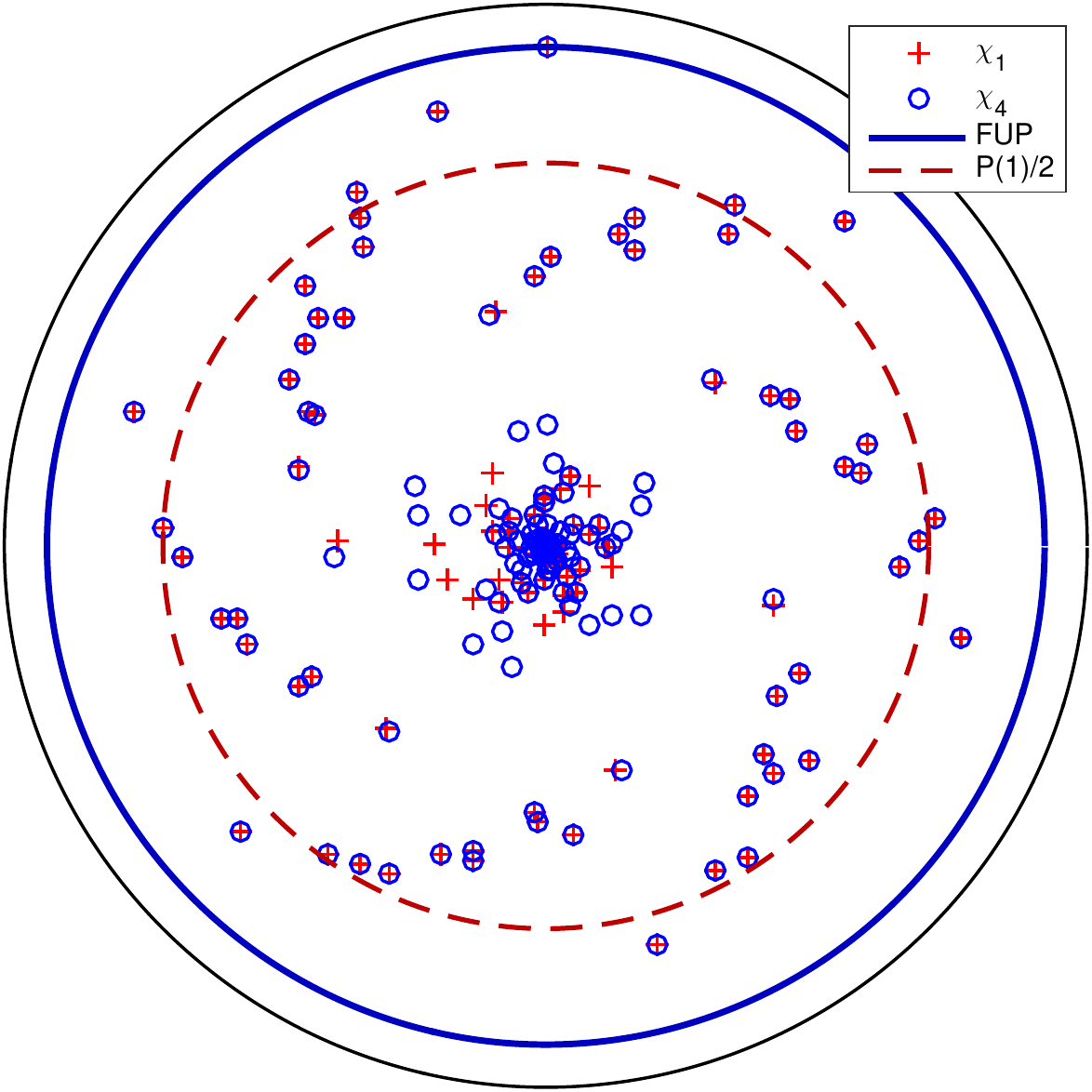}
\caption{An illustration of Theorem~\ref{t:cutoff-dependence},
with $M=4$, $\mathcal A=\{1,2\}$, $k=6$.
We take cutoffs $\chi_1,\chi_2,\chi_3,\chi_4$
such that $\chi_1,\chi_2,\chi_3\in C_0^\infty((0,1);[0,1])$,
$\chi_1=\chi_2=1$ on $\mathcal C_\infty$,
$\chi_3\not\equiv 1$ on $\mathcal C_\infty$, and
$\chi_4\equiv 1$.
On the left, we see that $\chi_1$ and $\chi_2$ produce essentially
the same eigenvalues, but $\chi_3$ does not.
On the right, we see that sufficiently small eigenvalues for $\chi_4$ are
significantly different than those for $\chi_1$; this is due to
the fact that the Fourier transform of the sharp cutoff $\chi_4=\mathbf 1_{[0,1]}$
is not rapidly decaying.}
\label{f:cutoff-dependence}
\end{figure}
%%%%%%%%%%%%%%%%%%%%%%%%%%%%%%%%%%%%%%%%%%%%%%%%%%%%%%%%%%%%%%%%%%%%%%%%%%%%%%%%

%%%%%%%%%%%%%%%%%%%%%%%%%%%%%%%%%%%%%%%%%%%%%%%%%%%%%%%%%%%%%%%%%%%%%%%%%%%%%%%%
\begin{theo}[Dependence on cutoff]
  \label{t:cutoff-dependence}
Assume that $\chi,\chi'\in C_0^\infty((0,1);[0,1])$ satisfy
$$
\chi=\chi'\quad\text{in a neighborhood of }\mathcal C_\infty.
$$
Fix $\nu\geq 0$ and assume that $\lambda$ is an eigenvalue
of $B_{N,\chi}$ satisfying $|\lambda|\geq M^{-\nu}$.
Then there exists an $\mathcal O(N^{-\infty})$ quasimode $v$
for $B_{N,\chi'}$ at $\lambda$, that is
$$
v\in\ell^2_N,\quad
\|v\|_{\ell^2_N}=1,\quad
\|(B_{N,\chi'}-\lambda)v\|_{\ell^2_N}=\mathcal O(N^{-\infty}),
$$
with the constants in $\mathcal O(N^{-\infty})$ depending
only on $\chi,\chi',\nu$.
\end{theo}
%%%%%%%%%%%%%%%%%%%%%%%%%%%%%%%%%%%%%%%%%%%%%%%%%%%%%%%%%%%%%%%%%%%%%%%%%%%%%%%%
Theorem~\ref{t:cutoff-dependence} does not imply that the spectra of $B_{N,\chi}$
and $B_{N,\chi'}$ in annuli are $\mathcal O(N^{-\infty})$ close to each
other, due to possible pseudospectral effects. However, it shows that
for stable features of the spectrum such as eigenvalue free regions
with a polynomial resolvent bound, only the values of $\chi$ near $\mathcal C_\infty$
matter. In particular, if $0,M-1\notin \mathcal A$, then one can choose an arbitrary
$\chi$ such that $\chi=1$ near $\mathcal C_\infty$ and see the same stable properties
of the spectrum. A numerical illustration of Theorem~\ref{t:cutoff-dependence}
is shown on Figure~\ref{f:cutoff-dependence}.

%%%%%%%%%%%%%%%%%%%%%%%%%%%%%%%%%%%%%%%%%%%%%%%%%%%%%%%%%%%%%%%%%%%%%%%%%%%%%%%%
\subsection{Related results}
  \label{s:history}

We now briefly review some previous results in resonance gaps and counting
and explain their relation to the present paper.
For more
information we refer the reader to Nonnenmacher~\cite{Nonnenmacher} for
mathematical results in open quantum chaos, to Novaes~\cite{Novaes} for the physics literature on open quantum maps, and to Zelditch~\cite{ZelditchReview}
for the closely related field of closed quantum chaos.

A popular class of models for open quantum chaos is given by Laplacians
(or more general Schr\"odinger operators)
on noncompact Riemannian manifolds whose geodesic
flow is hyperbolic on the trapped set.
Resonances for these operators appear in long time expansions of solutions to
wave equations. Examples include
exteriors of several convex obstacles in $\mathbb R^n$
and convex co-compact hyperbolic quotients.
We remark that~\cite{NonnenmacherSjZworski1} reduced
the study of resonances for Laplacians to the setting of open quantum maps
quantizing a Poincar\'e map of the geodesic flow.

Essential spectral gaps for Laplacians have been studied
by Patterson~\cite{PattersonGap}, Ikawa~\cite{Ikawa},
Gaspard--Rice~\cite{GaspardRice}, and Nonnenmacher--Zworski~\cite{NonnenmacherZworskiActa}.
These papers establish in various settings a gap of size
$\beta=-P(1/2)$, under the \emph{pressure condition}
$P(1/2)<0$. Here $P(s)$ is the topological pressure of the classical flow,
see~\eqref{e:under-pressure}. The pressure gap was observed in microwave scattering experiments by Barkhofen et al.~\cite{ZworskiPRL}.

Naud~\cite{NaudGap}, Stoyanov~\cite{Stoyanov1,Stoyanov2}, and Petkov--Stoyanov~\cite{PetkovStoyanov}
showed that in some cases such as hyperbolic quotients,
there exists a gap strictly larger than $-P(1/2)$, under the
condition $P(1/2)\leq 0$. These works use the method originally developed
by Dolgopyat~\cite{Dolgopyat}, exploiting in a subtle way the interference
between waves living on different trapped trajectories,
and the size of the improvement is hard to compute from the arguments.
Recently Dyatlov--Zahl~\cite{hgap} have come up with a different
interpretation of the improved gap for hyperbolic quotients
in terms of fractal uncertainty principle, in particular obtaining for $P(1/2)\approx 0$
a gap whose size depends on the additive energy of the limit set similarly to~\S\ref{s:ae-improvements}. 
The approach of~\cite{hgap} is used in the present paper as
well as in the recent papers~\cite{fullgap,regfup} discussed in~\S\ref{s:intro-gaps}.
Improved gaps were also observed
numerically for hyperbolic surfaces with $P(1/2)\approx 0$ by Borthwick and Borthwick--Weich~\cite{BorthwickNum,Borthwick-Weich}.

On the other hand very little is known
about spectral gaps for systems with $P(1/2)>0$ and Theorem~\ref{t:gap-improves}
appears to be the first general result in this case, albeit for a special
class of systems.
Examples of systems with $P(1/2)>0$ and a spectral gap were previously given
in~\cite{NonnenmacherZworskiOQM}, discussed below, and~\cite{hgap}.

Fractal Weyl upper bounds in strips for resonances of Laplacians
and Schr\"odinger operators were first proved in the
analytic category by Sj\"ostrand~\cite{SjostrandFWL} and later
in various smooth settings by Guillop\'e--Lin--Zworski~\cite{GLZ},
Zworski~\cite{ZwInventiones},
Sj\"ostrand--Zworski~\cite{SjostrandZworskiFWL},
Nonnenmacher--Sj\"ostrand--Zworski~\cite{NonnenmacherSjZworski1,NonnenmacherSjZworski2},
and Datchev--Dyatlov~\cite{fwl}. In terms of~\eqref{e:N-k}
these bounds give $\mathcal N_k(\nu)=\mathcal O(N^\delta)$,
with $\delta$ related to the Minkowski dimension of the trapped set.
Compared to these works, our bound~\eqref{e:weyl} loses
an arbitrarily small power of $N$.
The sharpness of the exponent $\delta$ has been
investigated experimentally by Potzuweit et al.~\cite{ZworskiPRE}
and numerically by Lu--Sridhar--Zworski~\cite{LSZ},
Borthwick~\cite{BorthwickNum},
Borthwick--Weich~\cite{Borthwick-Weich},
and Borthwick--Dyatlov--Weich~\cite[Appendix]{ifwl}.

Concentration of resonances near the decay rate
$P(1)/2$ has been observed numerically in~\cite{LSZ}
and experimentally in~\cite{ZworskiPRL,ZworskiPRE}.
Jakobson--Naud~\cite{JakobsonNaud} conjectured that for hyperbolic surfaces,
there is a gap of any size less than $P(1)/2$. While the
numerical investigations of~\cite{BorthwickNum,Borthwick-Weich,ifwl}
do not seem to support this conjecture for general systems,
in~\S\ref{s:special-alphabets} we provide examples
of systems which do satisfy the conjecture.
Naud~\cite{NaudCount} showed an improved Weyl bound
$\mathcal N_k(\nu)=\mathcal O(N^{m(\nu)})$ for hyperbolic surfaces,
for some $m(\nu)<\delta$ when $\nu<P(1)/2$, and Dyatlov~\cite{ifwl}
proved the bound~\eqref{e:weyl} for hyperbolic quotients. 

Quantum baker's maps have attracted a lot of attention in physics and mathematics literature.
Their study was initiated in the closed setting
$\mathcal A=\{0,\dots,M-1\}$
by Bal\'azs--Voros~\cite{Balasz-Voros},
Saraceno~\cite{Saraceno}, and
Saraceno--Voros~\cite{SaracenoVoros};
see the introduction to~\cite{DNW}
for an overview of more recent results. 
In the open setting,
Keating et al.~\cite{Keating2} observed numerically concentration of
eigenfunctions in position and momentum consistent with Proposition~\ref{l:key-eigenvalues}.
This concentration was proved for the Walsh quantization by
Keating et al.~\cite{Keating1}; 
see also the work of
Nonnenmacher--Rubin~\cite{NonnenmacherRubin} on semiclassical defect
measures.
Novaes et al.~\cite{Novaes2} and
Carlo--Benito--Borondo~\cite{Carlo} introduced an approximation
for eigenfunctions using short periodic orbits.
Experimental realizations for open baker's maps
have been proposed by Brun--Schack~\cite{Brun-Schack}
and Hannay--Keating--Ozorio de Almeida~\cite{HKO}.
Recently ideas in open quantum chaos have been applied to
analysis of computer networks, see
Ermann--Frahm--Shepelyansky~\cite{GoogleMatrix}.

The closest to the present
paper is the work of Nonnenmacher--Zworski~\cite{NonnenmacherZworskiOQM2,NonnenmacherZworskiOQM} who studied open quantum baker's maps,
in particular the Walsh quantization for the cases $M=3,4$, $\mathcal A=\{0,2\}$, $\chi\equiv 1$
in the notation of our paper (as well as obtaining numerical results for other maps
and quantizations). The Walsh quantization is obtained by replacing
$\mathcal F_N,\mathcal F_{N/M}$ in~\eqref{e:B-N-def} by the Walsh Fourier
transform, which is the Fourier transform on the group $(\mathbb Z_M)^k$.
Eigenvalues of Walsh quantizations are computed
explicitly in~\cite[\S5]{NonnenmacherZworskiOQM},
which proves fractal Weyl law and shows concentration of resonances around
decay rate $P(1)/2$. Moreover~\cite{NonnenmacherZworskiOQM}
shows that there is a spectral gap for $M=3,\mathcal A=\{0,2\}$
but not for $M=4,\mathcal A=\{0,2\}$. The latter does not contradict Theorem~\ref{t:gap-improves} because a different quantization is used, and Cantor sets
do not always satisfy the uncertainty principle under the 
Walsh Fourier transform.

%%%%%%%%%%%%%%%%%%%%%%%%%%%%%%%%%%%%%%%%%%%%%%%%%%%%%%%%%%%%%%%%%%%%%%%%%%%%%%%%
%%%%%%%%%%%%%%%%%%%%%%%%%%%%%%%%%%%%%%%%%%%%%%%%%%%%%%%%%%%%%%%%%%%%%%%%%%%%%%%%
\section{Open quantum maps}
  \label{s:oqm}

In this section, we study the open quantum map $B_N$. The main result is Proposition~\ref{l:fup-reduction},
giving a bound on the spectral radius of $B_N$ in terms of the
fractal uncertainty principle exponent $\beta$ defined in~\eqref{e:beta-fup}.

%%%%%%%%%%%%%%%%%%%%%%%%%%%%%%%%%%%%%%%%%%%%%%%%%%%%%%%%%%%%%%%%%%%%%%%%%%%%%%%%
\subsection{Definition and basic properties}
  \label{s:setup}

For $N\in\mathbb N$, consider the abelian group
$$
\mathbb Z_N=\mathbb Z/N\mathbb Z\simeq \{0,\dots,N-1\}
$$
and the space $\ell^2_N$ of functions $u:\mathbb Z_N\to \mathbb C$ with the
Hilbert norm
$$
\|u\|_{\ell^2_N}^2=\sum_{j=0}^{N-1}|u(j)|^2.
$$
Define the unitary Fourier transform
$$
\mathcal F_N:\ell^2_N\to\ell^2_N,\quad
\mathcal F_N u(j)={1\over \sqrt{N}}\sum_{\ell=0}^{N-1}\exp\Big(-{2\pi i j\ell\over N}\Big)u(\ell).
$$
For a cutoff function $\chi\in C_0^\infty((0,1);[0,1])$, define its discretization
$\chi_N\in \ell^2_N$ by
\begin{equation}
  \label{e:chi-N}
\chi_N(j)=\chi\Big({j\over N}\Big),\quad
j\in \{0,\dots,N-1\}.
\end{equation}
We also denote by $\chi_N$ the corresponding multiplication operator on $\ell^2_N$.

Fix $(M,\mathcal A,\chi)$ as in~\eqref{e:oqm-triple} and take $k\in\mathbb N$;
put $N:=M^k$. Then the open quantum map
$B_N:\ell^2_N\to \ell^2_N$ defined in~\eqref{e:B-N-def} can be written as follows:
if $\Pi_a:\ell^2_N\to \ell^2_{N/M}$, $a\in\{0,\dots,M-1\}$, is the projection map
defined by 
\begin{equation}
  \label{e:Pi-a}
\Pi_a u(j)=u\Big(j+a{N\over M}\Big),\quad
u\in \ell^2_N,\quad
j\in \Big\{0,\dots,{N\over M}-1\Big\},
\end{equation}
then 
$$
B_N=\sum_{a\in\mathcal A}B_N^a,\quad
B_N^a:=\mathcal F_N^* \Pi_a^*\, \chi_{N/M}\,\mathcal F_{N/M}\,\chi_{N/M}\,\Pi_a.
$$
We compute for each $u\in \ell^2_N$, $j\in \{0,\dots,N-1\}$, and $a\in\mathcal A$,
\begin{equation}
  \label{e:B-N-matrix}
B_N^a u(j)={\sqrt M\over N}\sum_{m,\ell=0}^{{N\over M}-1}
e^{2\pi i\left({(j-M\ell)m\over N}+{ja\over M}\right)}\chi\Big({mM\over N}\Big)
\chi\Big({\ell M\over N}\Big)u\Big(\ell+a{N\over M}\Big).
\end{equation}
The continuous analogue of the transformation $B_N$ is obtained as follows:
put
$$
x:={j\over N},\quad
y:={\ell\over N}+{a\over M},\quad
\theta:={m\over N},\quad
h:={1\over 2\pi N}
$$
and replace the sums over $\ell, m$ by integrals over $y,\theta$ with the corresponding Jacobian factors.
Then the analogue of~\eqref{e:B-N-matrix}
is given by the operator $\mathcal U^a_h$ on $L^2(\mathbb R)$ defined as follows:
\begin{equation}
  \label{e:U-a}
\mathcal U^a_h v(x)={\sqrt{M}\over 2\pi h}\int_{\mathbb R^2}
e^{{i\over h}\left((x+a-My)\theta+xa/M\right)}\chi(M\theta)\chi(My-a)u(y)\,dyd\theta.
\end{equation}
The sum
\begin{equation}
  \label{e:U-h}
\mathcal U_h:=\sum_{a\in\mathcal A}\mathcal U^a_h
\end{equation}
is a semiclassical Fourier integral operator
(see for instance~\cite[\S3.2]{nhp}) associated to the canonical relation
$\varkappa_{M,\mathcal A}$ defined in~\eqref{e:kappa-def}, with
principal symbol equal to $\chi(x)\chi(\eta)$ with appropriate normalization.
Because of the analogy with the continuous case,
we may think of $B_N$ as a discrete Fourier integral operator quantizing the relation
$\varkappa_{M,\mathcal A}$. A rigorous justification of this analogy
can be found in the papers of Degli Esposti--Nonnenmacher--Winn~\cite{DNW}
and Nonnenmacher--Zworski~\cite{NonnenmacherZworskiOQM},
with heuristic arguments appearing in Bal\'azs--Voros~\cite{Balasz-Voros}
and Saraceno--Voros~\cite{SaracenoVoros}.

We next consider the distance function on $[0,1]$ with $0$ and $1$ identified
with each other: for $x,y\in[0,1]$,
\begin{equation*}
d(x,y)=\min_{k=-1,0,1}|x-y-k|=\min\{|x-y|,1-|x-y|\}.
\end{equation*}
In particular, $d(x,0)=\min\{x,1-x\}$ is the usual distance from $x$ to the closest integer.
For $x\in [0,1]$ and $V,W\subset [0,1]$, we put
\begin{equation}
  \label{e:distance-set}
d(x,V):=\inf_{y\in V}d(x,y),\quad
d(V,W):=\inf_{y\in V,\,z\in W}d(y,z).
\end{equation}
We define the expanding map
\begin{equation}
  \label{e:Phi}
\begin{gathered}
\Phi=\Phi_{M,\mathcal{A}}:\bigsqcup_{a\in\mathcal A}\left(\frac{a}{M},\frac{a+1}{M}\right)\to (0,1);\\
\Phi(x)=Mx-a,\quad
x\in \left(\frac{a}{M},\frac{a+1}{M}\right).
\end{gathered}
\end{equation}
In other words, $\Phi_{M,\mathcal{A}}$ is the action of the relation $\varkappa_{M,\mathcal{A}}$ on the space variable $x$. We establish the following fact regarding the interaction between the map $\Phi$ and the distance function~$d$:
%%%%%%%%%%%%%%%%%%%%%%%%%%%%%%%%%%%%%%%%%%%%%%%%%%%%%%%%%%%%%%%%%%%%%%%%%%%%%%%%
\begin{lemm} Assume that $x\in[0,1]$ and $y$ is in the domain of $\Phi$. Then
\begin{equation}
\label{e:dist-phi-1}
\min\big\{d(\Phi(y),0),\,M\cdot d(y,\Phi^{-1}(x))\big\}\leq d(x,\Phi(y)).
\end{equation}
\end{lemm}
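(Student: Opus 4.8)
The plan is to unwind the definition of $\Phi$ and carry out a short case analysis according to where the point $x$ lies relative to the single interval $\left(\frac{a}{M},\frac{a+1}{M}\right)$ containing $y$. Write $a\in\mathcal A$ for the index with $y\in\left(\frac{a}{M},\frac{a+1}{M}\right)$, so that $\Phi(y)=My-a$ and $\Phi^{-1}(x)$ (the branch landing in this interval) equals $\frac{x+a}{M}$. The key observation is that the map $z\mapsto\frac{z+a}{M}$ is an $M$-to-$1$ contraction sending $(0,1)$ onto $\left(\frac{a}{M},\frac{a+1}{M}\right)$, and it is an isometry up to the factor $\frac1M$ once we are inside a single such interval; conversely $\Phi$ expands distances by exactly $M$ on that interval. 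So the quantity $M\cdot d(y,\Phi^{-1}(x))$ is morally ``$d$ between $\Phi(y)$ and $x$, measured on $(0,1)$'', while $d(x,\Phi(y))$ on the right is measured on the circle $[0,1]/{\sim}$. The only way these two differ is when the circle distance $d(x,\Phi(y))$ is achieved by ``wrapping around'' through $0\equiv 1$, and in that regime it is bounded below by $d(\Phi(y),0)$ — which is exactly the other term on the left.

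Concretely, first I would dispose of the easy case: if $x$ also lies in $\left(\frac{a}{M},\frac{a+1}{M}\right)$, then both $x$ and $\Phi(y)$ arise from points of that interval under the same affine map of slope $M$, one computes $d(x,\Phi(y))=\min\{|x-\Phi(y)|,1-|x-\Phi(y)|\}$ and $M\cdot d(y,\Phi^{-1}(x)) = M\,d\!\left(y,\tfrac{x+a}{M}\right)$; since $y$ and $\tfrac{x+a}{M}$ are both in an interval of length $\tfrac1M<1$, the circle distance between them is just $|y-\tfrac{x+a}{M}|$, so $M\cdot d(y,\Phi^{-1}(x)) = |x-\Phi(y)| \ge d(x,\Phi(y))$ and we are done even without the first term. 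Second, the remaining case is $x\notin\left(\frac{a}{M},\frac{a+1}{M}\right)$. Then I would argue that $d\!\left(x,\left[\tfrac{a}{M},\tfrac{a+1}{M}\right]\right)$ — the circle distance from $x$ to the closed interval — is realized at one of the two endpoints $\tfrac{a}{M}$ or $\tfrac{a+1}{M}$, and $\Phi$ maps these endpoints to $0$ and $1$ respectively, both of which are at circle-distance $0$ from the point $0$. Pushing forward by the slope-$M$ map $\Phi$ (which can only increase the relevant distances, since it stretches the interval to all of $(0,1)$), one gets
\begin{equation*}
d(x,\Phi(y)) \ \ge\ \min\Big\{ d\big(\Phi(y),0\big),\ \Phi(y)\text{'s distance to }1\Big\} \wedge \big(M\cdot d(y,\Phi^{-1}(x))\big),
\end{equation*}
and since $d(\Phi(y),1)=d(\Phi(y),0)$ on the circle, this collapses to the claimed inequality~\eqref{e:dist-phi-1}.

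The step I expect to be the mild obstacle is the bookkeeping in the second case: making precise that when $x$ is outside $\left(\frac{a}{M},\frac{a+1}{M}\right)$, the shortest circle path from $x$ to $\Phi(y)\in(0,1)$ either stays inside the image interval — in which case it dominates $M\,d(y,\Phi^{-1}(x))$ exactly as in case one, with $\Phi^{-1}(x)$ now interpreted as the nearest endpoint — or it exits through an endpoint of $\left[\tfrac{a}{M},\tfrac{a+1}{M}\right]$, whose $\Phi$-image is $0$ or $1$, forcing $d(x,\Phi(y))\ge d(\Phi(y),\{0,1\}) = d(\Phi(y),0)$. One has to be a little careful that $d(x,\Phi^{-1}(x))$ in the definition of $\Phi^{-1}(x)$ should be read as the distance to the point of the closed interval $\left[\tfrac aM,\tfrac{a+1}M\right]$ nearest to $x$, i.e. we extend $\Phi^{-1}$ to map $[0,1]$ into that closed interval by clipping; with that reading both the formula and the three-line estimate go through. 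Everything else is a direct computation with the explicit piecewise-affine $\Phi$ and the min-formula for $d$.
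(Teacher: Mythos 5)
Your first case contains a genuine error of direction. You derive $M\cdot d(y,\Phi^{-1}(x)) = |x-\Phi(y)| \ge d(x,\Phi(y))$ and conclude ``we are done even without the first term,'' but the lemma asks that the \emph{minimum} of the two left-hand quantities be $\le d(x,\Phi(y))$, so dispensing with the first term requires $M\cdot d(y,\Phi^{-1}(x))\le d(x,\Phi(y))$ --- the opposite of what you proved. That opposite inequality is in fact false inside your ``easy case'': take $M=3$, $\mathcal A=\{0,2\}$, $y=0.3$, $x=0.1$, both in $(0,1/3)$. Then $\Phi(y)=0.9$, $d(x,\Phi(y))=0.2$, while $\Phi^{-1}(x)=\{1/30,\,7/10\}$ gives $M\cdot d(y,\Phi^{-1}(x))=0.8$. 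The lemma survives only because $d(\Phi(y),0)=0.1\le 0.2$, i.e.\ precisely via the term you discard. The correct dichotomy is not whether $x$ lies in the same branch interval as $y$ (irrelevant: every $x\in(0,1)$ has a preimage $(x+a)/M$ in $y$'s interval), but whether the shortest circle path from $x$ to $\Phi(y)$ wraps through $0\equiv 1$. If not, $d(x,\Phi(y))=|x-\Phi(y)|=M\,|(x+a)/M-y|\ge M\,d(y,\Phi^{-1}(x))$; if so, $d(x,\Phi(y))\ge d(\Phi(y),0)$. The paper packages exactly this as the identity $d(x,z)=\min\{|x-z|,\,d(x,0)+d(z,0)\}$ applied with $z=\Phi(y)$, which yields the lemma in two lines with no case split on the position of $x$.

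Your second case and closing paragraph do gesture at the right dichotomy (``the shortest circle path \dots either stays inside the image interval \dots or exits through an endpoint''), and that sketch, made precise, is essentially the paper's argument. But the ``clipping'' reinterpretation of $\Phi^{-1}(x)$ is both unnecessary and changes the statement: in the lemma, $d(y,\Phi^{-1}(x))$ is the distance to the genuine preimage set $\{(x+a')/M:\ a'\in\mathcal A\}$, which for $x\in(0,1)$ has one point in \emph{every} branch interval, so no extension by clipping is needed (for $x\in\{0,1\}$ the set is empty, the infimum is $+\infty$, and the bound reduces to the trivial $d(\Phi(y),0)\le d(0,\Phi(y))$). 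Relatedly, your equality $M\cdot d(y,\Phi^{-1}(x))=M\,d\big(y,(x+a)/M\big)$ should be a ``$\le$,'' since another branch's preimage may be closer; that slip is harmless for the direction you actually need, but it stems from the same misreading of what $\Phi^{-1}(x)$ is.
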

%%%%%%%%%%%%%%%%%%%%%%%%%%%%%%%%%%%%%%%%%%%%%%%%%%%%%%%%%%%%%%%%%%%%%%%%%%%%%%%%
\begin{proof} We have the following equivalent expression for $d(x,y)$:
\begin{equation*}
d(x,y)=\min\{|x-y|, d(x,0)+d(y,0)\}.
\end{equation*}
Therefore
\begin{equation*}
d(x,\Phi(y))=\min\big\{|x-\Phi(y)|, d(x,0)+d(\Phi(y),0)\big\}.
\end{equation*}
Let $\tilde x$ be the unique element in $\Phi^{-1}(x)$ that
is in the same interval $({a\over M},{a+1\over M})$ as $y$.
Then
$$
|x-\Phi(y)|=M|\tilde x-y|\geq M d(y,\Phi^{-1}(x)),
$$
finishing the proof.
\end{proof}
%%%%%%%%%%%%%%%%%%%%%%%%%%%%%%%%%%%%%%%%%%%%%%%%%%%%%%%%%%%%%%%%%%%%%%%%%%%%%%%%
We also use the following result on rapid decay for oscillating sums,
which is the discrete analog of rapid decay of Fourier series of smooth functions:
%%%%%%%%%%%%%%%%%%%%%%%%%%%%%%%%%%%%%%%%%%%%%%%%%%%%%%%%%%%%%%%%%%%%%%%%%%%%%%%%
\begin{lemm}[Method of nonstationary phase]
\label{l:non-st}
Assume that $a\in\mathbb{Z}_N$ and
\begin{equation}
  \label{e:mnp-0}
d\Big({a\over N},0\Big)\geq cN^{-\rho}\quad\text{for some constants }
c>0,\ \rho\in [0,1).
\end{equation}
Then for all $\chi\in C_0^\infty((0,1))$, we have
\begin{equation}
  \label{e:mnp}
\sum_{m=0}^{N-1}\exp\Big({2\pi iam\over N}\Big)\chi\Big({m\over N}\Big)=\mathcal{O}(N^{-\infty})
\end{equation}
where the constants in $\mathcal{O}(N^{-\infty})$ only depend on $c$, $\rho$, and $\chi$.
\end{lemm}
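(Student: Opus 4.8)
The plan is to prove~\eqref{e:mnp} by iterated discrete integration by parts, the discrete analog of the method of nonstationary phase, exploiting that $\omega:=e^{2\pi i a/N}$ is an $N$-th root of unity which lies at a controlled distance from~$1$. The first step is the elementary bound
\begin{equation*}
|\omega-1|=2\Big|\sin{\pi a\over N}\Big|\ge 4\,d\Big({a\over N},0\Big)\ge 4cN^{-\rho},
\end{equation*}
where the middle inequality uses concavity of $\sin$ on $[0,\pi]$ and the last one is the hypothesis~\eqref{e:mnp-0}.

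Next I would set up the summation by parts so that no boundary terms occur. Extend $\chi$ to a $\mathbb Z$-periodic function $\tilde\chi\in C^\infty(\mathbb R)$, which is possible since $\chi$ and all of its derivatives vanish at the endpoints of $(0,1)$, and put $f(m):=\tilde\chi(m/N)$; then $f$ is $N$-periodic in $m\in\mathbb Z$ and agrees with $\chi(m/N)$ for $m\in\{0,\dots,N-1\}$. Writing $(Df)(m):=f(m)-f(m+1)$, the key identity to verify is
\begin{equation*}
\sum_{m=0}^{N-1}\omega^m f(m)={\omega\over\omega-1}\sum_{m=0}^{N-1}\omega^m (Df)(m).
\end{equation*}
It follows by expanding the right-hand side and using that $\sum_{m=0}^{N-1}\omega^m f(m+1)=\omega^{-1}\sum_{m=0}^{N-1}\omega^m f(m)$, which is legitimate because $m\mapsto\omega^m f(m)$ is $N$-periodic (this is where $\omega^N=1$ enters), after which the scalar factor $({\omega\over\omega-1})(1-\omega^{-1})=1$ collapses.

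Iterating this identity $j$ times gives $\sum_m\omega^m f(m)=\big(\omega/(\omega-1)\big)^j\sum_m\omega^m (D^jf)(m)$. I would then bound the $j$-fold finite difference at scale $1/N$ by $|(D^jf)(m)|\le N^{-j}\|\chi^{(j)}\|_{L^\infty}$, which comes from the representation $(D^jf)(m)=(-1)^jN^{-j}\int_{[0,1]^j}\tilde\chi^{(j)}\big((m+s_1+\dots+s_j)/N\big)\,ds_1\cdots ds_j$, proved by induction on~$j$. Combining this with the lower bound on $|\omega-1|$ and the trivial count of $N$ summands,
\begin{equation*}
\Big|\sum_{m=0}^{N-1}\omega^m f(m)\Big|\le (4c)^{-j}N^{j\rho}\cdot N\cdot N^{-j}\|\chi^{(j)}\|_{L^\infty}=(4c)^{-j}\|\chi^{(j)}\|_{L^\infty}\cdot N^{1-j(1-\rho)}.
\end{equation*}
Since $\rho<1$, given $n\in\mathbb N$ I would take $j$ large enough that $1-j(1-\rho)\le -n$, obtaining a bound $\mathcal O(N^{-n})$ whose implied constant depends only on $c$, $\rho$ and $\chi$; as $f(m)=\chi(m/N)$ on the range of summation, this is precisely~\eqref{e:mnp}. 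The argument is entirely elementary, so I do not expect a genuine obstacle: the only points that need a little care are the bookkeeping in the summation by parts --- which is why it is convenient to pass to the $N$-periodic extension, so that boundary terms disappear altogether --- and the estimate $|(D^jf)(m)|\le N^{-j}\|\chi^{(j)}\|_{L^\infty}$ on the iterated finite difference, which reduces to the integral formula above.
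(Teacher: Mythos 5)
Your argument is correct, but it takes a different route from the paper's. The paper applies the Poisson summation formula to $e^{2\pi i ax/N}\chi(x/N)$, rewriting the left-hand side of~\eqref{e:mnp} as $N\sum_{\ell\in\mathbb Z}\hat\chi(N\ell-a)$ and concluding from the rapid decay of $\hat\chi$, with the hypothesis~\eqref{e:mnp-0} needed only to control the terms $\ell=0,1$; this is a two-line proof once one knows to reach for Poisson summation. You instead carry out the discrete nonstationary-phase argument literally: summation by parts against the geometric phase $\omega=e^{2\pi ia/N}$, iterated $j$ times, with the gain $|\omega-1|^{-1}\lesssim c^{-1}N^{\rho}$ per step beaten by the loss $N^{-1}$ from each finite difference of $\chi(\cdot/N)$. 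All the steps check out: the lower bound $|\omega-1|=2|\sin(\pi a/N)|\geq 4d(a/N,0)$ by concavity, the absence of boundary terms thanks to the $N$-periodicity of both $\omega^m$ and the periodic extension of $\chi$ (this is where $a\in\mathbb Z_N$ and $\chi\in C_0^\infty((0,1))$ are used), the integral representation of $D^jf$ giving $|D^jf|\leq N^{-j}\|\chi^{(j)}\|_{L^\infty}$, and the final choice of $j$ with $1-j(1-\rho)\leq -n$, which makes the implied constants depend only on $c,\rho,\chi$ as required. Your proof is longer but entirely elementary and self-contained, and it makes the mechanism behind the lemma's name transparent; the paper's proof is shorter but outsources the work to Poisson summation and the decay of $\hat\chi$. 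Either is acceptable here.
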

%%%%%%%%%%%%%%%%%%%%%%%%%%%%%%%%%%%%%%%%%%%%%%%%%%%%%%%%%%%%%%%%%%%%%%%%%%%%%%%%
\begin{proof}
Applying the Poisson summation formula to the function
$e^{2\pi i ax/N}\chi(x/N)$, we write the left-hand side of~\eqref{e:mnp} as
$$
N\sum_{\ell\in\mathbb Z} \hat\chi(N\ell-a)
$$
where $\hat\chi$ is the Fourier transform of $\chi$:
$$
\hat\chi(\xi)=\int_{\mathbb R}\exp(-2\pi i x\xi)\chi(x)\,dx.
$$
Since $\hat\chi$ is rapidly decreasing, \eqref{e:mnp} follows;
here we use~\eqref{e:mnp-0} to handle the cases $\ell=0,1$.
\end{proof}

%%%%%%%%%%%%%%%%%%%%%%%%%%%%%%%%%%%%%%%%%%%%%%%%%%%%%%%%%%%%%%%%%%%%%%%%%%%%%%%%
\subsection{Propagation of singularities}

Following~\eqref{e:chi-N}, for each $\varphi:[0,1]\to\mathbb R$, we define
$$
\varphi_N\in \ell^2_N,\quad
\varphi_N(j)=\varphi\Big({j\over N}\Big).
$$
The function $\varphi_N$ defines a multiplication operator on $\ell^2_N$,
still denoted $\varphi_N$.
We also use the corresponding Fourier multplier
$$
\varphi_{N}^{\mathcal F}=\mathcal F_N^*\varphi_N\mathcal F_N.
$$
The following theorems are analogues of propagation of semiclassical singularities
(that is, regions where $\varphi_N$ is not $\mathcal O(N^{-\infty})$)
in position and frequency space under quantum evolution.
A stronger statement (not needed here) is \emph{Egorov's theorem},
which describes symbols of propagated quantum observables. In the context
of quantum baker's maps it was proved in~\cite[Theorems~12,13]{DNW}
and~\cite[Proposition~4.15]{NonnenmacherZworskiOQM}.

We start with the case when we apply the open quantum map only once.
We use the map $\Phi$ defined in~\eqref{e:Phi} and the cutoff function $\chi$
which is part of~\eqref{e:oqm-triple}.
Note that due to the simple nature of the map $B_N$ we do not need
to impose any smoothness assumptions on the classical observables $\varphi,\psi$
below.
%%%%%%%%%%%%%%%%%%%%%%%%%%%%%%%%%%%%%%%%%%%%%%%%%%%%%%%%%%%%%%%%%%%%%%%%%%%%%%%%
\begin{prop}[Propagation of singularities]
\label{l:egorov}
Assume that $\varphi,\psi:[0,1]\to[0,1]$ and for some $c>0$ and $0\leq\rho<1$,
\begin{equation}
\label{e:dist-eg-phi}
d\big(\Phi(\supp\psi\cap\Phi^{-1}(\supp\chi)),\supp\varphi)\geq cN^{-\rho}.
\end{equation}
Then
\begin{align}
\label{e:eg-space}
\|\varphi_N B_N \psi_N\|_{\ell^2_N\to\ell^2_N}&=\mathcal O(N^{-\infty}),\\
\label{e:eg-fourier}
\|\psi_N^{\mathcal F}B_N \varphi_N^{\mathcal F}\|_{\ell^2_N\to \ell^2_N}&=\mathcal O(N^{-\infty}),
\end{align}
where the constants in $\mathcal O(N^{-\infty})$ depend only on $c,\rho,\chi$. In particular, \eqref{e:eg-space} and \eqref{e:eg-fourier} hold when%
\footnote{The fact that~\eqref{e:dist-eg} implies~\eqref{e:dist-eg-phi} for a different
constant $c$ is not directly used in this paper, however its proof is a good
introduction to the proof of Proposition~\ref{l:egorov-long} below.}
\begin{equation}
\label{e:dist-eg}
d\big(\supp\psi,\Phi^{-1}(\supp\varphi)\big)\geq cN^{-\rho}.
\end{equation}
\end{prop}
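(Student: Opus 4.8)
The plan is to prove both \eqref{e:eg-space} and \eqref{e:eg-fourier} by passing to matrix entries. Since each operator is an $N\times N$ matrix, it suffices to show that every entry is $\mathcal{O}(N^{-\infty})$ uniformly: then $\|A\|\leq\|A\|_{\mathrm{HS}}\leq N\sup_{i,j}|A_{ij}|=\mathcal{O}(N^{-\infty})$, and for bounded $N$ the bound is trivial since $\|B_N\|\leq 1$. For \eqref{e:eg-fourier} I would first conjugate by the unitary $\mathcal{F}_N$, which reduces the claim to $\|\psi_N(\mathcal{F}_N B_N\mathcal{F}_N^*)\varphi_N\|=\mathcal{O}(N^{-\infty})$.

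For \eqref{e:eg-space}, write $B_N=\sum_{a\in\mathcal A}B_N^a$, put $N':=N/M$, and use~\eqref{e:B-N-matrix}: the matrix of $\varphi_N B_N^a\psi_N$ vanishes except on columns in the block $\{aN',\dots,(a+1)N'-1\}$, where its $(j,\ell+aN')$-entry equals $\varphi(j/N)\,\psi\big((\ell+aN')/N\big)\cdot\tfrac{\sqrt M}{N}e^{2\pi i ja/M}\chi(\ell/N')$ times the sum $\sum_{m=0}^{N'-1}e^{2\pi i(j-M\ell)m/N}\chi(m/N')$. The cutoffs make this vanish unless $j/N\in\supp\varphi$, $(\ell+aN')/N\in\supp\psi$, and $\ell/N'\in\supp\chi$; since $\ell/N'=\Phi\big((\ell+aN')/N\big)$, the latter two give $\ell/N'\in\Phi(\supp\psi\cap\Phi^{-1}(\supp\chi))$, and then~\eqref{e:dist-eg-phi} yields $d(j/N,\ell/N')\geq cN^{-\rho}$. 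To bound the sum I would strip off the integer part of the frequency: writing $j-M\ell=Ms+r$ with $r=j\bmod M\in\{0,\dots,M-1\}$ and $s=\lfloor j/M\rfloor-\ell$, the sum becomes $\sum_{m=0}^{N'-1}e^{2\pi i sm/N'}g(m/N')$ with $g(x)=e^{2\pi i(r/M)x}\chi(x)$, a complex-valued element of $C_0^\infty((0,1))$ whose derivatives are bounded uniformly in $r$. Now $d(s/N',0)=d(\lfloor j/M\rfloor/N',\ell/N')\geq d(j/N,\ell/N')-M/N\geq cN^{-\rho}-M/N\geq \tfrac{c}{2M}(N')^{-\rho}$ once $N$ is large (this is the only place the hypothesis $\rho<1$ is used), so Lemma~\ref{l:non-st}, applied with $N$ replaced by $N'$, cutoff $g$, and frequency $s\bmod N'$, gives $\mathcal{O}(N^{-\infty})$ with constants depending only on $c,\rho,\chi$.

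For \eqref{e:eg-fourier}, since $\mathcal{F}_N B_N^a\mathcal{F}_N^*=\Pi_a^*\chi_{N/M}\mathcal{F}_{N/M}\chi_{N/M}\Pi_a\mathcal{F}_N^*$, the same computation that produced~\eqref{e:B-N-matrix} shows that this matrix is supported on the $a$-th diagonal block, with $(j_0+aN',c)$-entry equal to $\tfrac{\sqrt M}{N}e^{2\pi i ac/M}\chi(j_0/N')$ times $\sum_{\ell=0}^{N'-1}\chi(\ell/N')e^{2\pi i\ell(c-Mj_0)/N}$. The argument then runs exactly as before with the roles of position and frequency exchanged: the cutoffs force $c/N\in\supp\varphi$ and $j_0/N'=\Phi\big((j_0+aN')/N\big)\in\Phi(\supp\psi\cap\Phi^{-1}(\supp\chi))$, so $d(c/N,j_0/N')\geq cN^{-\rho}$; stripping the integer part of the frequency $(c-Mj_0)/M$ turns the $\ell$-sum into one with integer frequency $\lfloor c/M\rfloor-j_0$ and cutoff $e^{2\pi i(c\bmod M)x/M}\chi(x)$, and $d\big((\lfloor c/M\rfloor-j_0)/N',0\big)\geq cN^{-\rho}-M/N$, so Lemma~\ref{l:non-st} again gives $\mathcal{O}(N^{-\infty})$. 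Finally, to get the ``in particular'' statement: if~\eqref{e:dist-eg} holds, then for any $x\in\supp\varphi$ and $y\in\supp\psi\cap\Phi^{-1}(\supp\chi)$ we have $d(\Phi(y),0)\geq c_\chi>0$ (since $\Phi(y)\in\supp\chi$, a compact subset of $(0,1)$) and $d(y,\Phi^{-1}(x))\geq cN^{-\rho}$ (since $\Phi^{-1}(x)\subseteq\Phi^{-1}(\supp\varphi)$), so~\eqref{e:dist-phi-1} gives $d(x,\Phi(y))\geq\min(c_\chi,c)N^{-\rho}$, i.e.~\eqref{e:dist-eg-phi} holds with a new constant.

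There is no single deep step here: the work is the bookkeeping above — keeping track of which index is the incoming and which the outgoing position, arranging the phase so that after discarding its $\mathcal{O}(1)$ integer part one lands precisely in the hypothesis of Lemma~\ref{l:non-st}, and computing the matrix of $\mathcal{F}_N B_N\mathcal{F}_N^*$ for the frequency-space bound. The one genuinely quantitative point is the estimate $cN^{-\rho}-M/N\geq\tfrac{c}{2M}(N')^{-\rho}$, which breaks down at $\rho=1$ and is the reason for that restriction. As the footnote notes, the implication \eqref{e:dist-eg}$\Rightarrow$\eqref{e:dist-eg-phi} is also the model for the iterated version needed in Proposition~\ref{l:egorov-long}.
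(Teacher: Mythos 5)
Your proof is correct, and the core mechanism --- reducing to matrix entries, using the support conditions and~\eqref{e:dist-eg-phi} to place the discrete frequency at distance $\gtrsim N^{-\rho}$ from $0$, and invoking Lemma~\ref{l:non-st} --- is the same as the paper's. Two points of divergence are worth noting. First, for the oscillating sum $\sum_{m=0}^{N/M-1}e^{2\pi i(j-M\ell)m/N}\chi(mM/N)$ you rescale to $N'=N/M$ and split off the integer part of the frequency, paying the $M/N$ loss that forces the (harmless, since $\rho<1$) estimate $cN^{-\rho}-M/N\gtrsim (N')^{-\rho}$; the paper instead keeps the scale $N$, observing that the sum equals $\sum_{m=0}^{N-1}e^{2\pi ibm/N}\chi_1(m/N)$ with $b=j-M\ell$ and $\chi_1(x)=\chi(Mx)\in C_0^\infty((0,1))$, so Lemma~\ref{l:non-st} applies directly with $d(b/N,0)=d(j/N,\ell M/N)\geq cN^{-\rho}$ and no loss. (Relatedly, your remark that the step $cN^{-\rho}-M/N\gtrsim (N')^{-\rho}$ is ``the only place $\rho<1$ is used'' is slightly off: Lemma~\ref{l:non-st} itself requires $\rho<1$.) Second, and more substantively, for~\eqref{e:eg-fourier} you recompute the matrix of $\mathcal F_NB_N\mathcal F_N^*$ and run the nonstationary-phase argument again with position and frequency exchanged; the paper instead uses the identity $\psi_N^{\mathcal F}B_N\varphi_N^{\mathcal F}=\mathcal F_N^*(\overline{\varphi_NB_N\psi_N})^*\mathcal F_N$ (valid because $\mathcal F_N^*=\overline{\mathcal F_N}$ and $\varphi,\psi$ are real), so that~\eqref{e:eg-fourier} follows from~\eqref{e:eg-space} in one line since conjugation and adjoint preserve operator norms. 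Your route costs an extra computation but makes the position--frequency symmetry of $B_N$ explicit; the paper's is shorter but hides that symmetry in an algebraic identity. Your treatment of the implication \eqref{e:dist-eg}$\Rightarrow$\eqref{e:dist-eg-phi} via~\eqref{e:dist-phi-1} coincides with the paper's.
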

%%%%%%%%%%%%%%%%%%%%%%%%%%%%%%%%%%%%%%%%%%%%%%%%%%%%%%%%%%%%%%%%%%%%%%%%%%%%%%%%
\Remark The continuous analog of the operator $\varphi_N$ is
the multiplication operator~$\varphi(y)$, and the continuous analog
of $\varphi_N^{\mathcal F}$ is the Fourier multiplier $\varphi({h\over i}\partial_y)$.
Both of these are semiclassical pseudodifferential operators (see for instance~\cite[\S4.1]{e-z}),
with symbols given by $\tilde \varphi(y,\eta)=\varphi(y)$ and $\tilde \varphi^{\mathcal F}(y,\eta)=\varphi(\eta)$.
The condition~\eqref{e:dist-eg-phi} is equivalent to each of the following conditions
featuring the relation $\varkappa=\varkappa_{M,\mathcal A}$ defined in~\eqref{e:kappa-def}
and the symbol $\widetilde \chi(y,\eta)=\chi(\Phi(y))\chi(\eta)$:
\begin{align}
  \label{e:jack-1}
d\big(\varkappa(\supp\tilde \psi\cap \supp\widetilde\chi),\supp\tilde \varphi\big)&\geq cN^{-\rho},\\
  \label{e:jack-2}
d\big(\varkappa^{-1}(\supp\tilde \psi^{\mathcal F}\cap\varkappa(\supp\widetilde\chi)),\supp\tilde\varphi^{\mathcal F})&\geq cN^{-\rho}.
\end{align}
The continuous analogues of~\eqref{e:eg-space}, \eqref{e:eg-fourier} are expressed via
the operator from~\eqref{e:U-h}:
$$
\varphi\, \mathcal U_h\psi,\ \psi\Big({h\over i}\partial_y\Big)\mathcal U_h\,\varphi\Big({h\over i}\partial_y\Big)=\mathcal O(h^\infty)_{L^2(\mathbb R)\to L^2(\mathbb R)}
$$
In the case when $\varphi,\psi$ are smooth and $h$-independent and $\rho=0$, the latter two statements follow from~\eqref{e:jack-1},
\eqref{e:jack-2}, and the wavefront set statement
(see for instance~\cite[\S3.2]{nhp})
$$
\WFh(\mathcal U_h)\subset \{(x,\xi;y,\eta)\mid (x,\xi)=\varkappa(y,\eta),\quad
(y,\eta)\in\supp\widetilde\chi\}.
$$
%%%%%%%%%%%%%%%%%%%%%%%%%%%%%%%%%%%%%%%%%%%%%%%%%%%%%%%%%%%%%%%%%%%%%%%%%%%%%%%%
\begin{proof}[Proof of Proposition~\ref{l:egorov}]
By~\eqref{e:B-N-matrix}, we have for all $u\in \ell^2_N$, $j\in\{0,\dots,N-1\}$,
$$
\begin{aligned}
\varphi_N B_N\psi_Nu(j)&=\sum_{a\in\mathcal A}\sum_{\ell=0}^{{N/ M}-1}
A^a_{j\ell} \,u\Big(\ell+a{N\over M}\Big),\\
A^a_{j\ell}&={\sqrt{M}\over N}\varphi\Big({j\over N}\Big)\exp\Big({2\pi i aj\over M}\Big)\chi\Big({\ell M\over N}\Big)
\psi\Big({\ell\over N}+{a\over M}\Big)\widetilde A_{j\ell},\\
\widetilde A_{j\ell}&=\sum_{m=0}^{N/M-1}
\exp\Big({2\pi i m(j-\ell M)\over N}\Big)\chi\Big({mM\over N}\Big).
\end{aligned}
$$
We write
$$
\widetilde A_{j\ell}=
\sum_{m=0}^{N-1}\exp\Big({2\pi ibm\over N}\Big)\chi_1\Big({m\over N}\Big),\quad
b:=j-\ell M,\quad
\chi_1(x)=\chi(Mx).
$$
We have $A^a_{j\ell}=0$ unless
\begin{equation}
  \label{e:supporter}
{j\over N}\in\supp\varphi,\quad
{\ell\over N}+{a\over M}\in\supp\psi,\quad
{\ell M\over N}=\Phi\Big({\ell\over N}+{a\over M}\Big)\in\supp \chi.
\end{equation}
By~\eqref{e:dist-eg-phi}, we see that~\eqref{e:supporter} implies
$$
d\Big({b\over N},0\Big)=d\Big({j\over N},{\ell M\over N}\Big)\geq cN^{-\rho}.
$$
Applying Lemma~\ref{l:non-st}, we see that
$$
\max_{a,j,\ell}|A^a_{j\ell}|=\mathcal O(N^{-\infty})
$$
and~\eqref{e:eg-space} follows.

To show~\eqref{e:eg-fourier}, we notice that $\mathcal F_N^*=\overline{\mathcal F_N}$
and thus
$$
\psi_N^\mathcal{F}B_N\varphi_N^\mathcal{F}=\mathcal{F}_N^\ast
(\overline{\varphi_N B_N\psi_N})^*\mathcal{F}_N
$$
where for any operator $A:\ell^2_N\to\ell^2_N$ its
complex conjugate $\overline A:\ell^2_N\to\ell^2_N$ is defined by $\overline A\,\bar u=\overline{Au}$,
$u\in\ell^2_N$.
Since both $\mathcal{F}_N$ and $\mathcal{F}_N^\ast$ are unitary, \eqref{e:eg-fourier} follows from \eqref{e:eg-space} and the facts that both operations of the complex conjugate and the adjoint preserve the operator norm.

Finally, we show that~\eqref{e:dist-eg} implies \eqref{e:dist-eg-phi} with a different $c$.
Indeed, assume \eqref{e:dist-eg} holds.
Then for any $x\in\supp\varphi$ and $y\in\supp\psi\cap\Phi^{-1}(\supp\chi)$, by \eqref{e:dist-phi-1}, we see that
$$
\begin{aligned}
d(x,\Phi(y))&\geq\min\{d(\Phi(y),0),M\cdot d(y,\Phi^{-1}(x))\}\\
&\geq\min\{d(\supp\chi,0),cMN^{-\rho}\}\geq c'N^{-\rho}
\end{aligned}
$$
which finishes the proof.
\end{proof}
%%%%%%%%%%%%%%%%%%%%%%%%%%%%%%%%%%%%%%%%%%%%%%%%%%%%%%%%%%%%%%%%%%%%%%%%%%%%%%%%
Now we turn to the case when we iterate the open quantum map up to (almost) twice the Ehrenfest time%
\footnote{The map $\varkappa_{M,\mathcal A}$ has expansion rate equal to $M$,
and the semiclassical parameter is $h=(2\pi M^k)^{-1}$, therefore
propagation until the Ehrenfest time corresponds to taking $B_N$ to the power $k/2$.}
$k$.
%%%%%%%%%%%%%%%%%%%%%%%%%%%%%%%%%%%%%%%%%%%%%%%%%%%%%%%%%%%%%%%%%%%%%%%%%%%%%%%%
\begin{prop}[Propagation of singularities for long times]
\label{l:egorov-long}
Assume that $\varphi,\psi:[0,1]\to[0,1]$ and for some $c>0$, $0\leq\rho<1$,
and an integer $\tilde k\in [1,k]$,
\begin{equation}
\label{e:dist-eg-l}
d\big(\supp\psi,\Phi^{-\tilde k}(\supp\varphi)\big)\geq cN^{-\rho}.
\end{equation}
Then
\begin{align}
\label{e:eg-space-l}
\|\varphi_N (B_N)^{\tilde k} \psi_N\|_{\ell^2_N\to\ell^2_N}&=\mathcal O(N^{-\infty}),\\
\label{e:eg-fourier-l}
\|\psi_N^{\mathcal F}(B_N)^{\tilde k} \varphi_N^{\mathcal F}\|_{\ell^2_N\to \ell^2_N}&=\mathcal O(N^{-\infty}),
\end{align}
where the constants in $\mathcal O(N^{-\infty})$ depend only on $c,\rho,\chi$.
\end{prop}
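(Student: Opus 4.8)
The plan is to iterate the single-step estimate of Proposition~\ref{l:egorov} $\tilde k$ times, inserting carefully chosen intermediate cutoff functions so that each factor is covered by the hypothesis~\eqref{e:dist-eg} of the one-step proposition (in the form that~\eqref{e:dist-eg} implies~\eqref{e:dist-eg-phi}). Concretely, I would write $(B_N)^{\tilde k}=B_N\cdot B_N\cdots B_N$ and try to interpose, between consecutive copies of $B_N$, a partition of unity $\varphi^{(0)}+\varphi^{(1)}+\cdots$ subordinate to the base-$M$ intervals $(a/M,(a+1)/M)$, refined to scale $M^{-j}$ at the $j$-th step. The point is that $\Phi$ expands distances by a factor $M$ on each such interval, so tracking $\Phi^{-1}$ preimages of $\supp\varphi$ backwards through $\tilde k$ steps is exactly the content of the hypothesis $d(\supp\psi,\Phi^{-\tilde k}(\supp\varphi))\geq cN^{-\rho}$. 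After telescoping, \eqref{e:eg-space-l} reduces to a sum of $\mathcal O(M^{\tilde k})=\mathcal O(N)$ terms, each of which is $\mathcal O(N^{-\infty})$ by Proposition~\ref{l:egorov}, and since $\tilde k\leq k$ and $N=M^k$, the loss of a polynomial factor in $N$ is harmless against $N^{-\infty}$.

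More precisely, I expect the cleanest route is a direct matrix-kernel computation generalizing~\eqref{e:B-N-matrix}: write out the kernel of $\varphi_N(B_N)^{\tilde k}\psi_N$ as an iterated sum over intermediate indices $\ell_0,\dots,\ell_{\tilde k}$ and phase variables $m_1,\dots,m_{\tilde k}$, with alphabet letters $a_1,\dots,a_{\tilde k}\in\mathcal A$. Each application of $B_N$ contributes a factor $\chi(\cdot)\chi(\cdot)$ and an oscillatory sum in one $m_i$ variable, of the type handled by Lemma~\ref{l:non-st}. The nonstationary-phase bound applies to the $m_i$-sum provided the corresponding frequency $b_i=\ell_{i-1}-M\ell_i$ (mod $N$, suitably normalized) is bounded away from $0$ by $cN^{-\rho}$. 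The hypothesis~\eqref{e:dist-eg-l}, together with the fact that $\Phi$ is a piecewise linear expanding map with expansion constant $M$, guarantees that at the \emph{last} step where the iterate of $\Phi$ first separates $\supp\psi$ from the relevant preimage of $\supp\varphi$, one gets the required lower bound on some $d(b_i/N,0)$; then Lemma~\ref{l:non-st} kills that sum and the remaining (trivially bounded) sums contribute at most a power of $N$. The Fourier-side statement~\eqref{e:eg-fourier-l} then follows from~\eqref{e:eg-space-l} by the same conjugation-by-$\mathcal F_N$ and complex-conjugation argument used at the end of the proof of Proposition~\ref{l:egorov}, since $\mathcal F_N^*((B_N)^{\tilde k})^*\mathcal F_N$ relates the two and $\Phi^{-\tilde k}$ appears symmetrically.

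The main obstacle will be bookkeeping: keeping track of which of the $\tilde k$ intermediate oscillatory sums actually enjoys a good nonstationary-phase bound, uniformly in the combinatorial data $(a_1,\dots,a_{\tilde k})$ and in $\tilde k\in[1,k]$. The subtlety is that $d(\supp\psi,\Phi^{-\tilde k}(\supp\varphi))\geq cN^{-\rho}$ is a statement about the \emph{composed} map, and one must argue that separation under $\Phi^{-\tilde k}$ forces separation of the corresponding intervals at some intermediate scale $M^{-j}$ that is still $\geq cN^{-\rho}$ up to the expansion factors — this is where the structure of $\Phi$ as an expanding Markov map, and the identity $|x-\Phi(y)|=M|\tilde x-y|$ from the proof of~\eqref{e:dist-phi-1}, get used repeatedly. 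I would handle this by an induction on $\tilde k$: apply Proposition~\ref{l:egorov} with $\psi$ and a cutoff $\varphi'$ chosen so that $\supp\varphi'$ is a neighborhood of $\Phi^{-(\tilde k-1)}(\supp\varphi)\cap\{\text{relevant intervals}\}$ at scale comparable to $cMN^{-\rho}$, verify the one-step separation $d(\supp\psi,\Phi^{-1}(\supp\varphi'))\geq c'N^{-\rho}$ from~\eqref{e:dist-eg-l}, and then apply the inductive hypothesis to $\varphi'_N(B_N)^{\tilde k-1}\varphi_N$ after writing $B_N\varphi'_N=B_N+B_N(\varphi'_N-1)$ and controlling the error term again by Proposition~\ref{l:egorov}. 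The only thing to watch is that the scale $cMN^{-\rho}$ stays $\leq c''N^{-\rho'}$ with $\rho'<1$ through all $\tilde k\leq k$ steps, which holds since $M^k=N$ so $M^{\tilde k}N^{-\rho}\leq N^{1-\rho}$, still a negative power loss absorbed by $N^{-\infty}$.
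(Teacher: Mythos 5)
Your high-level plan — iterate the one-step Proposition~\ref{l:egorov} by interposing carefully chosen intermediate cutoffs and exploiting the expansiveness of $\Phi$ — is the right idea and is essentially what the paper does. However, the concrete scheme you sketch has a genuine gap in the scale-tracking, which is precisely the delicate part of this proof.

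In your inductive version you propose taking $\supp\varphi'$ to be a neighborhood of $\Phi^{-(\tilde k-1)}(\supp\varphi)$ at scale comparable to $cMN^{-\rho}$, and you acknowledge that iterating this gives scales of order $M^{j}N^{-\rho}$, up to $M^{\tilde k}N^{-\rho}\leq N^{1-\rho}$. You then dismiss this as ``a negative power loss absorbed by $N^{-\infty}$'', but that conflates two different things. A polynomial prefactor in the \emph{operator-norm estimate} is harmless against $N^{-\infty}$; but $N^{1-\rho}$ here is the \emph{width of the neighborhood} on $[0,1]$, and for $\rho<1$ and $\tilde k$ close to $k$ this exceeds $1$, so the neighborhood swallows the whole circle. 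At that point the separation hypothesis of Proposition~\ref{l:egorov} simply fails and there is no estimate to invoke. (Already at the first step, a neighborhood of width $cMN^{-\rho}$ maps under $\Phi^{-1}$ to a $cN^{-\rho}$-neighborhood of $\Phi^{-\tilde k}(\supp\varphi)$, which can touch $\supp\psi$ when the hypothesis~\eqref{e:dist-eg-l} is just barely satisfied — so even the base of the induction needs a strictly smaller constant.)

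The paper's construction avoids this by making the enlargement \emph{additive} and letting the contraction by $\Phi^{-1}$ do the work of keeping scales bounded. Concretely, one builds indicator functions $\varphi^{(j)}$ inductively \emph{forward} from $\varphi$: $\varphi^{(0)}=\mathbf 1_{\supp\varphi}$, and $\varphi^{(j)}$ is the indicator of $\{x\in\Phi^{-1}(\supp\chi)\colon d(\Phi(x),\supp\varphi^{(j-1)})\leq c'N^{-\rho}\}$. The additive $c'N^{-\rho}$ fattening applied before each $\Phi^{-1}$ produces, via the inequality~\eqref{e:dist-phi-1} and an induction, the uniform bound $d(x_j,\Phi^{-j}(x_0))\leq c'N^{-\rho}\,(1-M^{-j})/(M-1)$, a geometric series that stays $<cN^{-\rho}$ for all $j$ once $c'$ is chosen small (the conditions~\eqref{e:c-cond-1},~\eqref{e:c-cond-2}). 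This gives $\supp\varphi^{(\tilde k)}\cap\supp\psi=\emptyset$, after which one telescopes $\varphi_N(B_N)^{\tilde k}\psi_N$ into only $\tilde k$ cross-terms $\varphi^{(j)}_NB_N\psi^{(j+1)}_N$ (using the two-function partition $\mathbf 1=\varphi^{(j)}_N+\psi^{(j)}_N$, not an $M^{\tilde k}$-fold partition), each killed by Proposition~\ref{l:egorov}. Note also that intersecting with $\Phi^{-1}(\supp\chi)$ at each step is essential — see Figure~\ref{f:nested-cutoffs} — which is why~\eqref{e:seqf-3} uses~\eqref{e:dist-eg-phi} rather than the cleaner-looking~\eqref{e:dist-eg}. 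Your direct kernel-expansion sketch has a related difficulty: one must argue that in every summand at least one of the intermediate frequencies $b_i$ is of size $\gtrsim N^{-\rho}$, and that argument is in effect the same geometric-series chain bound, so it cannot be bypassed.
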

%%%%%%%%%%%%%%%%%%%%%%%%%%%%%%%%%%%%%%%%%%%%%%%%%%%%%%%%%%%%%%%%%%%%%%%%%%%%%%%%
\begin{proof}
It suffices to construct a sequence of functions (see Figure~\ref{f:nested-cutoffs})
$$
\varphi^{(j)}:[0,1]\to [0,1],\quad \psi^{(j)}:=1-\varphi^{(j)},\quad
j=0,\dots,\tilde k,
$$
such that for some $c'>0$ depending only on $c,\chi$,
\begin{gather}
  \label{e:seqf-1}
\psi^{(0)}\varphi=0,\\
  \label{e:seqf-2}
\psi^{(\tilde k)}\psi=\psi,\\
  \label{e:seqf-3}
d\big(\Phi(\supp\psi^{(j+1)}\cap \Phi^{-1}(\supp\chi)),\supp\varphi^{(j)}\big)\geq c'N^{-\rho},\quad
j=0,\dots,\tilde k-1.
\end{gather}
Indeed, inserting $\indic=\varphi^{(j)}_N+\psi^{(j)}_N$ after the $j$-th factor $B_N$ and using~\eqref{e:seqf-1},
\eqref{e:seqf-2}, we write
$$
\begin{gathered}
\varphi_N(B_N)^{\tilde{k}}\psi_N
=\sum_{j=0}^{\tilde{k}-1}A'_{j}\big(\varphi^{(j)}_NB_N\psi^{(j+1)}_N\big)A''_{j},\\
A'_{j}=\varphi_N(B_N)^j,\quad
A''_{j}=(B_N\psi_N^{(j+2)})\cdots(B_N\psi_N^{(\tilde{k})})\psi_N.
\end{gathered}
$$
Clearly, 
\begin{equation*}
\|A'_{j}\|_{\ell^2_N\to\ell^2_N},\|A''_{j}\|_{\ell^2_N\to\ell^2_N}\leq 1.
\end{equation*}
Applying~\eqref{e:eg-space} and using~\eqref{e:seqf-3}, we get
\begin{equation*}
\|\varphi^{(j)}_NB_N\psi^{(j+1)}_N\|_{\ell^2_N\to\ell^2_N}=\mathcal{O}(N^{-\infty}).
\end{equation*}
This concludes the proof of \eqref{e:eg-space-l} as $\tilde{k}\leq k=O(\log N)$. The other estimate \eqref{e:eg-fourier-l} follows from \eqref{e:eg-fourier} by the same argument.

%%%%%%%%%%%%%%%%%%%%%%%%%%%%%%%%%%%%%%%%%%%%%%%%%%%%%%%%%%%%%%%%%%%%%%%%%%%%%%%%
\begin{figure}
\includegraphics{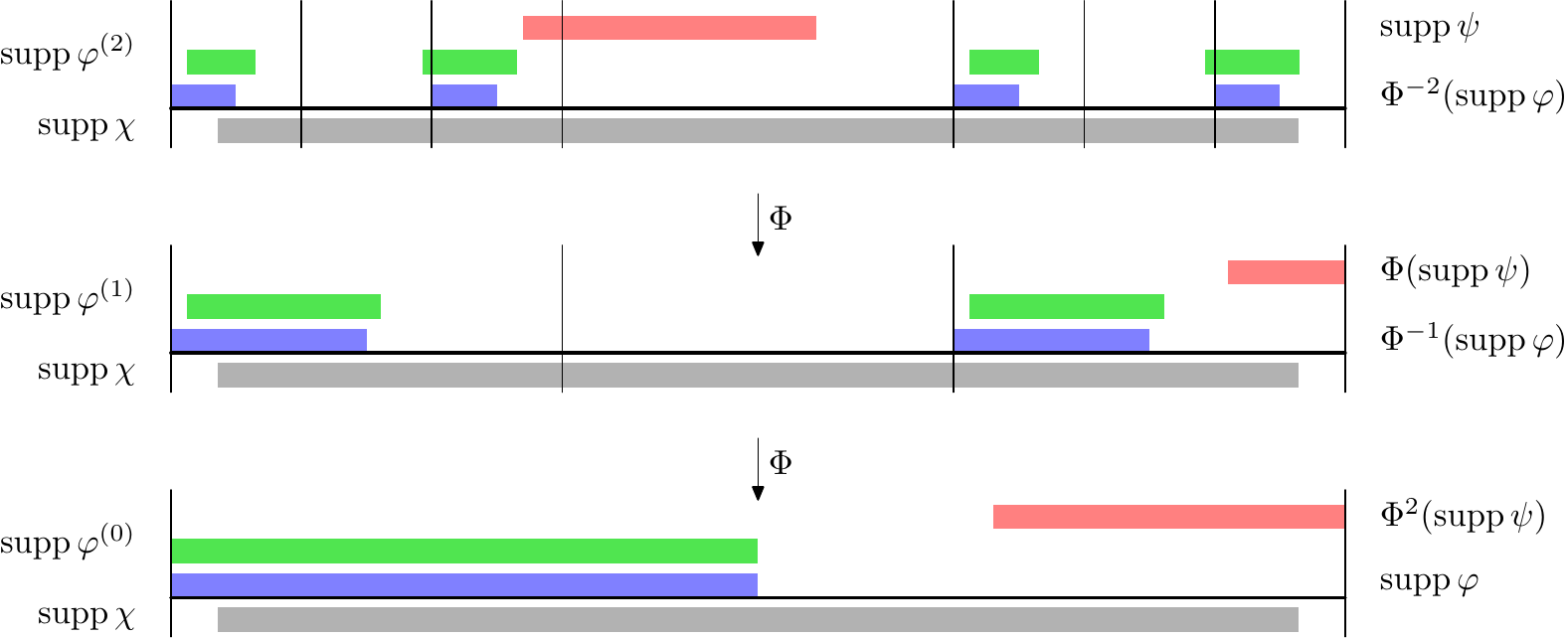}
\caption{An illustration of the proof of Proposition~\ref{l:egorov-long} for
$M=3$, $\mathcal A=\{0,2\}$, $\tilde k=2$. Note that $d(\Phi^2(\supp\psi),\supp\varphi)=0$,
explaining why we need to use the cutoff function $\chi$ in~\eqref{e:seqf-3}.}
\label{f:nested-cutoffs}
\end{figure}
%%%%%%%%%%%%%%%%%%%%%%%%%%%%%%%%%%%%%%%%%%%%%%%%%%%%%%%%%%%%%%%%%%%%%%%%%%%%%%%%

We now construct the functions $\varphi^{(j)}$. Fix $c'>0$ depending only on $c,\chi$, to be chosen
in~\eqref{e:c-cond-1}, \eqref{e:c-cond-2} below.
Define
$$
\varphi^{(0)}(x)=\begin{cases}
1,& x\in\supp\varphi,\\
0,& \text{otherwise}.
\end{cases}
$$
Note that~\eqref{e:seqf-1} holds.
We next define inductively for $j=1,\dots,\tilde k$,
$$
\varphi^{(j)}(x)=\begin{cases}
1,& x\in\Phi^{-1}(\supp\chi)\quad \text{and} \quad d(\Phi(x),\supp\varphi^{(j-1)})\leq c'N^{-\rho},\\
0,& \text{otherwise}.
\end{cases}
$$
Then~\eqref{e:seqf-3} holds, so it remains to prove~\eqref{e:seqf-2}. The latter follows
from the fact that
\begin{equation}
  \label{e:seqf-4}
\supp\varphi^{(\tilde k)}\cap \supp\psi=\emptyset.
\end{equation}
To show~\eqref{e:seqf-4}, we note that
any point in $\supp\varphi^{(\tilde k)}$ is equal to
$x_{\tilde k}$ for some sequence of points
$x_0,\dots,x_{\tilde k}\in [0,1]$ such that
$$
x_0\in\supp\varphi;\quad
x_j\in\Phi^{-1}(\supp\chi),\quad
d(\Phi(x_j),x_{j-1})\leq c'N^{-\rho},\quad
j=1,\dots,\tilde k.
$$
By~\eqref{e:dist-eg-l} it then suffices to prove that
\begin{equation}
  \label{e:seqf-5}
d(x_{\tilde k},\Phi^{-\tilde k}(x_0))<cN^{-\rho}.
\end{equation}
We have for each $j=1,\dots,\tilde k$,
$$
\begin{aligned}
\min\big\{d(\supp\chi,0),M\cdot d(x_j,\Phi^{-j}(x_0))\big\}&\leq
\min\big\{d(\Phi(x_j),0),M\cdot d(x_j,\Phi^{-j}(x_0))\big\}\\
&\leq d(\Phi(x_j),\Phi^{-(j-1)}(x_0))\\
&\leq d(x_{j-1},\Phi^{-(j-1)}(x_0))+c'N^{-\rho}
\end{aligned}
$$
where on the second line we use~\eqref{e:dist-phi-1}. By induction on $j$,
we see that if $c'$ is small enough so that
\begin{equation}
  \label{e:c-cond-1}
{c'M\over M-1}\leq d(\supp\chi,0)
\end{equation}
then for all $j=0,\dots,\tilde k$ we have
$$
d(x_j,\Phi^{-j}(x_0))\leq c'N^{-\rho}\cdot {1-M^{-j}\over M-1}.
$$
This implies~\eqref{e:seqf-5} as long as
\begin{equation}
  \label{e:c-cond-2}
{c'\over M-1}\leq c,
\end{equation}
finishing the proof of the proposition.
\end{proof}
%%%%%%%%%%%%%%%%%%%%%%%%%%%%%%%%%%%%%%%%%%%%%%%%%%%%%%%%%%%%%%%%%%%%%%%%%%%%%%%%

%%%%%%%%%%%%%%%%%%%%%%%%%%%%%%%%%%%%%%%%%%%%%%%%%%%%%%%%%%%%%%%%%%%%%%%%%%%%%%%%
\subsection{Localization of eigenfunctions and reduction to fractal uncertainty principle}
  \label{s:localization}

Fix $\rho\in (0,1)$ and put
\begin{equation}
  \label{e:tilde-k}
\tilde k:=\lceil\rho k\rceil\in \{1,\dots,k\}.
\end{equation}
Define
$$
\mathcal X_{\rho}:=\{x\in [0,1]\colon d(x,\Phi^{-\tilde k}([0,1]))\leq N^{-\rho}\}.
$$
Using the Cantor set $\mathcal C_k$ defined in~\eqref{e:C-k}, we also put
\begin{equation}
  \label{e:X-rho}
X_{\rho}:=\bigcup \{\mathcal C_k+m\colon m\in \mathbb Z,\ |m|\leq 2N^{1-\rho}\}
\subset\mathbb Z_N
\end{equation}
where addition is carried in the group $\mathbb Z_N$. Then
\begin{equation}
  \label{e:kontainer}
\ell\in \{0,\dots,N-1\},\quad
{\ell\over N}\in\mathcal X_{\rho}\quad\Longrightarrow\quad
\ell\in X_{\rho}.
\end{equation}
Indeed, we have
$$
\Phi^{-\tilde k}([0,1])=\bigcup_{j'\in\mathcal C_{\tilde k}}
\Big({j'\over M^{\tilde k}},{j'+1\over M^{\tilde k}}\Big)
\subset\bigcup_{j\in\mathcal C_k}\Big({j-M^{k-\tilde k}\over N},
{j+M^{k-\tilde k}\over N} \Big)
$$
and~\eqref{e:kontainer} follows since $M^{k-\tilde k}\leq N^{1-\rho}$.

Taking
$\varphi\equiv 1$ and $\psi:=1-\mathbf 1_{\mathcal X_{\rho}}$ in Proposition~\ref{l:egorov-long},
we obtain
\begin{align}
  \label{e:concentration-1}
(B_N)^{\tilde k}&=(B_N)^{\tilde k}\indic_{X_{\rho}}+\mathcal O(N^{-\infty})_{\ell^2_N\to\ell^2_N},\\
  \label{e:concentration-2}
(B_N)^{\tilde k}&=\mathcal F_N^*\indic_{X_{\rho}}\mathcal F_N(B_N)^{\tilde k}+\mathcal O(N^{-\infty})_{\ell^2_N\to\ell^2_N},
\end{align}
where the constants in $\mathcal O(N^{-\infty})$ depend only on $\rho,\chi$.
%%%%%%%%%%%%%%%%%%%%%%%%%%%%%%%%%%%%%%%%%%%%%%%%%%%%%%%%%%%%%%%%%%%%%%%%%%%%%%%%
\begin{figure}
\includegraphics{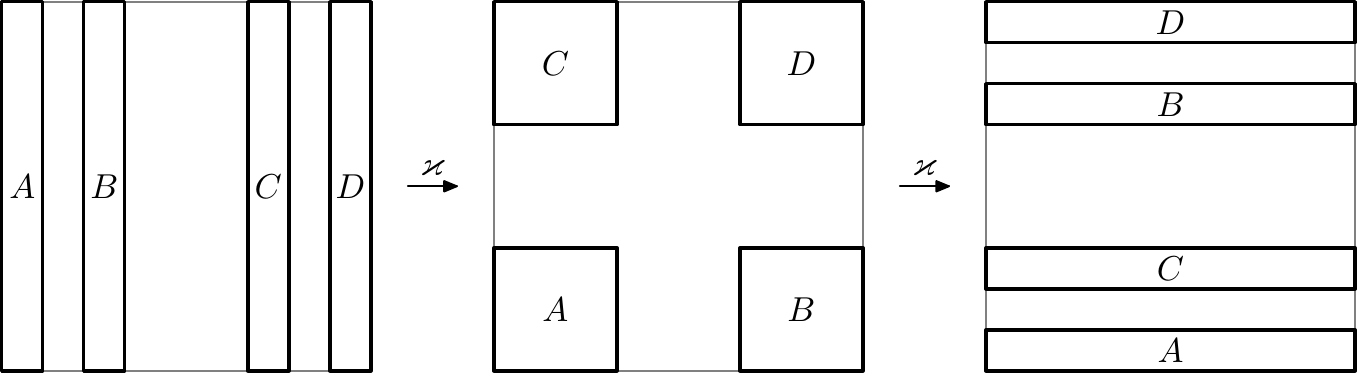}
\caption{The domain and range of the relation $\varkappa^2$
for $M=3$, $\mathcal A=\{0,2\}$.
See also Figure~\ref{f:basic-baker}.}
\label{f:longtime}
\end{figure}
%%%%%%%%%%%%%%%%%%%%%%%%%%%%%%%%%%%%%%%%%%%%%%%%%%%%%%%%%%%%%%%%%%%%%%%%%%%%%%%%

The statements~\eqref{e:concentration-1}, \eqref{e:concentration-2} can be interpreted
in terms of the canonical relation $\varkappa=\varkappa_{M,\mathcal A}$ from~\eqref{e:kappa-def}
and the Fourier integral operator $\mathcal U_h$ from~\eqref{e:U-h}.
Indeed, define the \emph{incoming/outgoing tails}
and the \emph{trapped set}
\begin{equation}
  \label{e:trapped-set}
\Gamma_\pm = \bigcap_{\pm r\geq 0} \varkappa^{\,r}\big((0,1)^2\big),\quad
\Gamma = \Gamma_+\cap\Gamma_-
\end{equation}
They can be expressed in terms of the Cantor set $\mathcal C_\infty$ defined in~\eqref{e:C-infty}:
$$
\Gamma_+=\{x\in (0,1),\ \xi\in\mathcal C_\infty\},\quad
\Gamma_-=\{\xi\in (0,1),\ x\in\mathcal C_\infty\}.
$$
See Figure~\ref{f:longtime}. Then~\eqref{e:concentration-2} corresponds
to the statement that functions in the range of~$\mathcal U_h^{\tilde k}$
(that is, outgoing functions) are microlocalized $h^\rho$ close to $\Gamma_+$.
Similarly~\eqref{e:concentration-1} corresponds to the statement that
functions in the range of the adjoint of $\mathcal U_h^{\tilde k}$
(that is, incoming functions) are microlocalized $h^\rho$ close to $\Gamma_-$.

Applied to eigenfunctions of $B_N$, \eqref{e:concentration-1} and~\eqref{e:concentration-2} give the following
statement. (See~\cite[Lemma~4.6]{hgap} for its analog in the continuous setting of hyperbolic surfaces.)
%%%%%%%%%%%%%%%%%%%%%%%%%%%%%%%%%%%%%%%%%%%%%%%%%%%%%%%%%%%%%%%%%%%%%%%%%%%%%%%%
\begin{prop}
  \label{l:key-eigenvalues}
Fix $\nu>0,\rho\in (0,1)$ and assume that for some $k\in\mathbb N,\lambda\in\mathbb C,u\in\ell^2_N$,
$$
B_N u=\lambda u,\quad
|\lambda|\geq M^{-\nu}.
$$
Then, with $X_\rho$ defined in~\eqref{e:X-rho},
\begin{gather}
  \label{e:concentration-3}
\|u\|_{\ell^2_N}\leq M^\nu|\lambda|^{-\rho k}\,\|\indic_{X_{\rho}}u\|_{\ell^2_N}+\mathcal O(N^{-\infty})\|u\|_{\ell^2_N},\\
  \label{e:concentration-4}
\|u-\mathcal F_N^*\indic_{X_\rho}\mathcal F_N u\|_{\ell^2_N}=\mathcal O(N^{-\infty})\|u\|_{\ell^2_N}
\end{gather}
where the constants in $\mathcal O(N^{-\infty})$ depend only on $\nu,\rho,\chi$.
\end{prop}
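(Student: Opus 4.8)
The plan is to feed the eigenfunction $u$ directly into the two propagation estimates~\eqref{e:concentration-1} and~\eqref{e:concentration-2}, using that $(B_N)^{\tilde k}u=\lambda^{\tilde k}u$, and then to absorb the resulting negative powers of $|\lambda|$ into the $\mathcal O(N^{-\infty})$ errors by invoking the lower bound $|\lambda|\geq M^{-\nu}$. All the analytic content has already been done: propagation of singularities up to the Ehrenfest time is packaged into Proposition~\ref{l:egorov-long}, and the containment~\eqref{e:kontainer} is what identifies the relevant discrete set as $X_\rho$. So the proof is essentially bookkeeping.

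First I would establish~\eqref{e:concentration-4}. Applying~\eqref{e:concentration-2} to $u$ and using $(B_N)^{\tilde k}u=\lambda^{\tilde k}u$ gives
$$
\lambda^{\tilde k}\big(u-\mathcal F_N^*\indic_{X_\rho}\mathcal F_N u\big)=\mathcal O(N^{-\infty})\|u\|_{\ell^2_N},
$$
so $\|u-\mathcal F_N^*\indic_{X_\rho}\mathcal F_N u\|_{\ell^2_N}\leq |\lambda|^{-\tilde k}\mathcal O(N^{-\infty})\|u\|_{\ell^2_N}$. Since $\tilde k=\lceil\rho k\rceil\leq \rho k+1$ and $|\lambda|\geq M^{-\nu}$, we have $|\lambda|^{-\tilde k}\leq M^{\nu(\rho k+1)}=M^\nu N^{\nu\rho}$, a fixed power of $N$, so the right-hand side remains $\mathcal O(N^{-\infty})\|u\|_{\ell^2_N}$.

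Next, for~\eqref{e:concentration-3} I would apply~\eqref{e:concentration-1} to $u$ to get $\lambda^{\tilde k}u=(B_N)^{\tilde k}\indic_{X_\rho}u+\mathcal O(N^{-\infty})\|u\|_{\ell^2_N}$. Taking $\ell^2_N$ norms and using $\|(B_N)^{\tilde k}\|_{\ell^2_N\to\ell^2_N}\leq 1$ (as $\|B_N\|\leq 1$) yields $|\lambda|^{\tilde k}\|u\|_{\ell^2_N}\leq \|\indic_{X_\rho}u\|_{\ell^2_N}+\mathcal O(N^{-\infty})\|u\|_{\ell^2_N}$, hence
$$
\|u\|_{\ell^2_N}\leq |\lambda|^{-\tilde k}\|\indic_{X_\rho}u\|_{\ell^2_N}+|\lambda|^{-\tilde k}\mathcal O(N^{-\infty})\|u\|_{\ell^2_N}.
$$
Since $|\lambda|\leq 1$ and $|\lambda|^{-1}\leq M^\nu$, we bound $|\lambda|^{-\tilde k}\leq |\lambda|^{-\rho k-1}\leq M^\nu|\lambda|^{-\rho k}$; the coefficient $M^\nu|\lambda|^{-\rho k}\leq M^\nu N^{\nu\rho}$ is again a fixed power of $N$, so the last term is absorbed into $\mathcal O(N^{-\infty})\|u\|_{\ell^2_N}$, and~\eqref{e:concentration-3} follows. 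The constants produced this way depend only on $\nu,\rho,\chi$, as claimed.

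I do not expect a genuine obstacle here; the only subtlety is the observation that $\tilde k\sim\rho\log_M N$ grows logarithmically, so the factors $|\lambda|^{\pm\tilde k}$ are merely polynomially large in $N$ once $|\lambda|\geq M^{-\nu}$ — which is precisely what allows the $\mathcal O(N^{-\infty})$ errors to survive being multiplied by them. The mildly unaesthetic factor $M^\nu$ in~\eqref{e:concentration-3} (rather than a bare $|\lambda|^{-\rho k}$) is just an artifact of the ceiling in the definition~\eqref{e:tilde-k} of $\tilde k$.
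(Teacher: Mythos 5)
Your proposal is correct and follows the paper's own proof essentially verbatim: apply \eqref{e:concentration-1} and \eqref{e:concentration-2} to the eigenfunction, use $(B_N)^{\tilde k}u=\lambda^{\tilde k}u$ together with $\|B_N\|_{\ell^2_N\to\ell^2_N}\leq 1$, and note that $|\lambda|^{-\tilde k}\leq M^\nu N^{\rho\nu}$ is only polynomially large so the $\mathcal O(N^{-\infty})$ errors survive. No issues.
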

%%%%%%%%%%%%%%%%%%%%%%%%%%%%%%%%%%%%%%%%%%%%%%%%%%%%%%%%%%%%%%%%%%%%%%%%%%%%%%%%
\begin{proof}
Define $\tilde k$ by~\eqref{e:tilde-k}. Then
$$
(B_N)^{\tilde k}u=\lambda^{\tilde k}u,\quad
|\lambda|^{-\tilde k}\leq M^{\nu} N^{\rho \nu}.
$$
Applying~\eqref{e:concentration-1} to $\lambda^{-\tilde k}u$ and using the bound $\|B_N\|_{\ell^2_N\to\ell^2_N}\leq 1$,
we obtain~\eqref{e:concentration-3}. Applying~\eqref{e:concentration-2} to $\lambda^{-\tilde k}u$,
we obtain~\eqref{e:concentration-4}.
\end{proof}
%%%%%%%%%%%%%%%%%%%%%%%%%%%%%%%%%%%%%%%%%%%%%%%%%%%%%%%%%%%%%%%%%%%%%%%%%%%%%%%%
Finally, Proposition~\ref{l:key-eigenvalues} implies the following statement,
which proves the second part of Theorem~\ref{t:gap-detailed}.
%%%%%%%%%%%%%%%%%%%%%%%%%%%%%%%%%%%%%%%%%%%%%%%%%%%%%%%%%%%%%%%%%%%%%%%%%%%%%%%%
\begin{prop}
  \label{l:fup-reduction}
With $r_k$ defined in~\eqref{e:r-k}, put
\begin{equation}
  \label{e:beta-limsup}
\beta:=-\limsup_{k\to\infty}{\log r_k\over k\log M}.
\end{equation}
Then
\begin{equation}
\label{e:spec-limsup}
\limsup_{N\to\infty}\max\{|\lambda|\colon \lambda\in\Sp(B_N)\}\ \leq\ M^{-\beta}.
\end{equation}
\end{prop}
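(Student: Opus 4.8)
The plan is to argue by contradiction, turning a hypothetical large eigenvalue into a violation of the estimate $r_k\le N^{-\beta+\varepsilon}$ that is built into the definition~\eqref{e:beta-limsup}. Suppose~\eqref{e:spec-limsup} fails. Then there are $\nu\in(0,\beta)$, a sequence $k_j\to\infty$ with $N_j:=M^{k_j}$, and $\lambda_j\in\Sp(B_{N_j})$ with $|\lambda_j|\ge M^{-\nu}$. Since $\beta\le\frac{1-\delta}{2}<\frac12$ by~\eqref{e:jnn}, we have $0<\nu<1$; fix $\varepsilon\in(0,\beta-\nu)$ and then $\rho$ in the interval $\big(\frac{1-\beta+\varepsilon}{1-\nu},1\big)$, which is nonempty because $1-\beta+\varepsilon<1-\nu$. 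Throughout, write $R_k:=\|\indic_{X_\rho}\mathcal F_N\indic_{X_\rho}\|_{\ell^2_N\to\ell^2_N}$, with $X_\rho\subset\mathbb Z_N$ as in~\eqref{e:X-rho}; note that $R_k=\|\indic_{X_\rho}\mathcal F_N^*\indic_{X_\rho}\|$ since passing to the adjoint preserves the operator norm.

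First I would use Proposition~\ref{l:key-eigenvalues} to produce a lower bound for $R_{k_j}$. Apply it with the above $\nu,\rho$ to a unit eigenfunction $u_j$ of $B_{N_j}$. Since $|\lambda_j|^{-\rho k_j}\le M^{\nu\rho k_j}=N_j^{\nu\rho}$, estimate~\eqref{e:concentration-3} yields $\|\indic_{X_\rho}u_j\|\ge\frac12 M^{-\nu}N_j^{-\nu\rho}$ for $j$ large. Setting $v_j:=\mathcal F_{N_j}u_j$, estimate~\eqref{e:concentration-4} gives $\|(1-\indic_{X_\rho})v_j\|=\mathcal O(N_j^{-\infty})$. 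Now decompose $\indic_{X_\rho}u_j=\indic_{X_\rho}\mathcal F_{N_j}^*\indic_{X_\rho}v_j+\indic_{X_\rho}\mathcal F_{N_j}^*(1-\indic_{X_\rho})v_j$; the first term has norm at most $R_{k_j}\|v_j\|=R_{k_j}$ and the second is $\mathcal O(N_j^{-\infty})$, so $R_{k_j}\ge\frac14 M^{-\nu}N_j^{-\nu\rho}$ for $j$ large.

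Second, I would establish the complementary \emph{a priori} bound $R_k\le C\,N^{1-\rho}r_k$ with $C$ absolute. By~\eqref{e:X-rho}, $X_\rho$ is a union of $L=\mathcal O(N^{1-\rho})$ translates $\mathcal C_k+m$ inside $\mathbb Z_N$. For any integers $m,m'$ one has $\|\indic_{\mathcal C_k+m}\mathcal F_N\indic_{\mathcal C_k+m'}\|=r_k$: writing $\indic_{\mathcal C_k+m}$ as the conjugate of $\indic_{\mathcal C_k}$ by a unitary translation operator, $\mathcal F_N$ gets conjugated into $\mathcal F_N$ composed with modulation (unimodular multiplication) operators, which commute with $\indic_{\mathcal C_k}$, and translations and modulations are unitary. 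Given $f$ supported on $X_\rho$, split $f=\sum_{i=1}^L f_i$ into pieces with pairwise disjoint supports, each $f_i$ supported on one translate $\mathcal C_k+m_i$, so that $\sum_i\|f_i\|^2=\|f\|^2$; then for each translate $\|\indic_{\mathcal C_k+m_{i'}}\mathcal F_N f\|\le\sum_{i=1}^L r_k\|f_i\|\le r_k\sqrt L\,\|f\|$ by the triangle and Cauchy--Schwarz inequalities, and summing the squares over $i'$ together with $\indic_{X_\rho}\le\sum_{i'}\indic_{\mathcal C_k+m_{i'}}$ gives $\|\indic_{X_\rho}\mathcal F_N f\|\le L\,r_k\|f\|$, hence $R_k\le L\,r_k$.

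Finally I would combine the two. The $\limsup$ in~\eqref{e:beta-limsup} gives $r_k\le N^{-\beta+\varepsilon}$ for all large $k$, so for $j$ large $\frac14 M^{-\nu}N_j^{-\nu\rho}\le R_{k_j}\le C\,N_j^{1-\rho}r_{k_j}\le C\,N_j^{1-\rho-\beta+\varepsilon}$, i.e.\ $\frac{M^{-\nu}}{4C}\le N_j^{\,1-\beta+\varepsilon-\rho(1-\nu)}$. By the choice $\rho>\frac{1-\beta+\varepsilon}{1-\nu}$ the exponent is negative, so the right-hand side tends to $0$ as $j\to\infty$, a contradiction; this proves~\eqref{e:spec-limsup}. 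I expect the substantive content to already lie in Proposition~\ref{l:key-eigenvalues}, so the only real care needed is the bookkeeping: keeping all constants (the $\mathcal O(N^{-\infty})$ from the propagation estimates and the absolute constants above) uniform along the sequence, and the elementary exponent arithmetic that forces $\rho$ to be taken close to $1$ in order to absorb the polynomial loss $N^{1-\rho}$ coming from covering $X_\rho$ by translates of $\mathcal C_k$.
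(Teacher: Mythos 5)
Your proof is correct and follows essentially the same route as the paper: apply Proposition~\ref{l:key-eigenvalues} to a putative large eigenfunction, cover $X_\rho$ by $\mathcal O(N^{1-\rho})$ translates of $\mathcal C_k$ and use~\eqref{e:same-shift} to reduce the norm $\|\indic_{X_\rho}\mathcal F_N\indic_{X_\rho}\|$ to $r_k$, then take $\rho$ close to $1$ to absorb the polynomial loss. The only inessential differences are that you frame the argument by contradiction with a fixed $\nu<\beta$ (whereas the paper bounds the spectral radius directly for every $\rho$ and lets $\rho\to 1$), and that your Cauchy--Schwarz/orthogonal-splitting estimate gives the marginally sharper covering bound $R_k\leq L\,r_k=\mathcal O(N^{1-\rho}r_k)$ in place of the paper's cruder double triangle inequality giving $\mathcal O(N^{2(1-\rho)}r_k)$; both suffice since the loss is killed by $\rho\to 1$ regardless.
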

%%%%%%%%%%%%%%%%%%%%%%%%%%%%%%%%%%%%%%%%%%%%%%%%%%%%%%%%%%%%%%%%%%%%%%%%%%%%%%%%
\begin{proof}
Fix $\rho\in (0,1)$. Take $k\in\mathbb N$, $N:=M^k$, and assume that $\lambda\in \mathbb C$ is an eigenvalue
of $B_N$ such that $|\lambda|\geq M^{-\beta}$. (Note that $\beta$ is always finite, see~\eqref{e:JN-2}.)
Choose a normalized eigenfunction
$$
u\in\ell^2_N,\quad
B_Nu=\lambda u,\quad
\|u\|_{\ell^2_N}=1.
$$
Combining~\eqref{e:concentration-3} and~\eqref{e:concentration-4}, we obtain
\begin{equation}
  \label{e:fupuse-1}
\begin{aligned}
1=\|u\|_{\ell^2_N}&\leq M^\beta|\lambda|^{-\rho k}\|\indic_{X_\rho}\mathcal F_N^*\indic_{X_\rho}\mathcal F_N u\|_{\ell^2_N}+\mathcal O(N^{-\infty})\\
&\leq M^\beta|\lambda|^{-\rho k}\|\indic_{X_\rho}\mathcal F_N^*\indic_{X_\rho}\|_{\ell^2_N\to \ell^2_N}
+\mathcal O(N^{-\infty})
\end{aligned}
\end{equation}
where the constants in $\mathcal O(N^{-\infty})$ depend only on $\beta,\rho,\chi$.

Using~\eqref{e:same-shift},
the following corollary of~\eqref{e:X-rho}:
$$
0\leq \mathbf 1_{X_{\rho}}\leq \sum_{m\in\mathbb Z,\, |m|\leq 2N^{1-\rho}}\mathbf 1_{\mathcal C_k+m},
$$
and the triangle inequality for the operator norm, we estimate
\begin{equation}
  \label{e:fupuse-2}
\begin{aligned}
\|\indic_{X_\rho}\mathcal F_N^*\indic_{X_\rho}\|_{\ell^2_N\to \ell^2_N}
&=\|\indic_{X_\rho}\mathcal F_N\indic_{X_\rho}\|_{\ell^2_N\to \ell^2_N}\\
&\leq \sum_{|m|,|m'|\leq 2N^{1-\rho}}\|\indic_{\mathcal C_k+m}\mathcal F_N\indic_{\mathcal C_k+m'}\|_{\ell^2_N\to\ell^2_N}\\
&\leq 5N^{2(1-\rho)} r_k.
\end{aligned}
\end{equation}
Combining~\eqref{e:fupuse-1} and~\eqref{e:fupuse-2}, we obtain a bound on the
spectral radius of~$B_N$:
$$
\big(\max\{|\lambda|\colon \lambda\in\Sp(B_N)\}\big)^{\rho k}\leq \max\big\{M^{-\beta\rho k},
5M^\beta N^{2(1-\rho)}r_k
\big\}.
$$
Taking both sides to the power $1\over \rho k$ and taking the limit, we get
$$
\limsup_{N\to\infty}\max\{|\lambda|\colon \lambda\in\Sp(B_N)\}\leq\max\{M^{-\beta},M^{(2-2\rho-\beta)/\rho}\}.
$$
This is true for all $\rho\in (0,1)$; taking the limit $\rho\to 1$, we obtain~\eqref{e:spec-limsup}.
\end{proof}
%%%%%%%%%%%%%%%%%%%%%%%%%%%%%%%%%%%%%%%%%%%%%%%%%%%%%%%%%%%%%%%%%%%%%%%%%%%%%%%%

%%%%%%%%%%%%%%%%%%%%%%%%%%%%%%%%%%%%%%%%%%%%%%%%%%%%%%%%%%%%%%%%%%%%%%%%%%%%%%%%
%%%%%%%%%%%%%%%%%%%%%%%%%%%%%%%%%%%%%%%%%%%%%%%%%%%%%%%%%%%%%%%%%%%%%%%%%%%%%%%%
\section{Fractal uncertainty principle}
  \label{s:fup}

In this section, we study bounds on the operator norms
\begin{equation}
  \label{e:op-norm-fup}
\|\indic_X\mathcal F_N\indic_Y\|_{\ell^2_N\to \ell^2_N}
\end{equation}
where $X,Y\subset\mathbb Z_N$. We will derive several general bounds
and apply them to the special case of Cantor sets $\mathcal C_k$
defined in~\eqref{e:C-k},
estimating the norm
\begin{equation}
  \label{e:r-k-again}
r_k:=\|\indic_{\mathcal C_k}\mathcal F_N\indic_{\mathcal C_k}\|_{\ell^2_N\to\ell^2_N}
\end{equation}
and
finishing the proof of Theorem~\ref{t:gap-detailed} (see the end of~\S\ref{s:improve-2}).
We also establish better bounds when $\delta$ is close to $1/2$ using additive
energy (see~\S\ref{s:ae-improvements}),
consider the special case when the fractal uncertainty principle
exponent $\beta$ defined in~\eqref{e:beta-fup} has the maximal possible value
(see~\S\ref{s:special-alphabets}), and give lower bounds on~$r_k$
(see~\S\ref{s:improve-all}).

First of all, we have the following basic estimates:
\begin{align}
  \label{e:0-fup}
\|\indic_X\mathcal F_N\indic_Y\|_{\ell^2_N\to\ell^2_N}&\leq 1,\\
  \label{e:pres-fup}
\|\indic_X\mathcal F_N\indic_Y\|_{\ell^2_N\to\ell^2_N}&\leq 
\sqrt{|X|\cdot |Y|\over N}.
\end{align}
The operator norm bound~\eqref{e:pres-fup} follows from the following
formula for the Hilbert--Schmidt norm:
\begin{equation}
  \label{e:hs}
\|\indic_X\mathcal F_N\indic_Y\|_{\HS}=\sqrt{|X|\cdot |Y|\over N}.
\end{equation}
For the case of Cantor sets $X=Y=\mathcal C_k$,
the bounds~\eqref{e:0-fup}, \eqref{e:pres-fup} yield
\begin{equation}
  \label{e:this-is-trivial}
r_k\leq\min\big(1,N^{\delta-{1\over 2}}\big)
\end{equation}
where $\delta\in (0,1)$ is defined in~\eqref{e:delta}. Therefore, the exponent
$\beta$ defined in~\eqref{e:beta-fup} satisfies
$$
\beta\geq \max\Big(0,{1\over 2}-\delta\Big).
$$
(We will show that the limit~\eqref{e:beta-fup} exists in Proposition~\ref{l:beta-limit} below.)
Also, if we define
$$
\supp u:=\{j\in \mathbb Z_N\mid u(j)\neq 0\},\quad
u\in\ell^2_N,
$$
then~\eqref{e:op-norm-fup} is equal to
\begin{equation}
\label{e:norm-ip}
\sup\bigg\{{|\langle \mathcal F_N u,v\rangle|\over \|u\|_{\ell^2_N}\|v\|_{\ell^2_N}}\colon
u,v\in\ell^2_N\setminus \{0\},\
\supp u\subset Y,\
\supp v\subset X\bigg\}.
\end{equation}
Finally, if $X,Y\subset\mathbb Z_N$, $j,k\in\mathbb Z_N$,
and the sets $X+j,Y+k$ are defined using addition in $\mathbb Z_N$, then
\begin{equation}
  \label{e:same-shift}
\|\indic_{X+j}\mathcal F_N\indic_{Y+k}\|_{\ell^2_N\to \ell^2_N}
=\|\indic_X\mathcal F_N \indic_Y\|_{\ell^2_N\to\ell^2_N}.
\end{equation}
Indeed, the circular shift gives a unitary operator on $\ell^2_N$, and it is conjugated by the Fourier
transform to a multiplication operator which commutes with $\indic_X,\indic_Y$.

In particular~\eqref{e:same-shift} implies that if $j\geq 1$ and $\mathcal A\subset \{0,\dots,M-j-1\}$,
then the pairs $(M,\mathcal A)$ and $(M,\mathcal A+j)$ have the same norms
$r_k$, and thus the same value of $\beta$ defined in~\eqref{e:beta-fup}.

%%%%%%%%%%%%%%%%%%%%%%%%%%%%%%%%%%%%%%%%%%%%%%%%%%%%%%%%%%%%%%%%%%%%%%%%%%%%%%%%
\subsection{Submultiplicativity}
  \label{s:submul}

In this section, we assume that $N$ factorizes as $N=N_1N_2$, where
$N_1,N_2\in\mathbb N$.
The following lemma gives a way to reduce the Fourier
transform $\mathcal F_N$ to Fourier transforms $\mathcal F_{N_1},\mathcal F_{N_2}$,
using a technique similar to the one employed in fast Fourier transform (FFT) algorithms.
The resulting submultiplicative inequality~\eqref{e:submultiplicative}
is a crucial component of the proof of Theorem~\ref{t:gap-detailed},
making it possible to reduce bounds for large $N$ to bounds
for bounded $N$.
%%%%%%%%%%%%%%%%%%%%%%%%%%%%%%%%%%%%%%%%%%%%%%%%%%%%%%%%%%%%%%%%%%%%%%%%%%%%%%%%
\begin{lemm}
  \label{l:split-apart}
Assume $u,v\in\ell^2_N$. For $\ell_1,j_2\in\mathbb Z_N$, define $\tilde u_{\ell_1}\in\ell^2_{N_2}$,
$\tilde v_{j_2}\in\ell^2_{N_1}$ by
$$
\begin{aligned}
\tilde u_{\ell_1}(\ell_2)=u(\ell_1+N_1\ell_2),&\quad
\ell_2\in \mathbb Z_{N_2};
\\
\tilde v_{j_2}(j_1)=v(j_2+N_2j_1),&\quad
j_1\in \mathbb Z_{N_1}.
\end{aligned}
$$
Then
\begin{equation}
  \label{e:split-apart}
\langle \mathcal F_N u,v\rangle=\sum_{\ell_1=0}^{N_1-1}\sum_{j_2=0}^{N_2-1}
\exp\Big(-{2\pi i j_2\ell_1 \over N}\Big)\mathcal F_{N_2}\tilde u_{\ell_1}(j_2)
\cdot \overline{\mathcal F_{N_1}^* \tilde v_{j_2}(\ell_1)}.
\end{equation}
\end{lemm}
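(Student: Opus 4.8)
The plan is to verify~\eqref{e:split-apart} by directly substituting the definition of $\mathcal F_N$ and reorganizing the double sum according to the factorization $N=N_1N_2$. First I would expand the left-hand side as
$$
\langle \mathcal F_N u,v\rangle
={1\over\sqrt N}\sum_{j=0}^{N-1}\sum_{\ell=0}^{N-1}
\exp\Big(-{2\pi i j\ell\over N}\Big)u(\ell)\overline{v(j)}.
$$
Then I would write each index uniquely in mixed-radix form adapted to the two ``reshapings'' used to define $\tilde u_{\ell_1}$ and $\tilde v_{j_2}$: namely $\ell=\ell_1+N_1\ell_2$ with $\ell_1\in\mathbb Z_{N_1}$, $\ell_2\in\mathbb Z_{N_2}$, and $j=j_2+N_2 j_1$ with $j_2\in\mathbb Z_{N_2}$, $j_1\in\mathbb Z_{N_1}$. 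This turns the single sum over $j,\ell\in\mathbb Z_N$ into a quadruple sum over $\ell_1,\ell_2,j_1,j_2$.

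Next I would compute the phase $j\ell/N$ modulo $1$ after substitution. Expanding,
$$
j\ell=(j_2+N_2j_1)(\ell_1+N_1\ell_2)
=j_2\ell_1+N_1 j_2\ell_2+N_2 j_1\ell_1+N j_1\ell_2,
$$
so dividing by $N=N_1N_2$ the last term $j_1\ell_2$ is an integer and drops out of the exponential, leaving
$$
\exp\Big(-{2\pi i j\ell\over N}\Big)
=\exp\Big(-{2\pi i j_2\ell_1\over N}\Big)
\exp\Big(-{2\pi i j_2\ell_2\over N_2}\Big)
\exp\Big(-{2\pi i j_1\ell_1\over N_1}\Big).
$$
With this the quadruple sum factors: the sum over $\ell_2$ produces $\sqrt{N_2}\,\mathcal F_{N_2}\tilde u_{\ell_1}(j_2)$, the sum over $j_1$ produces $\sqrt{N_1}\,\overline{\mathcal F_{N_1}\bar{\tilde v}_{j_2}}(\ell_1) = \sqrt{N_1}\,\overline{\mathcal F_{N_1}^*\tilde v_{j_2}(\ell_1)}$ (using $\mathcal F_{N_1}^*=\overline{\mathcal F_{N_1}}$), the normalizing constant $\sqrt{N_1 N_2}/\sqrt N=1$, and what remains is the asserted double sum over $\ell_1,j_2$ with the cross phase $\exp(-2\pi i j_2\ell_1/N)$.

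The only genuine obstacle here is bookkeeping: making sure the two different mixed-radix decompositions (one for the $u$-index, one for the $v$-index) are consistently applied, that the cross term $j_2\ell_1/N$ does \emph{not} simplify further (it survives because $j_2\ell_1$ need not be divisible by either $N_1$ or $N_2$), and that the complex conjugation on $v$ correctly converts $\mathcal F_{N_1}$ applied to $\bar{\tilde v}_{j_2}$ into $\overline{\mathcal F_{N_1}^*\tilde v_{j_2}}$. Everything else is a routine rearrangement of finite sums, so no analytic input is needed.
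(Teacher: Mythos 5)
Your proposal is correct and is essentially identical to the paper's proof: both expand the inner product, decompose $\ell=\ell_1+N_1\ell_2$ and $j=j_2+N_2 j_1$, drop the integer cross term $j_1\ell_2$ from the phase, and factor the remaining triple sum into $\mathcal F_{N_2}\tilde u_{\ell_1}(j_2)$ and $\overline{\mathcal F_{N_1}^*\tilde v_{j_2}(\ell_1)}$. One small slip in the bookkeeping: the inner sum $\sum_{j_1}e^{-2\pi i j_1\ell_1/N_1}\overline{\tilde v_{j_2}(j_1)}$ equals $\sqrt{N_1}\,\mathcal F_{N_1}\bar{\tilde v}_{j_2}(\ell_1)=\sqrt{N_1}\,\overline{\mathcal F_{N_1}^*\tilde v_{j_2}(\ell_1)}$, without the extra conjugation bar you placed over $\mathcal F_{N_1}\bar{\tilde v}_{j_2}$; the end result you state is nevertheless the right one.
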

%%%%%%%%%%%%%%%%%%%%%%%%%%%%%%%%%%%%%%%%%%%%%%%%%%%%%%%%%%%%%%%%%%%%%%%%%%%%%%%%
\begin{proof}
We write
$$
\begin{aligned}
\langle \mathcal F_Nu,v\rangle&=
{1\over\sqrt{N}}\sum_{j,\ell=0}^{N-1}\exp\Big(-{2\pi i j\ell\over N}\Big)u(\ell)\overline{v(j)}\\
&={1\over\sqrt{N}}\sum_{j_1,\ell_1=0}^{N_1-1}
\sum_{j_2,\ell_2=0}^{N_2-1}\exp\Big(-{2\pi i (j_2+N_2j_1)(\ell_1+N_1\ell_2)\over N}\Big)\tilde u_{\ell_1}(\ell_2)\overline{\tilde v_{j_2}(j_1)}.
\end{aligned}
$$
Now, since $\exp(-2\pi i j_1\ell_2)=1$, we have for each $j_2,\ell_1$
$$
\begin{gathered}
{1\over\sqrt{N}}\sum_{j_1=0}^{N_1-1}
\sum_{\ell_2=0}^{N_2-1}\exp\Big(-{2\pi i (j_2+N_2j_1)(\ell_1+N_1\ell_2)\over N}\Big)\tilde u_{\ell_1}(\ell_2)\overline{\tilde v_{j_2}(j_1)}\\
=\exp\Big(-{2\pi i j_2\ell_1\over N}\Big)\mathcal F_{N_2}\tilde u_{\ell_1}(j_2)
\cdot\overline{\mathcal F_N^* \tilde v_{j_2}(\ell_1)}
\end{gathered}
$$
finishing the proof.
\end{proof}
%%%%%%%%%%%%%%%%%%%%%%%%%%%%%%%%%%%%%%%%%%%%%%%%%%%%%%%%%%%%%%%%%%%%%%%%%%%%%%%%
As a corollary of Lemma~\ref{l:split-apart}
we get the following bound on the norm~\eqref{e:op-norm-fup}
when the sets $X,Y$ have special structure:
%%%%%%%%%%%%%%%%%%%%%%%%%%%%%%%%%%%%%%%%%%%%%%%%%%%%%%%%%%%%%%%%%%%%%%%%%%%%%%%%
\begin{lemm}
  \label{l:norm-apart}
Assume that $X_1,Y_1\subset\{0,\dots,N_1-1\}$,
$X_2,Y_2\subset \{0,\dots,N_2-1\}$, and define
$X,Y\subset \{0,\dots, N-1\}\simeq \ell^2_N$ by
$$
\begin{aligned}
X&:=\{j_2+N_2j_1\mid j_1\in X_1,\ j_2\in X_2\},\\
Y&:=\{\ell_1+N_1\ell_2\mid \ell_1\in Y_1,\ \ell_2\in Y_2\}.
\end{aligned}
$$
Then
$$
\|\indic_X\mathcal F_N\indic_Y\|_{\ell^2_N\to\ell^2_N}
\leq
\|\indic_{X_1}\mathcal F_{N_1}\indic_{Y_1}\|_{\ell^2_{N_1}\to\ell^2_{N_1}}
\cdot
\|\indic_{X_2}\mathcal F_{N_2}\indic_{Y_2}\|_{\ell^2_{N_2}\to\ell^2_{N_2}}.
$$ 
\end{lemm}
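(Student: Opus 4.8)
The plan is to combine the characterization~\eqref{e:norm-ip} of the operator norm with the splitting identity~\eqref{e:split-apart}. Fix $u,v\in\ell^2_N\setminus\{0\}$ with $\supp u\subset Y$ and $\supp v\subset X$; by~\eqref{e:norm-ip} it suffices to bound $|\langle\mathcal F_N u,v\rangle|$ by $\|\indic_{X_1}\mathcal F_{N_1}\indic_{Y_1}\|\cdot\|\indic_{X_2}\mathcal F_{N_2}\indic_{Y_2}\|\cdot\|u\|_{\ell^2_N}\|v\|_{\ell^2_N}$. In the notation of Lemma~\ref{l:split-apart}, the product structure of $Y$ means that $\tilde u_{\ell_1}=0$ whenever $\ell_1\notin Y_1$ and $\supp\tilde u_{\ell_1}\subset Y_2$ for $\ell_1\in Y_1$; likewise $\tilde v_{j_2}=0$ for $j_2\notin X_2$ and $\supp\tilde v_{j_2}\subset X_1$ for $j_2\in X_2$. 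Since $(\ell_1,\ell_2)\mapsto\ell_1+N_1\ell_2$ and $(j_1,j_2)\mapsto j_2+N_2 j_1$ are bijections $\mathbb Z_{N_1}\times\mathbb Z_{N_2}\to\mathbb Z_N$, we also have the Plancherel-type identities $\sum_{\ell_1}\|\tilde u_{\ell_1}\|_{\ell^2_{N_2}}^2=\|u\|_{\ell^2_N}^2$ and $\sum_{j_2}\|\tilde v_{j_2}\|_{\ell^2_{N_1}}^2=\|v\|_{\ell^2_N}^2$.

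Next I would rewrite~\eqref{e:split-apart} as a Frobenius pairing of matrices indexed by $(\ell_1,j_2)$: put $A_{\ell_1,j_2}:=\mathcal F_{N_2}\tilde u_{\ell_1}(j_2)$, $B_{\ell_1,j_2}:=\mathcal F_{N_1}^*\tilde v_{j_2}(\ell_1)$, and $\Theta_{\ell_1,j_2}:=\exp(-2\pi i j_2\ell_1/N)$, so that $\langle\mathcal F_N u,v\rangle=\sum_{\ell_1,j_2}\Theta_{\ell_1,j_2}A_{\ell_1,j_2}\overline{B_{\ell_1,j_2}}$, where the terms with $\ell_1\notin Y_1$ or $j_2\notin X_2$ vanish by the support statements above. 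The key point—and the only place requiring any care—is that the unimodular phase $\Theta$ costs nothing at the level of Hilbert--Schmidt norms: applying Cauchy--Schwarz to this entrywise sum and using $|\Theta_{\ell_1,j_2}|=1$ gives $|\langle\mathcal F_N u,v\rangle|\leq\big(\sum_{\ell_1\in Y_1,\,j_2\in X_2}|A_{\ell_1,j_2}|^2\big)^{1/2}\big(\sum_{\ell_1\in Y_1,\,j_2\in X_2}|B_{\ell_1,j_2}|^2\big)^{1/2}$.

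It then remains to estimate the two sums. For the first, $\sum_{\ell_1\in Y_1}\sum_{j_2\in X_2}|\mathcal F_{N_2}\tilde u_{\ell_1}(j_2)|^2=\sum_{\ell_1\in Y_1}\|\indic_{X_2}\mathcal F_{N_2}\indic_{Y_2}\tilde u_{\ell_1}\|_{\ell^2_{N_2}}^2\leq\|\indic_{X_2}\mathcal F_{N_2}\indic_{Y_2}\|^2\,\|u\|_{\ell^2_N}^2$, using $\supp\tilde u_{\ell_1}\subset Y_2$ and the identity from the first paragraph; for the second, similarly $\sum_{j_2\in X_2}\sum_{\ell_1\in Y_1}|\mathcal F_{N_1}^*\tilde v_{j_2}(\ell_1)|^2\leq\|\indic_{Y_1}\mathcal F_{N_1}^*\indic_{X_1}\|^2\,\|v\|_{\ell^2_N}^2$, and one notes $\|\indic_{Y_1}\mathcal F_{N_1}^*\indic_{X_1}\|=\|\indic_{X_1}\mathcal F_{N_1}\indic_{Y_1}\|$ by taking adjoints (the projections are self-adjoint and the operator norm is invariant under adjunction). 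Combining these three displays and taking the supremum over $u,v$ via~\eqref{e:norm-ip} yields the claim. I do not expect a genuine obstacle: once the decomposition of Lemma~\ref{l:split-apart} is in hand the argument is essentially bookkeeping, the one mild subtlety being the Hadamard/Hilbert--Schmidt trick that discards the coupling phase $\Theta$.
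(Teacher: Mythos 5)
Your proposal is correct and matches the paper's own proof step for step: both invoke the duality~\eqref{e:norm-ip}, expand via Lemma~\ref{l:split-apart}, use the product structure of $X,Y$ to restrict the sum to $\ell_1\in Y_1$, $j_2\in X_2$, apply Cauchy--Schwarz (the unimodular phase contributing nothing), and bound each resulting square sum by the corresponding one-dimensional operator norm. The only cosmetic difference is that you spell out the adjoint identity $\|\indic_{Y_1}\mathcal F_{N_1}^*\indic_{X_1}\|=\|\indic_{X_1}\mathcal F_{N_1}\indic_{Y_1}\|$, which the paper leaves implicit.
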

%%%%%%%%%%%%%%%%%%%%%%%%%%%%%%%%%%%%%%%%%%%%%%%%%%%%%%%%%%%%%%%%%%%%%%%%%%%%%%%%
\begin{proof}
We use~\eqref{e:norm-ip}. Assume that $u,v\in\ell^2_N$
and $\supp u\subset Y$, $\supp v\subset X$. To estimate
$\langle\mathcal F_N u,v\rangle$, we use~\eqref{e:split-apart}.
We have $\tilde u_{\ell_1}=0$ unless $\ell_1\in Y_1$,
and $\tilde v_{j_2}=0$ unless $j_2\in X_2$, therefore
the sum on the right-hand side of~\eqref{e:split-apart}
can be taken over $\ell_1\in Y_1,j_2\in X_2$. By Cauchy--Schwartz,
we then get
$$
|\langle\mathcal F_N u,v\rangle|^2\leq
\bigg(\sum_{\ell_1\in Y_1\atop j_2\in X_2}|\mathcal F_{N_2}\tilde u_{\ell_1}(j_2)|^2\bigg)
\bigg(\sum_{\ell_1\in Y_1\atop j_2\in X_2}|\mathcal F_{N_1}^*\tilde v_{j_2}(\ell_1)|^2\bigg).
$$
Since $\supp \tilde u_{\ell_1}\subset Y_2$, $\supp \tilde v_{j_2}\subset X_1$, we have
$$
\begin{aligned}
\sum_{\ell_1\in Y_1\atop j_2\in X_2}|\mathcal F_{N_2}\tilde u_{\ell_1}(j_2)|^2
\ &\leq\
\|\indic_{X_2}\mathcal F_{N_2}\indic_{Y_2}\|_{\ell^2_{N_2}\to\ell^2_{N_2}}^2
\cdot \|u\|_{\ell^2_N}^2,\\
\sum_{\ell_1\in Y_1\atop j_2\in X_2}|\mathcal F_{N_1}^*\tilde v_{j_2}(\ell_1)|^2
\ &\leq\
\|\indic_{Y_1}\mathcal F_{N_1}^*\indic_{X_1}\|_{\ell^2_{N_1}\to\ell^2_{N_1}}^2
\cdot \|v\|_{\ell^2_N}^2
\end{aligned}
$$
finishing the proof.
\end{proof}
%%%%%%%%%%%%%%%%%%%%%%%%%%%%%%%%%%%%%%%%%%%%%%%%%%%%%%%%%%%%%%%%%%%%%%%%%%%%%%%%
In the case of Cantor sets~\eqref{e:C-k}, by putting $N_1=M^{k_1},N_2=M^{k_2}$,
$X_1=Y_1=\mathcal C_{k_1}$, $X_2=Y_2=\mathcal C_{k_2}$,
$X=Y=\mathcal C_k$, Lemma~\ref{l:norm-apart} implies
the following submultiplicative inequality on the norm $r_k$ defined in~\eqref{e:r-k-again}:
\begin{equation}
  \label{e:submultiplicative}
r_{k_1+k_2}\leq r_{k_1}r_{k_2},\quad
k_1,k_2\in\mathbb N.
\end{equation}
The sequence $\log r_k$ is then subadditive, which by Fekete's Lemma gives
%%%%%%%%%%%%%%%%%%%%%%%%%%%%%%%%%%%%%%%%%%%%%%%%%%%%%%%%%%%%%%%%%%%%%%%%%%%%%%%%
\begin{prop}
  \label{l:beta-limit}
For $\beta$ defined in~\eqref{e:beta-limsup}, we have
\begin{equation}
  \label{e:beta-limit}
\beta=-\lim_{k\to\infty}{\log r_k\over k\log M}=-\inf_{k}{\log r_k\over k\log M}.
\end{equation}
\end{prop}
%%%%%%%%%%%%%%%%%%%%%%%%%%%%%%%%%%%%%%%%%%%%%%%%%%%%%%%%%%%%%%%%%%%%%%%%%%%%%%%%
Proposition~\ref{l:beta-limit} implies that the inequality in~\eqref{e:beta-fup}
is proven once we show that
\begin{equation}
  \label{e:beta-little}
-{\log r_k\over k\log M}>\max\Big(0,{1\over 2}-\delta\Big)\quad\text{for some }k.
\end{equation}

%%%%%%%%%%%%%%%%%%%%%%%%%%%%%%%%%%%%%%%%%%%%%%%%%%%%%%%%%%%%%%%%%%%%%%%%%%%%%%%%
\subsection{Improvements over the pressure bound}
\label{s:improve-1}

We start the proof of~\eqref{e:beta-little} by improving
over the pressure bound ${1\over 2}-\delta$. We rely on the following
general
%%%%%%%%%%%%%%%%%%%%%%%%%%%%%%%%%%%%%%%%%%%%%%%%%%%%%%%%%%%%%%%%%%%%%%%%%%%%%%%%
\begin{figure}
\includegraphics[scale=0.75]{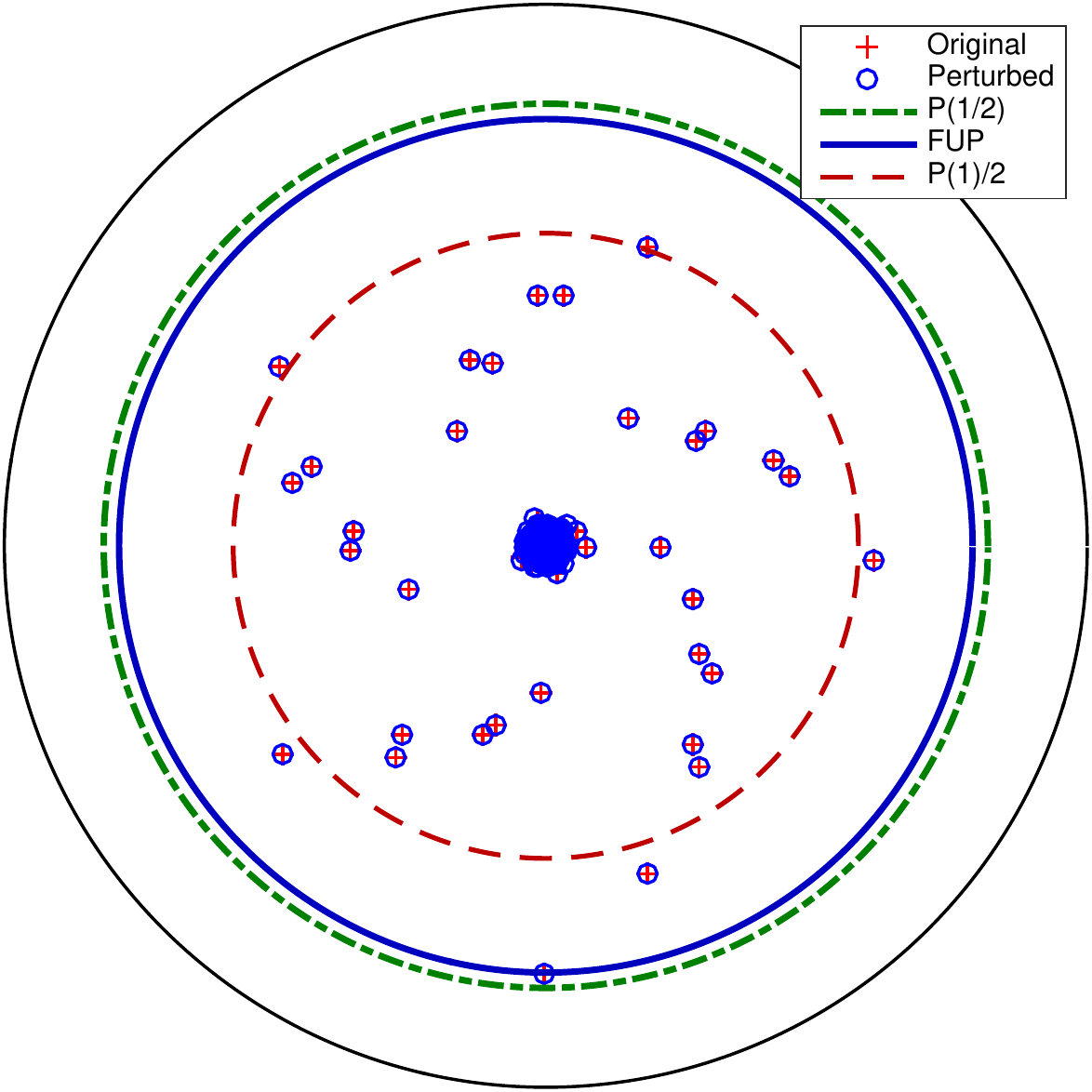}
\caption{The spectrum of $B_N$ for $M=6$, $\mathcal A=\{2,3\}$, $k=5$. The pressure bound on the spectral radius
is not sharp but the slightly smaller fractal uncertainty bound appears
to be sharp.}
\label{f:improved-pressure}
\end{figure}
%%%%%%%%%%%%%%%%%%%%%%%%%%%%%%%%%%%%%%%%%%%%%%%%%%%%%%%%%%%%%%%%%%%%%%%%%%%%%%%%
%
%%%%%%%%%%%%%%%%%%%%%%%%%%%%%%%%%%%%%%%%%%%%%%%%%%%%%%%%%%%%%%%%%%%%%%%%%%%%%%%%
\begin{lemm}
  \label{l:improve-pressure}
Assume $X,Y\subset \mathbb Z_N$ and there exist
\begin{equation}
  \label{e:off-beat}
j,j'\in X,\quad
\ell,\ell'\in Y,\quad
(j-j')(\ell-\ell')\notin N\mathbb Z.
\end{equation}
Then for some global constant $\mathbf K$,
\begin{equation}
  \label{e:improve-pressure}
\|\indic_X\mathcal F_N\indic_Y\|_{\ell^2_N\to\ell^2_N}\leq
\Big(1-{1\over \mathbf KN^4}\Big)\sqrt{|X|\cdot |Y|\over N}.
\end{equation}
\end{lemm}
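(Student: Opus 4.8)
The plan is to realize the Hilbert--Schmidt bound \eqref{e:hs} as an equality for a rank-one model and then quantify the defect coming from the assumption \eqref{e:off-beat}. Recall that $\|\indic_X\mathcal F_N\indic_Y\|_{\HS}^2=\sum_{j\in X,\ell\in Y}|(\mathcal F_N)_{j\ell}|^2=\tfrac{|X|\cdot|Y|}{N}$, since every matrix entry of $\mathcal F_N$ has modulus $N^{-1/2}$. If the operator norm were equal to this Hilbert--Schmidt norm, then the operator $A:=\indic_X\mathcal F_N\indic_Y$ (viewed as an $|X|\times|Y|$ matrix) would have exactly one nonzero singular value, i.e.\ it would be rank one. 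So the first step is: \emph{if $A$ is not rank one, then $\|A\|_{\ell^2_N\to\ell^2_N}<\|A\|_{\HS}$, and we need a quantitative gap.} Write $s_1\geq s_2\geq\cdots\geq 0$ for the singular values of $A$. Then $\|A\|^2=s_1^2$ and $\|A\|_{\HS}^2=\sum s_i^2$, so
\[
\|A\|^2 = \|A\|_{\HS}^2 - \sum_{i\geq 2}s_i^2 \leq \|A\|_{\HS}^2 - s_2^2 = {|X|\cdot|Y|\over N} - s_2^2.
\]
Thus it suffices to produce a lower bound $s_2^2\geq (\mathbf K N^4)^{-1}\cdot\tfrac{|X|\cdot|Y|}{N}\cdot\big(1+o(1)\big)$, or more simply $s_2^2\geq c\,N^{-3}$ for an absolute constant $c$, together with the trivial bound $\tfrac{|X|\cdot|Y|}{N}\leq N$; then $\|A\|^2\leq \tfrac{|X|\cdot|Y|}{N}\big(1-\tfrac{s_2^2 N}{|X||Y|}\big)\leq \tfrac{|X|\cdot|Y|}{N}\big(1-\tfrac{c}{N^4}\big)$, and $\sqrt{1-x}\leq 1-x/2$ absorbs the factor of $2$ into $\mathbf K$.

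The second step is the lower bound on $s_2$, i.e.\ showing that $A$ is \textbf{quantitatively} not rank one. The natural tool is the $2\times 2$ minors: $s_1 s_2 = \sigma_2(A)$ where $\sigma_2$ is the second elementary symmetric function of the singular values, and $\sum_i\sigma_i(A)^2\text{-type identities}$ aside, the cleanest route is that $\|\wedge^2 A\|_{\HS}^2 = \sum_{i<i'} s_i^2 s_{i'}^2 \geq s_1^2 s_2^2$, and also $\|\wedge^2 A\|_{\HS}^2 = \sum |{\det}_{2\times 2}|^2$ summed over all pairs of rows and pairs of columns. The hypothesis \eqref{e:off-beat} gives rows $j,j'$ and columns $\ell,\ell'$ for which the corresponding $2\times2$ submatrix of $\mathcal F_N$ has determinant
\[
\tfrac1N\Big(\zeta^{j\ell+j'\ell'}-\zeta^{j\ell'+j'\ell}\Big),\quad \zeta:=e^{-2\pi i/N},
\]
whose modulus is $\tfrac1N\big|1-\zeta^{(j-j')(\ell-\ell')}\big|$; since $(j-j')(\ell-\ell')\notin N\mathbb Z$, this is $\tfrac1N\cdot 2|\sin(\pi m/N)|$ for some $m\not\equiv0$, hence $\geq \tfrac1N\cdot\tfrac{4}{N}=\tfrac{4}{N^2}$ using $|\sin(\pi x)|\geq 2\,d(x,\mathbb Z)$ and $d(m/N,\mathbb Z)\geq 1/N$. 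Therefore $\|\wedge^2 A\|_{\HS}^2\geq (4/N^2)^2 = 16 N^{-4}$, giving $s_1^2 s_2^2\geq 16 N^{-4}$, and combined with $s_1^2\leq \|A\|_{\HS}^2\leq N$ this yields $s_2^2\geq 16 N^{-5}$. Plugging into the first step with $\tfrac{|X||Y|}{N}\leq N$ gives $\|A\|^2\leq \tfrac{|X||Y|}{N}\big(1-\tfrac{16}{N^6}\big)$, which is \eqref{e:improve-pressure} with room to spare (one can sharpen the power of $N$ but it is immaterial).

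The only real subtlety — and the step I expect to be the main obstacle to writing cleanly — is keeping the bookkeeping of singular values versus Hilbert--Schmidt norms of exterior powers correct, in particular making sure the inequality $s_1^2 s_2^2\le \|\wedge^2 A\|_{\HS}^2$ is used in the right direction and that the chosen $2\times 2$ minor is actually among those summed in $\|\wedge^2 A\|_{\HS}^2$ (it is, since $j,j'\in X$ index rows that survive $\indic_X$ and $\ell,\ell'\in Y$ index columns surviving $\indic_Y$). An alternative, perhaps more elementary, route avoiding exterior algebra: test $\|A\|$ against a well-chosen pair of unit vectors supported on $\{j,j'\}$ and $\{\ell,\ell'\}$ to see that the $2\times2$ submatrix $A'$ already has $\|A'\|<\|A'\|_{\HS}$ quantitatively, but this only bounds a compression of $A$ and does not immediately bound $\|A\|$ itself, so the exterior-power argument is the honest one. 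I would present the exterior-power version, stating the needed linear-algebra fact ($\|\wedge^2 A\|_{\operatorname{HS}}^2=\sum_{i<i'}s_i^2 s_{i'}^2=\sum$ of squared $2\times2$ minors) without proof as standard.
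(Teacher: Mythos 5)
Your overall strategy (compare the operator norm with the Hilbert--Schmidt norm, and detect a second nonzero singular value via the $2\times 2$ minor singled out by~\eqref{e:off-beat}) is the same as the paper's, and your computation of the minor, $|{\det}|=\tfrac1N|1-e^{-2\pi i(j-j')(\ell-\ell')/N}|\geq \tfrac2N\sin(\pi/N)$, is correct. But the key step is wrong as written: from $\|\wedge^2A\|_{\HS}^2\geq 16N^{-4}$ and the (correct) inequality $s_1^2s_2^2\leq\|\wedge^2A\|_{\HS}^2$ you conclude $s_1^2s_2^2\geq 16N^{-4}$ --- that uses the inequality in the reverse direction and does not follow. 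What you actually need is a \emph{lower} bound on $s_1s_2$ from the minor, and the clean way to get it is either $s_1s_2=\|\wedge^2A\|_{\ell^2\to\ell^2}\geq|(\wedge^2A)_{(j,j'),(\ell,\ell')}|$ (an operator norm dominates each matrix entry), or --- as the paper does --- the interlacing/Weyl inequality stating that the singular values of $A$ dominate those of the compression $\indic_{\{j,j'\}}A\,\indic_{\{\ell,\ell'\}}$, whose two singular values multiply to the minor. This is precisely the ``compression'' route you dismissed: it is not used to bound $\|A\|$ from above but to bound $\sigma_2(A)$ from \emph{below}, which is exactly what is needed.

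There is a second, quantitative problem. Even after repairing the direction of the inequality, your bookkeeping (bounding $s_1^2\leq N$, $|X|\cdot|Y|\leq N^2$, and then subtracting $s_2^2$) yields $\|A\|^2\leq\tfrac{|X||Y|}{N}(1-cN^{-6})$, and you assert this is~\eqref{e:improve-pressure} ``with room to spare''; it is the opposite --- $1-cN^{-6}$ is \emph{larger} than $1-\tfrac{1}{\mathbf K N^4}$ for large $N$, so you have proved a strictly weaker statement (still sufficient for Corollary~\ref{l:improve-pressure-cantor}, but not for the lemma as stated). To recover the exponent $4$ one must split into cases: for $|X|\cdot|Y|\geq N+1$ the claim is trivial from $\|A\|\leq 1$, while for $|X|\cdot|Y|\leq N$ one solves the constraints $\sigma_1^2+\sigma_2^2\leq\tfrac{|X||Y|}{N}$, $\sigma_1\sigma_2\geq\tfrac2N\sin(\pi/N)$ exactly (via the quadratic formula, as in the paper), so that the relative gap is $\gtrsim\sin^2(\pi/N)/(|X|^2|Y|^2)\cdot N^2\geq cN^{-4}$ rather than $N^{-6}$.
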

%%%%%%%%%%%%%%%%%%%%%%%%%%%%%%%%%%%%%%%%%%%%%%%%%%%%%%%%%%%%%%%%%%%%%%%%%%%%%%%%
\begin{proof}
Let $\sigma_1\geq\sigma_2\geq\dots\sigma_N\geq 0$ be the singular values
of $\indic_X\mathcal F_N\indic_Y$. Then
$$
\sigma_1=\|\indic_X\mathcal F_N\indic_Y\|_{\ell^2_N\to \ell^2_N}.
$$
Moreover, by~\eqref{e:hs}
\begin{equation}
  \label{e:sing-sum}
\sigma_1^2+\sigma_2^2\leq \sum_{r=1}^N \sigma_r^2=\|\indic_X\mathcal F_N\indic_Y\|_{\HS}^2
={|X|\cdot |Y|\over N}.
\end{equation}
It then remains to obtain a lower bound on $\sigma_1\sigma_2$.

By Weyl inequalities for products of operators~\cite[Theorem~3.3.16(d)]{HornJohnson}, the singular values of $\indic_X\mathcal F_N\indic_Y$ are greater
than or equal than those of
\begin{equation}
  \label{e:smallmat}
\indic_{X'}\mathcal F_N\indic_{Y'}=\indic_{X'}(\indic_X\mathcal F_N\indic_Y)\indic_{Y'},\quad
X':=\{j,j'\},\quad
Y':=\{\ell,\ell'\}.
\end{equation}
The matrix of~\eqref{e:smallmat} only has four nonzero elements,
and the absolute value of the determinant of the $2\times 2$ matrix composed of these is equal to
$$
{2\over N}\Big|\sin\Big({\pi\over N}(j-j')(\ell-\ell')\Big)\Big|\geq {2\over N}\sin\Big({\pi\over N}\Big)
$$
where in the last inequality we used~\eqref{e:off-beat}.
Therefore,
$$
\sigma_1\sigma_2\geq {2\over N}\sin\Big({\pi\over N}\Big)
$$
and by~\eqref{e:sing-sum} we have for $|X|\cdot |Y|\leq N$ and some global constant $\mathbf K$,
$$
\begin{aligned}
\sigma_1^2&={\sigma_1^2+\sigma_2^2+\sqrt{(\sigma_1^2+\sigma_2^2)^2-4\sigma_1^2\sigma_2^2}\over 2}\\
&\leq {|X|\cdot|Y|\over 2N}\Bigg(1+\sqrt{1-{16\over |X|^2\cdot |Y|^2}\sin^2\Big({\pi\over N}\Big)}\,\Bigg)\\
&\leq {|X|\cdot |Y|\over N}\Big(1-{1\over \mathbf KN^4}\Big)^2.
\end{aligned}
$$
For $|X|\cdot |Y|\geq N+1$, \eqref{e:improve-pressure} holds simply
by~\eqref{e:0-fup}; this finishes the proof.
\end{proof}
%%%%%%%%%%%%%%%%%%%%%%%%%%%%%%%%%%%%%%%%%%%%%%%%%%%%%%%%%%%%%%%%%%%%%%%%%%%%%%%%
In the case of Cantor sets, Lemma~\ref{l:improve-pressure} implies
%%%%%%%%%%%%%%%%%%%%%%%%%%%%%%%%%%%%%%%%%%%%%%%%%%%%%%%%%%%%%%%%%%%%%%%%%%%%%%%%
\begin{corr}
  \label{l:improve-pressure-cantor}
We have for some global constant $\mathbf K$ and $\beta$ defined in~\eqref{e:beta-limit}
\begin{equation}
  \label{e:improve-pressure-cantor}
\beta\geq -{\log r_2\over 2\log M}\geq{1\over 2}-\delta+{1\over \mathbf KM^8\log M}.
\end{equation}
\end{corr}
%%%%%%%%%%%%%%%%%%%%%%%%%%%%%%%%%%%%%%%%%%%%%%%%%%%%%%%%%%%%%%%%%%%%%%%%%%%%%%%%
\begin{proof}
Take $a,a'\in\mathcal A$, $a<a'$,
and put $j=\ell=Ma+a$, $j'=\ell'=Ma+a'$.
Then $(j-j')(\ell-\ell')=(a'-a)^2\in (0,M^2)$, thus~\eqref{e:off-beat}
holds for $N=M^2$, $X=Y=\mathcal C_2$. The bound~\eqref{e:improve-pressure}
then gives~\eqref{e:improve-pressure-cantor}.
\end{proof}
%%%%%%%%%%%%%%%%%%%%%%%%%%%%%%%%%%%%%%%%%%%%%%%%%%%%%%%%%%%%%%%%%%%%%%%%%%%%%%%%
\Remark The power of $M$ in~\eqref{e:improve-pressure-cantor} is most likely
not sharp. However, Proposition~\ref{l:lower-1/2} below gives
examples of alphabets for $\delta<1/2$ with a
power upper bound on the improvement $\beta-({1\over 2}-\delta)$.
See Table~\ref{b:worst-FUP} and Figure~\ref{f:improved-pressure}
for numerical evidence.

%%%%%%%%%%%%%%%%%%%%%%%%%%%%%%%%%%%%%%%%%%%%%%%%%%%%%%%%%%%%%%%%%%%%%%%%%%%%%%%%
\begin{table}
\begin{tabular}{|l|l|l|l|l||l|l|l|l|l|}
\hline
$M$ & $|\mathcal A|$ & $\delta$ & $k$ & \vrule height12pt width0pt depth6pt$\beta_{\min}-\max(0,{1\over 2}-\delta)$ &
$M$ & $|\mathcal A|$ & $\delta$ & $k$ & $\beta_{\min}-\max(0,{1\over 2}-\delta)$
\\\hline
10 & 2 & 0.3010 & 12 & $5.4\cdot 10^{-3}$ & 
9 & 2 & 0.3155 & 12 & $6.5 \cdot 10^{-3}$
\\\hline
8 & 2 & 0.3333 & 12 & $9.5\cdot 10^{-3}$
& 
7 & 2 & 0.3562 & 12 & $1.3\cdot 10^{-2}$ 
\\\hline
6 & 2 & 0.3869 & 12 & $2\cdot 10^{-2}$ 
& 
5 & 2 & 0.4307 & 12 & $3.2\cdot 10^{-2}$ 
\\\hline
10 & 3 & 0.4771 & 7 & $3.7\cdot 10^{-2}$ 
& 
\textbf{4} & \textbf{2} & \textbf{0.5000} & \textbf{12} & $\mathbf{5.4\cdot 10^{-2}}$ 
\\\hline
\textbf{9} & \textbf{3} & \textbf{0.5000} & \textbf{7} & $\mathbf{4.1\cdot 10^{-2}}$ 
& 
8 & 3 & 0.5283 & 7 & $3.4\cdot 10^{-2}$ 
\\\hline
7 & 3 & 0.5646 & 7 & $2.1\cdot 10^{-2}$ 
& 
10 & 4 & 0.6021 & 6 & $7.3\cdot 10^{-3}$ 
\\\hline
6 & 3 & 0.6131 & 7 & $8.7\cdot 10^{-3}$
& 
3 & 2 & 0.6309 & 12 & $6.2\cdot 10^{-3}$ 
\\\hline
9 & 4 & 0.6309 & 6 & $3.6\cdot 10^{-3}$
& 
8 & 4 & 0.6667 & 6 & $1.1\cdot 10^{-3}$ 
\\\hline
5 & 3 & 0.6826 & 7 & $5.1\cdot 10^{-4}$ 
& 
10 & 5 & 0.6990 & 5 & $1.3\cdot 10^{-4}$
\\\hline
7 & 4 & 0.7124 & 6 & $4.8\cdot 10^{-5}$
& 
9 & 5 & 0.7325 & 5 & $4.6\cdot 10^{-6}$ 
\\\hline
6 & 4 & 0.7737 & 6 & $3.5\cdot 10^{-7}$ 
& 
8 & 5 & 0.7740 & 5 & $4.5\cdot 10^{-8}$ 
\\\hline
10 & 6 & 0.7782 & 4 & $4.1\cdot 10^{-9}$ 
& 
4 & 3 & 0.7925 & 7 & $3\cdot 10^{-7}$ 
\\\hline
9 & 6 & 0.8155 & 4 & $<10^{-12}$ 
& 
7 & 5 & 0.8271 & 5 & $1.9\cdot 10^{-11}$ 
\\\hline
10 & 7 & 0.8451 & 4 & $<10^{-12}$ 
& 
5 & 4 & 0.8614 & 6 & $<10^{-12}$ 
\\\hline
8 & 6 & 0.8617 & 4 & $<10^{-12}$ 
& 
9 & 7 & 0.8856 & 4 & $<10^{-12}$ 
\\\hline
6 & 5 & 0.8982 & 5 & $<10^{-12}$ 
& 
10 & 8 & 0.9031 & 4 & $<10^{-12}$ 
\\\hline
7 & 6 & 0.9208 & 4 & $<10^{-12}$ 
& 
8 & 7 & 0.9358 & 4 & $<10^{-12}$ 
\\\hline
9 & 8 & 0.9464 & 4 & $<10^{-12}$ 
& 
10 & 9 & 0.9542 & 3 & $<10^{-12}$ 
\\\hline
\end{tabular}
\smallskip
\caption{Numerically computed minimal values $\beta_{\min}$
of $-\log r_k/(k\log M)$ 
(which approximate the fractal
uncertainty exponents $\beta(M,\mathcal A)$ by~\eqref{e:beta-limit})
for fixed $M,|\mathcal A|$, sorted by $\delta$.
The alphabets achieving $\beta_{\min}$ were all arithmetic progressions,
mostly with difference $1$.
Note that the improvement over the
standard bound, $\beta_{\min}-\max(0,{1\over 2}-\delta)$, is the largest
when $\delta={1\over 2}$, and is very small when $\delta$ is close to 1,
already for $M=4$. This is in agreement with
the lower envelope of Figure~\ref{f:fupcloud}.}
\label{b:worst-FUP}
\end{table}
%%%%%%%%%%%%%%%%%%%%%%%%%%%%%%%%%%%%%%%%%%%%%%%%%%%%%%%%%%%%%%%%%%%%%%%%%%%%%%%%

%%%%%%%%%%%%%%%%%%%%%%%%%%%%%%%%%%%%%%%%%%%%%%%%%%%%%%%%%%%%%%%%%%%%%%%%%%%%%%%%
\subsection{Improvements over the zero bound}
\label{s:improve-2}

We next show that the left-hand side of~\eqref{e:beta-little}
is greater than 0 for some $k$. We rely on the following general
%%%%%%%%%%%%%%%%%%%%%%%%%%%%%%%%%%%%%%%%%%%%%%%%%%%%%%%%%%%%%%%%%%%%%%%%%%%%%%%%
\begin{lemm}
  \label{l:improve-0}
Assume $X,Y\subset\mathbb Z_N$ and for some $L\in \{1,\dots, N-1\}$, the following
two conditions hold:
%%%%%%%%%%%%%%%%%%%%%%%%%%%%%%%%%%%%%%%%%%%%%%%%%%%%%%%%%%%%%%%%%%%%%%%%%%%%%%%%
\begin{enumerate}
\item $|X|\leq L$;
\item $Y$ has a gap of size $L$, that is
there exists $j\in \mathbb Z_N$ such that
$j,\dots,j+L-1\notin Y$,
with addition carried in $\mathbb Z_N$.
\end{enumerate}
%%%%%%%%%%%%%%%%%%%%%%%%%%%%%%%%%%%%%%%%%%%%%%%%%%%%%%%%%%%%%%%%%%%%%%%%%%%%%%%%
Then
\begin{equation}
  \label{e:improve-0}
\|\indic_X\mathcal F_N\indic_Y\|_{\ell^2_N\to \ell^2_N}\leq \sqrt{1-2^{-2N}}.
\end{equation}
\end{lemm}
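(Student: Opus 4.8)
The plan is to exploit the gap in $Y$ to produce a test function in frequency that is highly concentrated, together with the size bound on $X$ to control its spatial mass. Concretely, I would argue by contradiction: suppose $\|\indic_X\mathcal F_N\indic_Y\|_{\ell^2_N\to\ell^2_N}$ is very close to $1$, say at least $\sqrt{1-2^{-2N}}$ fails, meaning the norm exceeds this value. By~\eqref{e:norm-ip} there exist $u,v\in\ell^2_N\setminus\{0\}$ with $\supp u\subset Y$, $\supp v\subset X$, $\|u\|=\|v\|=1$, and $|\langle\mathcal F_N u,v\rangle|$ nearly $1$. Since $\mathcal F_N$ is unitary, $\|\mathcal F_N u\|=1$, so Cauchy--Schwarz forces $\mathcal F_N u$ to be extremely close (in $\ell^2_N$) to a unit multiple of $v$, hence $\mathcal F_N u$ is concentrated on $X$ up to an error of size $\mathcal O(2^{-N})$ in $\ell^2$.

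The heart of the matter is then a quantitative uncertainty statement: a function $u$ supported on $Y$, which has a gap of length $L$, cannot have its Fourier transform $\mathcal F_N u$ concentrated on a set $X$ of size $\le L$. I would make this effective by using the structure of the matrix $\mathcal F_N\indic_Y$ restricted to rows in $X$. Specifically, $w:=\mathcal F_N u$ is a trigonometric polynomial whose frequencies avoid $L$ consecutive values (those in the gap of $Y$), and whose values on the $\le L$ points of $X$ determine a linear system; the relevant submatrix is (a submatrix of) a Vandermonde-type / Fourier matrix whose smallest singular value admits an explicit lower bound. Equivalently, I would argue that the map $w\mapsto (w|_X, \widehat{w}|_{\text{gap}})$ is injective on the relevant finite-dimensional space and bound its inverse: if $w=\mathcal F_N u$ with $u$ vanishing on an $L$-block, and $w$ is small off $X$, then $w$ is small everywhere, a contradiction with $\|w\|=1$. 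The crude exponential constant $2^{-2N}$ in~\eqref{e:improve-0} strongly suggests the authors obtain it from a determinant/Cramer's-rule estimate on an $L\times L$ minor of $\mathcal F_N$: such a minor is nonsingular (a Fourier/Vandermonde minor with distinct nodes), its determinant is bounded below by something like $N^{-L}\prod|\sin(\pi\cdot)/N|$, i.e. at worst exponentially small in $N$, and inverting gives exactly a loss of order $2^{N}$, hence the bound $\sqrt{1-2^{-2N}}$.

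So the key steps, in order, are: (i) reduce via~\eqref{e:norm-ip} to a near-maximizing pair $(u,v)$ and use unitarity of $\mathcal F_N$ to convert a norm close to $1$ into $\mathcal F_N u$ being $\ell^2$-close to being supported in $X$; (ii) use that $u$ is supported in $Y$, which omits an $L$-block, to view the situation as an overdetermined interpolation problem for a band-limited-in-a-complementary-sense sequence; (iii) extract an $L\times L$ nonsingular submatrix of $\mathcal F_N$ (columns indexed in $X$, rows indexed in the gap of $Y$, or the transpose of this), bound its inverse by $\mathbf K^N$ using the Vandermonde determinant formula, and conclude that $\mathcal F_N u$ small off $X$ forces $u$ small, contradicting $\|u\|=1$; (iv) track constants to land on the explicit $\sqrt{1-2^{-2N}}$.

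The main obstacle I expect is step (iii): getting the clean, fully explicit lower bound on the smallest singular value of the relevant Fourier minor (and correctly matching it against the $2^{-2N}$ in the statement) requires care with which rows/columns to select and with the Vandermonde product $\prod_{p<q}|e^{2\pi i x_p/N}-e^{2\pi i x_q/N}|$, whose worst case over node configurations is what produces the exponential constant. The role of the hypotheses is transparent here: condition (1), $|X|\le L$, ensures the minor we invert is square of size $L$, and condition (2), the gap of size $L$ in $Y$, provides the $L$ frequencies that must be "killed", i.e. the $L$ rows of $\mathcal F_N$ outside $\supp u$ that together with $X$-concentration overdetermine $u$. Everything else is bookkeeping with unitarity and Cauchy--Schwarz.
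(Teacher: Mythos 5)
Your overall strategy---use the consecutive gap in $Y$ to view $\mathcal F_N u$ as the values of a low-degree polynomial at roots of unity, then run an interpolation/linear-algebra argument to show such a function cannot concentrate on a set of size $\le L$---is the right one, and it would prove that the norm is strictly less than $1$. The genuine gap is in your step (iii), which is precisely where the constant must come from. You propose to invert an $L\times L$ minor of $\mathcal F_N$ (columns in $X$, rows in the gap) by Cramer's rule and assert its inverse is $\mathcal O(\mathbf K^N)$ via the Vandermonde determinant. That bound is false in the regime that matters: writing $\omega_N=\exp(-2\pi i/N)$, the determinant of this minor is, up to unimodular factors, $N^{-L/2}\prod_{p<q}2\,|\sin(\pi(x_p-x_q)/N)|$ over $x_p\in X$, and when $X$ is itself an interval of length $L\sim N/2$ this product has size $e^{-cN^2}$. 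Since the minor has operator norm at most $1$, the determinant only certifies a smallest singular value $\ge e^{-cN^2}$, so your scheme yields a bound of the shape $\sqrt{1-e^{-cN^2}}$, not the stated $\sqrt{1-2^{-2N}}$. (A smaller caveat: for composite $N$ a general square minor of $\mathcal F_N$ can be singular, so the nonsingularity you invoke genuinely requires the rows to be the $L$ consecutive indices of the gap; ``distinct nodes'' alone is not enough. You seem aware of this, but it should be made explicit.)

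The paper sidesteps the worst-case Vandermonde product entirely. After a cyclic shift so that $Y\subset\{0,\dots,N-L-1\}$, one writes $\mathcal F_N u(j)=N^{-1/2}p(\omega_N^j)$ with $p(z)=\sum_{\ell\in Y}u(\ell)z^\ell$ of degree at most $N-L-1<N-|X|$, and Lagrange-interpolates $p$ at \emph{all} $N-|X|$ nodes $\omega_N^j$, $j\notin X$ (an $(N-|X|)$-point system, not an $L\times L$ one). The denominators of the Lagrange basis polynomials are controlled by the exact identity $\prod_{m\neq j}|\omega_N^j-\omega_N^m|=N$, obtained by differentiating $z^N-1=\prod_r(z-\omega_N^r)$ at $z=\omega_N^j$; this gives $|\mathcal L_j(z)|\le 2^N/N$ on $|z|=1$ with no dependence on the configuration of $X$, hence $|p(z)|^2\le 2^{2N}\big(1-\|\indic_X\mathcal F_N u\|_{\ell^2_N}^2\big)$, and averaging over the roots of unity yields exactly $\|\indic_X\mathcal F_N u\|_{\ell^2_N}^2\le 1-2^{-2N}$. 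To repair your argument you would need to replace the crude determinant/Cramer estimate by this identity or an equivalent device; as written, step (iii) does not deliver the claimed constant, although it does suffice for the qualitative conclusion that the norm is bounded away from $1$.
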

%%%%%%%%%%%%%%%%%%%%%%%%%%%%%%%%%%%%%%%%%%%%%%%%%%%%%%%%%%%%%%%%%%%%%%%%%%%%%%%%
\begin{proof}
By cyclically shifting $Y$ and using~\eqref{e:same-shift}
we may assume that
$$
Y\subset \{0,\dots,N-L-1\}.
$$
Assume that $u\in\ell^2_N$, $\|u\|_{\ell^2_N}=1$, $\supp u\subset Y$, and consider the polynomial
$$
p(z)=\sum_{\ell\in Y} u(\ell) z^\ell.
$$
Note that $p$ has degree at most $N-L-1$.
Denoting $\omega_N=\exp(-2\pi i/N)$, we have
$$
\mathcal F_N u(j)={1\over \sqrt{N}}\,p(\omega_N^j),\quad j\in\mathbb Z_N.
$$
Using that $\|\mathcal F_N u\|_{\ell^2_N}=1$, we compute
$$
\|\indic_X\mathcal F_N u\|_{\ell^2_N}^2=
1-{1\over N}\sum_{0\leq j<N\atop j\notin X}|p(\omega_N^j)|^2.
$$
This immediately shows that $\|\indic_X\mathcal F_N u\|_{\ell^2_N}<1$,
since otherwise the equation $p(z)=0$ has at least $N-|X|\geq N-L$ roots.

To get a quantitative bound, we use Lagrange interpolation: since
$p$ has degree at most $N-L-1<N-|X|$, we write
$$
p(z)=\sum_{0\leq j<N\atop j\notin X}p(\omega_N^j)\mathcal L_j(z),\quad
\mathcal L_j(z)=\prod_{0\leq m<N\atop m\notin X,\ m\neq j}{z-\omega_N^m\over \omega_N^j-\omega_N^m}.
$$
Differentiating at $z=1$ the polynomial
$$
z^N-1=\prod_{r=0}^{N-1} (z-\omega_N^r),
$$
we get for each $j\in\mathbb Z_N$
$$
N=\prod_{r=1}^{N-1}(1-\omega_N^r)=\prod_{0\leq m<N\atop m\neq j}|\omega_N^j-\omega_N^m|.
$$
Assume $|z|=1$; since $|z-\omega_N^r|\leq 2$, we get for $j\notin X$
$$
|\mathcal L_j(z)|\leq {2^{N-|X|}\over N}\prod_{m\in X}|\omega_N^j-\omega_N^m|
\leq {2^N\over N}.
$$
Therefore, by H\"older's inequality we have for $|z|=1$
$$
|p(z)|^2\leq {2^{2N}\over N}\sum_{0\leq j<N\atop j\notin X}|p(\omega_N^j)|^2
=2^{2N}\big(1-\|\indic_X\mathcal F_N u\|_{\ell^2_N}^2\big).
$$
Since 
$$
1=\|\mathcal F_N u\|_{\ell^2_N}^2={1\over N}\sum_{r=0}^N |p(\omega_N^r)|^2,
$$
we obtain $\|\indic_X\mathcal F_N u\|_{\ell^2_N}^2\leq 1-2^{-2N}$, finishing the proof.
\end{proof}
%%%%%%%%%%%%%%%%%%%%%%%%%%%%%%%%%%%%%%%%%%%%%%%%%%%%%%%%%%%%%%%%%%%%%%%%%%%%%%%%
In the case of Cantor sets, Lemma~\ref{l:improve-0} implies
%%%%%%%%%%%%%%%%%%%%%%%%%%%%%%%%%%%%%%%%%%%%%%%%%%%%%%%%%%%%%%%%%%%%%%%%%%%%%%%%
\begin{figure}
\includegraphics[scale=0.75]{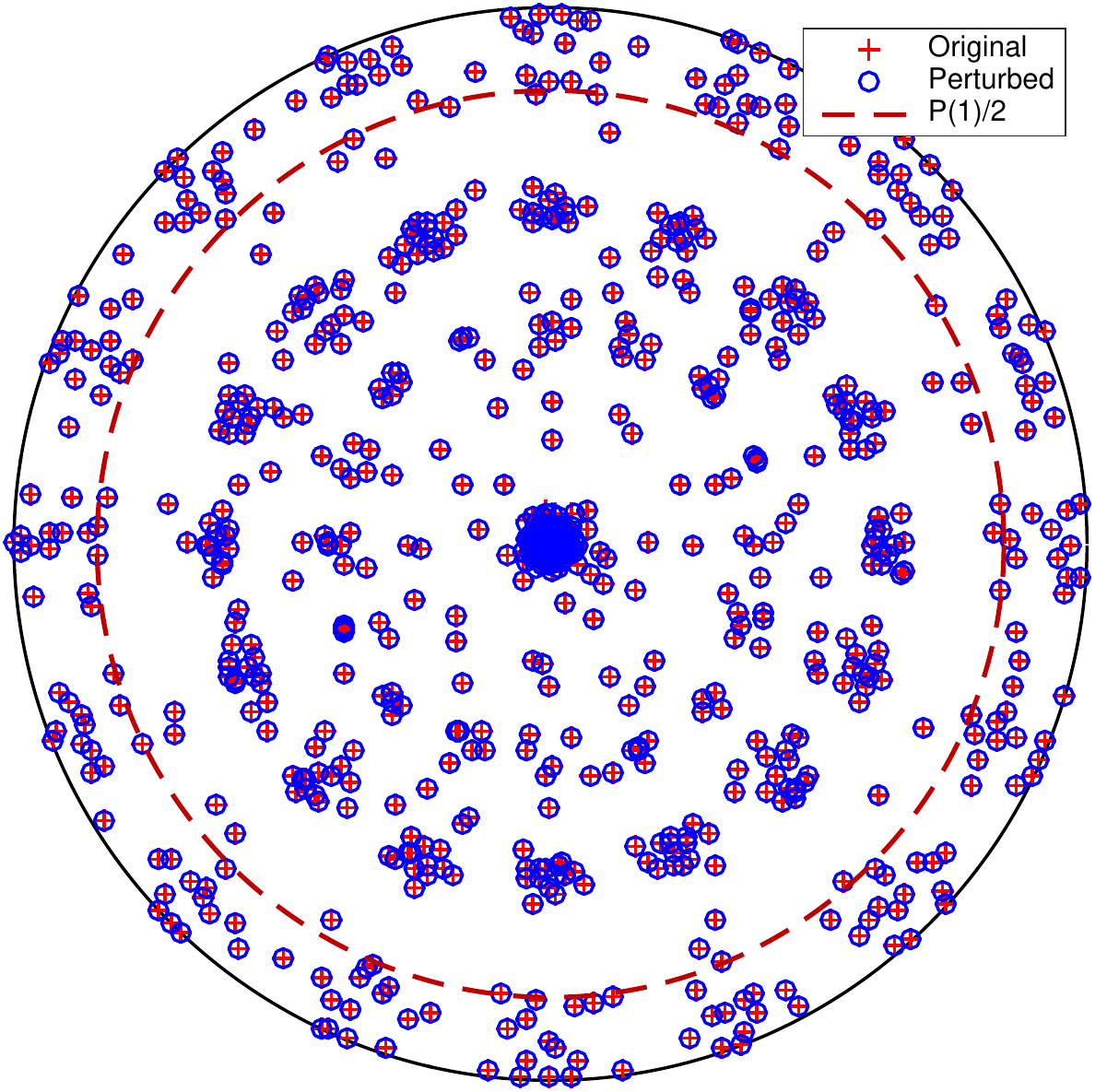}
\caption{The spectrum of $B_N$ for $M=7$, $\mathcal A=\{1,2,3,4,5\}$, $k=4$.
The numerically computed uncertainty principle exponent
is $\sim 10^{-11}$ and the spectral radius appears
to be very close to 1.}
\label{f:improved-0}
\end{figure}
%%%%%%%%%%%%%%%%%%%%%%%%%%%%%%%%%%%%%%%%%%%%%%%%%%%%%%%%%%%%%%%%%%%%%%%%%%%%%%%%
%
%%%%%%%%%%%%%%%%%%%%%%%%%%%%%%%%%%%%%%%%%%%%%%%%%%%%%%%%%%%%%%%%%%%%%%%%%%%%%%%%
\begin{corr}
  \label{l:improve-0-cantor}
For $\beta$ defined in~\eqref{e:beta-limit}, we have
\begin{equation}
  \label{e:improve-0-cantor}
\beta\geq -{\log r_k\over k\log M}\geq {2^{-2M^k}\over 2k\log M}>0\quad\text{for all }
k\geq {1\over 1-\delta}-{\log\lceil M^{1-\delta}-1\rceil\over(1-\delta)\log M}.
\end{equation}
\end{corr}
%%%%%%%%%%%%%%%%%%%%%%%%%%%%%%%%%%%%%%%%%%%%%%%%%%%%%%%%%%%%%%%%%%%%%%%%%%%%%%%%
\begin{proof}
For each $a\in \mathcal A$, define
$$
L_a:=\max\{j\geq 0\colon a+\ell\notin\mathcal A\text{ for all }\ell\in\mathbb Z,\ 
1\leq \ell\leq j\}
$$
where addition is carried in $\mathbb Z_M$. We have
$$
\sum_{a\in\mathcal A}(L_a+1)=M.
$$
Therefore, there exists $a\in\mathcal A$ such that
$L_a\geq \lceil M^{1-\delta}-1\rceil$. Then the set $\mathcal C_k$
has a gap starting at $(a+1)M^{k-1}$ of size
$$
L=\lceil M^{1-\delta}-1\rceil M^{k-1}.
$$
By the condition on $k$, we have
$$
|\mathcal C_k|=M^{\delta k}\leq L.
$$
Applying Lemma~\ref{l:improve-0} with $N=M^k$, $X=Y=\mathcal C_k$,
we obtain~\eqref{e:improve-0-cantor}.
\end{proof}
%%%%%%%%%%%%%%%%%%%%%%%%%%%%%%%%%%%%%%%%%%%%%%%%%%%%%%%%%%%%%%%%%%%%%%%%%%%%%%%%
\Remark For fixed $\delta\in (0,1)$ and large $M$, the bound on $k$ in~\eqref{e:improve-0-cantor}
is asymptotic to $k\geq \delta/(1-\delta)$. While we prove no upper bounds
on $\beta$, numerical evidence in Table~\ref{b:worst-FUP} and Figure~\ref{f:improved-0}
suggests that there exists some alphabets with $\delta>{1\over 2}$ for which
$\beta$ is very small and the spectral radius of $B_N$ is very close to 1.

We are now ready to finish the
%%%%%%%%%%%%%%%%%%%%%%%%%%%%%%%%%%%%%%%%%%%%%%%%%%%%%%%%%%%%%%%%%%%%%%%%%%%%%%%%
\begin{proof}[Proof of Theorem~\ref{t:gap-detailed}]
The existence of the limit in~\eqref{e:beta-fup} follows
from Proposition~\ref{l:beta-limit}. The fact
that $\beta>\max(0,{1\over 2}-\delta)$ follows from Corollaries~\ref{l:improve-pressure-cantor}
and~\ref{l:improve-0-cantor}. Finally the asymptotic spectral radius bound~\eqref{e:gap-improves}
follows from Proposition~\ref{l:fup-reduction}.
\end{proof}
%%%%%%%%%%%%%%%%%%%%%%%%%%%%%%%%%%%%%%%%%%%%%%%%%%%%%%%%%%%%%%%%%%%%%%%%%%%%%%%%

%%%%%%%%%%%%%%%%%%%%%%%%%%%%%%%%%%%%%%%%%%%%%%%%%%%%%%%%%%%%%%%%%%%%%%%%%%%%%%%%
\subsection{Improvements using additive energy}
  \label{s:ae-improvements}
  
So far we have established lower bounds on the improvement $\beta-\max(0,{1\over 2}-\delta)$,
with $\beta$ defined in~\eqref{e:beta-limit}, which decay like 
a power of~$M$ for $\delta\leq {1\over 2}$ (see Corollary~\ref{l:improve-pressure-cantor})
and exponentially in~$M$ for $\delta\geq {1\over 2}$ (see Corollary~\ref{l:improve-0-cantor}).
However, Figure~\ref{f:fupcloud} and Table~\ref{b:worst-FUP} indicate that the value
of $\beta-\max(0,{1\over 2}-\delta)$ should be larger when $\delta\approx{1\over 2}$.
In this section we explain this observation
by establishing lower bounds on $\beta$ which decay like ${1\over\log M}$ when $\delta\approx {1\over 2}$.

Our bounds rely on the following general statement:
%%%%%%%%%%%%%%%%%%%%%%%%%%%%%%%%%%%%%%%%%%%%%%%%%%%%%%%%%%%%%%%%%%%%%%%%%%%%%%%%
\begin{lemm}
  \label{l:fup-ae}
Assume $X,Y\subset\mathbb Z_N$. Then
$$
\|\indic_X\mathcal F_N\indic_Y\|_{\ell^2_N\to\ell^2_N}\leq {\widetilde E_A(X)^{1/8}|Y|^{3/8}\over N^{3/8}}
$$
where the quantity $\widetilde E_A(X)$, called \textbf{additive energy} of $X$, is defined by
$$
\widetilde E_A(X):=\big|\{(a,b,c,d)\in X^4\mid a+b=c+d\mod N\}\big|.
$$
\end{lemm}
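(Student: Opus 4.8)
The plan is to estimate the operator norm of $T:=\indic_X\mathcal F_N\indic_Y$ by passing to the Hilbert--Schmidt norm of $TT^*$ and recognizing the result as a Fourier-analytic quantity in which the additive energy of $X$ appears after a single Cauchy--Schwarz.

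First I would set up notation and record two elementary identities. Write $f_Z(n)=\sum_{a\in Z}e^{2\pi i na/N}$ for $Z\subset\mathbb Z_N$. Orthogonality of the characters of $\mathbb Z_N$ gives $\sum_{n=0}^{N-1}|f_X(n)|^4=N\,\widetilde E_A(X)$, and, letting $E_Y(m):=\big|\{(\ell,\ell')\in Y^2:\ell-\ell'\equiv m\bmod N\}\big|$, one has $\sum_m E_Y(m)^2=\widetilde E_A(Y)\le|Y|^3$, the last bound because $E_Y(m)\le|Y|$ and $\sum_m E_Y(m)=|Y|^2$.

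Next, since $TT^*$ is self-adjoint and positive semidefinite with eigenvalues $\lambda_1\ge\lambda_2\ge\cdots\ge0$ and $\|T\|^2=\|TT^*\|=\lambda_1$, we have $\|T\|^8=\lambda_1^4\le\big(\sum_i\lambda_i^2\big)^2=\|TT^*\|_{\HS}^4$ (equivalently, $\|T\|\le\|T\|_{S^4}$ and $\|T\|_{S^4}^4=\|TT^*\|_{\HS}^2$). It remains to bound $\|TT^*\|_{\HS}$, where $TT^*=\indic_X\mathcal F_N\indic_Y\mathcal F_N^*\indic_X$. A direct computation of the kernel shows that $\mathcal F_N\indic_Y\mathcal F_N^*$ has $(j,m)$ entry equal to $\tfrac1N f_Y(m-j)$, hence, using $|f_Y(m-j)|=|f_Y(j-m)|$,
\[
\|TT^*\|_{\HS}^2=\frac1{N^2}\sum_{a,a'\in X}|f_Y(a-a')|^2=\frac1{N^2}\sum_{\ell,\ell'\in Y}|f_X(\ell-\ell')|^2=\frac1{N^2}\sum_m E_Y(m)\,|f_X(m)|^2,
\]
where the second equality comes from expanding $|f_Y|^2$ and $|f_X|^2$ into characters and interchanging the order of summation, and the third from grouping by $m=\ell-\ell'$. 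Now Cauchy--Schwarz together with the two identities above yields
\[
\|TT^*\|_{\HS}^2\le\frac1{N^2}\Big(\sum_m E_Y(m)^2\Big)^{1/2}\Big(\sum_m|f_X(m)|^4\Big)^{1/2}\le\frac{|Y|^{3/2}\big(N\,\widetilde E_A(X)\big)^{1/2}}{N^2}=\frac{|Y|^{3/2}\widetilde E_A(X)^{1/2}}{N^{3/2}}.
\]
Combining with $\|T\|^8\le\|TT^*\|_{\HS}^4$ gives $\|T\|^8\le|Y|^3\widetilde E_A(X)/N^3$, and taking eighth roots finishes the proof.

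The argument is entirely computational; the only point requiring a little foresight is the choice of exponent. One must raise $\|T\|$ to the eighth power (i.e. work with the Schatten $4$-norm of $T$, not the Hilbert--Schmidt norm of $T$ itself), so that the identity $\sum_m|f_X(m)|^4=N\,\widetilde E_A(X)$ can appear and a single Cauchy--Schwarz cleanly separates the difference-multiplicity function $E_Y$ from it; using only $\|T\|\le\|T\|_{\HS}$, or estimating $\sum_m|f_X(m)|^4\le N|X|^3$, merely reproduces the trivial bound~\eqref{e:pres-fup}.
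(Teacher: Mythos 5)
Your proof is correct, but it follows a genuinely different route from the paper's. The paper works with the Hermitian operator $A=T^*T=\indic_Y\mathcal F_N^*\indic_X\mathcal F_N\indic_Y$ (where $T=\indic_X\mathcal F_N\indic_Y$), whose kernel is $A_{j\ell}=\mathbf 1_Y(j)\mathbf 1_Y(\ell)\,\mathcal F_N(\mathbf 1_X)(\ell-j)/\sqrt N$, and bounds $\|A\|$ by Schur's test, $\|A\|\le\max_{j\in Y}\sum_{\ell\in Y}|\mathcal F_N(\mathbf 1_X)(\ell-j)|/\sqrt N$, followed by H\"older with exponents $(4/3,4)$ in $\ell$; this produces the factor $|Y|^{3/4}$ times $\|\mathcal F_N(\mathbf 1_X)\|_{\ell^4_N}$, and the latter is evaluated by orthogonality as $(\widetilde E_A(X)/N)^{1/4}$. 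You instead use the Schatten-$S^4$ estimate $\|T\|^4\le\|TT^*\|_{\HS}^2$, compute $\|TT^*\|_{\HS}^2=N^{-2}\sum_m E_Y(m)\,|f_X(m)|^2$, and separate the two factors by a single Cauchy--Schwarz; the same $\ell^4$ orthogonality identity enters through $\sum_m|f_X(m)|^4=N\,\widetilde E_A(X)$, while $\sum_m E_Y(m)^2\le|Y|^3$ supplies the $|Y|$-factor. Both arguments are short and computational, and each isolates the additive energy of $X$ through Plancherel for the fourth power of $f_X$. Yours stays entirely inside Hilbert--Schmidt/$\ell^2$ machinery (no Schur test), and it has a small bonus: the estimate is manifestly symmetric in $X$ and $Y$ up to the last step, so if you keep $\sum_m E_Y(m)^2=\widetilde E_A(Y)$ rather than bounding it by $|Y|^3$, you get the slightly stronger bound $\|\indic_X\mathcal F_N\indic_Y\|_{\ell^2_N\to\ell^2_N}\le\big(\widetilde E_A(X)\,\widetilde E_A(Y)\big)^{1/8}/N^{3/8}$, whereas the Schur-test argument is asymmetric from the start and yields directly the asymmetric form stated in the lemma.
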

%%%%%%%%%%%%%%%%%%%%%%%%%%%%%%%%%%%%%%%%%%%%%%%%%%%%%%%%%%%%%%%%%%%%%%%%%%%%%%%%
\begin{proof}
The operator $A=(\indic_X\mathcal F_N\indic_Y)^*\indic_X\mathcal F_N\indic_Y
=\indic_Y\mathcal F_N^*\indic_X\mathcal F_N\indic_Y$
has the matrix
$$
A_{j\ell}={\mathbf 1_{Y}(j)\mathbf 1_{Y}(\ell)\over\sqrt{N}} \mathcal F_N(\mathbf 1_X)(\ell-j)
$$
where $\mathbf 1_{X},\mathbf 1_{Y}$ denote indicator functions. By Schur's and
H\"older's inequalities,
$$
\begin{aligned}
\|\indic_X\mathcal F_N\indic_Y\|_{\ell^2_N\to \ell^2_N}^2&
=\|A\|_{\ell^2_N\to\ell^2_N}\\
&\leq \max_{j\in Y}\sum_{\ell\in Y}{|\mathcal F_N(\mathbf 1_X)(\ell-j)|\over\sqrt N}\\
&\leq {|Y|^{3/4}\cdot \|\mathcal F_N(\mathbf 1_X)\|_{\ell^4_N}\over \sqrt{N}}.
\end{aligned}
$$
Now
$$
\|\mathcal F_N(\mathbf 1_X)\|_{\ell^4_N}^4
={1\over N^2}\sum_{\ell=0}^{N-1}\sum_{a,b,c,d\in X}
\exp\Big({2\pi i(a+b-c-d)\ell\over N}
\Big)={\widetilde E_A(X)\over N}
$$
finishing the proof.
\end{proof}
%%%%%%%%%%%%%%%%%%%%%%%%%%%%%%%%%%%%%%%%%%%%%%%%%%%%%%%%%%%%%%%%%%%%%%%%%%%%%%%%
We remark that for all $X$
\begin{equation}
  \label{e:ae-basic-1}
|X|^2\leq \widetilde E_A(X)\leq |X|^3,
\end{equation}
where the first bound comes from considering quadruples of the form $(a,b,a,b)$ and
the second one, from the fact that $a,b,c$ determine $d$. Moreover,
by writing
$$
\widetilde E_A(X)=\sum_{j=0}^{N-1}\widetilde F_j(X)^2,\quad
\widetilde F_j(X):=\big|\{(a,b)\in X^2\mid a-b=j\mod N\}\big|,
$$
and using H\"older's inequality together with the identity
$\sum_j \widetilde F_j(X)=|X|^2$, we get
\begin{equation}
  \label{e:ae-basic-2}
\widetilde E_A(X)\geq {|X|^4\over N}.
\end{equation}
In the case of Cantor sets defined in~\eqref{e:C-k}, Lemma~\ref{l:fup-ae} immediately gives
%%%%%%%%%%%%%%%%%%%%%%%%%%%%%%%%%%%%%%%%%%%%%%%%%%%%%%%%%%%%%%%%%%%%%%%%%%%%%%%%
\begin{corr}
  \label{l:improved-ae}
Assume that for some constants $C,\gamma\geq 0$ and all $k$,
\begin{equation}
  \label{e:ae-improved}
\widetilde E_A(\mathcal C_k)\leq CN^{3\delta-\gamma}.
\end{equation}
Then the exponent $\beta$ defined in~\eqref{e:beta-limit} satisfies
\begin{equation}
  \label{e:beta-ae}
\beta\geq {3\over 4}\Big({1\over 2}-\delta\Big)+{\gamma\over 8}.
\end{equation}
\end{corr}
%%%%%%%%%%%%%%%%%%%%%%%%%%%%%%%%%%%%%%%%%%%%%%%%%%%%%%%%%%%%%%%%%%%%%%%%%%%%%%%%
Note that by~\eqref{e:ae-basic-1}, \eqref{e:ae-basic-2} we have
$$
\gamma\leq \min(\delta,1-\delta).
$$
Therefore, the bound~\eqref{e:beta-ae} cannot improve over the standard
bound $\max(0,{1\over 2}-\delta)$ unless $\delta$ is close to $1/2$,
specifically $\delta\in (1/3,4/7)$.
However, the advantage of~\eqref{e:beta-ae} over
the bounds in Corollaries~\ref{l:improve-pressure-cantor}, \ref{l:improve-0-cantor}
for $\delta\approx {1\over 2}$ is that the
exponent $\gamma$ from~\eqref{e:ae-improved} can be computed explicitly as follows:
%%%%%%%%%%%%%%%%%%%%%%%%%%%%%%%%%%%%%%%%%%%%%%%%%%%%%%%%%%%%%%%%%%%%%%%%%%%%%%%%
\begin{lemm}
  \label{l:ae-computed}
Let $\rho(\mathcal A)$ be the spectral radius of the $2\times 2$ matrix
\begin{equation}
  \label{e:ae-matrix}
\mathcal M(\mathcal A)=\begin{pmatrix}
E_{M-1}(\mathcal A)+E_{M+1}(\mathcal A)&
2E_{M}(\mathcal A)\\
E_1(\mathcal A)&
E_0(\mathcal A)
\end{pmatrix}
\end{equation}
where (with the equality below in $\mathbb Z$ rather than $\mathbb Z/N\mathbb Z$)
\begin{equation}
  \label{e:E-ell}
E_\ell(\mathcal A):=\big|\{(a,b,c,d)\in\mathcal A^4\mid a+b-c-d=\ell\}\big|,\quad
\ell\in\mathbb Z.
\end{equation}
Then \eqref{e:ae-improved} holds for each $\gamma<\gamma_{\mathcal A}$ where
\begin{equation}
  \label{e:gamma-A}
\gamma_{\mathcal A}=3\delta-{\log\rho(\mathcal{A})\over\log M}.
\end{equation}
\end{lemm}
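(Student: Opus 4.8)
The plan is to compute $\widetilde E_A(\mathcal C_k)$ exactly by a digit-by-digit carry argument encoded in a transfer matrix on the $3$-element set of carries $\{-1,0,1\}$, and then to match the exponential growth rate of this count with $\rho(\mathcal A)$. First I would reduce the congruence defining the additive energy to a carry recursion. For $a,b,c,d\in\mathcal C_k\subset\{0,\dots,M^k-1\}$ the bound $|a+b-c-d|\le 2M^k-2$ shows that $a+b\equiv c+d\pmod{M^k}$ is equivalent to $a+b-c-d\in\{-M^k,0,M^k\}$. Writing $a,b,c,d$ in base $M$ and setting $e_j:=a_j+b_j-c_j-d_j$ for their $j$-th digits, so $|e_j|\le 2(M-1)$, an induction on $j$ shows that $\sum_{j=0}^{k-1}e_jM^j\in\{-M^k,0,M^k\}$ if and only if there is a (necessarily unique) carry sequence $f_{-1}=0,f_0,\dots,f_{k-1}\in\{-1,0,1\}$ with $Mf_j=f_{j-1}+e_j$ for all $j$: the point at each step is that $M\mid (f_{j-1}+e_j)$ is forced, whereupon $|f_{j-1}+e_j|\le 2M-1<2M$ forces $f_j:=(f_{j-1}+e_j)/M\in\{-1,0,1\}$, and unrolling gives $\sum_j e_jM^j=M^k f_{k-1}$.

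Consequently $\widetilde E_A(\mathcal C_k)$ equals the total weight of length-$k$ walks on $\{-1,0,1\}$ starting at $0$, where a step from carry $f$ to carry $f'$ has weight $E_{Mf'-f}(\mathcal A)$, namely the number of digit quadruples in $\mathcal A^4$ realizing $e_j=Mf'-f$. That is, $\widetilde E_A(\mathcal C_k)=\mathbf 1^TT^k e_0$, where $e_0$ is the indicator of the carry $0$, $\mathbf 1=(1,1,1)$, and $T$ is the $3\times3$ nonnegative matrix with entries $T_{f',f}=E_{Mf'-f}(\mathcal A)$ (written out using $E_{-\ell}(\mathcal A)=E_\ell(\mathcal A)$, which follows by swapping $(a,b)\leftrightarrow(c,d)$). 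Now I would use that $T$ commutes with the sign-flip permutation of $\{-1,0,1\}$ (again by $E_\ell=E_{-\ell}$), so $T$ preserves the two-dimensional subspace $V_s=\operatorname{span}\{(1,0,1),(0,1,0)\}$; since $e_0$ and $\mathbf 1=(1,0,1)+(0,1,0)$ lie in $V_s$, the count only involves $T|_{V_s}$. Computing $T$ on the basis $\{(1,0,1),(0,1,0)\}$ gives the matrix with first row $(E_{M-1}(\mathcal A)+E_{M+1}(\mathcal A),\ E_M(\mathcal A))$ and second row $(2E_1(\mathcal A),\ E_0(\mathcal A))$, which has the same trace and determinant — hence the same spectral radius $\rho(\mathcal A)$ — as $\mathcal M(\mathcal A)$ from~\eqref{e:ae-matrix} (indeed the two are conjugate by a diagonal matrix). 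As this $2\times2$ matrix has nonnegative entries and trace $E_{M-1}(\mathcal A)+E_{M+1}(\mathcal A)+E_0(\mathcal A)\ge E_0(\mathcal A)\ge|\mathcal A|^2>0$, we have $\rho(\mathcal A)>0$, so $\gamma_{\mathcal A}$ is well defined.

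Finally, for a $2\times2$ matrix one has $\|(T|_{V_s})^k\|\le C_\varepsilon(\rho(\mathcal A)+\varepsilon)^k$ for every $\varepsilon>0$, so $\widetilde E_A(\mathcal C_k)\le C_\varepsilon(\rho(\mathcal A)+\varepsilon)^k$. Given $\gamma<\gamma_{\mathcal A}$, i.e.\ $\rho(\mathcal A)<M^{3\delta-\gamma}$, choosing $\varepsilon$ with $\rho(\mathcal A)+\varepsilon<M^{3\delta-\gamma}$ yields $\widetilde E_A(\mathcal C_k)\le C_\varepsilon M^{(3\delta-\gamma)k}=C_\varepsilon N^{3\delta-\gamma}$, which is~\eqref{e:ae-improved}. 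The one delicate point is the bookkeeping in the first step — verifying that the modular condition $a+b\equiv c+d\pmod{M^k}$ unfolds exactly into the carry recursion with carries confined to $\{-1,0,1\}$, with no spurious solutions and a unique carry sequence; once $T$ is in place the remaining linear algebra is routine.
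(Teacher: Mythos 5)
Your proof is correct and follows essentially the same route as the paper: the paper also encodes the count $\widetilde E_A(\mathcal C_k)$ via the base-$M$ addition algorithm with carries in $\{-1,0,1\}$, obtaining the same $3\times3$ transfer matrix, and then reduces to the $2\times2$ matrix $\mathcal M(\mathcal A)$ using the symmetry $x^{(k)}_0=x^{(k)}_2$ (your invariant-subspace/diagonal-conjugation observation is the same reduction written differently). Your treatment of the carry recursion and of the final spectral-radius estimate is somewhat more explicit than the paper's, but the argument is the same.
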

%%%%%%%%%%%%%%%%%%%%%%%%%%%%%%%%%%%%%%%%%%%%%%%%%%%%%%%%%%%%%%%%%%%%%%%%%%%%%%%%
\begin{proof}
We use the standard addition algorithm, keeping track of the carry digits.
For $k\in\mathbb N$, consider the vector
$$
x^{(k)}=(x^{(k)}_0,x^{(k)}_1,x^{(k)}_2)\in\mathbb N_0^3
$$
defined as follows: $x^{(k)}_j$ is the number of quadruples
$(a,b,c,d)\in \mathcal C_k^4$ such that
$$
a+b=c+d+(j-1)M^k\quad\text{in }\mathbb Z.
$$
A direct calculation shows that if we put $x^{(0)}:=(0,1,0)$, then
for $k\in\mathbb N$
$$
x^{(k)}=\begin{pmatrix}
E_{M-1}(\mathcal A)&
E_{M}(\mathcal A)&
E_{M+1}(\mathcal A)\\
E_1(\mathcal A)&
E_0(\mathcal A)&
E_1(\mathcal A)\\
E_{M+1}(\mathcal A)&
E_M(\mathcal A)&
E_{M-1}(\mathcal A)
\end{pmatrix}x^{(k-1)},\quad
\widetilde E_A(\mathcal C_k)=\langle x^{(k)},(1,1,1)\rangle.
$$
Now, it is easy to see that $x^{(k)}_0=x^{(k)}_2$ for all $k$. In fact,
if $y^{(k)}=(2x^{(k)}_0,x^{(k)}_1)$, then
$$
y^{(k)}=\mathcal M(\mathcal A)^{k}\begin{pmatrix}0\\1\end{pmatrix},\quad
\widetilde E_A(\mathcal C_k)=\langle y^{(k)},(1,1)\rangle.
$$
It follows that~\eqref{e:ae-improved} holds for each $\gamma<\gamma_A$.
\end{proof}
%%%%%%%%%%%%%%%%%%%%%%%%%%%%%%%%%%%%%%%%%%%%%%%%%%%%%%%%%%%%%%%%%%%%%%%%%%%%%%%%
The next statement, proved in Appendix~\ref{s:additive-combinatorics},
gives a positive lower bound on the exponent $\gamma_A$ from~\eqref{e:gamma-A}:
%%%%%%%%%%%%%%%%%%%%%%%%%%%%%%%%%%%%%%%%%%%%%%%%%%%%%%%%%%%%%%%%%%%%%%%%%%%%%%%%
\begin{prop}
\label{l:ub-rho}
For any $\zeta>0$, there exists a constant $\varepsilon>0$ only depending on~$\zeta$ such that whenever $1<|\mathcal{A}|<\frac{2}{3}(1-\zeta)M$, the spectral radius $\rho(\mathcal{A})$ of the
matrix $\mathcal{M}(\mathcal{A})$ defined in~\eqref{e:ae-matrix} satisfies
\begin{equation}
\rho(\mathcal{A})\leq(1-\varepsilon)|\mathcal{A}|^3.
\end{equation}
Thus $\gamma_\mathcal{A}\geq c/\log M$ where $c=-\log(1-\varepsilon)>0$.
\end{prop}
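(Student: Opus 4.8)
The plan is to exploit that $\mathcal M(\mathcal A)$ is a $2\times 2$ matrix with nonnegative entries, so $\rho(\mathcal A)$ is its larger eigenvalue, i.e.\ the larger root of the characteristic polynomial $p(\lambda)=\lambda^2-(\operatorname{tr}\mathcal M)\lambda+\det\mathcal M$, where $\operatorname{tr}\mathcal M=E_0+E_{M-1}+E_{M+1}$ and $\det\mathcal M=(E_{M-1}+E_{M+1})E_0-2E_ME_1$ (all energies of $\mathcal A$). Writing $A:=|\mathcal A|$, the bound $\rho(\mathcal A)\le(1-\varepsilon)A^3$ holds as soon as $(1-\varepsilon)A^3$ lies to the right of the vertex of $p$ and $p\big((1-\varepsilon)A^3\big)\ge0$. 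The first is the easy trace bound $\operatorname{tr}\mathcal M\le 2(1-\varepsilon)A^3$; after rearranging, the second reads
\begin{equation}
\label{e:ub-rho-crux}
\big((1-\varepsilon)A^3-E_0(\mathcal A)\big)\big((1-\varepsilon)A^3-E_{M-1}(\mathcal A)-E_{M+1}(\mathcal A)\big)\ \ge\ 2E_M(\mathcal A)\,E_1(\mathcal A).
\end{equation}
So everything reduces to establishing~\eqref{e:ub-rho-crux} together with the trace bound.

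Next I would record the elementary, structure-free estimates for the $E_\ell(\mathcal A)$. Writing $s:=\mathbf 1_{\mathcal A}*\mathbf 1_{\mathcal A}$, so that $E_\ell(\mathcal A)=\sum_n s(n)s(n-\ell)$ with $s$ supported in $\{0,\dots,2M-2\}$, Cauchy--Schwarz gives $E_\ell(\mathcal A)\le E_0(\mathcal A)$ for all $\ell$, and splitting the sum defining $E_M$ (resp.\ $E_{M\pm1}$) at $n=M$ and using that the ranges $n\ge M$ and $n\le M-2$ are essentially complementary in $\operatorname{supp}s$ gives $E_M(\mathcal A),E_{M-1}(\mathcal A),E_{M+1}(\mathcal A)\le\tfrac12E_0(\mathcal A)+\mathcal O(A^2)$. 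Combined with the classical elementary bound on the additive energy of a finite integer set, $E_0(\mathcal A)\le\tfrac23A^3+\tfrac13A$ (equality for arithmetic progressions), so that $E_0(\mathcal A)\le\tfrac34A^3$ and $A^3-E_0(\mathcal A)\ge\tfrac14A^3$, these yield the trace bound at once; and they already give~\eqref{e:ub-rho-crux} with room to spare whenever $E_0(\mathcal A)\le(\tfrac12-\varepsilon')A^3$, since then $2E_ME_1\le E_0^2+\mathcal O(A^5)\le(\tfrac12-\varepsilon')^2A^6+\mathcal O(A^5)$ while the left side of~\eqref{e:ub-rho-crux} is $\ge(\tfrac12+\varepsilon'-\varepsilon)^2A^6-\mathcal O(A^5)$. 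Two peripheral cases are disposed of directly: if $\max\mathcal A<M/2$ then $s$ is supported in $\{0,\dots,M-2\}$, so $E_M=E_{M-1}=E_{M+1}=0$, $\mathcal M$ is triangular, and $\rho(\mathcal A)=E_0(\mathcal A)\le\tfrac34A^3$; and the finitely many $\mathcal A$ with $A$ below an absolute constant are checked by hand.

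The heart of the argument is the remaining regime, $E_0(\mathcal A)$ close to its maximum $\tfrac23A^3$, and this is the only place the hypothesis $A<\tfrac23(1-\zeta)M$ is used. The mechanism is a trade-off between $E_0(\mathcal A)$ being large and $E_M(\mathcal A)$ being large. On one side, $E_M(\mathcal A)$ counts quadruples with $a+b=c+d+M$, which forces $\mathcal A$ to have many pairs with $a+b\ge M$, i.e.\ to reach well past $M/2$; decomposing $\mathcal A$ at $M/2$ and expanding $\mathbf 1_{\mathcal A}*\mathbf 1_{\mathcal A}$ shows this spreads out the sumset and pulls $E_0(\mathcal A)$, hence $E_1(\mathcal A)\le E_0(\mathcal A)$, well below $\tfrac23A^3$, again opening enough room in~\eqref{e:ub-rho-crux}. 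On the other side, when $E_0(\mathcal A)$ is within $o(1)\cdot A^3$ of $\tfrac23A^3$, $\mathcal A$ must be close to an arithmetic progression, and an arithmetic progression of length $A$ inside $\{0,\dots,M-1\}$ with $A<\tfrac23(1-\zeta)M$ has its sumset confined to an interval so short that $E_M(\mathcal A)$ and $E_{M\pm1}(\mathcal A)$ are each at most $\tfrac16\kappa(\zeta)^3A^3$ with $\kappa(\zeta)=\tfrac{1-4\zeta}{2(1-\zeta)}<\tfrac12$ (indeed all three vanish once $\zeta\ge\tfrac14$), which is comfortably small for~\eqref{e:ub-rho-crux} given $A^3-E_0\ge\tfrac14A^3$.

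The main obstacle is to turn this dichotomy into one quantitative statement uniform in $M,A$: an inverse/stability estimate of the shape ``$E_M(\mathcal A)\ge\eta A^3$ forces $E_0(\mathcal A)\le(\tfrac23-c(\eta))A^3$'' (with the analogous control of $E_{M\pm1}$) and explicit constants. This is exactly where the constant $\tfrac23$ enters, and it is sharp: for $\mathcal A=\{0,\dots,A-1\}$ one computes $\rho(\mathcal A)=\big(\tfrac23+a_0\big)A^3+\mathcal O(A^2)$ with $a_0=\tfrac{(2\alpha-1)_+^3}{3\alpha^3}$, $\alpha:=A/M$, so $\rho(\mathcal A)\to\tfrac{17}{24}A^3$ as $\alpha\uparrow\tfrac23$, whereas $\alpha$ near $1$ gives $\rho(\mathcal A)\to A^3$, showing the density restriction cannot be dropped. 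Once the stability estimate is available, combining it with the elementary bounds above yields~\eqref{e:ub-rho-crux} with a $\zeta$-dependent $\varepsilon>0$, hence $\rho(\mathcal A)\le(1-\varepsilon)|\mathcal A|^3$ and $\gamma_{\mathcal A}=3\delta-\tfrac{\log\rho(\mathcal A)}{\log M}\ge\tfrac{-\log(1-\varepsilon)}{\log M}>0$.
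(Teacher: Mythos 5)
Your reduction is sound and in fact parallels the paper's: writing $A=|\mathcal A|$, the paper normalizes to $\widetilde{\mathcal M}=A^{-3}\mathcal M(\mathcal A)$ and proves a small matrix lemma whose hypotheses are exactly your trace/determinant conditions, with the decisive quantities being the two \emph{column sums} $q+s=A^{-3}(E_0+2E_M)=A^{-3}\widetilde E_0(\mathcal A)$ and $p+r=A^{-3}(E_1+E_{M-1}+E_{M+1})=A^{-3}\widetilde E_1(\mathcal A)$, i.e.\ the mod-$M$ additive energies. Your elementary inputs ($E_\ell\le E_0$, $2E_M\le E_0$, $E_{M-1}+E_{M+1}\le E_0$, $E_0\le\frac23A^3+\frac13A$) are all correct and coincide with the paper's Proposition~\ref{l:ab-individual}, proved there by rearrangement inequalities.

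The gap is the combinatorial heart, which you name but do not prove, and the version you state is false. The claim ``$E_M(\mathcal A)\ge\eta A^3$ forces $E_0(\mathcal A)\le(\frac23-c(\eta))A^3$'' fails for an interval $\mathcal A=\{a_0,\dots,a_0+A-1\}$ with $M/2<A<\frac23(1-\zeta)M$: there $E_0=\frac23A^3-\mathcal O(A)$ is extremal while $E_M\asymp(2A-M)^3$ is a positive proportion of $A^3$ (your own $17/24$ computation is exactly this example), so no such trade-off exists. Moreover your two easy regimes leave the range $E_0\in\big((\frac12-\varepsilon')A^3,(\frac23-c)A^3\big)$ uncovered: there the elementary bounds give only $2E_ME_1\le E_0^2$, which exceeds the left side of your crux inequality, and the near-extremal structure theory (that $E_0$ close to $\frac23A^3$ forces $\mathcal A$ close to an AP, with quantitative control of $E_M,E_{M\pm1}$ for the perturbed set) is itself a nontrivial inverse theorem you would still have to prove. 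The paper closes this by a genuinely different mechanism: it works mod $M$, introduces the popular-difference sets $V_\alpha=\{j:\widetilde F_j(\mathcal A)\ge(1-\alpha)A\}$, shows $V_\alpha+V_\alpha\subset V_{2\alpha}$, and applies the inverse Fre\u\i man theorem in $\mathbb Z_M$ to place $V_\alpha$ inside a proper subgroup $L\mathbb Z/M\mathbb Z$ unless $A\gtrsim\frac23M$; since consecutive residues cannot both lie in a proper subgroup, $\widetilde E_1$ (one column sum) is then forced to be small whenever $\widetilde E_0$ (the other) is large. That trichotomy (Proposition~\ref{l:ae-cyclic}) is the statement your proposal is missing, and without it or a correct substitute the proof does not go through.
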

%%%%%%%%%%%%%%%%%%%%%%%%%%%%%%%%%%%%%%%%%%%%%%%%%%%%%%%%%%%%%%%%%%%%%%%%%%%%%%%%
Combining Corollary~\ref{l:improved-ae}, Lemma~\ref{l:ae-computed},
and Proposition~\ref{l:ub-rho}, we obtain
%%%%%%%%%%%%%%%%%%%%%%%%%%%%%%%%%%%%%%%%%%%%%%%%%%%%%%%%%%%%%%%%%%%%%%%%%%%%%%%%
\begin{prop}
  \label{l:improved-ae-ultimate}
There exists a global constant $\mathbf K$ such that for all
$(M,\mathcal A)$ satisfying
$$
\Big|\delta-{1\over 2}\Big|\leq {1\over \mathbf K\log M}
$$
the fractal uncertainty exponent $\beta=\beta(M,\mathcal A)$ defined in~\eqref{e:beta-limit} satisfies
\begin{equation}
  \label{e:log-improved}
\beta\geq \max\Big(0,{1\over 2}-\delta\Big)+{1\over \mathbf K\log M}.
\end{equation}
\end{prop}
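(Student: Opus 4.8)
The plan is to feed the additive-combinatorics estimate of Proposition~\ref{l:ub-rho} through Lemma~\ref{l:ae-computed} and Corollary~\ref{l:improved-ae}, and then to do a short amount of constant-chasing. Concretely: Proposition~\ref{l:ub-rho} will give a lower bound $\gamma_{\mathcal A}\geq c/\log M$ on the energy exponent~\eqref{e:gamma-A} with $c>0$ global; Lemma~\ref{l:ae-computed} then turns this into the additive-energy decay~\eqref{e:ae-improved} with a definite $\gamma\asymp 1/\log M$; and Corollary~\ref{l:improved-ae} converts that into a lower bound for $\beta$ which beats $\max(0,{1\over 2}-\delta)$ by a multiple of $1/\log M$ precisely when $\delta$ is within $O(1/\log M)$ of $1/2$.

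In detail, I would first fix a numerical constant $\zeta>0$ with ${2\over 3}(1-\zeta)>{1\over 2}$ (e.g. $\zeta={1\over 8}$) and let $\varepsilon=\varepsilon(\zeta)>0$ and $c=-\log(1-\varepsilon)>0$ be the resulting global constants of Proposition~\ref{l:ub-rho}; the constant $\mathbf K$ in the statement is chosen at the very end, large depending only on $c$. Given $(M,\mathcal A)$ with $|\delta-{1\over 2}|\leq {1\over \mathbf K\log M}$ we have $|\mathcal A|=M^\delta\leq M^{1/2}e^{1/\mathbf K}$, which for $\mathbf K$ large is $<{2\over 3}(1-\zeta)M$ for all but finitely many $M$; the finitely many small $M$ for which the hypothesis on $\delta$ is not vacuous (for $\mathbf K$ large only $M=4$ with $|\mathcal A|=2$ and $M=9$ with $|\mathcal A|=3$ survive, since $\delta$ that close to $1/2$ forces $M^{1/2}$ to be essentially an integer) are checked by hand. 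Hence Proposition~\ref{l:ub-rho} applies and $\gamma_{\mathcal A}\geq c/\log M$. Applying Lemma~\ref{l:ae-computed} with, say, $\gamma:={c\over 2\log M}<\gamma_{\mathcal A}$ yields~\eqref{e:ae-improved} for this $\gamma$ and some $C=C(M,\mathcal A)$, and then Corollary~\ref{l:improved-ae} gives
$$
\beta\ \geq\ {3\over 4}\Big({1\over 2}-\delta\Big)+{\gamma\over 8}\ =\ {3\over 4}\Big({1\over 2}-\delta\Big)+{c\over 16\log M}
$$
for the exponent $\beta$ of~\eqref{e:beta-limit} (which equals the one in~\eqref{e:beta-fup} by Proposition~\ref{l:beta-limit}).

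It remains to compare the right-hand side with $\max(0,{1\over 2}-\delta)+{1\over \mathbf K\log M}$. If $\delta\geq {1\over 2}$, the bound reads $\beta\geq {3\over 4}({1\over 2}-\delta)+{c\over 16\log M}\geq {1\over \log M}\big({c\over 16}-{3\over 4\mathbf K}\big)$, using ${1\over 2}-\delta\geq -{1\over \mathbf K\log M}$. If $\delta<{1\over 2}$, then $\beta-({1\over 2}-\delta)\geq -{1\over 4}({1\over 2}-\delta)+{c\over 16\log M}\geq {1\over \log M}\big({c\over 16}-{1\over 4\mathbf K}\big)$, using $0\leq {1\over 2}-\delta\leq {1\over \mathbf K\log M}$. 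In either case, choosing $\mathbf K$ to be a large enough multiple of $1/c$ (and large enough that the verification of the hypothesis of Proposition~\ref{l:ub-rho} above goes through) makes the right-hand side at least ${1\over \mathbf K\log M}$, which is~\eqref{e:log-improved}.

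The genuinely hard input — the estimate $\rho(\mathcal A)\leq (1-\varepsilon)|\mathcal A|^3$ for the transfer matrix $\mathcal M(\mathcal A)$ — is exactly the content of Proposition~\ref{l:ub-rho}, proved separately in Appendix~\ref{s:additive-combinatorics}, so inside the present argument there is no substantial obstacle. The only points needing any care are (i) checking that the hypothesis $1<|\mathcal A|<{2\over 3}(1-\zeta)M$ of that proposition is automatic here, which holds because $\delta$ near $1/2$ forces $|\mathcal A|\approx M^{1/2}\ll M$, with the handful of small-$M$ edge cases dealt with directly; and (ii) the elementary bookkeeping in the last paragraph. I expect (i) to be the more fiddly of the two, purely because of those edge cases, which nonetheless become vacuous once $\mathbf K$ is taken large.
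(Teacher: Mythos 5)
Your proposal is correct and is exactly the paper's proof, which simply combines Corollary~\ref{l:improved-ae}, Lemma~\ref{l:ae-computed}, and Proposition~\ref{l:ub-rho} and leaves the constant-chasing you carried out implicit. The only blemish is the parenthetical claim that for large $\mathbf K$ only $(M,|\mathcal A|)=(4,2)$ and $(9,3)$ survive — false for large $M$, where many alphabets have $\delta$ within $O(1/\log M)$ of $1/2$ — but this does not matter since your main bound $|\mathcal A|\leq M^{1/2}e^{1/\mathbf K}<\tfrac23(1-\zeta)M$ already covers every case with no hand-checking needed.
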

%%%%%%%%%%%%%%%%%%%%%%%%%%%%%%%%%%%%%%%%%%%%%%%%%%%%%%%%%%%%%%%%%%%%%%%%%%%%%%%%
A family of alphabets with an upper bound on $\beta$ matching~\eqref{e:log-improved}
is presented in Proposition~\ref{l:lower-1/2} below.

%%%%%%%%%%%%%%%%%%%%%%%%%%%%%%%%%%%%%%%%%%%%%%%%%%%%%%%%%%%%%%%%%%%%%%%%%%%%%%%%
\subsection{Special alphabets with the best possible exponent}
  \label{s:special-alphabets}

For any two nonempty sets $X,Y\subset \mathbb Z_N$, we have
the lower bound
\begin{equation}
  \label{e:JN-1}
\|\indic_X \mathcal F_N\indic_Y\|_{\ell^2_N\to \ell^2_N}\geq
\sqrt{\max(|X|,|Y|)\over N}.
\end{equation}
Indeed, to show the lower bound of $\sqrt{|X|/N}$, it is enough to apply
the operator $\indic_X\mathcal F_N$ to any element of $\ell^2_N$ supported
at one point of $Y$; taking adjoints, we obtain the lower bound $\sqrt{|Y|/N}$.

For $X=Y=\mathcal C_k$ defined in~\eqref{e:C-k}, and $r_k$ defined
in~\eqref{e:r-k-again}, \eqref{e:JN-1} gives $r_k\geq N^{(\delta-1)/2}$,
implying the following bound on the exponent $\beta$ from~\eqref{e:beta-limit}:
\begin{equation}
  \label{e:JN-2}
\beta\leq {1-\delta\over 2}.
\end{equation}
As expected from the numerical evidence in Figure~\ref{f:fupcloud} and Table~\ref{b:worst-FUP},
and proved in Proposition~\ref{l:not-the-best} below, for many alphabets
the actual value of $\beta$ is strictly below the upper bound~\eqref{e:JN-2}.
However, the next statement implies that there exist alphabets
for which~\eqref{e:JN-2} is an equality, that is $\beta$ takes
its largest allowed value:
%%%%%%%%%%%%%%%%%%%%%%%%%%%%%%%%%%%%%%%%%%%%%%%%%%%%%%%%%%%%%%%%%%%%%%%%%%%%%%%%
\begin{prop}
  \label{l:special-alpha}
For an alphabet $\mathcal A\subset \mathbb Z_M$, define the 1-periodic function
\begin{equation}
  \label{e:G-A}
G_{\mathcal A}(x)={1\over \sqrt{M}}\sum_{a\in\mathcal A}\exp(-2\pi i a x),\
x\in\mathbb R.
\end{equation}
Assume that
\begin{equation}
  \label{e:special-alpha}
G_{\mathcal A}\Big({b-b'\over M}\Big)=0\quad\text{for all }
b,b'\in \mathcal A,\
b\neq b'.
\end{equation}
Then
\begin{equation}
  \label{e:special-r}
r_k=\Big({|\mathcal A|\over M}\Big)^{k/2}\quad\text{for all }k
\end{equation}
and thus $\beta=(1-\delta)/2$.
\end{prop}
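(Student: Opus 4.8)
The plan is to compute the nonnegative self-adjoint operator $T_k^*T_k$, where $T_k:=\indic_{\mathcal C_k}\mathcal F_N\indic_{\mathcal C_k}$ and $N=M^k$, and to show that it is exactly a scalar multiple of the orthogonal projection $\indic_{\mathcal C_k}$. Since $r_k^2=\|T_k\|^2=\|T_k^*T_k\|$ by the $C^*$-identity, this pins down $r_k$ on the nose, and the value $\beta=(1-\delta)/2$ then falls out of \eqref{e:beta-limit} and \eqref{e:delta}.

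Concretely, for $\ell,\ell'\in\mathcal C_k$ the $(\ell,\ell')$ entry of $T_k^*T_k$ is $\frac1N\sum_{a\in\mathcal C_k}e^{2\pi i a(\ell-\ell')/N}$. I would expand $a=\sum_{i=0}^{k-1}a_iM^i$ with $a_i\in\mathcal A$, factor the exponential over the base-$M$ digits, and interchange sum and product to get
$$
{1\over N}\sum_{a\in\mathcal C_k}e^{2\pi i a(\ell-\ell')/N}
=\prod_{m=1}^{k}\Bigl({1\over M}\sum_{a\in\mathcal A}e^{2\pi i a(\ell-\ell')/M^m}\Bigr)
=\prod_{m=1}^{k}{1\over\sqrt M}\,G_{\mathcal A}\Bigl(-{\ell-\ell'\over M^m}\Bigr),
$$
with $G_{\mathcal A}$ the mask from \eqref{e:G-A}. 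For $\ell=\ell'$ each factor equals $G_{\mathcal A}(0)/\sqrt M=|\mathcal A|/M$, so the diagonal entry is $|\mathcal A|^k/M^k=|\mathcal C_k|/N$. For $\ell\neq\ell'$ I would let $j_1$ be the least index at which the base-$M$ digits $\ell_j,\ell'_j$ differ; then $\ell-\ell'\equiv(\ell_{j_1}-\ell'_{j_1})M^{j_1}$ modulo $M^{j_1+1}$ (the lower digits agree, and the higher ones contribute multiples of $M^{j_1+1}$), so by $1$-periodicity of $G_{\mathcal A}$ the factor with $m=j_1+1$ equals $\tfrac1{\sqrt M}G_{\mathcal A}\bigl(\tfrac{\ell'_{j_1}-\ell_{j_1}}{M}\bigr)$, which vanishes by hypothesis \eqref{e:special-alpha} since $\ell_{j_1},\ell'_{j_1}$ are distinct elements of $\mathcal A$. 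Hence $T_k^*T_k=\tfrac{|\mathcal A|^k}{M^k}\indic_{\mathcal C_k}$, so $r_k=(|\mathcal A|/M)^{k/2}$ for every $k$, and therefore $\beta=(1-\delta)/2$.

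A slightly shorter variant avoids the digit expansion at level $k$ entirely: the case $k=1$ of the computation above gives $r_1=\sqrt{|\mathcal A|/M}$ (there $T_1^*T_1$ has $(b,b')$ entry $\tfrac1{\sqrt M}G_{\mathcal A}(\tfrac{b'-b}{M})$, which is $|\mathcal A|/M$ on the diagonal and $0$ off it by \eqref{e:special-alpha}), and then the submultiplicative inequality \eqref{e:submultiplicative} gives $r_k\le r_1^k=(|\mathcal A|/M)^{k/2}$, while the trivial lower bound \eqref{e:JN-1} gives $r_k\ge\sqrt{|\mathcal C_k|/N}=(|\mathcal A|/M)^{k/2}$; the two estimates pinch. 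I mildly prefer the direct route since it is self-contained.

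This argument has no real obstacle — it is essentially bookkeeping. The only point needing a little care is \emph{which} factor of the telescoping product the hypothesis \eqref{e:special-alpha} annihilates: one must use the least-significant digit at which $\ell$ and $\ell'$ differ, so that the contributions of the higher digits to $-(\ell-\ell')/M^{j_1+1}$ are integers and disappear modulo $1$; using the most-significant differing digit does not work, because $G_{\mathcal A}$ need not vanish at the corresponding point. Everything else (the factorization of the exponential sum over base-$M$ digits, the identity $r_k^2=\|T_k^*T_k\|$, and reading off $|\mathcal C_k|=|\mathcal A|^k$ from \eqref{e:C-k}) is routine.
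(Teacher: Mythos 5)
Your proposal is correct, and in fact your ``shorter variant'' is word for word the paper's own proof: the hypothesis~\eqref{e:special-alpha} makes the rows of $\indic_{\mathcal A}\mathcal F_M\indic_{\mathcal A}$ pairwise orthogonal of norm $\sqrt{|\mathcal A|/M}$, so $r_1=\sqrt{|\mathcal A|/M}$, and then submultiplicativity~\eqref{e:submultiplicative} pinches against the lower bound~\eqref{e:JN-1}. Your preferred direct route is a genuinely different (and slightly stronger) argument: the digit factorization
$\frac1N\sum_{a\in\mathcal C_k}e^{2\pi ia(\ell-\ell')/N}=\prod_{m=1}^k\frac1{\sqrt M}G_{\mathcal A}\bigl(-\frac{\ell-\ell'}{M^m}\bigr)$
together with the least-significant-differing-digit observation shows that $T_k^*T_k=(|\mathcal A|/M)^k\,\indic_{\mathcal C_k}$ exactly, i.e.\ \emph{all} nonzero singular values of $\indic_{\mathcal C_k}\mathcal F_N\indic_{\mathcal C_k}$ equal $(|\mathcal A|/M)^{k/2}$ --- a fact the paper states separately after the proposition (deducing it there from the Hilbert--Schmidt identity~\eqref{e:hs} combined with the norm bound) and uses to motivate Conjecture~\ref{l:Weyl-great}. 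What the direct route costs in bookkeeping it buys back in being self-contained (no appeal to~\eqref{e:submultiplicative} or~\eqref{e:JN-1}) and in producing the full singular value structure in one pass; your caveat about using the least significant differing digit, so that the higher digits contribute integers killed by $1$-periodicity, is exactly the right point to be careful about.
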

%%%%%%%%%%%%%%%%%%%%%%%%%%%%%%%%%%%%%%%%%%%%%%%%%%%%%%%%%%%%%%%%%%%%%%%%%%%%%%%%
\begin{proof}
Condition~\eqref{e:special-alpha} implies that any two different rows of the matrix
of the transformation $\indic_A\mathcal F_M\indic_A$ are orthogonal to each other.
Since each of these rows has $\ell^2_M$ norm equal to 0 or $\sqrt{|\mathcal A|/M}$, we have
$$
r_1=\sqrt{|\mathcal A|/M}.
$$
By~\eqref{e:submultiplicative} we obtain an upper bound on $r_k$
which matches the lower bound following from~\eqref{e:JN-1}.
This immediately implies~\eqref{e:special-r}.
\end{proof}
%%%%%%%%%%%%%%%%%%%%%%%%%%%%%%%%%%%%%%%%%%%%%%%%%%%%%%%%%%%%%%%%%%%%%%%%%%%%%%%%
%
%%%%%%%%%%%%%%%%%%%%%%%%%%%%%%%%%%%%%%%%%%%%%%%%%%%%%%%%%%%%%%%%%%%%%%%%%%%%%%%%
\begin{figure}
\includegraphics[scale=0.5]{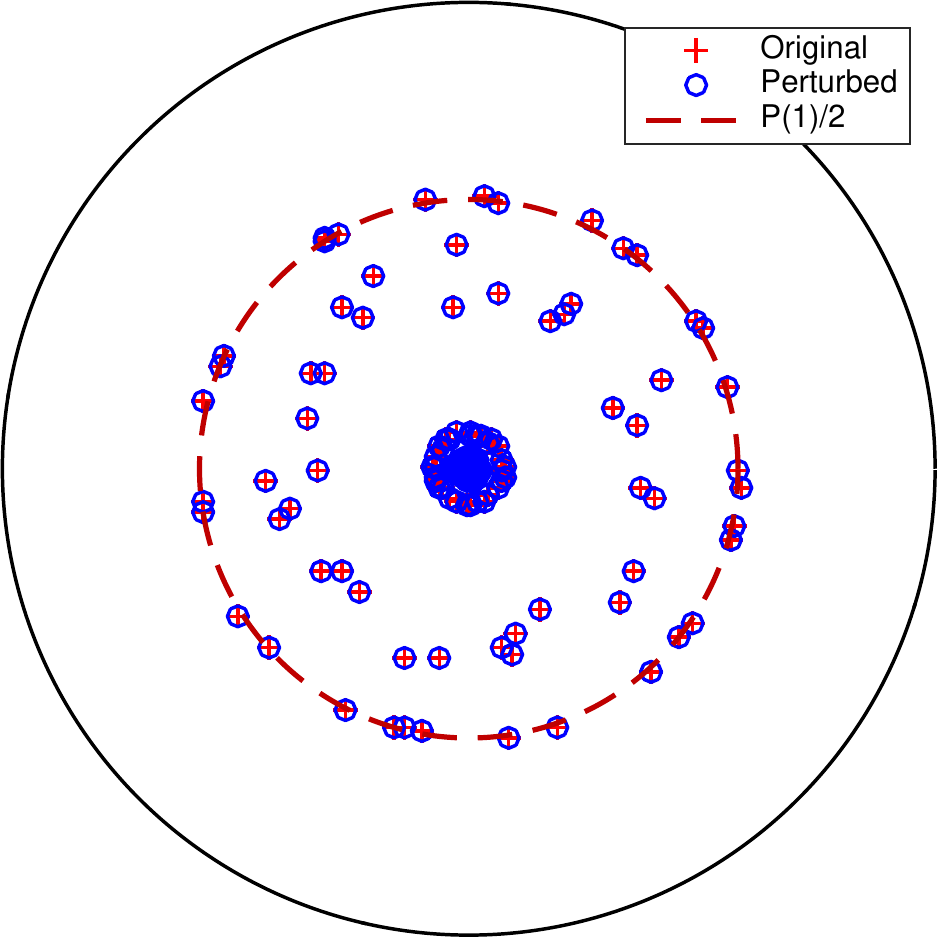}\quad
\includegraphics[scale=0.5]{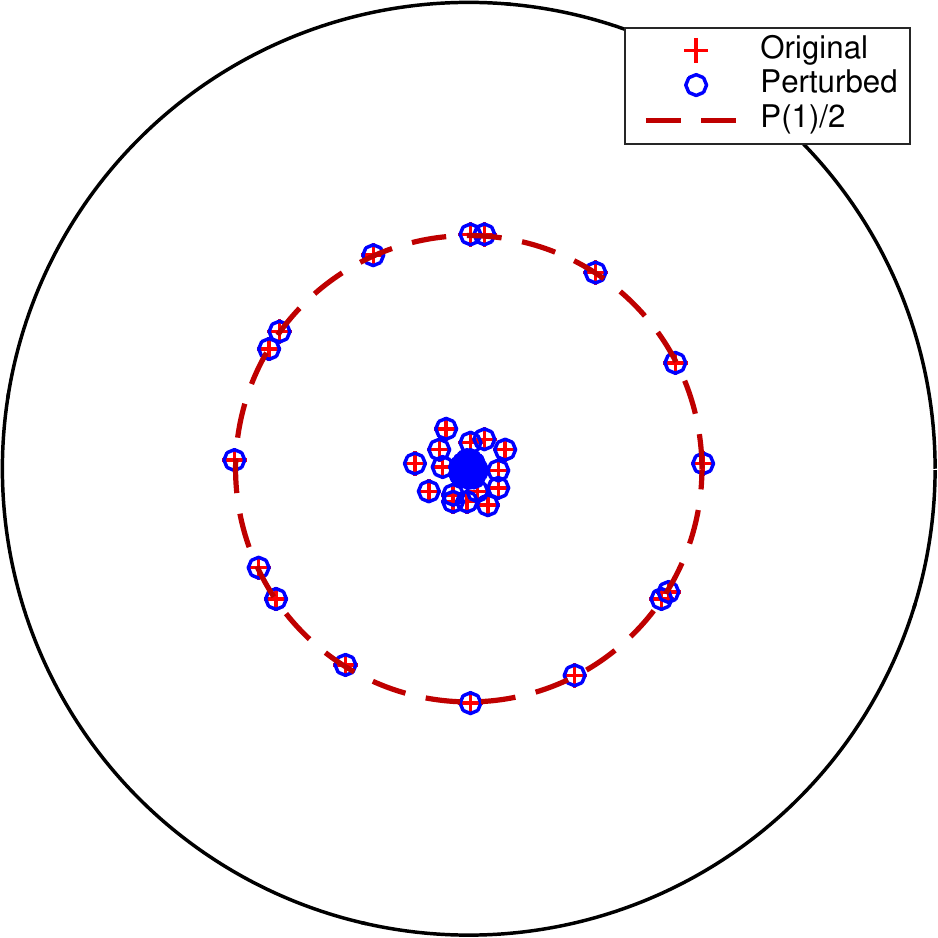}\quad
\includegraphics[scale=0.5]{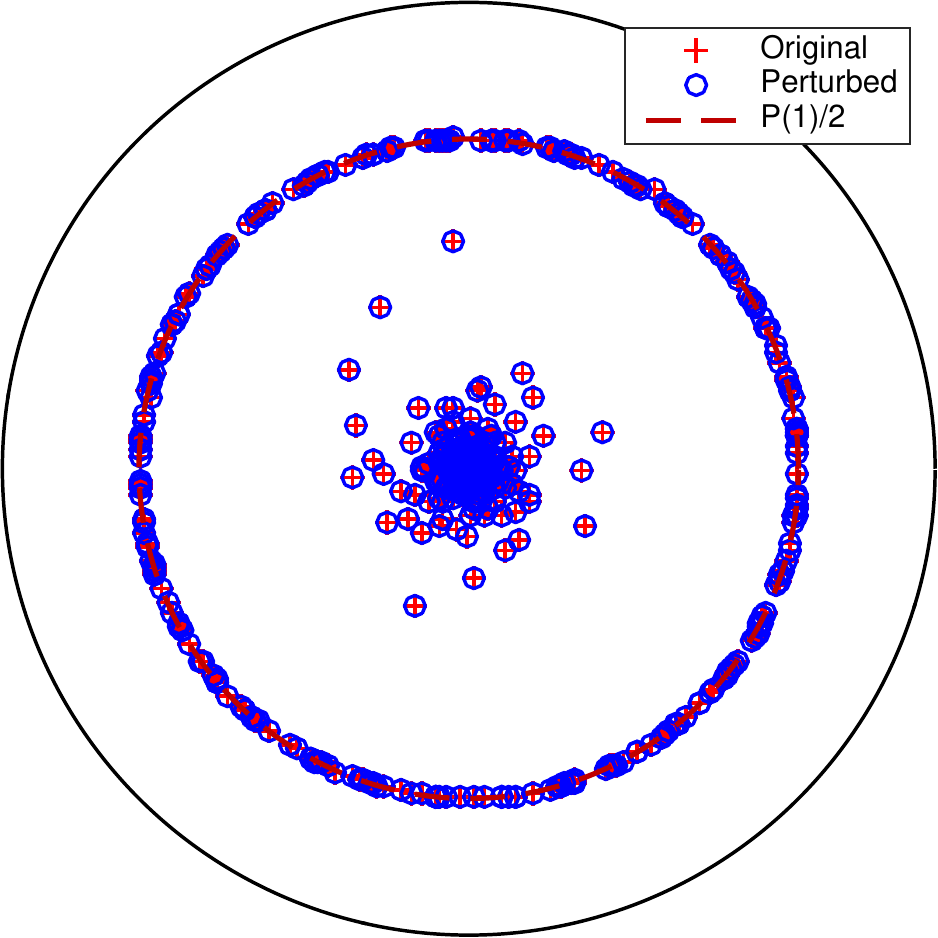}
\hbox to\hsize{\hss
\quad $M=6$, $\mathcal A=\{1,4\}$\hss\hss\qquad
$M=8$, $\mathcal A=\{2,4\}$\hss\hss\quad
$M=8$, $\mathcal A=\{1,2,5,6\}$
\hss}
\caption{The spectra of $B_N$ for three alphabets satisfying~\eqref{e:special-alpha},
with $k=5$ for $M=6$ and $k=4$ for $M=8$. Each case exhibits a band structure consistent with Conjecture~\ref{l:Weyl-great},
in particular the number of the eigenvalues near the outer circle is exactly $|\mathcal A|^k$.}
\label{f:Weyl-great}
\end{figure}
%%%%%%%%%%%%%%%%%%%%%%%%%%%%%%%%%%%%%%%%%%%%%%%%%%%%%%%%%%%%%%%%%%%%%%%%%%%%%%%%
The alphabets satisfying~\eqref{e:special-alpha} are interesting
in particular because all nonzero singular values of the matrix
$\indic_{\mathcal C_k}\mathcal F_N\indic_{\mathcal C_k}$ are
equal to $(|\mathcal A|/M)^{k/2}$,
as follows from~\eqref{e:hs}. Therefore we expect
that as long as $0,M-1\notin\mathcal A$ and the cutoff~$\chi$ is equal to 1
near the Cantor set $\mathcal C_\infty$ (see Theorem~\ref{t:cutoff-dependence}),
many eigenvalues of the open quantum map $B_{N,\chi}$
will lie near the circle of radius $\sqrt{|\mathcal A|/M}$.
Indeed, if an eigenfunction $u$ with eigenvalue $\lambda$ satisfied the following stronger versions
of~\eqref{e:concentration-3}, \eqref{e:concentration-4}:
$$
\|u\|_{\ell^2_N}=|\lambda|^{-k}\cdot \|\indic_{\mathcal C_k}u\|_{\ell^2_N},\quad
u=\mathcal F_N^*\indic_{\mathcal C_k}\mathcal F_N u,
$$
then we would have $|\lambda|=\sqrt{|\mathcal A|/M}$.

The above heuristical reasoning is supported by numerical evidence.
In fact, in all cases of~\eqref{e:special-alpha} that we computed, eigenfunctions
exhibit a \emph{band structure} not unlike the one of~\cite{nhp,FaureTsujii1,FaureTsujii2,FaureTsujii3}~--
see Figure~\ref{f:Weyl-great}.
The outermost band concentrates strongly near the circle
of radius $\sqrt{|\mathcal A|/M}$ and has exactly $|\mathcal A|^k$ eigenvalues,
which prompts us to make the following
%%%%%%%%%%%%%%%%%%%%%%%%%%%%%%%%%%%%%%%%%%%%%%%%%%%%%%%%%%%%%%%%%%%%%%%%%%%%%%%%
\begin{conj}
  \label{l:Weyl-great}
Assume that the alphabet $\mathcal A$ satisfies~\eqref{e:special-alpha},
$0,M-1\notin\mathcal A$, and $\chi=1$ near $\mathcal C_\infty$.
Then there exists $\nu_0>{1-\delta\over 2}$ such that for each $\varepsilon>0$
and $k$ large enough, we have the following:
\begin{itemize}
\item (Second gap) Every eigenvalue $\lambda$ of $B_{N,\chi}$ in $\{|\lambda|>M^{-\nu_0}\}$
lies in the thin annulus
\begin{equation}
  \label{e:annulus}
\Big\{\sqrt{|\mathcal A|\over M}-\varepsilon<|\lambda|<\sqrt{|\mathcal A|\over M}+\varepsilon\Big\},\quad
\sqrt{|\mathcal A|\over M}=M^{1-\delta\over 2};
\end{equation}
\item (Weyl law) The number of eigenvalues in the annulus~\eqref{e:annulus}
is exactly equal to
$$
|\mathcal A|^k=N^\delta.
$$
\end{itemize}
\end{conj}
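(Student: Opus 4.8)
The plan is to reduce $B_{N,\chi}-\lambda$ to a finite-dimensional matrix model via a Grushin (Schur complement) problem adapted to an $N^\delta$-dimensional \emph{physical subspace} $\mathcal H_k\subset\ell^2_N$, to show that on $\mathcal H_k$ the operator acts as $\sqrt{|\mathcal A|/M}$ times a unitary up to a small error while off $\mathcal H_k$ it has a spectral gap at radius $M^{-\nu_0}$ with a polynomial resolvent bound, and then to count eigenvalues in the annulus~\eqref{e:annulus} by the argument principle.

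For $\mathcal H_k$ I would take the quantization of the outgoing tail $\Gamma_+$ (cf.\ the construction for hyperbolic surfaces in~\cite{hgap}), concretely a trimming to dimension exactly $|\mathcal A|^k=N^\delta$ of the range of $\mathcal F_N^*\indic_{X_\rho}$ with $\rho$ close to $1$ and $X_\rho$ as in~\eqref{e:X-rho}. By Proposition~\ref{l:key-eigenvalues}, every eigenfunction with $|\lambda|$ bounded away from $0$ lies in $\mathcal H_k$ up to $\mathcal O(N^{-\infty})$ (by~\eqref{e:concentration-3}, \eqref{e:concentration-4}), and a suitable power of $B_{N,\chi}$ factors through $\mathcal H_k$ up to $\mathcal O(N^{-\infty})$; this makes the Grushin problem in the splitting $\ell^2_N=\mathcal H_k\oplus\mathcal H_k^\perp$ well posed, with the off-diagonal blocks $B_{12},B_{21}$ controlled by the propagation estimates of \S\ref{s:localization}. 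The decisive structural input is that under hypothesis~\eqref{e:special-alpha}, together with $0,M-1\notin\mathcal A$ and $\chi=1$ near $\mathcal C_\infty$ — the latter forcing the cutoffs $\chi_{N/M}$ in~\eqref{e:B-N-def} to equal the identity on the indices of the surviving (digits-in-$\mathcal A$) part, so that they remove only what $I_{\mathcal A,M}$ removes one level deeper — the remark following Proposition~\ref{l:special-alpha} applies: all nonzero singular values of $\indic_{\mathcal C_k}\mathcal F_N\indic_{\mathcal C_k}$ equal $(|\mathcal A|/M)^{k/2}$, hence $\indic_{\mathcal C_k}\mathcal F_N\indic_{\mathcal C_k}=(|\mathcal A|/M)^{k/2}\mathcal V_k$ with $\mathcal V_k$ a unitary on the $|\mathcal A|^k$-dimensional space $\indic_{\mathcal C_k}\ell^2_N$. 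Unwinding~\eqref{e:B-N-def} one should then obtain $B_{11}:=\Pi_{\mathcal H_k}B_{N,\chi}\Pi_{\mathcal H_k}=\sqrt{|\mathcal A|/M}\,U_k+o(1)$ with $U_k$ unitary on $\mathcal H_k$ (the factor $\sqrt{|\mathcal A|/M}$ being the one-step survival ratio from $I_{\mathcal A,M}$ and $U_k$ the essentially measure-preserving surviving dynamics), the $o(1)$ rather than $\mathcal O(N^{-\infty})$ coming from having to take $\rho<1$ and trim $\mathcal H_k$ — which is precisely the origin of the fixed annulus width $\varepsilon$ in~\eqref{e:annulus}. An arguably cleaner route for this step is to compare $B_{N,\chi}$ on $\mathcal H_k$ with the Walsh quantization $B_N^W$, whose spectrum under~\eqref{e:special-alpha} is diagonalized explicitly in~\cite[\S5]{NonnenmacherZworskiOQM} and consists of exactly $|\mathcal A|^k$ eigenvalues on $\{|\lambda|=\sqrt{|\mathcal A|/M}\}$ and the rest at $0$; the two operators differ by the FFT twiddle factors of Lemma~\ref{l:split-apart}, which should be absorbable into $U_k$.

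The main obstacle is the \emph{second gap}: one must produce a fixed $\nu_0>{1-\delta\over 2}$ such that $B_{22}-\lambda$, where $B_{22}:=\Pi_{\mathcal H_k^\perp}B_{N,\chi}\Pi_{\mathcal H_k^\perp}$, is invertible with polynomially bounded inverse for all $M^{-\nu_0}<|\lambda|$, equivalently that $B_{N,\chi}$ has no eigenvalues in that range outside the annulus. The fractal uncertainty principle of Theorem~\ref{t:gap-detailed} only yields the \emph{first} gap, at radius $M^{-\beta}=\sqrt{|\mathcal A|/M}$, and here it is saturated — all $|\mathcal A|^k$ top singular values of $\indic_{\mathcal C_k}\mathcal F_N\indic_{\mathcal C_k}$ coincide — so it cannot by itself give a second gap. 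What is needed is a second-microlocal refinement: a would-be resonant state of $B_{22}$ in the second band is still localized on $\mathcal C_k$ but, being orthogonal to the entire top band $\mathcal H_k$, must carry extra oscillation at the Cantor scale confining it to an effectively thinner set on which a strictly better uncertainty estimate holds. This is the kind of band-structure statement established for Ruelle resonances of Anosov flows and for quantum cat maps in~\cite{FaureTsujii1,FaureTsujii2,FaureTsujii3,nhp}, and I expect transporting those methods to the open baker's map to require genuinely new input; the most tractable first step is probably to settle the Walsh case of these special alphabets, where the diagonalization of~\cite[\S5]{NonnenmacherZworskiOQM} already exhibits the second gap (the complementary block being nilpotent), and then to pass to general $\chi$ via Theorem~\ref{t:cutoff-dependence} together with the stability of gaps carrying a polynomial resolvent bound — though note that the twiddle factors of Lemma~\ref{l:split-apart} are of size $\mathcal O(1)$, so the Fourier case does not follow from the Walsh case by mere perturbation.

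Given the two inputs above, the conclusion is soft. For $\lambda$ with $M^{-\nu_0}<|\lambda|$ and $\big||\lambda|-\sqrt{|\mathcal A|/M}\big|>\varepsilon$, the $B_{22}-\lambda$ block is invertible by the second gap, so $B_{N,\chi}-\lambda$ is invertible iff the Schur complement $B_{11}-\lambda-B_{12}(B_{22}-\lambda)^{-1}B_{21}=\sqrt{|\mathcal A|/M}\,U_k-\lambda+o(1)$ is invertible on $\mathcal H_k$, which holds since $U_k$ is unitary; this gives the second-gap half of the conjecture. For the Weyl count, apply the Gohberg--Sigal argument principle to $\det(B_{N,\chi}-\lambda)$ over the annulus~\eqref{e:annulus} and deform, through the decoupled operator $\sqrt{|\mathcal A|/M}\,U_k\oplus B_{22}$, to conclude that the number of eigenvalues there equals $\dim\mathcal H_k=|\mathcal A|^k=N^\delta$ exactly — all within $o(1)$, hence within $(-\varepsilon,\varepsilon)$ for $k$ large, of the circle $\{|\lambda|=\sqrt{|\mathcal A|/M}\}$ — while none of the remaining $\dim\mathcal H_k^\perp$ eigenvalues contribute, as they lie in $\{|\lambda|\le M^{-\nu_0}\}$.
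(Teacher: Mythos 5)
The statement you address is stated in the paper as a \emph{conjecture}, and the paper provides no proof of it. The only supporting material the paper offers is: (i) the heuristic paragraph preceding the conjecture, which observes that all nonzero singular values of $\indic_{\mathcal C_k}\mathcal F_N\indic_{\mathcal C_k}$ equal $(|\mathcal A|/M)^{k/2}$ under~\eqref{e:special-alpha} and that an eigenfunction exactly localized on $\mathcal C_k$ in both position and frequency would have $|\lambda|=\sqrt{|\mathcal A|/M}$; and (ii) the remark that the conjecture holds for the Walsh quantization with $\chi\equiv 1$, where the spectrum of $(B_N)^k$ is computed explicitly in~\cite[\S5]{NonnenmacherZworskiOQM} and consists of $|\mathcal A|^k$ eigenvalues on the circle of radius $\sqrt{|\mathcal A|/M}$ plus zeros. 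Your Grushin/Schur-complement framing is a sensible way to give structure to this heuristic: isolate an $N^\delta$-dimensional subspace on which the operator is $\sqrt{|\mathcal A|/M}$ times an approximate unitary, and show invertibility with polynomially bounded inverse of the complementary block for $M^{-\nu_0}<|\lambda|<\sqrt{|\mathcal A|/M}-\varepsilon$ with some fixed $\nu_0>\tfrac{1-\delta}{2}$.

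The genuine gap is the one you flag yourself: neither the second-gap statement for the off-band block nor the claim $\Pi_{\mathcal H_k}B_{N,\chi}\Pi_{\mathcal H_k}=\sqrt{|\mathcal A|/M}\,U_k+o(1)$ with $U_k$ unitary is established, and the paper offers no mechanism to produce either. Here $\beta=\tfrac{1-\delta}{2}$, so Theorem~\ref{t:gap-detailed} gives only the \emph{first} gap; the estimate~\eqref{e:fupuse-2} that feeds it is saturated precisely because the top $|\mathcal A|^k$ singular values coincide, so no second-microlocal improvement can come from that bound alone, and a genuinely new band-structure argument is needed. Your fallback of proving the Walsh case and transferring to $B_{N,\chi}$ also does not close the gap: the Walsh case is already known from~\cite[\S5]{NonnenmacherZworskiOQM}, but the two quantizations differ by the order-one twiddle factors in Lemma~\ref{l:split-apart}, so this is not a perturbation, and Theorem~\ref{t:cutoff-dependence} transfers stable spectral features only across changes of the cutoff $\chi$ at fixed Fourier transform, not across changes of the transform itself. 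In short, your plan correctly locates the difficulty, aligns with the paper's heuristics and Walsh remark, and is honest about the missing ingredient, but it does not close the conjecture; neither does the paper, which supports it only by the numerics of Figure~\ref{f:Weyl-great}.
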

%%%%%%%%%%%%%%%%%%%%%%%%%%%%%%%%%%%%%%%%%%%%%%%%%%%%%%%%%%%%%%%%%%%%%%%%%%%%%%%%
\Remark Conjecture~\ref{l:Weyl-great} is true if we use the Walsh quantization
(see~\cite[\S5.1]{NonnenmacherZworskiOQM}) and put $\chi\equiv 1$.
Indeed, as explained in~\cite[Proposition~5.4]{NonnenmacherZworskiOQM}, in that case
the spectrum of the $k$-th power $B_N^k$ is computed explicitly as
$$
\Spec(B_N^k)=\big\{\lambda_1\cdots\lambda_k \mid \lambda_1,\dots,\lambda_k\in \Spec(\indic_{\mathcal A}\mathcal F_M\indic_{\mathcal A})\big\}.
$$
If~\eqref{e:special-alpha} holds, then the matrix $\indic_{\mathcal A}\mathcal F_M\indic_{\mathcal A}$
has $|\mathcal A|$ nonzero eigenvalues, which lie on the circle of radius $\sqrt{|\mathcal A|/M}$.
Therefore $B_N$ has $|\mathcal A|^k$ nonzero eigenvalues, which also lie on that circle.

There exist many solutions $(M,\mathcal A)$ to~\eqref{e:special-alpha}~-- see Table~\ref{b:special-cases}
for a complete list up to $M=24$. 
We do not give a classification of all solutions,
but we provide a few examples and properties (some of which were explained to the authors
by Bjorn Poonen):
%%%%%%%%%%%%%%%%%%%%%%%%%%%%%%%%%%%%%%%%%%%%%%%%%%%%%%%%%%%%%%%%%%%%%%%%%%%%%%%%
\begin{table}
\begin{tabular}{|l|l||l|l||l|l|}
\hline
$M$ & $\mathcal A$ &
$M$ & $\mathcal A$ &
$M$ & $\mathcal A$ \\\hline
6 & $\{0,3\}$ &
6 & $\{0,2,4\}$ &
8 & $\{0,2\}$ \\\hline
8 & $\{0,1\}+\{0,4\}$ &
10 & $\{0, 5\}$ &
10 & $\{0, 2, \dots 8\}$ \\\hline
12 & $\{0, 4, 8\}$ &
12 & $\{0, 2, 4\}$ &
12 & $\{0,3,6,9\}$ \\\hline
12 & $\{0,1\}+\{0,6\}$ &
14 & $\{0,7\}$ &
14 & $\{0,2,\dots,12\}$ \\\hline
15 & $\{0,5,10\}$ &
15 & $\{0,3, \dots, 12\}$ &
16 & $\{0,2,4,6\}$ \\\hline
16 & $\{0,1\}+\{0,8\}$ &
18 & $\{0,9\}$ &
18 & $\{0,3\}$ \\\hline
18 & $\{0, 2, \dots, 16\}$ &
18 & $\{0,1,2\}+\{0,6,12\}$ &
20 & $\{0,5,10,15\}$ \\\hline
20 & $\{0,1\}+\{0,10\}$ &
20 & $\{0,2,\dots,8\}$ &
20 & $\{0,4,\dots,16\}$ \\\hline
20 & $\{0, 2, 4, 8, 16 \}$ &
21 & $\{0, 7, 14\}$ &
21 & $\{0, 3, \dots, 18\}$ \\\hline
22 & $\{0, 11\}$ &
22 & $\{0, 2, \dots, 20\}$ & 
24 & $\{0, 6\}$ \\\hline
24 & $\{0, 8, 16\}$ &
24 & $\{0, 4, 8\}$ &
24 & $\{0, 3\} + \{0, 12\}$ \\\hline
24 & $\{0, 1\} + \{0, 12\}$ &
24 & $\{0, 2, \dots, 10\}$ &
24 & $\{0, 2, 4, 6, 10, 20\}$ \\\hline
24 & $\{0, 2, 4, 8, 10, 18\}$ &
24 & $\{0, 2\}+\{0,8,16\}$ &
24 & $\{0, 1\}+\{0, 6, 12, 18\}$ \\\hline
24 & $\{0, 3, \dots, 21\}$ &
24 & $\{0, 1\}+ \{0, 4, \dots, 20\}$\\
\cline{1-4}
\end{tabular}
\smallskip
\caption{The complete list of solutions to~\eqref{e:special-alpha}
for $M\leq 24$ with $1<|\mathcal A|<M$.
We identify solutions which are related by~\eqref{e:transformer}
and use the following notation:
$a,a+k,\dots,a+\ell k$
denotes the arithmetic progression $\{a+jk\mid 0\leq j\leq \ell\}$
and $\mathcal A+\mathcal B$ denotes the set of all sums of elements
of $\mathcal A$ with elements of $\mathcal B$.}
\label{b:special-cases}
\end{table}
%%%%%%%%%%%%%%%%%%%%%%%%%%%%%%%%%%%%%%%%%%%%%%%%%%%%%%%%%%%%%%%%%%%%%%%%%%%%%%%%
%
%%%%%%%%%%%%%%%%%%%%%%%%%%%%%%%%%%%%%%%%%%%%%%%%%%%%%%%%%%%%%%%%%%%%%%%%%%%%%%%%
\begin{enumerate}
\item If $\mathcal A=\{0,\dots,M-1\}$ or $\mathcal A$ has only one element,
then $(M,\mathcal A)$ solves~\eqref{e:special-alpha},
though these degenerate alphabets are not allowed in the rest of this paper.
\item A basic example of a nondegenerate alphabet solving~\eqref{e:special-alpha}
is given by
$$
M=pq,\quad
\mathcal A=\{jq\mid j=0,\dots,p-1\},
$$
where $p$ is prime and $q>1$ is not divisible by~$p$.
This provides a family of examples with dimensions $\delta$ forming a dense set in $[0,1]$.
\item If $(M,\mathcal A)$ is a solution to~\eqref{e:special-alpha},
$d\in\mathbb N$ is coprime to $M$, and $q\in\mathbb Z$, then
\begin{equation}
  \label{e:transformer}
\big(M,(d\mathcal A+q)\bmod M\big)
\end{equation}
is also a solution. Indeed, the case of $d=1$
follows by direct calculation; it remains to consider the case $d>1,q=0$. In that
case we note that~\eqref{e:special-alpha} can be expressed as a system of polynomial
relations with integer coefficients (depending on~$\mathcal A$) on the root of unity
$\omega_M:=\exp(-2\pi i/M)$. Then the condition~\eqref{e:special-alpha}
for $(M,(d\mathcal A)\bmod M)$ is expressed as the same system of polynomial
relations on $\omega_M^{d^2}$. Since $d^2$ is coprime to $M$,
$\omega_M^{d^2}$ is a Galois conjugate of $\omega_M$ and these two numbers
solve the same polynomial equations with rational coefficients.
\item If $(M,\mathcal A)$ is a solution to~\eqref{e:special-alpha},
and $d\in\mathbb N$, then $(d^2M,d\mathcal A)$ is also a solution to~\eqref{e:special-alpha}.
\end{enumerate}
%%%%%%%%%%%%%%%%%%%%%%%%%%%%%%%%%%%%%%%%%%%%%%%%%%%%%%%%%%%%%%%%%%%%%%%%%%%%%%%%
\Remark We say that a nonempty set
$\mathcal A\subset \mathbb Z_M$ is a \emph{discrete spectral set}
modulo $M$ if there exists $\mathcal B\subset \mathbb Z_M$,
called a \emph{spectrum} for $\mathcal A$,
such that $|\mathcal B|=|\mathcal A|$ and,
with $G_{\mathcal A}$ defined in~\eqref{e:G-A},
$$
G_{\mathcal A}\Big({b-b'\over M}\Big)=0\quad\text{for all }b,b'\in\mathcal B,\
b\neq b'.
$$
Clearly, an alphabet $\mathcal A$ satisfies~\eqref{e:special-alpha}
if and only if it is its own spectrum. One can define a more general
version of the open quantum map~\eqref{e:B-N-def}
which depends on two alphabets $\mathcal A,\mathcal B$ 
of the same size and a bijection $\mathcal A\leftrightarrow\mathcal B$.
If $\mathcal B$ is a spectrum for $\mathcal A$, then we expect
a spectral gap of size $(1-\delta)/2$ as in Proposition~\ref{l:special-alpha},
and Conjecture~\ref{l:Weyl-great} can be extended to these cases.
Regarding the structure of discrete spectral sets,
the following is a discrete analogue of a conjecture
made by Fuglede~\cite{Fuglede}, which
we verified numerically for all $M\leq 20$:
%%%%%%%%%%%%%%%%%%%%%%%%%%%%%%%%%%%%%%%%%%%%%%%%%%%%%%%%%%%%%%%%%%%%%%%%%%%%%%%%
\begin{conj}
  \label{c:spectral-sets}
A set $\mathcal A\subset\mathbb Z_M$ is a discrete spectral set if and only
if it tiles $\mathbb Z_M$ by translations, that is there exists $T\subset \mathbb Z_M$
such that
$$
\mathbb Z_M=\bigsqcup_{t\in T} (\mathcal A+t)\bmod M.
$$
\end{conj}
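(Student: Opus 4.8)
The plan is to express both conditions as divisibility statements about the \emph{mask polynomial} $A(x)=\sum_{a\in\mathcal A}x^a$, reduced modulo $x^M-1$. Writing $\omega_M=\exp(-2\pi i/M)$, one has $G_{\mathcal A}(j/M)=A(\omega_M^j)/\sqrt M$, and the $d$-th cyclotomic polynomial $\Phi_d$ (not to be confused with the expanding map of~\eqref{e:Phi}) vanishes at $\omega_M^j$ exactly when $\omega_M^j$ has order $d$. Thus, setting $S_A:=\{d\mid M,\ d>1:\Phi_d\mid A\}$, a set $\mathcal B$ with $|\mathcal B|=|\mathcal A|$ is a spectrum for $\mathcal A$ if and only if the order of $\omega_M^{b-b'}$ lies in $S_A$ for every pair of distinct $b,b'\in\mathcal B$; and $\mathcal A$ tiles $\mathbb Z_M$ with a complement $T$ if and only if $|\mathcal A|\cdot|T|=M$ and every divisor $d>1$ of $M$ lies in $S_A\cup S_T$ (using $1+x+\dots+x^{M-1}=\prod_{d\mid M,\,d>1}\Phi_d$ and irreducibility of the $\Phi_d$). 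In this language the conjecture asks that the sets $S_A$ arising from a spectrum be exactly those admitting such a complementary $S_T$.

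For the implication tiling $\Rightarrow$ spectral I would use the Coven--Meyerowitz framework. A tiling set satisfies property (T1), namely $|\mathcal A|=\prod_{p^\alpha\in S_A}p$ (product over the prime powers in $S_A$); by \L aba's theorem, (T1) together with property (T2) --- for any prime powers $p_1^{\alpha_1},\dots,p_r^{\alpha_r}\in S_A$ with distinct primes $p_i$, the product $p_1^{\alpha_1}\cdots p_r^{\alpha_r}$ also lies in $S_A$ --- implies $\mathcal A$ is spectral, and Coven--Meyerowitz proved that tiling forces (T2) whenever $|\mathcal A|$ has at most two distinct prime factors. Since $1<|\mathcal A|<M\le 20$ forces $|\mathcal A|$ to have at most two prime factors, this direction is complete in the numerically verified range; indeed it holds for all $M$ with fewer than three distinct prime factors.

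The main work is the converse, spectral $\Rightarrow$ tiling, which I would attack in three steps: (i) derive (T1) from the existence of a spectrum, by expanding the orthogonality relations $\sum_{a\in\mathcal A}\omega_M^{(b-b')a}=0$ via the Lam--Leung structure theorem for vanishing sums of $M$-th roots of unity (each is an $\mathbb N_0$-combination of rotated complete sums of $p$-th roots, $p\mid M$) and counting over the $|\mathcal A|^2-|\mathcal A|$ nonzero differences of $\mathcal B$ to force $|\mathcal A|=\prod_{p^\alpha\in S_A}p$; (ii) derive (T2) for the spectral set; and (iii) invoke the Coven--Meyerowitz sufficiency theorem, (T1) and (T2) $\Rightarrow$ tiling (valid when $|\mathcal A|$ has at most two prime factors), to produce the complement $T$.

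The hard part is step (ii): proving (T2) for an arbitrary spectral set is equivalent to the discrete Fuglede conjecture and is open in general. It is known when $M$ has at most two distinct prime factors --- the case relevant to $M\le 20$, since $2\cdot 3\cdot 5=30$ --- and for a handful of further families such as $M=p^nq$ and $M=p^2q^2$; the argument there tracks how $\mathcal B$ must be laid out along the divisor lattice of $M$ (if $p^\alpha$ is the top power of $p$ in $S_A$, then $\mathcal B$ reduced mod $p^\alpha$ is large enough to force the mixed prime-power products into $S_A$). Combining (i)--(iii) with this two-prime analysis yields a complete proof of the conjecture for all $M$ with at most two prime factors, in particular every $M\le 20$; a proof for general $M$ would require settling the open part of Fuglede's conjecture in cyclic groups.
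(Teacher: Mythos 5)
There is no proof of this statement in the paper for you to be measured against: it is stated as Conjecture~\ref{c:spectral-sets}, presented explicitly as a discrete analogue of Fuglede's conjecture~\cite{Fuglede}, and the authors report only a numerical verification for $M\leq 20$ together with the citation of \L aba~\cite{Laba} for the tiling $\Rightarrow$ spectral direction when $M$ has at most two distinct prime factors. Any complete proof of the statement as written would resolve the discrete Fuglede conjecture for cyclic groups, which is open. Your proposal is honest about this, and your reduction of both properties to cyclotomic divisibility of the mask polynomial is the standard and correct starting point; moreover every $M\leq 20$ is of the form $p^n$ or $p^nq$, so the combination of~\cite{Laba} (tiling $\Rightarrow$ spectral under (T1), (T2)) with the Coven--Meyerowitz necessity of (T1), (T2) and with the resolution of the $\mathbb Z_{p^nq}$ case in~\cite{MaKo} does establish the conjecture on the numerically verified range.

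That said, your outline has concrete gaps even in that range. The most serious is step (i): it is not a routine Lam--Leung counting exercise to show that an arbitrary spectral set satisfies (T1) --- even the divisibility $|\mathcal A|\mid M$ does not fall out of the orthogonality relations by counting over the $|\mathcal A|^2-|\mathcal A|$ nonzero differences of $\mathcal B$, and in the settings where spectral $\Rightarrow$ tiling is actually known ($\mathbb Z_{p^n}$, $\mathbb Z_{p^nq}$), properties (T1) and (T2) emerge from a delicate structural analysis of how $\mathcal A$ and $\mathcal B$ sit over the divisor lattice, not as an a priori input to a (T1)+(T2) $\Rightarrow$ tiling argument. Two smaller corrections: Coven--Meyerowitz prove the sufficiency (T1) and (T2) $\Rightarrow$ tiling \emph{unconditionally}; the two-prime hypothesis on $|\mathcal A|$ is needed only for the necessity of (T2). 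And the blanket claim that spectral $\Rightarrow$ tiling ``is known when $M$ has at most two distinct prime factors'' overstates the literature (your own hedge about ``further families such as $M=p^nq$ and $M=p^2q^2$'' implicitly concedes this); what is safe to cite is the prime-power case and $M=p^nq$, which happens to suffice for $M\leq 20$. For general $M$ your plan cannot be completed without settling the open problem, exactly as you acknowledge.
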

%%%%%%%%%%%%%%%%%%%%%%%%%%%%%%%%%%%%%%%%%%%%%%%%%%%%%%%%%%%%%%%%%%%%%%%%%%%%%%%%
\L aba~\cite{Laba} proved that if $M$ has at most two distinct prime factors
and $\mathcal A$ tiles $\mathbb Z_M$ by translations,
then $\mathcal A$ is a spectral set.
We refer the reader to~\cite{Laba,Dutkai-Lai,MaKo} for an overview of recent results on spectral sets.

%%%%%%%%%%%%%%%%%%%%%%%%%%%%%%%%%%%%%%%%%%%%%%%%%%%%%%%%%%%%%%%%%%%%%%%%%%%%%%%%
\subsection{Upper bounds on the fractal uncertainty exponent}
\label{s:improve-all}

We finally present some asymptotic lower bounds on the norm $r_k$ from~\eqref{e:r-k-again}, or equivalently upper
bounds on the fractal uncertainty exponent $\beta$ defined in~\eqref{e:beta-limit}.
While an upper bound on $\beta$ does not imply a lower bound on the spectral
radius of $B_N$ (to prove the latter, one would have to show
existence of eigenvalues in a fixed annulus for a nonselfadjoint operator, which 
is notoriously difficult), numerics seem to indicate that
at least in some cases the value of $\beta$ gives a good approximation
to the spectral radius~-- see Figures~\ref{f:fup-howgood}, \ref{f:improved-pressure}, \ref{f:improved-0}, and~\ref{f:Weyl-great}.

Our bounds are based on an idea suggested by Hong Wang
(see also~\cite[\S6]{Wiener})
and use the following formula for the Fourier transform of the indicator function of~$\mathcal C_k$
in terms of the function $G_{\mathcal A}$ defined in~\eqref{e:G-A}:
\begin{equation}
  \label{e:fc-split}
\mathcal F_N(\mathbf 1_{\mathcal C_k})(j)=\prod_{s=1}^{k}G_{\mathcal A}\Big({j\over M^s}\Big).
\end{equation}
We first show that unless a slightly weaker version of~\eqref{e:special-alpha}
holds (we believe that this version is equivalent to~\eqref{e:special-alpha},
and we checked it numerically for all $M\leq 25$), the exponent
$\beta$ has to be strictly smaller than the upper bound~\eqref{e:JN-2}:
%%%%%%%%%%%%%%%%%%%%%%%%%%%%%%%%%%%%%%%%%%%%%%%%%%%%%%%%%%%%%%%%%%%%%%%%%%%%%%%%
\begin{prop}
  \label{l:not-the-best}
Assume that there exist
\begin{equation}
  \label{e:not-the-best}
b,b'\in\mathcal A,\quad
b\neq b',\quad
G_{\mathcal A}\Big({b'-b\over M}\Big)G_{\mathcal A}\Big({b'-b\over M^2}\Big)\neq 0.
\end{equation}
Then $\beta<(1-\delta)/2$.
\end{prop}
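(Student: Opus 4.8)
The plan is to show $\lim_{k\to\infty}r_k^{1/k}>(|\mathcal A|/M)^{1/2}$, which by the definition~\eqref{e:beta-limit} of $\beta$ gives $\beta<(1-\delta)/2$; recall that $r_k\geq(|\mathcal A|/M)^{k/2}$ by~\eqref{e:JN-1}, so only the strict inequality is at stake. I would first lower bound $r_k=\sigma_1$, the top singular value of $A_k:=\indic_{\mathcal C_k}\mathcal F_N\indic_{\mathcal C_k}$, by the elementary inequality $\sigma_1^2\geq\big(\sum_j\sigma_j^4\big)/\big(\sum_j\sigma_j^2\big)=\|A_k^*A_k\|_{\HS}^2/\|A_k\|_{\HS}^2$. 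Here $\|A_k\|_{\HS}^2=|\mathcal C_k|^2/N=M^{(2\delta-1)k}$ by~\eqref{e:hs}, while $(A_k^*A_k)_{j\ell}=\indic_{\mathcal C_k}(j)\indic_{\mathcal C_k}(\ell)N^{-1/2}\mathcal F_N\indic_{\mathcal C_k}(j-\ell)$ (up to conjugation), so $\|A_k^*A_k\|_{\HS}^2=N^{-1}Q_k$ with $Q_k:=\sum_{j,\ell\in\mathcal C_k}|\mathcal F_N\indic_{\mathcal C_k}(j-\ell)|^2$. Thus $r_k^2\geq Q_k/M^{2\delta k}$, and since the diagonal terms $j=\ell$ already contribute $M^{(3\delta-1)k}$ to $Q_k$ (using $\mathcal F_N\indic_{\mathcal C_k}(0)=|\mathcal C_k|/\sqrt N$), it suffices to prove
$$\limsup_{k\to\infty}Q_k^{1/k}>M^{3\delta-1}=\frac{|\mathcal A|^3}{M}.$$

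By~\eqref{e:fc-split}, $\mathcal F_N\indic_{\mathcal C_k}(m)=\prod_{s=1}^kG_{\mathcal A}(m/M^s)$, and $G_{\mathcal A}(m/M^s)$ depends only on $m\bmod M^s$. Writing $j=\sum_i j_iM^i$, $\ell=\sum_i\ell_iM^i$ with $j_i,\ell_i\in\mathcal A$ and $d_i=j_i-\ell_i$, and setting $y_0=0$, $y_s=(y_{s-1}+d_{s-1})/M$, one gets the exact formula
$$Q_k=\sum_{(d_0,\dots,d_{k-1})\in(\mathcal A-\mathcal A)^k}\Big(\prod_{i=0}^{k-1}r_{\mathcal A}(d_i)\Big)\prod_{s=1}^k\big|G_{\mathcal A}(y_s)\big|^2,\qquad r_{\mathcal A}(d):=\big|\{(a,b)\in\mathcal A^2:a-b=d\}\big|,$$
i.e.\ $Q_k=(\mathcal L^k\mathbf 1)(0)$ for the positive transfer operator $(\mathcal L g)(y)=\sum_{d}r_{\mathcal A}(d)|G_{\mathcal A}((y+d)/M)|^2g((y+d)/M)$. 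The all-zero sequence contributes $(|\mathcal A|\cdot|G_{\mathcal A}(0)|^2)^k=(|\mathcal A|^3/M)^k$ (as $|G_{\mathcal A}(0)|^2=|\mathcal A|^2/M$), exactly the diagonal; since deviating from $d=0$ only lowers the factors $|G_{\mathcal A}(y_s)|^2$, each individual deviating sequence has growth rate at most $|\mathcal A|^3/M$, so the strict gain must come from summing over exponentially many of them.

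Set $r=b'-b$ (replacing $r$ by $-r$ if needed, so $1\leq r\leq M-1$). Hypothesis~\eqref{e:not-the-best} gives $G_{\mathcal A}(r/M)\ne0$ and $G_{\mathcal A}(r/M^2)\ne0$, and I would first observe that $G_{\mathcal A}(r/M^t)\ne0$ for \emph{all} $t\geq1$: for $t\geq3$ this is automatic, because $\sqrt M\,G_{\mathcal A}(r/M^t)=P(\zeta)$ where $P(x)=\sum_{a\in\mathcal A}x^a$ has degree $\leq M-1$ and $\zeta$ is a primitive $n$-th root of unity with $n=M^t/\gcd(r,M^t)>M^{t-1}\geq M^2$; since every prime dividing $n$ divides $M$ one has $\varphi(n)\geq n\varphi(M)/M>M\varphi(M)\geq M>M-1$, so the cyclotomic polynomial $\Phi_n$ (the minimal polynomial of $\zeta$) cannot divide $P$, whence $P(\zeta)\ne0$. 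Consequently the digit sequence $(r,0,0,\dots)$ follows the orbit $y_s=r/M^s$ with only positive factors $|G_{\mathcal A}(r/M^s)|^2$, all converging to $|G_{\mathcal A}(0)|^2$, so that $\rho:=\dfrac{r_{\mathcal A}(r)}{|\mathcal A|}\displaystyle\prod_{t\geq1}\dfrac{|G_{\mathcal A}(r/M^t)|^2}{|G_{\mathcal A}(0)|^2}>0$.

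To harvest the gain I would fix a large $T$ and bound $Q_{nT}$ from below by summing over the $2^n$ sequences formed by concatenating $n$ ``blocks'', each block being either $\mathbf S=(0,\dots,0)$ ($T$ zeros) or $\mathbf E=(r,0,\dots,0)$ ($r$ followed by $T-1$ zeros). Since $G_{\mathcal A}$ vanishes neither at $0$ nor at any $r/M^t$, the running value $y$ returns to $\{|y|\leq M^{1-T}\}$ at the end of every block, so each $\mathbf S$-block multiplies the weight by $(|\mathcal A|^3/M)^T(1-\eta_T)$ and each $\mathbf E$-block by $(|\mathcal A|^3/M)^T\rho_T(1-\eta_T)$, where $\rho_T\geq\rho>0$ and $\eta_T=O(M^{3-T})$. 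Summing the binomial series gives $Q_{nT}\geq\big[(|\mathcal A|^3/M)^T(1-\eta_T)(1+\rho_T)\big]^n$, hence $\limsup_kQ_k^{1/k}\geq(|\mathcal A|^3/M)\big((1-\eta_T)(1+\rho_T)\big)^{1/T}>|\mathcal A|^3/M$ as soon as $T$ is large enough that $\eta_T<\rho/(1+\rho)$. The one step that needs genuine care — and which I expect to be the main obstacle — is precisely this last one: tracking the drift of the base point $y$ through a long concatenation of blocks, and converting the \emph{pointwise} non-vanishing of $G_{\mathcal A}$ along $\{r/M^t\}$ (together with the convergence of the product defining $\rho$) into the \emph{uniform} block-weight estimates above.
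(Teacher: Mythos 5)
Your argument is correct, and its engine is the same as the paper's: the product formula \eqref{e:fc-split}, the non-vanishing of $G_{\mathcal A}$ along the entire orbit $\{(b'-b)/M^t\}_{t\geq 1}$, and the harvesting of exponentially many difference vectors $j-\ell=\sum_p(b'-b)M^{s_p}$ with well-separated exponents, each retaining a definite fraction of the diagonal weight. The differences are in packaging. First, the entry point: the paper tests $A_k=\indic_{\mathcal C_k}\mathcal F_N\indic_{\mathcal C_k}$ against a single modulated indicator $u(\ell)=e^{2\pi i j_b\ell/N}$, reducing to the single sum $\sum_{j\in\mathcal C_k}|\mathcal F_N(\mathbf 1_{\mathcal C_k})(j-j_b)|^2/|\mathcal A|^k$, whereas you use $\sigma_1^2\geq\|A_k^*A_k\|_{\HS}^2/\|A_k\|_{\HS}^2$ to reduce to the double sum $Q_k$; both reductions are valid and lead to the same kind of lower bound. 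Second, for $t\geq 3$ the paper deduces $G_{\mathcal A}((b'-b)/M^t)\neq 0$ from the elementary observation that all angles $2\pi a(b'-b)/M^t$ lie in $[0,\pi)$, which is simpler than your cyclotomic argument but equivalent in effect. Third, the paper's sparse sets $\Omega_q$ (exponents separated by at least $L+1$, counted and summed as an exponential series) play exactly the role of your periodic blocks and the binomial series $(1+\rho_T)^n$. The step you flag as needing care does go through, and the paper's \eqref{e:product-yay} is precisely the uniform statement required: a lower bound on $\frac{1}{|\mathcal A|^\ell}\prod_{s\leq\ell}|G_{\mathcal A}(x/M^s)|^2$ valid for all $x$ in a small interval around $b'-b$, which absorbs the drift of the base point between blocks. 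The only blemish is your claim $\rho_T\geq\rho$, which need not hold since partial products do not dominate the infinite product; but all you need is $\inf_T\rho_T>0$, which follows because every factor is positive and the product converges to a positive limit.
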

%%%%%%%%%%%%%%%%%%%%%%%%%%%%%%%%%%%%%%%%%%%%%%%%%%%%%%%%%%%%%%%%%%%%%%%%%%%%%%%%
\begin{proof}
Without loss of generality we assume $b'>b$.
(Note $G_{\mathcal A}(-x)=\overline{G_{\mathcal A}(x)}$.)
Put
$$
j_b:=\sum_{s=0}^{M-1} bM^s\in\mathcal C_k,
$$
and consider $u\in\ell^2_N$, $\supp u\subset \mathcal C_k$, defined by
$$
u(\ell)=\exp\Big({2\pi i j_b\ell\over N}\Big),\
\ell\in\mathcal C_k;\quad
\|u\|_{\ell^2_N}=|\mathcal A|^{k/2}.
$$
Then
\begin{equation}
  \label{e:glot}
r_k^2\geq
\bigg({\|\indic_{\mathcal C_k}\mathcal F_Nu\|_{\ell^2_N}\over \|u\|_{\ell^2_N}}\bigg)^2=\sum_{j\in\mathcal C_k}
{|\mathcal F_N(\mathbf 1_{\mathcal C_k})(j-j_b)|^2\over |\mathcal A|^k}
\end{equation}
and by~\eqref{e:fc-split},
\begin{equation}
  \label{e:produce}
{|\mathcal F_N(\mathbf 1_{\mathcal C_k})(j-j_b)|^2\over |\mathcal A|^k}={1\over |\mathcal A|^k}\prod_{s=1}^k\Big|G_{\mathcal A}\Big({j-j_b\over M^s}\Big)\Big|^2.
\end{equation}
We next estimate $|G_{\mathcal A}|^2$ from below. 
First of all, we have for all $s\in\mathbb N$,
\begin{equation}
  \label{e:never-the-best}
G_{\mathcal A}\Big({b'-b\over M^s}\Big)\neq 0.
\end{equation}
Indeed, the case of $s=1,2$ follows from~\eqref{e:not-the-best}. For $s\geq 3$
we have
$$
0 <{b'-b\over M^s}< {1\over 2M}.
$$
Recall that
$$
G_{\mathcal A}\Big({b'-b\over M^s}\Big)={1\over\sqrt M}\sum_{a\in\mathcal A} e^{-i\theta_a},\quad
\theta_a:=2\pi{a(b'-b)\over M^s}
$$
and the values $\theta_a$ lie in the interval $[0,\pi)$, implying that the sum of $e^{-i\theta_a}$
cannot be 0.

Next, we have as $s\to\infty$, uniformly in $x\in [0,M]$
$$
G_{\mathcal A}\Big({x\over M^s}\Big)={|\mathcal A|\over\sqrt M}+\mathcal O(M^{-s}).
$$
Therefore there exist constants $\rho>0$, $L\in\mathbb N$ depending on $\mathcal A$ such that
\begin{equation}
  \label{e:product-yay}
{1\over |\mathcal A|^\ell}\prod_{s=1}^\ell \Big|G_{\mathcal A}\Big({x\over M^s}\Big)\Big|^2\geq
\rho \Big({|\mathcal A|\over M}\Big)^\ell\quad\text{for all }
\ell\in\mathbb N,\
x\in [b'-b,b'-b+M^{-L}].
\end{equation}
For $q\in\mathbb N$, consider the following
subset of $\mathcal C_k$:
$$
\Omega_q=\bigg\{j_b+\sum_{p=1}^q (b'-b)M^{s_p}\colon
0\leq s_1,\dots,s_q<k,\
s_{p+1}-s_p\geq L+1\text{ for }1\leq p<q\bigg\}.
$$
Splitting the product in~\eqref{e:produce} into intervals $s\in [s_p+1,s_{p+1}]$
and using~\eqref{e:product-yay}, we see that
$$
{|\mathcal F_N(\mathbf 1_{\mathcal C_k})(j-j_b)|^2\over |\mathcal A|^k}
\geq \rho^q \Big(
{|\mathcal A|\over M}\Big)^k
\quad\text{for all }j\in\Omega_q.
$$
Moreover, for $q\leq {k\over 2L}$ and $k$ large enough the size of $\Omega_q$ is
$$
|\Omega_q|={\big(k-(q-1)L\big)!\over q! (k-qL)!}
\geq {1\over q!}\Big({k\over 2}\Big)^q.
$$
Therefore by~\eqref{e:glot}
$$
\begin{aligned}
r_k^2
&\geq \sum_{q=1}^{\lfloor{k\over 2L}\rfloor}
\sum_{j\in\Omega_q}
{|\mathcal F_N(\mathbf 1_{\mathcal C_k})(j-j_b)|^2\over |\mathcal A|^k}\\
&\geq \Big(
{|\mathcal A|\over M}\Big)^k\cdot \sum_{q=1}^{\lfloor{k\over 2L}\rfloor}{1\over q!}\Big({\rho k\over 2}\Big)^q\geq \Big({|\mathcal A|\over M}\Big)^k e^{\tilde\rho k} 
\end{aligned}
$$
for $\tilde\rho:={1\over 4}\min(\rho,L^{-1})>0$, where in the last inequality we used the following
corollary of Stirling's formula valid for sufficiently large $n\in\mathbb N$:
$$
\sum_{q=1}^n {1\over q!}n^q\geq {n^n\over n!}\geq e^{n/2}.
$$
It follows that
$$
\beta\leq {1-\delta\over 2}-{\tilde \rho\over 2\log M}<{1-\delta\over 2},
$$
finishing the proof.
\end{proof}
%%%%%%%%%%%%%%%%%%%%%%%%%%%%%%%%%%%%%%%%%%%%%%%%%%%%%%%%%%%%%%%%%%%%%%%%%%%%%%%%
We finish this section by considering a particular family of alphabets
with $\delta\leq {1\over 2}+\mathcal O({1\over \log M})$
and fractal uncertainty exponent $\beta$ which is close to ${1\over 2}-\delta$.
This shows that the lower bound of Proposition~\ref{l:improved-ae-ultimate}
is sharp; see also the remark following Corollary~\ref{l:improve-pressure-cantor}.
%%%%%%%%%%%%%%%%%%%%%%%%%%%%%%%%%%%%%%%%%%%%%%%%%%%%%%%%%%%%%%%%%%%%%%%%%%%%%%%%
\begin{prop}
  \label{l:lower-1/2}
Assume that $2\leq L\leq 2\sqrt{M}$, take $a_0\in [0,M-L]$, and consider the alphabet
$$
\mathcal A=a_0+\{0,1,\dots,L-1\}\subset\mathbb Z_M.
$$
Then for some global constant $\mathbf K$, the exponent $\beta$ defined in~\eqref{e:beta-limit}
is bounded above as follows:
\begin{equation}
  \label{e:lower-1/2}
\beta\leq {1\over 2}-\delta+{\mathbf KL^2\over M\log M}.
\end{equation}
\end{prop}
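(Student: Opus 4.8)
The plan is to prove a matching \emph{lower} bound on $r_k$ for every $k$ and then apply Proposition~\ref{l:beta-limit}. By the shift-invariance noted just after~\eqref{e:same-shift} we may assume $a_0=0$, so that $\mathcal A=\{0,1,\dots,L-1\}$ and $|G_{\mathcal A}(y)|^2=\frac1M\bigl(\tfrac{\sin\pi Ly}{\sin\pi y}\bigr)^2$ is a rescaled Dirichlet kernel. Testing the operator against $\mathbf 1_{\mathcal C_k}\in\ell^2_N$ and using the product formula~\eqref{e:fc-split},
\begin{equation*}
r_k^2\ \geq\ \frac{\|\indic_{\mathcal C_k}\mathcal F_N\mathbf 1_{\mathcal C_k}\|_{\ell^2_N}^2}{\|\mathbf 1_{\mathcal C_k}\|_{\ell^2_N}^2}\ =\ \frac{1}{L^k}\,T_k,\qquad
T_k:=\sum_{j\in\mathcal C_k}\ \prod_{s=1}^k\Bigl|G_{\mathcal A}\Bigl(\frac{j}{M^s}\Bigr)\Bigr|^2 .
\end{equation*}

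The core of the argument is a one-step lower bound $T_k\geq \mathbf c\,\tfrac{L^3}{M}\,T_{k-1}$, proved by peeling off the top base-$M$ digit. Writing $j\in\mathcal C_k$ as $j=j''+M^{k-1}a$ with $j''\in\mathcal C_{k-1}$ and $a\in\{0,\dots,L-1\}$, one observes that $j/M^s$ differs from $j''/M^s$ by an integer when $s\leq k-1$, whereas $j/M^k=a/M+j''/M^k$ with $j''/M^k\in[0,1/M)$; since $G_{\mathcal A}$ is $1$-periodic the product factors accordingly, and summing over $(j'',a)$ gives
\begin{equation*}
T_k=\sum_{j''\in\mathcal C_{k-1}}\Bigl(\prod_{s=1}^{k-1}|G_{\mathcal A}(j''/M^s)|^2\Bigr)\,\sum_{a=0}^{L-1}|G_{\mathcal A}(a/M+j''/M^k)|^2\ \geq\ \mathbf c\,\frac{L^3}{M}\,T_{k-1},
\end{equation*}
provided one can show $\sum_{a=0}^{L-1}|G_{\mathcal A}(a/M+\theta)|^2\geq \mathbf c\,\tfrac{L^3}{M}$ for all $\theta\in[0,1/M)$. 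Iterating from $T_1=\sum_{a=0}^{L-1}|G_{\mathcal A}(a/M)|^2$ yields $T_k\geq(\mathbf c L^3/M)^k$, hence $r_k\geq(\mathbf c L^2/M)^{k/2}$, and then Proposition~\ref{l:beta-limit} gives $\beta\leq\frac12-\delta+\frac{\log(1/\mathbf c)}{2\log M}$ (using $\delta=\log L/\log M$). So the whole statement reduces to the kernel inequality with a constant $\mathbf c$ for which $\log(1/\mathbf c)\leq\mathbf K L^2/M$.

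For the kernel inequality I would proceed in two regimes. When $|y|\leq\frac1{2L}$, the elementary bounds $\sin\pi Ly\geq\frac2\pi\pi Ly$ and $\sin\pi y\leq\pi y$ give $|G_{\mathcal A}(y)|^2\geq\frac{4L^2}{\pi^2M}$; since $L\leq2\sqrt M$, the set $\{0,\dots,\min(L-1,\lfloor M/(4L)\rfloor)\}$ has at least $c_0\min(L,M/L)$ elements and each of them keeps $a/M+\theta$ inside $[0,\frac1{2L}]$, so with $L^2\leq4M$ one gets $\sum_{a=0}^{L-1}|G_{\mathcal A}(a/M+\theta)|^2\geq c_1\tfrac{L^3}{M}$ for global constants $c_0,c_1$. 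This already gives $\beta\leq\frac12-\delta+\frac{\mathcal O(1)}{\log M}$, which is~\eqref{e:lower-1/2} whenever $L^2\geq c_2 M$ for a fixed $c_2>0$. To get the sharp $L$-dependence when $L^2\leq c_2 M$ (i.e.\ $\delta$ just below $\tfrac12$), observe that then every $a\in\{0,\dots,L-1\}$ has $a/M+\theta=\mathcal O(L/M)$, so the finer Taylor estimate $\bigl(\tfrac{\sin\pi Ly}{L\sin\pi y}\bigr)^2\geq 1-\mathcal O((\pi Ly)^2)$ applies; summing over $a$ (and using $\sum_a a^2=\mathcal O(L^3)$) yields $\sum_{a=0}^{L-1}|G_{\mathcal A}(a/M+\theta)|^2\geq\tfrac{L^3}{M}\bigl(1-\mathcal O((L^2/M)^2)\bigr)$, so $\log(1/\mathbf c)=\mathcal O(L^2/M)$ there. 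Taking $\mathbf c$ to be the larger of the two lower bounds in each regime gives $\log(1/\mathbf c)\leq\mathbf K L^2/M$ for all large $M$; the finitely many small $M$ are absorbed into $\mathbf K$ using the a priori bound $\beta\leq\frac{1-\delta}2$ of~\eqref{e:JN-2}.

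The step I expect to be genuinely delicate is the \emph{sharp} form of the kernel inequality — obtaining a recursion constant $\mathbf c=1-\mathcal O(L^2/M)$ rather than a fixed constant. A merely constant loss per step would only give $\beta\leq\frac12-\delta+\mathcal O(1/\log M)$, which is far too weak when $L$ is bounded as $M\to\infty$. The reason it works is that for $L^2\ll M$ all relevant arguments $a/M+\theta$ are $\mathcal O(L/M)$, so the Dirichlet kernel there equals $L^2$ up to relative error $\mathcal O((L^2/M)^2)$, and this quadratically small error survives being multiplied over all $k$ levels. The remaining ingredients — the integer-shift identity for $G_{\mathcal A}(j/M^s)$, the reduction through Proposition~\ref{l:beta-limit}, and the crude main-lobe count — are routine.
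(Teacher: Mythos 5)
Your proposal is correct and follows essentially the same route as the paper: testing against $\mathbf 1_{\mathcal C_k}$, using the product formula~\eqref{e:fc-split} for the Dirichlet-type kernel $G_{\mathcal A}$, and splitting into the regimes $L\gtrsim\sqrt M$ (crude main-lobe constant, absorbed because $L^2/M\gtrsim 1$) and $L\ll\sqrt M$ (the sharp $1-\mathcal O(L^2/M)$ bound). Your digit-peeling recursion $T_k\geq\mathbf c\,(L^3/M)\,T_{k-1}$ is just a repackaging of the paper's direct restriction to the sub-Cantor set with digits in $\{0,\dots,\lfloor L/9\rfloor\}$, and all the estimates go through as you describe.
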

%%%%%%%%%%%%%%%%%%%%%%%%%%%%%%%%%%%%%%%%%%%%%%%%%%%%%%%%%%%%%%%%%%%%%%%%%%%%%%%%
\begin{proof}
By shifting the alphabet $\mathcal A$ (see the remark following~\eqref{e:same-shift}),
we may assume that $a_0=0$.
We use~\eqref{e:fc-split}, calculating
$$
G_{\mathcal A}(x)={\exp(-2\pi iLx)-1\over \sqrt{M}(\exp(-2\pi ix)-1)},\quad
x\in\mathbb R.
$$
Put
$$
u:=\mathbf 1_{\mathcal C_k},\quad
\|u\|_{\ell^2_N}=L^{k/2}.
$$
Then
\begin{equation}
  \label{e:hodor}
r_k^2\geq \bigg({\|\indic_{\mathcal C_k}\mathcal F_N u\|_{\ell^2_N}\over \|u\|_{\ell^2_N}}\bigg)^2
={1\over L^k}\sum_{j\in\mathcal C_k}|\mathcal F_N(\mathbf 1_{\mathcal C_k})(j)|^2.
\end{equation}
Define the set $\Omega\subset\mathcal C_k$ by
$$
\Omega=\bigg\{\sum_{r=0}^{k-1}b_r M^r\colon b_0,\dots,b_{k-1}\in \Big\{0,\dots,\Big\lfloor {L\over 9}\Big\rfloor\Big\}\bigg\}.
$$
Since $L\leq 2\sqrt{M}$, we have for all $s\in \mathbb N$ and $j\in\Omega$,
$$
{j\over M^s}\in J+\Big[0,{1\over 2L}\Big]\quad\text{for some }J\in\mathbb Z.
$$
Then for some global constant $\mathbf K$, we have
$$
\Big|G_{\mathcal A}\Big({j\over M^s}\Big)\Big|\geq {L\over \mathbf K\sqrt M}\quad\text{for all }
s\in\mathbb N,\
j\in\Omega,
$$
and thus by~\eqref{e:fc-split}
$$
|\mathcal F_N(\mathbf 1_{\mathcal C_k})(j)|^2\geq {L^{2k}\over \mathbf K^{2k}M^k}\quad\text{for all }
j\in\Omega.
$$
It follows from~\eqref{e:hodor} that
$$
r_k^2
\geq {1\over L^k}\sum_{j\in\Omega}|\mathcal F_N(\mathbf 1_{\mathcal C_k})(j)|^2
\geq  {L^{2k}\over (3\mathbf K)^{2k}M^k}.
$$
This gives
$$
\beta\leq {1\over 2}-\delta+{\log(3\mathbf K)\over \log M}.
$$
This implies the bound~\eqref{e:lower-1/2} as long as $L\geq c\sqrt{M}$ for any
fixed $c>0$.

It remains to consider the case when $L/\sqrt{M}$ is small.
We compute
$$
G_{\mathcal A}(J+x)={L\over\sqrt{M}}(1+\mathcal O(Lx)),\quad
x\in\Big[0,{1\over 2L}\Big],\quad
J\in\mathbb Z
$$
with the constant in $\mathcal O(\cdot)$ independent of~$L,x,J$. Thus
$$
\Big|G_{\mathcal A}\Big({j\over M^s}\Big)\Big|\geq {L\over\sqrt{M}}\Big(1-{\mathbf KL^2\over M}\Big)
\quad\text{for all }s\in\mathbb N,\ j\in\mathcal C_k,
$$
implying by~\eqref{e:fc-split} and~\eqref{e:hodor}
$$
r_k^2\geq {L^{2k}\over M^k}\Big(1-{\mathbf K L^2\over M}\Big)^{2k}.
$$
This proves~\eqref{e:lower-1/2} when $L/\sqrt{M}$ is small enough depending on $\mathbf K$.
\end{proof}
%%%%%%%%%%%%%%%%%%%%%%%%%%%%%%%%%%%%%%%%%%%%%%%%%%%%%%%%%%%%%%%%%%%%%%%%%%%%%%%%

%%%%%%%%%%%%%%%%%%%%%%%%%%%%%%%%%%%%%%%%%%%%%%%%%%%%%%%%%%%%%%%%%%%%%%%%%%%%%%%%
%%%%%%%%%%%%%%%%%%%%%%%%%%%%%%%%%%%%%%%%%%%%%%%%%%%%%%%%%%%%%%%%%%%%%%%%%%%%%%%%
\section{Weyl bounds}
  \label{s:Weyl}
  
In this section, we prove Theorem \ref{t:weyl} following the argument of~\cite{ifwl}.
We fix
$$
\tilde \nu > \nu > 0
$$
and put
$$
\Omega:=\{M^{-\tilde\nu}<|\lambda|<3\}\subset\mathbb C.
$$

%%%%%%%%%%%%%%%%%%%%%%%%%%%%%%%%%%%%%%%%%%%%%%%%%%%%%%%%%%%%%%%%%%%%%%%%%%%%%%%% 
\subsection{An approximate inverse}

Similarly to~\S\ref{s:localization}, fix
$$
\rho,\rho'\in (0,1),\quad
\tilde k:=\lceil\rho k\rceil,\quad
\tilde k':=\lceil\rho' k\rceil.
$$
Let the sets
\begin{equation}
  \label{e:X-rho-new}
X:=X_{\rho},\
X':=X_{\rho'}\ \subset\ \mathbb Z_N
\end{equation}
be defined in~\eqref{e:X-rho}.
We construct an approximate inverse for $B_N-\lambda$, similarly to~\cite[Proposition~2.1]{ifwl}:
%%%%%%%%%%%%%%%%%%%%%%%%%%%%%%%%%%%%%%%%%%%%%%%%%%%%%%%%%%%%%%%%%%%%%%%%%%%%%%%% 
\begin{lemm}
\label{l:para-id}
There exist families of operators $\mathcal{J}(\lambda),\mathcal{Z}(\lambda),\mathcal{R}(\lambda):\ell^2_N\to\ell^2_N$ holomorphic in $\lambda\in\Omega$, such that uniformly in $\lambda\in\Omega$
\begin{align}
\label{e:para-j}
\|\mathcal{J}(\lambda)\|_{\ell^2_N\to\ell^2_N}&\leq |\lambda|^{-\tilde{k}},\\
\label{e:para-z}
\|\mathcal{Z}(\lambda)\|_{\ell^2_N\to\ell^2_N}&=\mathcal{O}(N^{\tilde \nu(\rho+\rho')}),\\
\label{e:para-error}
\|\mathcal{R}(\lambda)\|_{\ell^2_N\to\ell^2_N}&=\mathcal{O}(N^{-\infty})
\end{align}
and the following identity holds:
\begin{equation}
\label{e:para-id}
\indic=\mathcal{J}(\lambda)\indic_{X}\mathcal{F}_N^\ast\indic_{X'}\mathcal{F}_N+\mathcal{Z}(\lambda)(B_N-\lambda)+\mathcal{R}(\lambda).
\end{equation}
\end{lemm}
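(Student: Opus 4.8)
The plan is to build $\mathcal J(\lambda),\mathcal Z(\lambda),\mathcal R(\lambda)$ out of the elementary telescoping identity for $B_N-\lambda$ together with the propagation statements~\eqref{e:concentration-1} and~\eqref{e:concentration-2}, following the scheme of~\cite[Proposition~2.1]{ifwl}. The operator $\mathcal J(\lambda)$ will be nothing but $\lambda^{-\tilde k}(B_N)^{\tilde k}$, which already forces~\eqref{e:para-j} since $\|B_N\|_{\ell^2_N\to\ell^2_N}\leq 1$. The starting point is the identity, valid for every $m\in\mathbb N$ and every $\lambda\neq 0$,
\begin{equation*}
\indic=\lambda^{-m}(B_N)^{m}-\mathcal Z_m(\lambda)(B_N-\lambda),\qquad
\mathcal Z_m(\lambda):=\sum_{j=0}^{m-1}\lambda^{-j-1}(B_N)^{j},
\end{equation*}
obtained by expanding a telescoping sum; $\mathcal Z_m(\lambda)$ is holomorphic on $\Omega$ because $0\notin\Omega$, and $\|\mathcal Z_m(\lambda)\|_{\ell^2_N\to\ell^2_N}\leq m\,M^{\tilde\nu m}$ there, using $|\lambda|^{-1}\leq M^{\tilde\nu}$ on $\Omega$.

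The core of the argument will be the operator identity
\begin{equation*}
(B_N)^{\tilde k+\tilde k'}=(B_N)^{\tilde k}\,\indic_{X}\,\mathcal F_N^\ast\,\indic_{X'}\,\mathcal F_N\,(B_N)^{\tilde k'}+\mathcal O(N^{-\infty})_{\ell^2_N\to\ell^2_N}.
\end{equation*}
I would prove it by writing $(B_N)^{\tilde k+\tilde k'}=(B_N)^{\tilde k}(B_N)^{\tilde k'}$ and inserting two cutoffs. First, \eqref{e:concentration-2} applied with $\rho'$ in place of $\rho$ (so $X_\rho$ becomes $X'=X_{\rho'}$) gives $(B_N)^{\tilde k'}=\mathcal F_N^\ast\indic_{X'}\mathcal F_N(B_N)^{\tilde k'}+\mathcal O(N^{-\infty})$; multiplying on the left by $(B_N)^{\tilde k}$, of norm $\leq 1$, preserves the $\mathcal O(N^{-\infty})$ error. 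Next, \eqref{e:concentration-1} applied with $\rho$ gives $(B_N)^{\tilde k}(\indic-\indic_{X})=\mathcal O(N^{-\infty})$, hence $(B_N)^{\tilde k}\mathcal F_N^\ast=(B_N)^{\tilde k}\indic_X\mathcal F_N^\ast+\mathcal O(N^{-\infty})$ since $\|\mathcal F_N^\ast\|=1$; multiplying on the right by $\indic_{X'}\mathcal F_N(B_N)^{\tilde k'}$, again of norm $\leq 1$, preserves the error. Combining the two insertions yields the identity. The ordering here is exactly the one that works: $\indic_X$ can be slid next to $\mathcal F_N^\ast$ only because $(B_N)^{\tilde k}$ annihilates everything supported away from $X$ in position, while $\indic_{X'}$ naturally sits on the incoming side of $(B_N)^{\tilde k'}$ because it is the frequency support that is controlled there.

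To conclude, I would multiply the core identity by $\lambda^{-(\tilde k+\tilde k')}$, rewrite its left-hand side using the telescoping identity with $m=\tilde k+\tilde k'$, and replace the trailing factor $\lambda^{-\tilde k'}(B_N)^{\tilde k'}$ on the right-hand side using the telescoping identity with $m=\tilde k'$. Rearranging produces exactly~\eqref{e:para-id} with $\mathcal J(\lambda)=\lambda^{-\tilde k}(B_N)^{\tilde k}$, with $\mathcal Z(\lambda)=\lambda^{-\tilde k}(B_N)^{\tilde k}\indic_X\mathcal F_N^\ast\indic_{X'}\mathcal F_N\,\mathcal Z_{\tilde k'}(\lambda)-\mathcal Z_{\tilde k+\tilde k'}(\lambda)$, and with $\mathcal R(\lambda)$ equal to the residual $\mathcal O(N^{-\infty})$ term (equivalently, $\indic$ minus the other two pieces, which makes holomorphy on $\Omega$ manifest). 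Then \eqref{e:para-j} is immediate; \eqref{e:para-z} follows from the bound $\|\mathcal Z_m(\lambda)\|\leq m\,M^{\tilde\nu m}$ together with $\|\lambda^{-\tilde k}(B_N)^{\tilde k}\indic_X\mathcal F_N^\ast\indic_{X'}\mathcal F_N\|\leq|\lambda|^{-\tilde k}$ and the estimates $|\lambda|^{-\tilde k}\leq M^{\tilde\nu}N^{\tilde\nu\rho}$, $|\lambda|^{-\tilde k'}\leq M^{\tilde\nu}N^{\tilde\nu\rho'}$ on $\Omega$ (the factor $\tilde k+\tilde k'=\mathcal O(k)$ being harmless, or absorbed by an arbitrarily small enlargement of $\tilde\nu$); and \eqref{e:para-error} holds because $\mathcal R(\lambda)$ is the Step‑two error multiplied by $\lambda^{-(\tilde k+\tilde k')}$, which is only polynomially large in $N$ on $\Omega$.

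The main obstacle is the core identity of the second step: placing the two microlocal cutoffs in the correct order so that, after multiplication by the polynomially large factor $\lambda^{-(\tilde k+\tilde k')}$, the error is still $\mathcal O(N^{-\infty})$. This forces one to use the position-side statement~\eqref{e:concentration-1} and the frequency-side statement~\eqref{e:concentration-2} in precisely this asymmetric way; everything else — the telescoping algebra, the operator-norm bookkeeping, and the holomorphy of the three families — is routine.
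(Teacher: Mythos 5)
Your proposal is correct and follows essentially the same route as the paper: the same $\mathcal J(\lambda)=\lambda^{-\tilde k}(B_N)^{\tilde k}$, the same use of~\eqref{e:concentration-1} with exponent $\rho$ on the position side and~\eqref{e:concentration-2} with exponent $\rho'$ on the frequency side, and the same telescoping algebra, merely organized as a single "core identity" for $(B_N)^{\tilde k+\tilde k'}$ rather than as the composition of two partition-of-identity formulas. The resulting $\mathcal Z(\lambda)$ differs from the paper's by a harmless rearrangement, and your logarithmic factor in~\eqref{e:para-z} is present (implicitly) in the paper's estimate as well and is absorbed in all applications.
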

%%%%%%%%%%%%%%%%%%%%%%%%%%%%%%%%%%%%%%%%%%%%%%%%%%%%%%%%%%%%%%%%%%%%%%%%%%%%%%%% 
\begin{proof}
We have the following identities:
\begin{align}
  \label{e:para-id-1}
\indic&=\lambda^{-\tilde k}(B_N)^{\tilde k}\indic_{X}+\mathcal{Z}_1(\lambda)(B_N-\lambda)+\mathcal{R}_1(\lambda),\\
  \label{e:para-id-2}
\indic&=\mathcal{F}_N^\ast\indic_{X'}\mathcal{F}_N+\mathcal{Z}_2(\lambda)(B_N-\lambda)+\mathcal{R}_2(\lambda)
\end{align}
where
$$
\begin{aligned}
\mathcal{Z}_1(\lambda)=&\;-\sum_{0\leq \ell<\tilde k}\lambda^{-1-\ell}(B_N)^\ell=\mathcal{O}(N^{\rho\tilde \nu})_{\ell^2_N\to\ell^2_N},\\
\mathcal{Z}_2(\lambda)=&\;-\sum_{0\leq\ell <\tilde k'}\lambda^{-1-\ell}(\indic-\mathcal F_N^*\indic_{X'}\mathcal F_N)(B_N)^\ell=\mathcal{O}(N^{\rho'\tilde \nu})_{\ell^2_N\to\ell^2_N}
\end{aligned}
$$
and by~\eqref{e:concentration-1}, \eqref{e:concentration-2}
\begin{equation*}
\begin{split}
\mathcal{R}_1(\lambda)=&\;\lambda^{-\tilde{k}}(B_N)^{\tilde{k}}(\indic-\indic_{X})
=\mathcal O(N^{-\infty})_{\ell^2_N\to\ell^2_N},\\
\mathcal{R}_2(\lambda)=&\;\lambda^{-\tilde{k}'}(\indic-\mathcal{F}_N^\ast\indic_{X'}\mathcal{F}_N)(B_N)^{\tilde{k}'}
=\mathcal O(N^{-\infty})_{\ell^2_N\to\ell^2_N}.
\end{split}
\end{equation*}
Then~\eqref{e:para-id} holds with 
$$
\begin{aligned}
\mathcal{J}(\lambda)&=\lambda^{-\tilde{k}}(B_N)^{\tilde{k}},\\
\mathcal{Z}(\lambda)&=\mathcal Z_1(\lambda)+
\lambda^{-\tilde{k}}(B_N)^{\tilde k}\indic_X\mathcal{Z}_2(\lambda),\\
\mathcal{R}(\lambda)&=\mathcal{R}_1(\lambda)+\lambda^{-\tilde{k}}(B_N)^{\tilde k}\indic_X\mathcal{R}_2(\lambda).\qedhere
\end{aligned}
$$
\end{proof}
%%%%%%%%%%%%%%%%%%%%%%%%%%%%%%%%%%%%%%%%%%%%%%%%%%%%%%%%%%%%%%%%%%%%%%%%%%%%%%%% 
We remark that Proposition~\ref{l:para-id} gives a resolvent
bound inside the spectral gap given by the uncertainty principle:
%%%%%%%%%%%%%%%%%%%%%%%%%%%%%%%%%%%%%%%%%%%%%%%%%%%%%%%%%%%%%%%%%%%%%%%%%%%%%%%% 
\begin{prop}
\label{l:resolvent-bound}
Let $\beta$ be defined in~\eqref{e:beta-limsup}. Then for each $\nu\in (0,\beta)$
we have
$$
\|(B_N-\lambda)^{-1}\|_{\ell^2_N\to \ell^2_N}\leq CN^{2\nu}\quad\text{when}\quad
M^{-\nu}\leq |\lambda|\leq 1,\quad
N\geq N_0,
$$
with the constants $N_0,C$ depending on $\nu$.
\end{prop}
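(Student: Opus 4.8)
\emph{Proof proposal.} The plan is to read off the resolvent bound directly from the parametrix identity~\eqref{e:para-id} of Lemma~\ref{l:para-id}, applied with carefully chosen parameters. Given $\nu\in(0,\beta)$, first fix $\tilde\nu\in(\nu,\beta)$, so that the annulus $\{M^{-\nu}\leq|\lambda|\leq 1\}$ is contained in $\Omega=\{M^{-\tilde\nu}<|\lambda|<3\}$ and Lemma~\ref{l:para-id} is available there. Then take $\rho=\rho'=1-\eta$ with small $\eta>0$ and a small $\varepsilon>0$, both to be fixed below in terms of $\nu$ and $\beta$. Applying~\eqref{e:para-id} to an arbitrary $u\in\ell^2_N$ and writing $f:=(B_N-\lambda)u$ gives
\[
u=\mathcal J(\lambda)\indic_X\mathcal F_N^\ast\indic_{X'}\mathcal F_N u+\mathcal Z(\lambda)f+\mathcal R(\lambda)u,
\]
so it suffices to control the three terms on the right uniformly in $\lambda$ in the annulus and $N\geq N_0$.

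For the first term I would use $\|\mathcal J(\lambda)\|\leq|\lambda|^{-\tilde k}\leq M^{\nu}N^{\nu\rho}$ (from $|\lambda|\geq M^{-\nu}$, $\tilde k=\lceil\rho k\rceil\leq\rho k+1$, $N=M^k$), together with $\|\indic_X\mathcal F_N^\ast\indic_{X'}\mathcal F_N u\|\leq\|\indic_{X'}\mathcal F_N\indic_X\|\,\|u\|$. Decomposing $X=X_\rho$ and $X'=X_{\rho'}$ into $O(N^{1-\rho})$, resp.\ $O(N^{1-\rho'})$, translates of $\mathcal C_k$ as in~\eqref{e:X-rho}, and using the shift invariance~\eqref{e:same-shift} together with the fractal uncertainty principle $r_k\leq N^{-\beta+\varepsilon}$ for $k$ large (valid by~\eqref{e:beta-limsup}, or Proposition~\ref{l:beta-limit}) exactly as in~\eqref{e:fupuse-2}, one gets $\|\indic_{X'}\mathcal F_N\indic_X\|\leq C_0N^{2-\rho-\rho'-\beta+\varepsilon}$. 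Hence the first term is at most $C_0M^{\nu}N^{\nu\rho+2-\rho-\rho'-\beta+\varepsilon}\|u\|$; with $\rho=\rho'=1-\eta$ the exponent equals $(\nu-\beta)+\eta(2-\nu)+\varepsilon$, which is strictly negative once $\eta,\varepsilon$ are small (possible since $\nu<\beta$), so for $N\geq N_0$ this term is $\leq\tfrac12\|u\|$.

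For the second term I would re-run the estimates in the proof of Lemma~\ref{l:para-id} on the smaller annulus $\{|\lambda|\geq M^{-\nu}\}$: the geometric sums defining $\mathcal Z_1,\mathcal Z_2$ are then bounded by $\tilde k\,M^{\nu}N^{\nu\rho}$ and $\tilde k'\,M^{\nu}N^{\nu\rho'}$, giving $\|\mathcal Z(\lambda)\|\leq Ck^2M^{2\nu}N^{\nu(\rho+\rho')}=Ck^2M^{2\nu}N^{2\nu}N^{-2\nu\eta}$; since $k=\log_M N$ and $k^2N^{-2\nu\eta}\to 0$, this is $\leq CM^{2\nu}N^{2\nu}$ for $N\geq N_0$. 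The third term is $\mathcal O(N^{-\infty})\|u\|\leq\tfrac14\|u\|$ for $N\geq N_0$ by~\eqref{e:para-error}. Combining the three bounds yields $\|u\|\leq 8M^{2\nu}N^{2\nu}\|f\|$ for all $u$, all $\lambda$ in the annulus, and all $N\geq N_0$; in particular $B_N-\lambda$ is injective, hence invertible on the finite-dimensional space $\ell^2_N$, with $\|(B_N-\lambda)^{-1}\|\leq 8M^{2\nu}N^{2\nu}$, which is the claimed bound with $C=8M^{2\nu}$ (and $N_0,C$ depending only on $\nu$, since $\tilde\nu,\rho,\rho',\varepsilon$ were chosen as functions of $\nu$ and the fixed data $M,\mathcal A,\chi$).

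The only delicate point I expect is the simultaneous choice of $\rho,\rho',\varepsilon$: the first term wants $\rho,\rho'$ close to $1$ in order to exploit the full gap $\beta$ and keep $2-\rho-\rho'-\beta+\varepsilon$ below $-\nu\rho$, whereas the bound on $\mathcal Z(\lambda)$ needs $\rho+\rho'$ \emph{strictly} below $2$ so that the unavoidable $k^2\sim(\log N)^2$ factors get absorbed into a power of $N$; both can be satisfied precisely because $\nu<\beta$ leaves a fixed amount of slack. Everything else is routine bookkeeping on top of~\eqref{e:concentration-1}, \eqref{e:concentration-2}, and~\eqref{e:para-id}.
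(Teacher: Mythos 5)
Your proof is correct and follows essentially the same route as the paper: apply the parametrix identity~\eqref{e:para-id} with $\rho=\rho'$ close to $1$, absorb the $\mathcal J(\lambda)\indic_X\mathcal F_N^*\indic_{X'}\mathcal F_N$ and $\mathcal R(\lambda)$ terms as a small perturbation of the identity using the fractal uncertainty bound $r_k\leq N^{-\beta+\varepsilon}$, and read off the resolvent bound from $\|\mathcal Z(\lambda)\|$. Your re-derivation of the $\mathcal Z(\lambda)$ estimate on the annulus $\{|\lambda|\geq M^{-\nu}\}$ (trading a $k^2$ factor for $N^{-2\nu\eta}$) is in fact a slightly more careful handling of the $\tilde\nu$-versus-$\nu$ exponent than the paper's direct appeal to~\eqref{e:para-z}, but it is the same argument.
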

%%%%%%%%%%%%%%%%%%%%%%%%%%%%%%%%%%%%%%%%%%%%%%%%%%%%%%%%%%%%%%%%%%%%%%%%%%%%%%%% 
\begin{proof}
Take $\rho'=\rho$. By~\eqref{e:fupuse-2},
\eqref{e:para-j}, and~\eqref{e:para-error} we have for large $N$
$$
\|\mathcal J(\lambda)\indic_{X}\mathcal F_N^*\indic_{X}\mathcal F_N+\mathcal R(\lambda)\|
\leq C|\lambda|^{-\rho k}N^{2(1-\rho)}r_k\leq CN^{\rho\nu+2(1-\rho)-\beta+}\leq
{1\over 2}
$$
assuming that $\rho\in (0,1)$ satisfies the inequality
$-\rho\nu+2(\rho-1)+\beta>0$. It follows from~\eqref{e:para-id}
and~\eqref{e:para-z}
that
$$
\|(B_N-\lambda)^{-1}\|_{\ell^2_N\to\ell^2_N}
\leq 2\|\mathcal Z(\lambda)\|_{\ell^2_N\to \ell^2_N}
\leq C N^{2\nu},
$$
finishing the proof.
\end{proof}
%%%%%%%%%%%%%%%%%%%%%%%%%%%%%%%%%%%%%%%%%%%%%%%%%%%%%%%%%%%%%%%%%%%%%%%%%%%%%%%% 

%%%%%%%%%%%%%%%%%%%%%%%%%%%%%%%%%%%%%%%%%%%%%%%%%%%%%%%%%%%%%%%%%%%%%%%%%%%%%%%%
\subsection{Proof of Theorem~\texorpdfstring{\ref{t:weyl}}{3}}

Fix $\rho,\rho'\in (0,1)$.
Using Lemma~\ref{l:para-id}, define for $\lambda\in\Omega$
\begin{align}
\mathcal{B}(\lambda)&=\mathcal{J}(\lambda)\indic_{X}\mathcal{F}_N^\ast\indic_{X'}\mathcal{F}_N+\mathcal{R}(\lambda)=\indic-\mathcal{Z}(\lambda)(B_N-\lambda),\\
  \label{e:F-def}
F(\lambda)&=\det(\indic-\mathcal B(\lambda)^2).
\end{align}
Then we have
$$
F(\lambda)=\det(B_N-\lambda)\cdot \det \mathcal Z(\lambda)\cdot \det (1+\mathcal B(\lambda))
$$
and thus (with both sets counting multiplicity) 
\begin{equation}
\label{e:zero}
\Sp(B_N)\cap\Omega\ \subset\ \{\lambda\in\Omega\colon F(\lambda)=0\}.
\end{equation}
We have the following estimates on the determinant $F(\lambda)$:
%%%%%%%%%%%%%%%%%%%%%%%%%%%%%%%%%%%%%%%%%%%%%%%%%%%%%%%%%%%%%%%%%%%%%%%%%%%%%%%%
\begin{lemm}
\label{l:det-estimate}
Fix $\rho,\rho'\in (0,1)$. Then there exists a constant $C$ such that
for all $N$ large enough,
\begin{align}
  \label{e:det-estimate-1}
\sup_{\lambda\in\Omega}|F(\lambda)|&\leq \exp(CN^{\widetilde m}),\\
  \label{e:det-estimate-2}
|F(2)|&\geq \exp(-CN^{\widetilde m})
\end{align}
where
\begin{equation}
  \label{e:tilde-m}
\widetilde m=\delta(\rho+\rho')+1-\rho-\rho'+2\rho\tilde\nu.
\end{equation}
\end{lemm}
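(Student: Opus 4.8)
The plan is to derive both bounds from a single estimate on the trace (Schatten $S^1$) norm of $\mathcal B(\lambda)^2$. From $F(\lambda)=\det(\indic-\mathcal B(\lambda)^2)$ and the standard inequality $|\det(\indic-A)|\leq\exp\|A\|_{\mathrm{tr}}$ one gets $|F(\lambda)|\leq\exp\|\mathcal B(\lambda)^2\|_{\mathrm{tr}}$, while for the lower bound I would use that when $\|A\|_{\ell^2_N\to\ell^2_N}\leq\tfrac12$ one has $|\det(\indic-A)|=\prod_j|1-\mu_j(A)|\geq\prod_j e^{-2|\mu_j(A)|}\geq\exp(-2\|A\|_{\mathrm{tr}})$, using $1-t\geq e^{-2t}$ on $[0,\tfrac12]$ and Weyl's inequality $\sum_j|\mu_j(A)|\leq\sum_j\sigma_j(A)=\|A\|_{\mathrm{tr}}$. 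At $\lambda=2$ the hypothesis $\|\mathcal B(2)^2\|_{\ell^2_N\to\ell^2_N}\leq\tfrac12$ holds for $N$ large, since $\mathcal J(2)=2^{-\tilde k}(B_N)^{\tilde k}$ has norm $\leq 2^{-\tilde k}\leq\tfrac12$ by~\eqref{e:para-j}, the factor $\indic_X\mathcal F_N^*\indic_{X'}\mathcal F_N$ has norm $\leq 1$, and $\mathcal R(2)=\mathcal O(N^{-\infty})$ by~\eqref{e:para-error}. So it suffices to prove $\|\mathcal B(\lambda)^2\|_{\mathrm{tr}}\leq CN^{\widetilde m}$ uniformly for $\lambda\in\Omega$.

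Write $\mathcal B(\lambda)=\mathcal B_0(\lambda)+\mathcal R(\lambda)$ with $\mathcal B_0(\lambda)=\mathcal J(\lambda)\indic_X\mathcal F_N^*\indic_{X'}\mathcal F_N$. On $\Omega$ we have $\|\mathcal B_0(\lambda)\|_{\ell^2_N\to\ell^2_N}\leq|\lambda|^{-\tilde k}\leq M^{\tilde\nu\tilde k}$, which is polynomial in $N$, so each of $\mathcal B_0\mathcal R$, $\mathcal R\mathcal B_0$, $\mathcal R^2$ has operator norm $\mathcal O(N^{-\infty})$ by~\eqref{e:para-error}, hence (being of rank at most $N$) trace norm $\mathcal O(N^{-\infty})$; thus $\|\mathcal B(\lambda)^2\|_{\mathrm{tr}}=\|\mathcal B_0(\lambda)^2\|_{\mathrm{tr}}+\mathcal O(N^{-\infty})$. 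For the main term I would use the Schatten--H\"older inequality $\|AB\|_{\mathrm{tr}}\leq\|A\|_{\HS}\|B\|_{\HS}$ and unitarity of $\mathcal F_N$ together with~\eqref{e:para-j} and the Hilbert--Schmidt formula~\eqref{e:hs}:
$$
\|\mathcal B_0(\lambda)^2\|_{\mathrm{tr}}\leq\|\mathcal B_0(\lambda)\|_{\HS}^2\leq|\lambda|^{-2\tilde k}\,\|\indic_X\mathcal F_N^*\indic_{X'}\|_{\HS}^2=|\lambda|^{-2\tilde k}\cdot{|X|\cdot|X'|\over N}.
$$

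The only delicate point is the size bound $|X_\rho|\leq C_MN^{1-(1-\delta)\rho}$ (and likewise for $X_{\rho'}$): it is essential that $X=X_\rho$ is built from \emph{short-range} translates of $\mathcal C_k$, so that these translates coalesce rather than pile up disjointly. Writing each $j\in\mathcal C_k$ as $j=M^{k-\tilde k}j'+j''$ with $j'\in\mathcal C_{\tilde k}$ and $j''\in\mathcal C_{k-\tilde k}\subset\{0,\dots,M^{k-\tilde k}-1\}$, and using $M^{k-\tilde k}\leq N^{1-\rho}\leq M\cdot M^{k-\tilde k}$ (as in the discussion after~\eqref{e:kontainer}), one sees that $X_\rho\subset M^{k-\tilde k}\mathcal C_{\tilde k}+I$ for an interval $I$ of length $\mathcal O_M(M^{k-\tilde k})$, so
$$
|X_\rho|\leq C_M\,|\mathcal C_{\tilde k}|\cdot M^{k-\tilde k}=C_M\,M^{\delta\tilde k+k-\tilde k}\leq C_M\,N^{1-(1-\delta)\rho}
$$
since $\tilde k\geq\rho k$. (Bounding $|X_\rho|$ crudely by the number of translates times $|\mathcal C_k|$ gives only the too-large exponent $1-\rho+\delta$, which is why the Hilbert--Schmidt route above needs this refined count rather than a rank bound.) Combining this with $|\lambda|^{-2\tilde k}\leq M^{2\tilde\nu\tilde k}\leq CN^{2\tilde\nu\rho}$ on $\Omega$ yields
$$
\|\mathcal B_0(\lambda)^2\|_{\mathrm{tr}}\leq C\,N^{2\tilde\nu\rho}\cdot N^{1-(1-\delta)\rho}\cdot N^{1-(1-\delta)\rho'}\cdot N^{-1}=C\,N^{\widetilde m},
$$
because $\widetilde m=\delta(\rho+\rho')+1-\rho-\rho'+2\rho\tilde\nu=2\tilde\nu\rho+1-(1-\delta)(\rho+\rho')$. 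This gives~\eqref{e:det-estimate-1}; applying the lower bound from the first paragraph at $\lambda=2$, where $|\lambda|^{-2\tilde k}=2^{-2\tilde k}\leq1$ so the same trace-norm bound holds, gives~\eqref{e:det-estimate-2}. All constants are uniform in $\lambda\in\Omega$, the only place the domain $\Omega$ enters being the bound $|\lambda|^{-\tilde k}\leq M^{\tilde\nu\tilde k}$, which uses $|\lambda|\geq M^{-\tilde\nu}$. The main obstacle, then, is not the determinant bookkeeping (routine, as in~\cite{ifwl}) but recognizing and proving that fattening $\mathcal C_k$ by $N^{1-\rho}$ in position collapses it, up to a constant $C_M$, into $|\mathcal C_{\tilde k}|$ solid blocks of length $M^{k-\tilde k}$.
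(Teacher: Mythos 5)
Your proof is correct and follows essentially the same route as the paper's: bound $|F(\lambda)|$ via $\exp(\|\mathcal B(\lambda)^2\|_{\tr})\leq\exp(\|\mathcal B(\lambda)\|_{\HS}^2)$, control the Hilbert--Schmidt norm by $|\lambda|^{-2\tilde k}\,|X|\,|X'|/N$, and handle $\lambda=2$ using $\|\mathcal B(2)\|\leq 1/2$ (the paper gets the lower bound from $|F(2)|^{-1}=|\det(\indic+\mathcal B(2)^2(\indic-\mathcal B(2)^2)^{-1})|$ rather than your eigenvalue-product argument, but the two are equivalent). Your explicit verification that $|X_\rho|\leq C_MN^{1-(1-\delta)\rho}$ — rather than the too-crude $N^{1-\rho+\delta}$ — is exactly the count the paper's one-line Hilbert--Schmidt estimate relies on.
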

%%%%%%%%%%%%%%%%%%%%%%%%%%%%%%%%%%%%%%%%%%%%%%%%%%%%%%%%%%%%%%%%%%%%%%%%%%%%%%%%
\begin{proof}
We have by~\eqref{e:hs} and~\eqref{e:X-rho-new} the Hilbert--Schmidt norm bound
$$
\|\indic_X\mathcal F_N\indic_{X'}\|_{\HS}^2={|X|\cdot |X'|\over N}\leq 
CN^{\delta(\rho+\rho')+1-\rho-\rho'}.
$$
Therefore,
\begin{equation}
  \label{e:hs-gotit}
\sup_{\lambda\in\Omega} \|\mathcal B(\lambda)\|_{\HS}^2\leq CN^{\widetilde m}.
\end{equation}
We estimate the determinant using the trace norm, 
\begin{equation}
  \label{e:det-gotit}
|F(\lambda)|\leq \exp\big( \|\mathcal B(\lambda)^2\|_{\tr}\big)\leq\exp\big(\|\mathcal B(\lambda)\|_{\HS}^2\big)
\end{equation}
and~\eqref{e:det-estimate-1} follows.

Next, we have for $\lambda=2$ and sufficiently large $k$,
$$
\|\mathcal B(2)\|_{\ell^2_N\to\ell^2_N}
\leq \|\mathcal J(2)\|_{\ell^2_N\to\ell^2_N}+\mathcal O(N^{-\infty})
\leq 2^{-\rho k}+\mathcal O(N^{-\infty})\leq {1\over 2}.
$$
Therefore
$$
\|(\indic-\mathcal B(2)^2)^{-1}\|_{\ell^2_N\to\ell^2_N}\leq 2.
$$
We have
$$
\begin{gathered}
|F(2)|^{-1}=\big|\det\big((\indic-\mathcal B(2)^2)^{-1}\big)\big|\\
=\big|\det\big(\indic+\mathcal B(2)^2(\indic-\mathcal B(2)^2)^{-1}\big)\big|
\leq \exp(2\|\mathcal B(2)\|_{\HS}^2)
\end{gathered}
$$
thus~\eqref{e:det-estimate-2} follows from~\eqref{e:hs-gotit}.
\end{proof}
%%%%%%%%%%%%%%%%%%%%%%%%%%%%%%%%%%%%%%%%%%%%%%%%%%%%%%%%%%%%%%%%%%%%%%%%%%%%%%%%
To pass from the estimates~\eqref{e:det-estimate-1}, \eqref{e:det-estimate-2}
to bounding the number of zeros of $F(\lambda)$, we
need the following general statement from complex analysis:
%%%%%%%%%%%%%%%%%%%%%%%%%%%%%%%%%%%%%%%%%%%%%%%%%%%%%%%%%%%%%%%%%%%%%%%%%%%%%%%%
\begin{lemm}
  \label{l:jensen}
Assume that $z_0\in\mathbb C$, $\Omega\subset\mathbb C$ is a connected open set,
and $K\subset\mathbb C$ is a compact set such that
$$
z_0\ \in\ K\ \subset\ \Omega.
$$
Let $f(z)$ be a holomorphic function on $\Omega$ such that
for some constant $L>0$,
\begin{equation}
  \label{e:jensen-cond}
\sup_{z\in \Omega}|f(z)|\leq e^L,\quad
|f(z_0)|\geq e^{-L}.
\end{equation}
Then the number of zeros of $f(z)$ in $K$, counted with multiplicities, is bounded as follows:
\begin{equation}
  \label{e:jensen-bound}
\big|\{z\in K\colon f(z)=0\}\big|\ \leq\ CL
\end{equation}
where the constant $C$ depends only on $z_0,\Omega,K$.
\end{lemm}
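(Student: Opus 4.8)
The plan is to derive Lemma~\ref{l:jensen} from Jensen's formula combined with a chaining argument that propagates the single-point lower bound $|f(z_0)|\ge e^{-L}$ throughout $\Omega$. First I would use compactness to set up a convenient cover: since $K$ is a compact subset of the connected open set $\Omega$ and $z_0\in K$, there is a bounded open connected set $U$ with $K\cup\{z_0\}\subset U$ and $\overline U\subset\Omega$, a radius $r>0$ with $\overline{B(w,3r)}\subset\Omega$ for every $w\in\overline U$, and finitely many points $w_1,\dots,w_n\in\overline U$ with $K\subset\bigcup_{j=1}^n B(w_j,r)$. The integer $n$, the radius $r$, and (below) the length of the connecting chains depend only on $z_0,\Omega,K$. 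It then suffices to bound, for each $j$, the number $n_f(w_j,r)$ of zeros of $f$ in $B(w_j,r)$ counted with multiplicity.

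Fix $j$ and assume for the moment $f(w_j)\ne0$. Jensen's formula on the disk $B(w_j,2r)$, keeping only the zeros within distance $r$ of $w_j$ (each of which contributes at least $\log 2$), gives
$$
n_f(w_j,r)\,\log 2\ \le\ \sum_{a\,:\,|a-w_j|<2r}\log\frac{2r}{|a-w_j|}\ =\ \frac{1}{2\pi}\int_0^{2\pi}\log\bigl|f(w_j+2re^{i\theta})\bigr|\,d\theta-\log|f(w_j)|\ \le\ L-\log|f(w_j)|,
$$
where the last step uses $\sup_\Omega|f|\le e^L$. Thus the whole lemma reduces to proving a lower bound $\log|f(w_j)|\ge -C'L$ with $C'$ depending only on $z_0,\Omega,K$. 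The harmless case $f(w_j)=0$ is disposed of by first replacing each $w_j$ by a nearby point of $\overline U$ at which $f$ does not vanish (possible since the zero set of $f$ is discrete), enlarging the covering radius in the previous step to compensate.

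The remaining step --- and the main obstacle --- is this propagation of the lower bound, which is delicate because $\log|f|$ is subharmonic, so lower bounds are the ``wrong direction'' and can be destroyed near zeros. I would exploit the sub-mean-value inequality together with the uniform upper bound $\log|f|\le L$: for any $p$ with $\overline{B(p,r)}\subset\Omega$ and any $0<s\le r$, the average of $\log|f|$ over the circle $\partial B(p,s)$ is at least $\log|f(p)|$, and a short computation then shows that for a suitable absolute constant $t$ the set of $\theta$ with $\log|f(p+se^{i\theta})|\ge -t\,\max\bigl(L,-\log|f(p)|\bigr)$ has measure arbitrarily close to $2\pi$ --- in particular exceeding the length of any fixed arc of moderate size. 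Fixing once and for all, for each $j$, a chain of points $z_0=\zeta_0,\zeta_1,\dots,\zeta_{m_j}=w_j$ in $\overline U$ with $|\zeta_{i+1}-\zeta_i|\le r$ and $m_j$ bounded in terms of $z_0,\Omega,K$, one walks along the chain: at each step the ``good set'' on the circle $\partial B(\zeta_i,\cdot)$ is large enough to contain a point making definite geometric progress toward $w_j$, so one can route the actual walk through such points, losing only the bounded factor $t$ in the lower bound at each step. After $m_j$ steps this yields $\log|f(w_j)|\ge -t^{m_j}L$, and hence $C'=\max_j t^{m_j}$ depends only on $z_0,\Omega,K$. Summing the Jensen estimates over $j=1,\dots,n$ gives $\bigl|\{z\in K:f(z)=0\}\bigr|\le\sum_{j=1}^n n_f(w_j,r)\le \frac{n(1+C')}{\log 2}\,L$, which is~\eqref{e:jensen-bound}. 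The only genuinely fussy point is making the per-step routing precise (choosing the intermediate points so that the large ``good set'' on each circle actually lets one advance toward $w_j$); this is a self-contained quantitative planar argument requiring no input from the rest of the paper.
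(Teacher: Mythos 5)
Your proposal is correct in outline but takes a genuinely different route from the paper's. The paper sidesteps the whole propagation-of-the-lower-bound issue: it covers $K$ by finitely many disks, runs a separate argument for each, and in each case chooses a simply connected subdomain of $\Omega$ containing $z_0$ and the relevant disk, transports it to the unit disk with $z_0\mapsto 0$ via the Riemann mapping theorem, and then applies Jensen's formula once, centered at $z_0$ itself. Because Jensen is anchored at $z_0$, the hypothesis $|f(z_0)|\ge e^{-L}$ plugs in directly and one immediately gets a bound $C L$ with $C$ coming from the geometry of the Riemann map; no chaining is needed. Your version instead applies Jensen at the center $w_j$ of each covering ball and then has to manufacture a lower bound on $|f(w_j)|$, which forces the chaining argument through the sub-mean-value inequality. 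That chaining is sound --- the ``bad-set'' measure estimate $\mu<4\pi/t$ for $\{\theta:\log|f(p+se^{i\theta})|<-tM\}$ is correct, and with a fixed $t$ depending only on the chain geometry the routing can be made precise --- but it is, as you say, fussy, and it multiplies the loss by $t^{m_j}$ at each of $m_j$ steps. The trade-off: your argument is elementary and quantitative (no Riemann mapping theorem, explicit constants in principle), while the paper's is short and clean at the cost of invoking a nonconstructive tool. One small point to watch in your write-up: when you perturb $w_j$ to a nearby non-zero of $f$ and enlarge the covering radius, make sure you also enlarge the Jensen disk $B(w_j,2r)$ consistently so that it still lies inside $\Omega$ (you built in a margin of $3r$, so this is fine, but it should be stated).
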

%%%%%%%%%%%%%%%%%%%%%%%%%%%%%%%%%%%%%%%%%%%%%%%%%%%%%%%%%%%%%%%%%%%%%%%%%%%%%%%%
\begin{proof} By splitting $K$ into a union of smaller sets and shrinking $\Omega$
accordingly, we may assume that $\Omega$ is simply connected. Using Riemann
mapping theorem, we then reduce to the case when $\Omega=\{|z|<1\}$ is the unit disk
and $z_0=0$. We may then assume that $K=\{|z|\leq \alpha\}$ is the
closed disk of some radius $\alpha\in (0,1)$.

We now use Jensen's formula (see for instance~\cite[\S3.61, equation (2)]{TitchFunctions}):
$$
\sum_{z\colon f(z)=0,\ |z|<r} \log \Big({r\over |z|}\Big)=
{1\over 2\pi}\int_0^{2\pi}\log |f(re^{i\theta})|\,d\theta-\log |f(0)|\quad\text{for all }r\in (0,1).
$$
where the roots of $f$ are repeated in the sum according to multiplicity.
Taking $r:={1+\alpha\over 2}$ and using~\eqref{e:jensen-cond},
we obtain~\eqref{e:jensen-bound} with $C=2/\log(r/\alpha)$.
\end{proof}
%%%%%%%%%%%%%%%%%%%%%%%%%%%%%%%%%%%%%%%%%%%%%%%%%%%%%%%%%%%%%%%%%%%%%%%%%%%%%%%%
Applying Lemma~\ref{l:jensen} to the function $F$ defined
in~\eqref{e:F-def},
with $z_0=2$, $K=\{M^{-\nu}\leq |\lambda|\leq 2\}$, and $L=CN^{\widetilde m}$, and using Lemma~\ref{l:det-estimate}
and~\eqref{e:zero}, we get the following
bound on the counting function defined in~\eqref{e:N-k}:
$$
\mathcal N_k(\nu)\leq CN^{\widetilde m}.
$$
To prove Theorem~\ref{t:weyl}, it remains to show that by choosing
$\tilde\nu,\rho,\rho'$, we can make the constant
$\widetilde m$ defined in~\eqref{e:tilde-m} arbitrarily close
to
$$
m=\min(2\nu+2\delta-1,\delta).
$$
This follows from the following two statements:
$$
\begin{aligned}
\tilde\nu=\nu+\varepsilon,\quad
\rho=\rho'=1-\varepsilon\quad&\Longrightarrow\quad
\widetilde m\to 2\nu+2\delta-1\quad \text{as }\varepsilon\to 0+,\\
\tilde\nu=\nu+\varepsilon,\quad
\rho=\varepsilon,\quad \rho'=1-\varepsilon\quad&\Longrightarrow\quad
\widetilde m\to \delta\quad\text{as }\varepsilon\to 0+.\\
\end{aligned}
$$

%%%%%%%%%%%%%%%%%%%%%%%%%%%%%%%%%%%%%%%%%%%%%%%%%%%%%%%%%%%%%%%%%%%%%%%%%%%%%%%%
\section{Independence of cutoff}
  \label{s:cutoff-dependence}
  
%%%%%%%%%%%%%%%%%%%%%%%%%%%%%%%%%%%%%%%%%%%%%%%%%%%%%%%%%%%%%%%%%%%%%%%%%%%%%%%%
To prepare for the proof of Theorem~\ref{t:cutoff-dependence}, we introduce a family of open quantum baker's maps with different cutoffs on the physical side and the Fourier side.
More precisely, for $\chi,\chi'\in C_0^\infty((0,1);[0,1])$ we define the following generalization of~\eqref{e:B-N-def}:
\begin{equation}
B_{N,\chi,\chi'}=\mathcal F_N^*\begin{pmatrix}
\chi_{N/M}\mathcal F_{N/M}\chi'_{N/M} & \\
&\ddots&\\
 & &\chi_{N/M}\mathcal F_{N/M}\chi'_{N/M}
\end{pmatrix}I_{\mathcal A, M}.
\end{equation}
The advantage of this family is that it is bilinear in $\chi$ and $\chi'$.

Following the proof of Proposition~\ref{l:egorov-long}, we see
that propagation of singularities for long times also holds for powers of $B_{N,\chi,\chi'}$, or more generally for products of the form
\begin{equation}
B_{N,\chi_1,\chi_1'}B_{N,\chi_2,\chi_2'}\cdots B_{N,\chi_{\tilde{k}},\chi_{\tilde{k}}'}
\end{equation}
and the constants in $\mathcal O(N^{-\infty})$ are uniform as long as only finitely many different cut-offs appear in the product.

Let $\rho\in(0,1)$ and $\tilde{k}$, $\mathcal{X}_\rho$, $X_\rho$ be as in \S \ref{s:localization}, then we have the following slightly more general version of~\eqref{e:concentration-1} and~\eqref{e:concentration-2},
$$
(B_{N,\chi',\chi})^j(B_{N,\chi'})^{\tilde{k}-1-j}B_{N,\chi',\chi'-\chi}=(B_{N,\chi',\chi})^j(B_{N,\chi'})^{\tilde{k}-1-j}B_{N,\chi',\chi'-\chi}\indic_{X_\rho}+\mathcal{O}(N^{-\infty})_{\ell^2_N\to\ell^2_N}
$$
uniformly for $0\leq j\leq \tilde{k}-1$, and
$$
B_{N,\chi'-\chi,\chi'}(B_{N,\chi})^{\tilde{k}-1}=\mathcal{F}_N^\ast\indic_{X_\rho}\mathcal{F}_NB_{N,\chi'-\chi,\chi'}(B_{N,\chi})^{\tilde{k}-1}+\mathcal{O}(N^{-\infty})_{\ell^2_N\to\ell^2_N}.
$$
If $\supp(\chi'-\chi)\cap \mathcal C_\infty=\emptyset$ and
$\chi,\chi'$ are independent of $N$, then for $N$ large enough
$$
B_{N,\chi',\chi'-\chi}\indic_{X_\rho}=
\mathcal{F}_N^\ast\indic_{X_\rho}\mathcal{F}_NB_{N,\chi'-\chi,\chi'}=0.
$$
Therefore we have uniformly for $0\leq j\leq \tilde{k}-1$
\begin{align}
  \label{e:rapid-1}
(B_{N,\chi',\chi})^j(B_{N,\chi'})^{\tilde{k}-1-j}B_{N,\chi',\chi'-\chi}&=\mathcal{O}(N^{-\infty})_{\ell^2_N\to\ell^2_N},\\
  \label{e:rapid-2}
B_{N,\chi'-\chi,\chi}(B_{N,\chi})^{\tilde{k}-1}&=\mathcal{O}(N^{-\infty})_{\ell^2_N\to\ell^2_N}.
\end{align}
We are now ready to give
%%%%%%%%%%%%%%%%%%%%%%%%%%%%%%%%%%%%%%%%%%%%%%%%%%%%%%%%%%%%%%%%%%%%%%%%%%%%%%%%
\begin{proof}[Proof of Theorem~\ref{t:cutoff-dependence}]
Let $u\in \ell^2_N$, $\|u\|_{\ell^2_N}=1$, be an eigenfunction
of $B_{N,\chi}$ with eigenvalue~$\lambda$.
We first show that $u$ is a quasimode for $B_{N,\chi',\chi}$. To see this, we write
$$
(B_{N,\chi',\chi}-\lambda)u=B_{N,\chi'-\chi,\chi}u
=\lambda^{1-\tilde{k}}B_{N,\chi'-\chi,\chi}(B_{N,\chi})^{\tilde{k}-1}u.
$$
Since $|\lambda|^{1-\tilde{k}}=\mathcal{O}(M^{\rho\nu k})=\mathcal{O}(N^{\rho\nu})$, we see
by~\eqref{e:rapid-2}
\begin{equation}
\label{e:quasi-1}
\|(B_{N,\chi',\chi}-\lambda)u\|_{\ell^2_N}=\mathcal{O}(N^{-\infty}).
\end{equation}
Next, put
$$
v:={w\over \|w\|_{\ell^2_N}},\quad
w:=(B_{N,\chi'})^{\tilde{k}}\,u.
$$
We show that $w$ is a quasimode for $B_{N,\chi'}$:
\begin{equation}
\label{e:quasi-2}
\|(B_{N,\chi'}-\lambda)w\|_{\ell^2_N}=\mathcal{O}(N^{-\infty}).
\end{equation}
To see this, we write
$$
(B_{N,\chi'}-\lambda)w=(B_{N,\chi'}-\lambda)(B_{N,\chi'})^{\tilde{k}}u=(B_{N,\chi'})^{\tilde{k}}(B_{N,\chi'}-\lambda)u.
$$
Replacing $\lambda u$ by $B_{N,\chi',\chi}u$
using~\eqref{e:quasi-1}, we obtain
$$
\|(B_{N,\chi'}-\lambda)w\|_{\ell^2_N}\leq \|(B_{N,\chi'})^{\tilde{k}}B_{N,\chi',\chi'-\chi}u\|_{\ell^2_N}+\mathcal{O}(N^{-\infty}).
$$
Now \eqref{e:quasi-2} follows from \eqref{e:rapid-1} with $j=0$.

It remains to give a polynomial lower bound for $\|w\|_{\ell^2_N}$. Multiplying $(B_{N,\chi',\chi})^{\tilde{k}}$ to the left of the telescoping identity
$$
(B_{N,\chi'})^{\tilde{k}}=(B_{N,\chi',\chi})^{\tilde{k}}
+\sum_{j=0}^{\tilde{k}-1}(B_{N,\chi'})^{\tilde{k}-j-1}B_{N,\chi',\chi'-\chi}(B_{N,\chi',\chi})^j,
$$
and using \eqref{e:rapid-1} (together with the fact that $\tilde{k}=\mathcal{O}(\log N)$), we get
\begin{equation*}
\begin{split}
(B_{N,\chi',\chi})^{\tilde{k}}(B_{N,\chi'})^{\tilde{k}}
=&\;(B_{N,\chi',\chi})^{2\tilde{k}}
+\sum_{j=0}^{\tilde{k}-1}(B_{N,\chi',\chi})^{\tilde{k}}(B_{N,\chi'})^{\tilde{k}-j-1}B_{N,\chi',\chi'-\chi}(B_{N,\chi',\chi})^j\\
=&\;(B_{N,\chi',\chi})^{2\tilde{k}}+\mathcal{O}(N^{-\infty})_{\ell^2_N\to\ell^2_N}.
\end{split}
\end{equation*}
Therefore we have
$$
\|w\|_{\ell^2_N}=\|(B_{N,\chi'})^{\tilde{k}}u\|_{\ell^2_N}
\geq \|(B_{N,\chi',\chi})^{\tilde{k}}(B_{N,\chi'})^{\tilde{k}}u\|_{\ell^2_N}\geq \|(B_{N,\chi',\chi})^{2\tilde{k}}u\|_{\ell^2_N}-\mathcal{O}(N^{-\infty}).
$$
Using \eqref{e:quasi-1}, we get for some $c>0$ independent of $N$,
$$
\|w\|\geq |\lambda|^{2\tilde{k}}-\mathcal{O}(N^{-\infty})
\geq cN^{-2\nu\rho}.
$$
This finishes the proof.
\end{proof}
%%%%%%%%%%%%%%%%%%%%%%%%%%%%%%%%%%%%%%%%%%%%%%%%%%%%%%%%%%%%%%%%%%%%%%%%%%%%%%%%

%%%%%%%%%%%%%%%%%%%%%%%%%%%%%%%%%%%%%%%%%%%%%%%%%%%%%%%%%%%%%%%%%%%%%%%%%%%%%%%%
%%%%%%%%%%%%%%%%%%%%%%%%%%%%%%%%%%%%%%%%%%%%%%%%%%%%%%%%%%%%%%%%%%%%%%%%%%%%%%%%
\section{Remarks on numerical experiments}
  \label{s:numerics}

In this section we describe the numerical experiments used to produce
the figures in this paper. Our numerics and plots were made using MATLAB, version R2015b.

We use the cutoff function $\chi_\tau\in C_0^\infty((0,1);[0,1])$ depending on a
parameter $\tau\in (0,1/2]$ and defined by
$$
\chi_\tau(x)=F\Big({x\over\tau}\Big)F\Big({1-x\over\tau}\Big),\quad
F(x)=c\int_{-\infty}^{1.02\cdot x-0.01} \indic_{[0,1]}(t)\,e^{-{1\over t(1-t)}}\,dt
$$
where $c>0$ is chosen so that $F(x)=1$ for $x\gg 1$.
This function satisfies in particular
$$
\chi_{\tau}=1\quad\text{near }[\tau,1-\tau].
$$
We compute the eigenvalues of the matrices $B_{N,\chi}$ from~\eqref{e:B-N-def}
using the \texttt{eig()} function. To speed up computation
we remove the zero columns of the matrices~$B_{N,\chi}$ and the corresponding rows
(we call the resulting matrix \emph{trimmed});
this of course does not change the nonzero eigenvalues.
In Figures~\ref{f:fup-howgood}, \ref{f:Weyl}, \ref{f:improved-pressure}, \ref{f:improved-0}, and \ref{f:Weyl-great} we use $\tau=0.05$. In Figure~\ref{f:cutoff-dependence} we use the cutoffs
$\chi_j=\chi_{\tau_j}$ with $\tau_1=0.05,\tau_2=0.2,\tau_3=0.5$.

To test the stability of the eigenvalue computations, we also compute the
spectrum of the perturbed matrix
$B_{N,\chi}+\varepsilon Q$ where the entries of $Q$ are independent random variables
distributed uniformly in $[0,1]$ and $\varepsilon>0$ is chosen so that
$\|\varepsilon Q\|_{\ell^2_N\to \ell^2_N}=10^{-4}\cdot\|B_{N,\chi}\|_{\ell^2_N\to\ell^2_N}$.
(To speed up the computation we actually
perturb the trimmed matrix, see the previous paragraph.)
In Figures~\ref{f:fup-howgood}, \ref{f:improved-pressure}, \ref{f:improved-0}, \ref{f:Weyl-great} we plot the spectra of the original and the perturbed matrix,
noting that the two are very close to each other in the annuli of interest.
This indicates a lack of strong pseudospectral effects in annuli.

In Figures~\ref{f:fup-howgood}, \ref{f:cutoff-dependence},
\ref{f:improved-pressure}, \ref{f:improved-0}, and~\ref{f:Weyl-great} the outermost circle is the unit circle.
On each of these figures we also plot the circles of radii $M^{-\beta}$ for some of the following values of $\beta$:
\begin{enumerate}
\item[FUP:] the spectral radius bound of Theorem~\ref{t:gap-detailed}
where we replace the fractal uncertainty exponent $\beta$
with its approximation $\beta_k=-\log r_k/(k\log M)$ for the same
value of $k$ as used in the open quantum map;
\item[$P(1/2):$] the pressure bound, corresponding to
$\beta={1\over 2}-\delta$;
\item[$P(1)/2:$] the classical escape rate, corresponding
to $\beta={1-\delta\over 2}$.
\end{enumerate}
Here the norm $r_k$ from~\eqref{e:r-k} is computed numerically
using the \texttt{norm()} function. By~\eqref{e:beta-limit},
$\beta_k$ gives a lower bound for the limit $\beta$.
Moreover in the considered cases the sequence $\beta_k$
appears almost constant for $k\geq 3$, indicating that
$\beta_k$ is actually a reasonable approximation for $\beta$.

In Figure~\ref{f:fupcloud} we plot the points $(\delta,\beta_k)$
for all $M=3,\dots,10$ and all alphabets $\mathcal A$
with $1<|\mathcal A|<M$. Here for each $\mathcal A$
we take the largest $k$ such that
$|\mathcal A|^k\leq 5000$.

%%%%%%%%%%%%%%%%%%%%%%%%%%%%%%%%%%%%%%%%%%%%%%%%%%%%%%%%%%%%%%%%%%%%%%%%%%%%%%%%

\appendix

%%%%%%%%%%%%%%%%%%%%%%%%%%%%%%%%%%%%%%%%%%%%%%%%%%%%%%%%%%%%%%%%%%%%%%%%%%%%%%%%
\section{Proof of Proposition \texorpdfstring{\ref{l:ub-rho}}{3.11}}
\label{s:additive-combinatorics}

%%%%%%%%%%%%%%%%%%%%%%%%%%%%%%%%%%%%%%%%%%%%%%%%%%%%%%%%%%%%%%%%%%%%%%%%%%%%%%%%
\subsection{Additive energies and portraits}

Fix $M\in\mathbb Z$, $M\geq 2$, and a set
$$
\mathcal A\subset \mathbb Z_M=\{0,\dots,M-1\}\subset\mathbb Z.
$$
We define the following additive quantities, the first
of which was considered in~\eqref{e:E-ell}:
$$
\begin{aligned}
E_\ell(\mathcal{A})&:=\big|\{(a,b,c,d)\in\mathcal{A}^4:a+b-c-d=\ell\}\big|,
\quad \ell\in\mathbb{Z},\\
\widetilde{E}_\ell(\mathcal{A})&:=\big|\{(a,b,c,d)\in\mathcal{A}^4:a+b-c-d=\ell\mod{M}\}\big|,\quad \ell\in\mathbb{Z}_M.
\end{aligned}
$$
We record some of their properties which will be used later:
$$
\begin{gathered}
E_\ell(\mathcal{A})=E_{-\ell}(\mathcal{A}),\\
E_\ell(\mathcal{A})=0,\;\;\;\; |\ell|\geq2M-1,\\
\widetilde{E}_\ell(\mathcal{A})=E_{\ell-2M}(\mathcal{A})
+E_{\ell-M}(\mathcal{A})+E_{\ell}(\mathcal{A})+E_{\ell+M}(\mathcal{A}),\quad
\ell\in\mathbb Z_M.
\end{gathered}
$$
In particular,
\begin{align}
\label{e:decom-e0}
\widetilde{E}_0(\mathcal{A})&=E_0(\mathcal{A})+2E_M(\mathcal{A}),\\
\label{e:decom-e1}
\widetilde{E}_1(\mathcal{A})&=E_1(\mathcal{A})+E_{M+1}(\mathcal{A})+E_{M-1}(\mathcal{A}).
\end{align}
Moreover, we have the trivial bound 
\begin{equation}
\label{e:ub-trivial-tildee}
\widetilde{E}_\ell(\mathcal{A})\leq|\mathcal{A}|^3
\end{equation}
since for any $a,b,c\in\mathcal{A}$, there is at most one $d\in\mathcal{A}$ such that $a+b-c-d=\ell\mod{M}$.

We next introduce the following quantities which we call ``additive portraits'':
$$
\begin{aligned}
F_j(\mathcal{A})&=\big|\{(a,b)\in\mathcal{A}^2\colon a-b=j\}\big|, \quad j\in\mathbb{Z},\\
\widetilde{F}_j(\mathcal{A})&=\big|\{(a,b)\in\mathcal{A}^2\colon a-b=j\mod{M}\}\big|,
\quad j\in\mathbb{Z}_M=\{0,\dots,M-1\}.
\end{aligned}
$$
They have the following properties:
$$
\begin{gathered}
\widetilde{F}_j(\mathcal{A})\leq|\mathcal{A}|,
\quad
\widetilde{F}_j(\mathcal{A})=F_j(\mathcal{A})+F_{j-M}(\mathcal{A}),\\
F_j(\mathcal{A})=0 \text{ for } |j|\geq M,\\
\widetilde{F}_0(\mathcal{A})=F_0(\mathcal{A})=|\mathcal{A}|, \quad F_j(\mathcal{A})=F_{-j}(\mathcal{A}).
\end{gathered}
$$
Moreover,
\begin{equation}
\label{e:sum-j-f}
\sum_{j\in\mathbb{Z}_M}\widetilde{F}_j(\mathcal{A})=\sum_{j\in\mathbb{Z}}
F_j(\mathcal{A})=|\mathcal{A}|^2.
\end{equation}
Finally, additive energies and additive portraits are related as follows:
\begin{align}
  \label{e:ae-port-1}
E_\ell(\mathcal{A})&=\sum_{j\in\mathbb{Z}}F_j(\mathcal{A})F_{j+\ell}(\mathcal{A}),\\
  \label{e:ae-port-2}
\widetilde{E}_\ell(\mathcal{A})&=\sum_{j\in\mathbb{Z}_M}\widetilde{F}_j(\mathcal{A})\widetilde{F}_{(j+\ell)\bmod M}(\mathcal{A}).
\end{align}
We recall from~\eqref{e:ae-matrix}
that we need to estimate
$\rho(\mathcal A)$, the spectral radius of the $2\times 2$ matrix
\begin{equation}
  \label{e:Mat-A}
\mathcal M(\mathcal A)=\begin{pmatrix}
E_{M-1}(\mathcal A)+E_{M+1}(\mathcal A)&
2E_{M}(\mathcal A)\\
E_1(\mathcal A)&
E_0(\mathcal A)
\end{pmatrix}.
\end{equation}

%%%%%%%%%%%%%%%%%%%%%%%%%%%%%%%%%%%%%%%%%%%%%%%%%%%%%%%%%%%%%%%%%%%%%%%%%%%%%%%%
\subsection{Approximate group structure}

By~\eqref{e:decom-e0}, \eqref{e:decom-e1}, the sums of the columns
of the matrix $\mathcal M(\mathcal A)$ are given by $\widetilde E_0(\mathcal A)$,
$\widetilde E_1(\mathcal A)$. In this subsection we prove that
unless $|\mathcal A|\sim M$,
both of these quantities cannot be close to the maximal value $|\mathcal A|^3$:
%%%%%%%%%%%%%%%%%%%%%%%%%%%%%%%%%%%%%%%%%%%%%%%%%%%%%%%%%%%%%%%%%%%%%%%%%%%%%%%%
\begin{prop}
  \label{l:ae-cyclic}
Fix a number
\begin{equation}
\label{e:ub-epsilon0}
0<\epsilon_0<{\big(1-\sqrt{2/3}\big)^2\over 2}.
\end{equation}
Then at least one of the following statements is true:
\begin{align}
  \label{e:altc-1}
|\mathcal{A}|&\geq\frac{2}{3}(1-\sqrt{\epsilon_0/2})M,\\
  \label{e:altc-2}
\widetilde{E}_0(\mathcal{A})&\leq(1-\epsilon_0)|\mathcal{A}|^3,\\
  \label{e:altc-3}
\widetilde{E}_1(\mathcal{A})&\leq 2\sqrt{2\epsilon_0}\,|\mathcal{A}|^3.
\end{align}
\end{prop}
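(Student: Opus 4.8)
The plan is to assume that alternative~\eqref{e:altc-2} fails, i.e. $\widetilde E_0(\mathcal A)>(1-\epsilon_0)A^3$ where $A:=|\mathcal A|$, and to deduce that one of~\eqref{e:altc-1}, \eqref{e:altc-3} holds. The heuristic: near-maximality of $\widetilde E_0(\mathcal A)=\sum_j\widetilde F_j(\mathcal A)^2$, given that $\sum_j\widetilde F_j(\mathcal A)=A^2$ and $0\le\widetilde F_j(\mathcal A)\le A$, forces the difference function $\widetilde F(\mathcal A)$ to take values near $0$ or near $A$, so the set of \emph{popular differences} is essentially a coset of a subgroup $H\le\mathbb Z_M$; then either $H=\mathbb Z_M$, whence $A$ is close to $M$ and~\eqref{e:altc-1} holds, or $H$ is proper, whence $1\notin H$ and $\widetilde E_1(\mathcal A)$ is forced to be small. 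Throughout I would fix the balance parameter $\eta:=\sqrt{\epsilon_0/2}$ (so $2\eta=\sqrt{2\epsilon_0}$) and set $D':=\{j\in\mathbb Z_M:\widetilde F_j(\mathcal A)>(1-\eta)A\}$.

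First I would record the concentration estimate $\sum_j\widetilde F_j(A-\widetilde F_j)=A^3-\widetilde E_0<\epsilon_0A^3$. Since $A-\widetilde F_j\ge\eta A$ for $j\notin D'$, this gives $\sum_{j\notin D'}\widetilde F_j<\epsilon_0A^2/\eta=\sqrt{2\epsilon_0}\,A^2$, hence $|D'|\ge A^{-1}\sum_{j\in D'}\widetilde F_j=A^{-1}\big(A^2-\sum_{j\notin D'}\widetilde F_j\big)>(1-\sqrt{2\epsilon_0})A$; and, just from $\sum_j\widetilde F_j=A^2$, the larger level set $D'':=\{j:\widetilde F_j>(1-2\eta)A\}$ satisfies $|D''|<A/(1-2\eta)=A/(1-\sqrt{2\epsilon_0})$. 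The structural input is the elementary submodularity $\widetilde F_{j+j'}(\mathcal A)\ge\widetilde F_j(\mathcal A)+\widetilde F_{j'}(\mathcal A)-A$ (which follows from $\widetilde F_j(\mathcal A)=|\mathcal A\cap(\mathcal A-j)|$ and $|X\cap Y\cap Z|\ge|X\cap Y|+|Y\cap Z|-|Y|$), giving $D'+D'\subseteq D''$. Thus $D'$ has $0\in D'$ (as $\widetilde F_0=A$), $|D'|>(1-\sqrt{2\epsilon_0})A$, and $|D'+D'|\le|D''|<A/(1-\sqrt{2\epsilon_0})$; the hypothesis $\epsilon_0<(1-\sqrt{2/3})^2/2$ is precisely the inequality $(1-\sqrt{2\epsilon_0})^2>2/3$, which converts this into $|D'+D'|<\tfrac32|D'|$. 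Kneser's theorem should then yield a subgroup $H\le\mathbb Z_M$ with $D'\subseteq H$ and $(1-\sqrt{2\epsilon_0})A<|D'|\le|H|\le|D'+D'|<A/(1-\sqrt{2\epsilon_0})$.

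It remains to split on $H$. If $H=\mathbb Z_M$, then $M=|H|<A/(1-\sqrt{2\epsilon_0})$, so $A>(1-\sqrt{2\epsilon_0})M$; since $1-\sqrt{2\epsilon_0}>\sqrt{2/3}>\tfrac23\ge\tfrac23(1-\sqrt{\epsilon_0/2})$, this is~\eqref{e:altc-1}. If $H\ne\mathbb Z_M$, then $H$ is a proper subgroup of the cyclic group $\mathbb Z_M$ and so $1\notin H$; hence no two consecutive residues both lie in $D'$, as $j,j+1\in D'$ would give $1=(j+1)-j\in D'-D'\subseteq H-H=H$. Consequently $D'+1\subseteq\mathbb Z_M\setminus D'$, and bounding the $j$-th term of $\widetilde E_1(\mathcal A)=\sum_j\widetilde F_j(\mathcal A)\widetilde F_{(j+1)\bmod M}(\mathcal A)$ (see~\eqref{e:ae-port-2}) by $A\widetilde F_j(\mathcal A)$ if $j\notin D'$ and by $A\widetilde F_{(j+1)\bmod M}(\mathcal A)$ if $j\in D'$, we get
\[
\widetilde E_1(\mathcal A)\le 2A\sum_{j\notin D'}\widetilde F_j(\mathcal A)<2A\cdot\sqrt{2\epsilon_0}\,A^2=2\sqrt{2\epsilon_0}\,A^3,
\]
which is~\eqref{e:altc-3}. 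The point of the choice $\eta=\sqrt{\epsilon_0/2}$ is that it equalizes the two ``deficits'' $2\eta$ (from $D'$ to $D''$) and $\epsilon_0/\eta$ (from the concentration estimate), both equal to $\sqrt{2\epsilon_0}$, which is what makes the constants in~\eqref{e:altc-1}, \eqref{e:altc-3} and in the hypothesis on $\epsilon_0$ match up exactly.

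The step I expect to be the main obstacle is the passage from small doubling to a coset: getting the sharp quantitative form of Kneser's theorem and checking that $|D'+D'|<\tfrac32|D'|$ genuinely forces $D'$ into a \emph{single} coset of $H$ (rather than a bounded number of cosets), while keeping the bounds on $|D'|$ and $|H|$ tight enough that the cyclic dichotomy $H=\mathbb Z_M$ versus $H\subsetneq\mathbb Z_M$ lands precisely on~\eqref{e:altc-1}, resp.~\eqref{e:altc-3}. Everything else is routine manipulation of the concentration inequality and the submodularity of $\widetilde F(\mathcal A)$; and since $|D'|>(1-\sqrt{2\epsilon_0})A>1$, the set $D'$ is nondegenerate, so no special small cases arise.
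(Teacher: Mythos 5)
Your argument is correct and is essentially the paper's proof: the same popular-difference level sets ($D'$ is the paper's $V_\alpha$ with $\alpha=\sqrt{\epsilon_0/2}$), the same concentration estimate, the same submodularity giving $V_\alpha+V_\alpha\subset V_{2\alpha}$, the same appeal to the Fre\u\i man--Kneser $3/2$-doubling theorem, and the same dichotomy on whether the resulting subgroup is all of $\mathbb Z_M$. The one step you flag as a worry, $|H|\le|D'+D'|$, is a true (standard) consequence of Kneser's theorem but is not even needed: the weaker conclusion $|H|<\tfrac32|D'|$ combined with $|D'|\le|\mathcal A|/(1-\sqrt{\epsilon_0/2})$ already yields, when $H=\mathbb Z_M$, exactly the constant $\tfrac23(1-\sqrt{\epsilon_0/2})$ in~\eqref{e:altc-1}, which is how the paper proceeds.
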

%%%%%%%%%%%%%%%%%%%%%%%%%%%%%%%%%%%%%%%%%%%%%%%%%%%%%%%%%%%%%%%%%%%%%%%%%%%%%%%%
\Remark The alternatives~\eqref{e:altc-1}--\eqref{e:altc-3} can be explained
by the following examples. If $|\mathcal A|$ is very close to $M$,
then only \eqref{e:altc-1} holds.
If $\mathcal A=\{0,\dots,|\mathcal A|-1\}$ and
$1\ll |\mathcal A|\ll M$,
then only \eqref{e:altc-2} holds.
Finally if $\mathcal A$ is a proper subgroup of $\mathbb Z_M$ then only \eqref{e:altc-3}
holds.

%%%%%%%%%%%%%%%%%%%%%%%%%%%%%%%%%%%%%%%%%%%%%%%%%%%%%%%%%%%%%%%%%%%%%%%%%%%%%%%%
To start the proof, we define for $\alpha\in(0,1)$,
\begin{equation*}
V_\alpha:=\big\{j\in\mathbb{Z}_M:\widetilde{F}_j(\mathcal A)\geq(1-\alpha)|\mathcal{A}|\big\}.
\end{equation*}
By \eqref{e:sum-j-f}, we obtain an upper bound on the size of $V_\alpha$:
\begin{equation}
\label{e:ub-valpha}
|V_\alpha|\leq(1-\alpha)^{-1}|\mathcal{A}|.
\end{equation}
A lower bound is provided by
%%%%%%%%%%%%%%%%%%%%%%%%%%%%%%%%%%%%%%%%%%%%%%%%%%%%%%%%%%%%%%%%%%%%%%%%%%%%%%%%
\begin{lemm}
Suppose that~\eqref{e:altc-2} is false. Then
for all $\alpha\in (0,1)$
\begin{equation}
\label{e:lb-valpha}
\sum_{j\in V_\alpha}\widetilde{F}_j(\mathcal{A})\geq\Big(1-{\epsilon_0\over \alpha}\Big)|\mathcal{A}|^2,\quad
|V_\alpha|\geq\Big(1-{\epsilon_0\over \alpha}\Big)|\mathcal{A}|.
\end{equation}
\end{lemm}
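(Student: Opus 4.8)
The plan is to read off from \eqref{e:ae-port-2} with $\ell=0$ the identity
$\widetilde E_0(\mathcal A)=\sum_{j\in\mathbb Z_M}\widetilde F_j(\mathcal A)^2$, and to combine it with the normalization $\sum_{j\in\mathbb Z_M}\widetilde F_j(\mathcal A)=|\mathcal A|^2$ from~\eqref{e:sum-j-f} and the pointwise bound $\widetilde F_j(\mathcal A)\leq|\mathcal A|$. Since~\eqref{e:altc-2} is assumed false, we have $\widetilde E_0(\mathcal A)>(1-\epsilon_0)|\mathcal A|^3$, and the goal is to turn this into the statement that most of the mass $\sum_j\widetilde F_j(\mathcal A)=|\mathcal A|^2$ is carried by those $j$ for which $\widetilde F_j(\mathcal A)$ is within a factor $1-\alpha$ of its maximal possible value $|\mathcal A|$, i.e.\ by $V_\alpha$.

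First I would split the sum defining $\widetilde E_0(\mathcal A)$ according to whether $j\in V_\alpha$ or not, bounding one of the two factors $\widetilde F_j(\mathcal A)$ by $|\mathcal A|$ on $V_\alpha$ and by $(1-\alpha)|\mathcal A|$ on its complement:
$$
\widetilde E_0(\mathcal A)=\sum_{j\in V_\alpha}\widetilde F_j(\mathcal A)^2+\sum_{j\notin V_\alpha}\widetilde F_j(\mathcal A)^2\leq |\mathcal A|\sum_{j\in V_\alpha}\widetilde F_j(\mathcal A)+(1-\alpha)|\mathcal A|\sum_{j\notin V_\alpha}\widetilde F_j(\mathcal A).
$$
Writing $S:=\sum_{j\in V_\alpha}\widetilde F_j(\mathcal A)$ and using $\sum_{j\notin V_\alpha}\widetilde F_j(\mathcal A)=|\mathcal A|^2-S$, the right-hand side equals $|\mathcal A|\big((1-\alpha)|\mathcal A|^2+\alpha S\big)$. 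Combining with $\widetilde E_0(\mathcal A)>(1-\epsilon_0)|\mathcal A|^3$ and solving the resulting inequality $(1-\epsilon_0)|\mathcal A|^3<(1-\alpha)|\mathcal A|^3+\alpha|\mathcal A|S$ for $S$ yields $S>\big(1-\epsilon_0/\alpha\big)|\mathcal A|^2$, which is the first assertion of~\eqref{e:lb-valpha}.

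The second assertion then follows immediately: since $\widetilde F_j(\mathcal A)\leq|\mathcal A|$ for every $j$, we have $|V_\alpha|\cdot|\mathcal A|\geq\sum_{j\in V_\alpha}\widetilde F_j(\mathcal A)=S\geq\big(1-\epsilon_0/\alpha\big)|\mathcal A|^2$, hence $|V_\alpha|\geq\big(1-\epsilon_0/\alpha\big)|\mathcal A|$. There is no genuine obstacle here; the only things to be careful about are applying each elementary bound on the correct set (the gain comes entirely from the factor $1-\alpha$ off $V_\alpha$), and observing that both conclusions are vacuous unless $\alpha>\epsilon_0$, so no case distinction is actually needed. This lemma will later be used together with the upper bound~\eqref{e:ub-valpha} on $|V_\alpha|$ to force $V_\alpha$ into an approximate-subgroup regime, which is where the dichotomy with~\eqref{e:altc-1} and~\eqref{e:altc-3} enters.
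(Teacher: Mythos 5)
Your argument is exactly the paper's: split $\widetilde E_0(\mathcal A)=\sum_j\widetilde F_j(\mathcal A)^2$ over $V_\alpha$ and its complement, bound one factor by $|\mathcal A|$ on $V_\alpha$ and by $(1-\alpha)|\mathcal A|$ off it, use $\sum_j\widetilde F_j(\mathcal A)=|\mathcal A|^2$ and the failure of~\eqref{e:altc-2}, and solve for $\sum_{j\in V_\alpha}\widetilde F_j(\mathcal A)$. The second bound of~\eqref{e:lb-valpha} from $\widetilde F_j(\mathcal A)\leq|\mathcal A|$ is also as in the paper; the proof is correct and essentially identical.
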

%%%%%%%%%%%%%%%%%%%%%%%%%%%%%%%%%%%%%%%%%%%%%%%%%%%%%%%%%%%%%%%%%%%%%%%%%%%%%%%%
\begin{proof}
We split the sum~\eqref{e:ae-port-2} into two parts:
\begin{equation*}
\widetilde{E}_0(\mathcal{A})=\sum_{j\in\mathbb{Z}_M}\widetilde{F}_j(\mathcal{A})^2
=\sum_{j\in V_\alpha}\widetilde{F}_j(\mathcal{A})^2+\sum_{j\in\mathbb{Z}_M\setminus V_\alpha}\widetilde{F}_j(\mathcal{A})^2.
\end{equation*}
In the first sum we use the trivial bound $\widetilde{F}_j(\mathcal{A})\leq|\mathcal{A}|$ and in the second sum we use the definition of $V_\alpha$ to get
$$
\begin{aligned}
(1-\epsilon_0)|\mathcal{A}|^3&\leq \widetilde{E}_0(\mathcal{A})\leq |\mathcal{A}|\sum_{j\in V_\alpha}\widetilde{F}_j(\mathcal{A})+(1-\alpha)|\mathcal{A}|\sum_{j\in\mathbb{Z}_M\setminus V_\alpha}\widetilde{F}_j(\mathcal{A})
\\&=(1-\alpha)|\mathcal{A}|\sum_{j\in\mathbb{Z}_M}\widetilde{F}_j(\mathcal{A})
+\alpha|\mathcal{A}|\sum_{j\in V_\alpha}\widetilde{F}_j(\mathcal{A}).
\end{aligned}
$$
Finally we use \eqref{e:sum-j-f} to get
\begin{equation*}
(1-\epsilon_0)|\mathcal{A}|^3\leq(1-\alpha)|\mathcal{A}|^3
+\alpha|\mathcal{A}|\sum_{j\in V_\alpha}\widetilde{F}_j(\mathcal{A})
\end{equation*}
which gives the first part of~\eqref{e:lb-valpha} and the second part follows since $\widetilde{F}_j(\mathcal A)\leq |\mathcal A|$.
\end{proof}
%%%%%%%%%%%%%%%%%%%%%%%%%%%%%%%%%%%%%%%%%%%%%%%%%%%%%%%%%%%%%%%%%%%%%%%%%%%%%%%%
We next exploit the additive structure of the sets $V_\alpha$ for small $\alpha$.
Henceforth in this subsection, we use addition in the group $\mathbb Z_M=\mathbb Z/M\mathbb Z$.
%%%%%%%%%%%%%%%%%%%%%%%%%%%%%%%%%%%%%%%%%%%%%%%%%%%%%%%%%%%%%%%%%%%%%%%%%%%%%%%%
\begin{lemm}
  \label{l:double}
For $\alpha<1/2$, we have $V_\alpha+V_\alpha\subset V_{2\alpha}$.
\end{lemm}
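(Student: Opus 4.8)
The plan is to reinterpret the additive portrait $\widetilde F_j(\mathcal A)$ as an intersection count of translates of $\mathcal A$, and then run a triangle-inequality argument. Throughout, all additions are carried in $\mathbb Z_M$.

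First I would observe that
$$
\widetilde F_j(\mathcal A)=\big|\{a\in\mathcal A:a-j\in\mathcal A\}\big|=\big|\mathcal A\cap(\mathcal A+j)\big|,
$$
since for a fixed first coordinate $a$ the second coordinate $b=a-j$ is determined. Equivalently, $|\mathcal A\setminus(\mathcal A+j)|=|\mathcal A|-\widetilde F_j(\mathcal A)$, so membership $j\in V_\alpha$ is precisely the statement $|\mathcal A\setminus(\mathcal A+j)|\le\alpha|\mathcal A|$; by translation invariance of counting measure this also equals $|(\mathcal A+c)\setminus(\mathcal A+c+j)|$ for every $c\in\mathbb Z_M$.

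Next, given $j,j'\in V_\alpha$, I would estimate $|\mathcal A\setminus(\mathcal A+j+j')|$ by splitting according to whether an element lies in $\mathcal A+j$:
$$
\mathcal A\setminus(\mathcal A+j+j')\ \subset\ \big(\mathcal A\setminus(\mathcal A+j)\big)\ \cup\ \big((\mathcal A+j)\setminus(\mathcal A+j+j')\big).
$$
Indeed, if $a\in\mathcal A$ and $a\notin\mathcal A+j+j'$, then either $a\notin\mathcal A+j$, placing $a$ in the first set, or $a\in\mathcal A+j$, placing $a$ in the second. The first set has size at most $\alpha|\mathcal A|$ because $j\in V_\alpha$, and the second set is the translate by $j$ of $\mathcal A\setminus(\mathcal A+j')$, hence also has size at most $\alpha|\mathcal A|$ because $j'\in V_\alpha$. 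Therefore $|\mathcal A\setminus(\mathcal A+j+j')|\le 2\alpha|\mathcal A|$, i.e.\ $\widetilde F_{j+j'}(\mathcal A)\ge(1-2\alpha)|\mathcal A|$.

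Finally, the hypothesis $\alpha<1/2$ guarantees $2\alpha\in(0,1)$, so that $V_{2\alpha}$ is defined, and the bound just obtained says exactly $j+j'\in V_{2\alpha}$, proving the inclusion. I do not expect any genuine obstacle here: the only point requiring care is that the parameter of $V$ must stay in $(0,1)$, which is precisely the role of $\alpha<1/2$; apart from that, this is the standard observation that the set of ``almost-periods'' of $\mathcal A$ behaves like an approximate subgroup.
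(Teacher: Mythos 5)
Your proof is correct and is essentially the paper's argument: both start from the characterization $j\in V_\alpha\iff|\mathcal A\cap(\mathcal A+j)|\geq(1-\alpha)|\mathcal A|$ and use translation invariance to compare $\mathcal A$, $\mathcal A+j$, and $\mathcal A+j+j'$; you phrase the key step as a union bound on set differences, while the paper phrases it as the inclusion--exclusion inequality $|A\cap C|\geq|A\cap B|+|B\cap C|-|B|$, which are De Morgan duals of one another.
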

%%%%%%%%%%%%%%%%%%%%%%%%%%%%%%%%%%%%%%%%%%%%%%%%%%%%%%%%%%%%%%%%%%%%%%%%%%%%%%%%
\begin{proof}
Note that $j\in V_\alpha$ if and only if 
\begin{equation}
\label{e:valphaj}
|\mathcal{A}\cap(\mathcal{A}+j)|\geq(1-\alpha)|\mathcal{A}|.
\end{equation}
Now for any $j,k\in V_\alpha$, we have \eqref{e:valphaj} and
\begin{equation*}
|(\mathcal{A}+j)\cap(\mathcal{A}+j+k)|
=|\mathcal{A}\cap(\mathcal{A}+k)|\geq(1-\alpha)|\mathcal{A}|.
\end{equation*}
Therefore
\begin{equation*}
\begin{split}
|\mathcal{A}\cap(\mathcal{A}+j+k)|
\geq &\;|\mathcal{A}\cap(\mathcal{A}+j)|+|(\mathcal{A}+j)\cap(\mathcal{A}+j+k)|-|\mathcal{A}+j|\\
\geq &\; 2(1-\alpha)|\mathcal{A}|-|\mathcal{A}|=(1-2\alpha)|\mathcal{A}|
\end{split}
\end{equation*}
which implies $j+k\in V_{2\alpha}$.
\end{proof}
%%%%%%%%%%%%%%%%%%%%%%%%%%%%%%%%%%%%%%%%%%%%%%%%%%%%%%%%%%%%%%%%%%%%%%%%%%%%%%%%
We now fix
$$
\alpha:=\sqrt{\epsilon_0/2}.
$$
Combining~\eqref{e:ub-valpha}, \eqref{e:lb-valpha}, and Lemma~\ref{l:double},
we see that
\begin{equation}
  \label{e:gotcha-add}
|V_\alpha+V_\alpha|\leq|V_{2\alpha}|\leq{|\mathcal{A}|\over 1-2\alpha}\leq\frac{|V_\alpha|}{(1-2\alpha)(1-\epsilon_0/\alpha)}
=\frac{|V_\alpha|}{(1-\sqrt{2\varepsilon_0})^2}.
\end{equation}
Note that~\eqref{e:ub-epsilon0} implies that the right-hand side is strictly
less than ${3\over 2}|V_\alpha|$.

We next recall the inverse Fre\u\i man theorem for Abelian groups~\cite[Corollary~5.6]{Tao-Vu}:
%%%%%%%%%%%%%%%%%%%%%%%%%%%%%%%%%%%%%%%%%%%%%%%%%%%%%%%%%%%%%%%%%%%%%%%%%%%%%%%%
\begin{theo}
  \label{t:freiman-in-da-haus}
Let $A$ be a finite subset of an Abelian group $G$ and $|A+A|<\frac{3}{2}|A|$. Then there exists a subgroup $H$ of $G$
and $a\in G$ such that $A\subset a+H$ with $|H|<{3\over 2}|A|$.
\end{theo}
%%%%%%%%%%%%%%%%%%%%%%%%%%%%%%%%%%%%%%%%%%%%%%%%%%%%%%%%%%%%%%%%%%%%%%%%%%%%%%%%
Combining Theorem~\ref{t:freiman-in-da-haus} with the previous observations,
we obtain
%%%%%%%%%%%%%%%%%%%%%%%%%%%%%%%%%%%%%%%%%%%%%%%%%%%%%%%%%%%%%%%%%%%%%%%%%%%%%%%%
\begin{lemm}
  \label{l:in-a-subgroup}
Suppose that both~\eqref{e:altc-1} and~\eqref{e:altc-2} are false.
Then there exists a factorization $M=LL'$, $L,L'\in\mathbb N$, $L>1$,
such that $V_\alpha\subset L\mathbb Z/M\mathbb Z$.
\end{lemm}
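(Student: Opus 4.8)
The plan is to feed the set $A=V_\alpha$ into the inverse Fre\u\i man theorem (Theorem~\ref{t:freiman-in-da-haus}) with $G=\mathbb Z_M$, and then read off the structure of the resulting subgroup. First I would record that the hypothesis of that theorem is met under our standing assumptions: since \eqref{e:altc-2} is assumed false, the chain of inequalities \eqref{e:gotcha-add} applies, and the numerical condition \eqref{e:ub-epsilon0} on $\epsilon_0$ makes the right-hand side of \eqref{e:gotcha-add} strictly smaller than $\tfrac32|V_\alpha|$; moreover $V_\alpha$ is nonempty since $0\in V_\alpha$ (because $\widetilde F_0(\mathcal A)=|\mathcal A|\geq(1-\alpha)|\mathcal A|$). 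Thus $|V_\alpha+V_\alpha|<\tfrac32|V_\alpha|$, and Theorem~\ref{t:freiman-in-da-haus} supplies a subgroup $H\leq\mathbb Z_M$ and an element $a\in\mathbb Z_M$ with $V_\alpha\subset a+H$ and $|H|<\tfrac32|V_\alpha|$.

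Next I would make $H$ explicit. Every subgroup of the cyclic group $\mathbb Z_M=\mathbb Z/M\mathbb Z$ has the form $L\mathbb Z/M\mathbb Z$ for a divisor $L$ of $M$; writing $M=LL'$ we then have $|H|=L'$. To turn the coset inclusion $V_\alpha\subset a+H$ into the claimed subgroup inclusion $V_\alpha\subset L\mathbb Z/M\mathbb Z$, I would use once more that $0\in V_\alpha$: since $0\in a+H$ we get $a\in -H=H$, hence $a+H=H$ and therefore $V_\alpha\subset L\mathbb Z/M\mathbb Z$.

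It remains to exclude the trivial factorization $L=1$, i.e.\ $H=\mathbb Z_M$. If that held, then $M=|H|<\tfrac32|V_\alpha|$; combining this with the upper bound \eqref{e:ub-valpha}, namely $|V_\alpha|\leq(1-\alpha)^{-1}|\mathcal A|$ with $\alpha=\sqrt{\epsilon_0/2}$, would give $M<\tfrac32(1-\alpha)^{-1}|\mathcal A|$, that is $|\mathcal A|>\tfrac23(1-\sqrt{\epsilon_0/2})M$, contradicting the standing assumption that \eqref{e:altc-1} is false. Hence $L>1$, which completes the argument.

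There is no genuine obstacle here: the lemma is essentially a repackaging of the estimates already established earlier in this subsection together with Fre\u\i man's theorem. The only points requiring care are (i) verifying that the doubling constant in \eqref{e:gotcha-add} really falls below $\tfrac32$, which is exactly the role of the numerical condition \eqref{e:ub-epsilon0}, and (ii) the two uses of $0\in V_\alpha$ and of the failure of \eqref{e:altc-1} — these are what let one pass from a coset to a genuine subgroup and then rule out the improper subgroup $\mathbb Z_M$.
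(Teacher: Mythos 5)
Your argument is correct and is essentially identical to the paper's: both apply Theorem~\ref{t:freiman-in-da-haus} to $V_\alpha$ using the doubling bound \eqref{e:gotcha-add} (valid since \eqref{e:altc-2} fails), use $0\in V_\alpha$ to replace the coset $a+H$ by the subgroup $H=L\mathbb Z/M\mathbb Z$ itself, and rule out $L=1$ via \eqref{e:ub-valpha} and the failure of \eqref{e:altc-1}. No issues.
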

%%%%%%%%%%%%%%%%%%%%%%%%%%%%%%%%%%%%%%%%%%%%%%%%%%%%%%%%%%%%%%%%%%%%%%%%%%%%%%%%
\begin{proof}
Applying Theorem~\ref{t:freiman-in-da-haus} to $V_\alpha\subset\mathbb Z_M$
and using~\eqref{e:gotcha-add}, we see that
there exist $L,L'\in\mathbb N$ with $LL'=M$ and $a\in\mathbb Z_M$
such that $V_\alpha\subset a+L\mathbb Z/M\mathbb Z$ and
$L'<{3\over 2}|V_\alpha|$. Moreover since $0\in V_\alpha$
we may take $a=0$. Finally by~\eqref{e:ub-valpha}
and the fact that~\eqref{e:altc-1} is false, we have
$$
L'<{3\over 2}|V_\alpha|\leq {3\over 2(1-\alpha)}|\mathcal A|<M
$$
which gives $L>1$.
\end{proof}
%%%%%%%%%%%%%%%%%%%%%%%%%%%%%%%%%%%%%%%%%%%%%%%%%%%%%%%%%%%%%%%%%%%%%%%%%%%%%%%%
We now finish the proof of Proposition~\ref{l:ae-cyclic}. Assume that
both~\eqref{e:altc-1} and~\eqref{e:altc-2} are false. We
will show that~\eqref{e:altc-3} holds.
We recall that by~\eqref{e:lb-valpha} and~\eqref{e:sum-j-f},
\begin{equation}
\label{e:ub-valphac}
\sum_{j\in\mathbb{Z}_M\setminus V_\alpha}\widetilde{F}_j(\mathcal{A})\leq\sqrt{2\epsilon_0}\,|\mathcal{A}|^2.
\end{equation}
By Lemma~\ref{l:in-a-subgroup}, 
we see that for any $j\in V_\alpha$, $j+1$ and $j-1$ are not in $V_\alpha$. Therefore we can write by~\eqref{e:ae-port-2}
\begin{equation*}
\widetilde{E}_1(\mathcal{A})
=\sum_{j\in\mathbb{Z}_M}\widetilde{F}_j(\mathcal{A})\widetilde{F}_{j+1}(\mathcal{A})
\leq\sum_{j\in\mathbb Z_M\setminus V_\alpha}\widetilde F_j(\mathcal A)\big(\widetilde{F}_{j-1}(\mathcal{A})+\widetilde{F}_{j+1}(\mathcal{A})\big).
\end{equation*}
Now we can use \eqref{e:ub-valphac} and the trivial bound $\widetilde{F}_j(\mathcal{A})\leq|\mathcal{A}|$ to get
\begin{equation}
\label{e:ub-tildee1}
\widetilde{E}_1(\mathcal{A})
\leq 2|\mathcal{A}|\sum_{j\in\mathbb{Z}_M}\widetilde{F}_j(\mathcal{A})
\leq 2\sqrt{2\varepsilon_0}\,|\mathcal{A}|^3,
\end{equation}
obtaining~\eqref{e:altc-3}.

%%%%%%%%%%%%%%%%%%%%%%%%%%%%%%%%%%%%%%%%%%%%%%%%%%%%%%%%%%%%%%%%%%%%%%%%%%%%%%%%
\subsection{Rearrangement inequalities and upper bounds for \texorpdfstring{$E_\ell$}{E\_l}}

We first recall rearrangement inequalities,
following~\cite[Chapter~X]{HLP}. Let $(a)=\{a_j\}_{j\in\mathbb{Z}}$ be a sequence of non-negative numbers with only finitely many non-zero elements. We denote the set of such sequences to be $\ell_c^+$.

We say $(a')\in\ell_c^+$ is a rearrangement of $(a)\in\ell_c^+$ if there is a permutation function $\phi:\mathbb{Z}\to\mathbb{Z}$ which is the identity for large $|j|$ such that $a_j'=a_{\phi(j)}$.
We define the special rearrangements $(a^+)$ and $({}^+a)$ by requiring
$$
\begin{gathered}
a_0^+\geq a_1^+\geq a_{-1}^+\geq a_2^+\geq a_{-2}^+\geq\cdots\\
{}^+a_0\geq{}^+a_{-1}\geq{}^+a_1\geq{}^+a_{-2}\geq{}^+a_2\geq\cdots
\end{gathered}
$$
When $(a^+)=({}^+a)$, we denote $(a^\ast):=(a^+)=({}^+a)$ and
call the sequence $(a)$ \emph{symmetrical}. We recall the following
rearrangement inequalities, see~\cite[\S\S10.4, 10.5]{HLP}
%%%%%%%%%%%%%%%%%%%%%%%%%%%%%%%%%%%%%%%%%%%%%%%%%%%%%%%%%%%%%%%%%%%%%%%%%%%%%%%%
\begin{theo}[Rearrangements of two sets] 
For any two sequences $(a),(b)\in\ell_c^+$
\begin{equation}
\label{ineq:rearr-2}
\sum_{r+s=0}a_rb_s=\sum_j a_jb_{-j}\leq \sum_j a_j^+\cdot{}^+b_{-j}=\sum_{r+s=0}a_r^+\cdot{}^+b_s.
\end{equation}
\end{theo}
%%%%%%%%%%%%%%%%%%%%%%%%%%%%%%%%%%%%%%%%%%%%%%%%%%%%%%%%%%%%%%%%%%%%%%%%%%%%%%%%
%
%%%%%%%%%%%%%%%%%%%%%%%%%%%%%%%%%%%%%%%%%%%%%%%%%%%%%%%%%%%%%%%%%%%%%%%%%%%%%%%%
\begin{theo}[Rearrangements of three sets]
For any $(a),(b),(c)\in\ell_c^+$ with $(c)$ symmetrical, we have
\begin{equation}
\label{ineq:rearr-3}
\sum_{r+s+t=0}a_rb_sc_t\leq\sum_{r+s+t=0}a_r^+\cdot{}^+b_s\cdot c_t^\ast
=\sum_{r+s+t=0}{}^+a_r\cdot b_s^+\cdot c_t^\ast.
\end{equation}
\end{theo}
%%%%%%%%%%%%%%%%%%%%%%%%%%%%%%%%%%%%%%%%%%%%%%%%%%%%%%%%%%%%%%%%%%%%%%%%%%%%%%%%
We next obtain bounds on the individual terms of the matrix~\eqref{e:Mat-A}:
%%%%%%%%%%%%%%%%%%%%%%%%%%%%%%%%%%%%%%%%%%%%%%%%%%%%%%%%%%%%%%%%%%%%%%%%%%%%%%%%
\begin{prop}
  \label{l:ab-individual}
Assume $|\mathcal A|>1$. Then we have the following inequalities:
\begin{gather}
  \label{e:abin-1}
\max\big(E_1(\mathcal A),2E_M(\mathcal{A}),E_{M+1}(\mathcal{A})+E_{M-1}(\mathcal{A})\big)
\leq E_0(\mathcal{A}),\\
  \label{e:abin-3}
E_0(\mathcal{A})\leq \frac{2}{3}|\mathcal{A}|^3+\frac{1}{3}|\mathcal{A}|
\leq {3\over 4}|\mathcal A|^3.
\end{gather}
\end{prop}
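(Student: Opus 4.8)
The plan is to prove the two inequalities \eqref{e:abin-1} and \eqref{e:abin-3} separately, working throughout with the additive portraits $F_j=F_j(\mathcal A)$, the identity $E_\ell(\mathcal A)=\sum_{j\in\mathbb Z}F_jF_{j+\ell}$ from \eqref{e:ae-port-1}, and the elementary facts $F_j=F_{-j}$, $F_0=|\mathcal A|$, and $F_j=0$ for $|j|\geq M$ (since $\mathcal A\subseteq\{0,\dots,M-1\}$). Write $n:=|\mathcal A|$.

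For \eqref{e:abin-1}, each of the three quantities on the left is a sum $\sum_jF_jF_{j+\ell}$, and I would bound it by applying $2F_jF_{j+\ell}\leq F_j^2+F_{j+\ell}^2$ and re-indexing the resulting sums of squares. For $\ell=1$ this gives $2E_1\leq\sum_j(F_j^2+F_{j+1}^2)=2E_0$. For $\ell=M$, the product $F_jF_{j+M}$ is nonzero only for $j\in\{-(M-1),\dots,-1\}$; substituting $j\mapsto j+M$ in one of the two sums of squares and using $F_{-j}=F_j$ gives $2E_M\leq2\sum_{j=1}^{M-1}F_j^2=E_0-F_0^2\leq E_0$. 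For $E_{M-1}+E_{M+1}$, the relevant summation ranges are $j\in\{-(M-1),\dots,0\}$ and $j\in\{-(M-1),\dots,-2\}$; the same maneuver yields $E_{M-1}\leq F_0^2+\sum_{j=1}^{M-1}F_j^2$ and $E_{M+1}\leq\sum_{j=1}^{M-1}F_j^2$, which add up to $\sum_{j\in\mathbb Z}F_j^2=E_0$.

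For \eqref{e:abin-3}, the second inequality $\tfrac23n^3+\tfrac13n\leq\tfrac34n^3$ is equivalent to $n^2\geq4$ and holds since $n\geq2$. The first inequality is the assertion that $E_0$ is maximized, over all $n$-element subsets of $\mathbb Z$, by an arithmetic progression $\{0,\dots,n-1\}$, whose energy is $F_0^2+2\sum_{k=1}^{n-1}k^2=n^2+\tfrac{(n-1)n(2n-1)}{3}=\tfrac23n^3+\tfrac13n$. I would prove this via the following \emph{rank bound}: if $a_1<\dots<a_n$ are the elements of $\mathcal A$, then for every finite $T\subseteq\mathbb Z_{\geq1}$ with $|T|=l$,
$$\sum_{t\in T}F_t(\mathcal A)=\#\{(a,b)\in\mathcal A^2\colon a-b\in T\}\leq\sum_{i=1}^{n}\min(l,i-1)=\sum_{i=1}^l(n-i)_+ .$$
Indeed, for $a=a_i$ the count $\#\{b\in\mathcal A\colon a-b\in T\}$ is at most $\min(l,i-1)$, since the differences $a-b$ over the $i-1$ elements $b<a$ of $\mathcal A$ are distinct positive integers, of which at most $l$ can lie in $T$. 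The rank bound says precisely that the non-increasing rearrangement of $(F_t(\mathcal A))_{t\geq1}$ is majorized by $(n-1,n-2,\dots,1,0,\dots)$, the non-increasing rearrangement of $(F_t)_{t\geq1}$ for the progression; both have total $\binom n2$, and adjoining the common value $F_0=n$ (with the off-zero terms occurring in equal pairs $F_t=F_{-t}$ and total $n^2$ in both cases) preserves the majorization. The Hardy--Littlewood--P\'olya inequality applied to the convex function $x\mapsto x^2$ --- which, after a summation by parts, is of the same type as the rearrangement estimate \eqref{ineq:rearr-2} --- then gives $E_0(\mathcal A)=\sum_tF_t(\mathcal A)^2\leq\sum_tF_t(\{0,\dots,n-1\})^2=\tfrac23n^3+\tfrac13n$.

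The main obstacle is the first inequality in \eqref{e:abin-3}: the purely ``moment'' data about $(F_t)$ (total $n^2$, first moment $0$) does not suffice to bound $\sum_tF_t^2$, and one genuinely has to use that $(F_t)$ is the difference-multiplicity sequence of an actual set --- this is what the rank bound encodes. The only delicate point is the majorization bookkeeping (tracking $F_0$ and the pairing $F_t=F_{-t}$ so that the comparison lands exactly on the arithmetic progression rather than on a weaker configuration); part \eqref{e:abin-1} is entirely routine.
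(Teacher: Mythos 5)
Your proof is correct, and both halves take a genuinely different route from the paper's. For \eqref{e:abin-1} the paper invokes the two-sequence rearrangement inequality \eqref{ineq:rearr-2}, for $E_1\leq E_0$ directly and for $E_{k+M}+E_{k-M}\leq E_0$ ($k=0,1$) after packaging $F_{j+M}+F_{j-M}$ into an auxiliary sequence $b_j$ that is a rearrangement of $F_j$; your substitute $2F_jF_{j+\ell}\leq F_j^2+F_{j+\ell}^2$ together with the support bookkeeping is more elementary and in the case $\ell=M$ even yields the slightly stronger $2E_M\leq E_0-F_0^2$. For \eqref{e:abin-3} the paper writes $E_0=\sum_{r+s+t=0}1_{\mathcal A}(r)1_{-\mathcal A}(s)F_t$ and applies the three-sequence rearrangement inequality \eqref{ineq:rearr-3} twice, symmetrizing first the indicators and then the portrait, landing on $\sum_{|j|<|\mathcal A|}(|\mathcal A|-|j|)^2$; you instead prove directly that the decreasing rearrangement of $(F_t)_{t\geq1}$ is majorized by that of the interval $\{0,\dots,n-1\}$ via the rank bound $\sum_{t\in T}F_t\leq\sum_{i=1}^{|T|}(n-i)_+$ (which is where the set structure of $\mathcal A$ genuinely enters, as you note), and then apply the Hardy--Littlewood--P\'olya majorization inequality for the convex function $x\mapsto x^2$. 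Both arrive at the same sharp constant $\frac23|\mathcal A|^3+\frac13|\mathcal A|$; the paper's version is shorter given that \eqref{ineq:rearr-3} is already quoted for other purposes, while yours isolates the extremality of the arithmetic progression more transparently. Your verification of the rank bound, of the equality of total sums $\binom{n}{2}$ needed for majorization, and of the final arithmetic (including the use of $|\mathcal A|\geq2$ for the second inequality in \eqref{e:abin-3}) are all sound.
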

%%%%%%%%%%%%%%%%%%%%%%%%%%%%%%%%%%%%%%%%%%%%%%%%%%%%%%%%%%%%%%%%%%%%%%%%%%%%%%%%
\begin{proof}
First of all, applying~\eqref{ineq:rearr-2} to $F_j(\mathcal A)$ and
$F_{j+1}(\mathcal A)$ and using~\eqref{e:ae-port-1}
and the fact that $F_j(\mathcal A)=F_{-j}(\mathcal A)$, we have
$E_1(\mathcal A)\leq E_0(\mathcal A)$. To show~\eqref{e:abin-1}
it remains to prove the inequality
\begin{equation}
  \label{e:abin-1.1}
E_{k+M}(\mathcal{A})+E_{k-M}(\mathcal{A})\leq E_0(\mathcal{A}),\;\;\; k\in\{0,1\}.
\end{equation}
To show~\eqref{e:abin-1.1}, we use that $F_j(\mathcal A)=0$ for $|j|\geq M$
and write by~\eqref{e:ae-port-1}
$$
\begin{gathered}
E_{k+M}(\mathcal{A})+E_{k-M}(\mathcal{A})=\sum_{j}F_j(\mathcal{A})\big(F_{j+k+M}(\mathcal{A})+F_{j+k-M}(\mathcal{A})\big)=\sum_{j}F_j(\mathcal A)b_{j+k},\\
b_j(\mathcal A):=\indic_{[-M,M-1]}(j)\cdot\big(F_{j+M}(\mathcal{A})+F_{j-M}(\mathcal{A})\big).
\end{gathered}
$$
Since $b_j(\mathcal A)$ is equal to $0$ for $j=0$, to $F_{j-M}(\mathcal A)$
for $1\leq j\leq M-1$, and to $F_{j+M}(\mathcal A)$
for $-M\leq j\leq -1$, we see that
$b_j$ is a rearrangement of $F_j(\mathcal A)$, thus~\eqref{e:abin-1.1}
follows from~\eqref{ineq:rearr-2}.

It remains to prove~\eqref{e:abin-3}.
For that we write
$$
E_0(\mathcal{A})=\sum_{a,r\in\mathcal{A},\ c,s\in-\mathcal{A}\atop a+r+c+s=0}1=\sum_{r+s+t=0}1_\mathcal{A}(r)1_{-\mathcal{A}}(s)F_t(\mathcal{A}).
$$
The sequence $F_t(\mathcal A)$ is symmetrical since $F_t(\mathcal A)=F_{-t}(\mathcal A)$
and $F_t(\mathcal A)\leq |\mathcal A|=F_0(\mathcal A)$. Applying
the first inequality in~\eqref{ineq:rearr-3},
we get
$$
E_0(\mathcal{A})\leq \sum_{r=-\lfloor(|\mathcal{A}|-1)/2\rfloor}^{\lceil(|\mathcal{A}|-1)/2\rceil}\ \sum_{s=-\lceil(|\mathcal{A}|-1)/2\rceil}^{\lfloor(|\mathcal{A}|-1)/2\rfloor}F_{-r-s}^\ast(\mathcal{A})
=\sum_{j=1-|\mathcal{A}|}^{|\mathcal{A}|-1}(|\mathcal{A}|-|j|)F_j^\ast(\mathcal{A}).
$$
The right-hand side can be written as
\begin{equation}
\sum_{j\in\mathbb{Z}}C_jF_j(\mathcal A)
=\sum_{r+s+j=0}1_{\mathcal{A}}(r)1_{-\mathcal{A}}(s)C_j.
\end{equation}
where $C_j$ is some rearrangement of the symmetrical sequence $\{\max(|\mathcal{A}|-|j|,0)\}$.
Applying~\eqref{ineq:rearr-3} again and using
that $C^\ast_j=\max(|\mathcal A|-|j|,0)$, we finally get
$$
E_0(\mathcal{A})\leq \sum_{r=-\lfloor(|\mathcal{A}|-1)/2\rfloor}^{\lceil(|\mathcal{A}|-1)/2\rceil}\ \sum_{s=-\lceil(|\mathcal{A}|-1)/2\rceil}^{\lfloor(|\mathcal{A}|-1)/2\rfloor}C_{-r-s}^\ast\\
=\sum_{j=1-|\mathcal{A}|}^{|\mathcal{A}|-1}(|\mathcal{A}|-|j|)^2,
$$
finishing the proof.
\end{proof}
%%%%%%%%%%%%%%%%%%%%%%%%%%%%%%%%%%%%%%%%%%%%%%%%%%%%%%%%%%%%%%%%%%%%%%%%%%%%%%%%

%%%%%%%%%%%%%%%%%%%%%%%%%%%%%%%%%%%%%%%%%%%%%%%%%%%%%%%%%%%%%%%%%%%%%%%%%%%%%%%%
\subsection{End of the proof}

We now prove Proposition~\ref{l:ub-rho}. Assume that
$1<|\mathcal A|<{2\over 3}(1-\zeta)M$ for some $\zeta>0$,
and choose $\varepsilon_0$ satisfying~\eqref{e:ub-epsilon0}
and such that $\varepsilon_0\leq 2\zeta^2$,
then~\eqref{e:altc-1} is false.
Define the normalized matrix $\mathcal M(\mathcal A)$ as follows:
$$
\widetilde{\mathcal{M}}(\mathcal{A}):=|\mathcal{A}|^{-3}\mathcal{M}(\mathcal{A}),
$$
Then Proposition~\ref{l:ub-rho} follows from Propositions~\ref{l:ae-cyclic} and~\ref{l:ab-individual}, \eqref{e:decom-e0}--\eqref{e:ub-trivial-tildee},
and
%%%%%%%%%%%%%%%%%%%%%%%%%%%%%%%%%%%%%%%%%%%%%%%%%%%%%%%%%%%%%%%%%%%%%%%%%%%%%%%%
\begin{lemm}
Assume that $p,q,r,s\in\mathbb R$ satisfy for some $\epsilon_0\in (0,1/8)$,
\begin{gather}
  \label{e:lots-of-conditions-1}
0\leq p,q,r\leq s\leq {3\over 4},\quad
p+r\leq 1,\quad q+s\leq 1,\quad\text{and}\\
  \label{e:lots-of-conditions-2}
\text{either}\quad p+r\leq 2\sqrt{2\epsilon_0}\quad\text{or}\quad
q+s\leq 1-\epsilon_0.
\end{gather}
Then the matrix
$$
\mathcal M:=\begin{pmatrix} p & q \\ r & s \end{pmatrix}
$$
has spectral radius less than $1-\varepsilon$, with
$\varepsilon>0$ depending only on $\epsilon_0$.
\end{lemm}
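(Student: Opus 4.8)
The plan is to reduce the claim to a positive lower bound on $\det(\mathcal M-I)=(1-p)(1-s)-qr$. Since $q,r\ge 0$, the discriminant $(p-s)^2+4qr$ is nonnegative, so $\mathcal M$ has two real eigenvalues $\rho_-\le\rho_+$, and since $\operatorname{tr}\mathcal M=p+s\ge 0$ and $\rho_+\ge 0$ one checks $\rho(\mathcal M)=\rho_+=\tfrac12\big(p+s+\sqrt{(p-s)^2+4qr}\big)$. Because $p+s\le\tfrac32<2$, rationalizing $1-\rho_+$ and using the identity $(2-(p+s))^2-\big((p-s)^2+4qr\big)=4\det(\mathcal M-I)$ gives
\[
1-\rho_+=\frac{4\det(\mathcal M-I)}{2\big(2-(p+s)+\sqrt{(p-s)^2+4qr}\big)}\ \ge\ \tfrac12\,\det(\mathcal M-I),
\]
where the last step bounds the denominator by $8$ using $p,q,r,s\le\tfrac34$. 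So it suffices to bound $\det(\mathcal M-I)$ below by a positive quantity depending only on $\epsilon_0$.

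Conceptually, from $p+r\le 1$, $q+s\le 1$ and $p,s\le\tfrac34<1$ we always have $r\le 1-p$ and $q\le 1-s$ with $1-p,1-s>0$, hence $qr\le(1-p)(1-s)$, i.e.\ $\det(\mathcal M-I)\ge 0$; equality would force $r=1-p$ and $q=1-s$, i.e.\ $p+r=q+s=1$, which is exactly the degenerate case $\rho(\mathcal M)=1$ (the eigenvalues being then $1$ and $p+s-1$). Hypothesis~\eqref{e:lots-of-conditions-2} excludes this boundary case since $\epsilon_0<\tfrac18$ forces $2\sqrt{2\epsilon_0}<1$. For a quantitative bound I split into the two alternatives of~\eqref{e:lots-of-conditions-2}. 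If $q+s\le 1-\epsilon_0$, then $0\le q\le(1-s)-\epsilon_0$ and $0\le r\le 1-p$, so $qr\le\big((1-s)-\epsilon_0\big)(1-p)$ and
\[
\det(\mathcal M-I)\ \ge\ (1-p)(1-s)-\big((1-s)-\epsilon_0\big)(1-p)=\epsilon_0(1-p)\ \ge\ \tfrac{\epsilon_0}{4}.
\]
If instead $p+r\le t:=2\sqrt{2\epsilon_0}<1$, the trick is to use $r\le t-p$ together with $q\le 1-s$ rather than bounding $q,r$ separately, which gives $qr\le(1-s)(t-p)$ and hence
\[
\det(\mathcal M-I)\ \ge\ (1-p)(1-s)-(1-s)(t-p)=(1-s)(1-t)\ \ge\ \tfrac14\big(1-2\sqrt{2\epsilon_0}\big).
\]
Plugging either bound into the identity for $1-\rho_+$ gives $\rho(\mathcal M)\le 1-\tfrac18\min\big(\epsilon_0,\,1-2\sqrt{2\epsilon_0}\big)$, so the conclusion holds with $\varepsilon:=\tfrac1{16}\min\big(\epsilon_0,\,1-2\sqrt{2\epsilon_0}\big)>0$, depending only on $\epsilon_0$.

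The main obstacle is the second case: estimating $q\le\tfrac34$ and $r\le t$ separately only yields $qr\le\tfrac34 t$, which fails to beat $(1-p)(1-s)$ once $\epsilon_0$ is not tiny, so the naive argument breaks down for $\epsilon_0$ near $\tfrac18$. The resolution is to exploit both affine constraints $r\le t-p$ and $q\le 1-s$ at once, after which the factor $1-p$ cancels and leaves the clean lower bound $(1-s)(1-t)$. (One could instead avoid explicit constants entirely: $\rho(\mathcal M)$ is continuous in $(p,q,r,s)$ on the compact set cut out by~\eqref{e:lots-of-conditions-1}--\eqref{e:lots-of-conditions-2}, and by the conceptual argument above it is $<1$ at every point of this set, hence $\le 1-\varepsilon$ for some $\varepsilon>0$.)
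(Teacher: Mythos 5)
Your proof is correct, and its core is the same as the paper's: both diagonalize $\mathcal M$ explicitly, observe that $\rho(\mathcal M)=\lambda_+=\tfrac12\big(p+s+\sqrt{(p-s)^2+4qr}\big)$, and control it through the quantity $(1-p)(1-s)-qr=\det(\mathcal M-I)$, with the hypotheses $p+r\leq1$, $q+s\leq1$ forcing this determinant to be nonnegative and the alternative hypothesis ruling out the degenerate equality case $p+r=q+s=1$. The only real difference is that the paper stops at the qualitative statement $\rho(\mathcal M)<1$ on the compact parameter set and invokes compactness to get a uniform $\varepsilon$ (exactly the shortcut you mention in your final parenthetical), whereas you rationalize $1-\lambda_+$ and push the two alternatives through to an explicit value $\varepsilon=\tfrac1{16}\min\big(\epsilon_0,\,1-2\sqrt{2\epsilon_0}\big)$; your version buys an effective constant at the cost of the slightly more delicate case analysis (in particular the observation that in the case $p+r\leq 2\sqrt{2\epsilon_0}$ one must pair $r\leq t-p$ with $q\leq 1-s$ so that the factor $1-p$ cancels). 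All the individual estimates check out, including the denominator bound by $8$ and the sign verification needed before dividing.
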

%%%%%%%%%%%%%%%%%%%%%%%%%%%%%%%%%%%%%%%%%%%%%%%%%%%%%%%%%%%%%%%%%%%%%%%%%%%%%%%%
\begin{proof}
Since the subset of $\mathbb R^4$ defined by~\eqref{e:lots-of-conditions-1},
\eqref{e:lots-of-conditions-2}
is compact, it suffices to show that
$\mathcal M$ has spectral radius $<1$.
Assume the contrary. The eigenvalues of $\mathcal M$ are
$$
\lambda_\pm={p+s\pm\sqrt{(p-s)^2+4qr}\over 2}\in\mathbb R,\quad
|\lambda_-|\leq\lambda_+.
$$
Since $0\leq r\leq1-p$ and $0\leq q\leq1-s$, we have
$$
\lambda_+\leq{p+s+\sqrt{(p-s)^2+4(1-p)(1-s)}\over 2}
=1.
$$
Since $\lambda_+\geq1$ by our assumption, it follows that
$qr=(1-p)(1-s)$, leading to the following two cases:
\begin{enumerate}
\item $p=1$ or $s=1$: this is impossible since $p\leq s\leq 3/4$;
\item $p+r=q+s=1$: this is impossible by~\eqref{e:lots-of-conditions-2}.\qedhere
\end{enumerate}
\end{proof}
%%%%%%%%%%%%%%%%%%%%%%%%%%%%%%%%%%%%%%%%%%%%%%%%%%%%%%%%%%%%%%%%%%%%%%%%%%%%%%%%

%%%%%%%%%%%%%%%%%%%%%%%%%%%%%%%%%%%%%%%%%%%%%%%%%%%%%%%%%%%%%%%%%%%%%%%%%%%%%%%%
%%%%%%%%%%%%%%%%%%%%%%%%%%%%%%%%%%%%%%%%%%%%%%%%%%%%%%%%%%%%%%%%%%%%%%%%%%%%%%%%
\medskip\noindent\textbf{Acknowledgements.}
We would like to thank Maciej Zworski for introducing us to open quantum maps and
many helpful comments and encouragement, St\'ephane Nonnenmacher for several
discussions on the history of the subject,
Jens Marklof for suggesting the short proof of Lemma~\ref{l:non-st},
and Rogers Epstein, Vadim Gorin, Izabella \L aba, Bjorn Poonen, Peter Sarnak, Hong Wang,
and Alex Iosevich for many enlightening discussions.
We are also grateful to an anonymous referee for many useful suggestions
to improve this article.
This research was conducted during the period SD served as
a Clay Research Fellow and
LJ served as a postdoctoral research fellow in CMSA at Harvard University.
SD and LJ would also like to thank Yau Mathematical Sciences Center
at Tsinghua University for hospitality during their stay
where part of this project was completed.

%%%%%%%%%%%%%%%%%%%%%%%%%%%%%%%%%%%%%%%%%%%%%%%%%%%%%%%%%%%%%%%%%%%%%%%%%%%%%%%%

\end{document}